\documentclass{amsart}
\usepackage{amsmath,amsthm,amssymb,latexsym,epic,bbm,comment}
\usepackage{graphicx,enumerate,stmaryrd, xcolor, tikz-cd}
\usepackage[all,2cell]{xy}
\xyoption{2cell}
\usepackage[active]{srcltx}
\usepackage[parfill]{parskip}
\UseTwocells
\usepackage[
colorlinks=true,
linkcolor=black, 
anchorcolor=black,
citecolor=black, 
urlcolor=black, 
]{hyperref}
\usepackage[e]{esvect}
\usepackage{mathrsfs}
\usepackage{todonotes}

\newtheorem{theorem}{Theorem}
\newtheorem{lemma}[theorem]{Lemma}
\newtheorem{corollary}[theorem]{Corollary}
\newtheorem{proposition}[theorem]{Proposition}
\newtheorem{conjecture}[theorem]{Conjecture}

\theoremstyle{definition}

\newtheorem{example}[theorem]{Example}
\newtheorem{remark}[theorem]{Remark}

\newtheorem{definition}[theorem]{Definition}

\newcommand{\tto}{\twoheadrightarrow}

\newcommand{\cC}{\mathscr{C}}

\newcommand{\cE}{\mathscr{E}}

\newcommand{\cR}{\mathscr{R}}

\newcommand{\cS}{\mathscr{S}}

\newcommand{\cD}{\mathscr{D}}

\newcommand{\cA}{\mathscr{A}}

\newcommand{\cB}{\mathscr{B}}

\newcommand{\vpar}{\mathsf{v}}

\newcommand{\lmod}{\text{-}\mathrm{mod}}

\newcommand{\proj}{\mathrm{proj}}
\newcommand{\rproj}{\mathrm{proj}}
\newcommand{\bfrproj}{\boldsymbol{\mathrm{proj}}}

\newcommand{\wfmod}{\text{-}\mathrm{afmod}}
\newcommand{\swfmod}{\text{-}\mathrm{safmod}}
\newcommand{\cwfmod}{\text{-}\mathrm{cafmod}}
\newcommand{\copwfmod}{\text{-}\mathrm{copafmod}}
\newcommand{\ccopwfmod}{\text{-}\mathrm{ccopafmod}}
\newcommand{\scopwfmod}{\text{-}\mathrm{scopafmod}}

\newcommand{\comod}{\operatorname{comod}}
\newcommand{\rmod}{\operatorname{mod}}
\newcommand{\bfrmod}{\boldsymbol{\operatorname{mod}}}
\newcommand{\bproj}{\boldsymbol{\operatorname{proj}}}

\newcommand{\coker}{\operatorname{coker}}

\newcommand{\Hom}{\mathrm{Hom}}

\newcommand{\End}{\mathrm{End}}

\newcommand{\rad}{\operatorname{rad}}

\newcommand{\op}{\mathrm{op}} 
\newcommand{\ex}{\mathrm{ex}} 
\newcommand{\Ev}{\mathsf{Ev}}

\newcommand{\Vect}{\operatorname{Vect}}
\DeclareMathOperator{\ind}{ind}

\newcommand{\circh}{\circ_{\mathsf{h}}}
\newcommand{\circv}{\circ_{\mathsf{v}}}

\newcommand{\rA}{\mathrm{A}}
\newcommand{\rB}{\mathrm{B}}
\newcommand{\rC}{\mathrm{C}}

\newcommand{\rF}{\mathrm{F}}
\newcommand{\rG}{\mathrm{G}}
\newcommand{\rH}{\mathrm{H}}

\newcommand{\rK}{\mathrm{K}}

\newcommand{\rM}{\mathrm{M}}
\newcommand{\rN}{\mathrm{N}}

\newcommand{\rT}{\mathrm{T}}
\newcommand{\rX}{\mathrm{X}}
\newcommand{\rY}{\mathrm{Y}}
\newcommand{\rZ}{\mathrm{Z}}

\newcommand{\rBS}{\mathrm{BS}}

\newcommand{\B}{\mathcal{B}}
\newcommand{\C}{\mathcal{C}}
\def\D{{\mathcal{D}}}

\newcommand{\F}{\mathcal{F}}

\def\H{\mathcal{H}}
\newcommand{\I}{\mathcal{I}}
\newcommand{\J}{\mathcal{J}}

\def\L{{\mathcal{L}}}

\newcommand{\X}{\mathcal{X}}
\newcommand{\Y}{\mathcal{Y}}

\newcommand{\bbN}{\mathbb{N}}
\newcommand{\bbZ}{\mathbb{Z}}

\newcommand{\bbC}{\mathbb{C}}

\newcommand{\id}{\mathrm{id}}

\newcommand{\one}{\mathbbm{1}}

\newcommand{\ov}[1]{\overline{#1}}

\newcommand{\bfM}{\mathbf{M}}

\newcommand{\bfG}{\mathbf{G}}

\newcommand{\bfS}{\mathbf{S}}
\newcommand{\bfN}{\mathbf{N}}

\newcommand{\bfC}{\mathbf{C}}

\newcommand{\bfI}{\mathbf{I}}
\newcommand{\bfJ}{\mathbf{J}}
\newcommand{\bfH}{\mathbf{H}}

\newcommand{\bfm}{\mathbf{m}}
\newcommand{\bfu}{\mathbf{u}}

\newcommand{\ti}{\mathtt{i}}
\newcommand{\tj}{\mathtt{j}}
\newcommand{\tk}{\mathtt{k}}

\newcommand{\tx}{\mathtt{x}}
\newcommand{\ty}{\mathtt{y}}

\newcommand{\tB}{\mathtt{B}}

\newcommand{\tX}{\mathtt{X}}

\newcommand{\cpl}[1]{{#1}^\circ}
\newcommand{\copr}[1]{{#1}^\sqcup}
\newcommand{\cop}[1]{{#1}^\diamond}

\newcommand{\Soeaff}{\widehat{\cS}}
\newcommand{\Soeext}{\widehat{\cS}^{\mathrm{ext}}}
\newcommand{\Sfin}{\mathfrak{S}}
\newcommand{\Saff}{\widehat{\mathfrak{S}}}
\newcommand{\Saffext}{\widehat{\mathfrak{S}}^{\mathrm{ext}}}

\numberwithin{equation}{section}
\numberwithin{theorem}{section}

\title[Almost finitary birepresentation theory and Soergel bimodules]{Almost finitary birepresentation theory and applications to affine Soergel bimodules}

\author{Marco Mackaay}
\address{M.M.: Departamento de Matemática, FCT, Universidade do Algarve, Campus de Gambelas,
8005-139 Faro, Portugal \&
Center for Research and Development in Mathematics and Applications (CIDMA), 
Department of Mathematics, University of Aveiro, 3810-193 Aveiro, Portugal,  
\newline \href{https://fct.ualg.pt/bio/mmackaay}{https://fct.ualg.pt/bio/mmackaay}, \href{https://orcid.org/0000-0001-9807-6991}{ORCID 0000-0001-9807-6991}}
\email{mmackaay@ualg.pt}
\author{Vanessa Miemietz}
\address{V.M.: School of Engineering, Mathematics and Physics, University of East Anglia, Norwich NR4 7TJ, United Kingdom,  \newline \href{https://archive.uea.ac.uk/~byr09xgu/}{https://archive.uea.ac.uk/~byr09xgu/}}
\email{v.miemietz@uea.ac.uk}
\author{Pedro Vaz}
\address{P.V.: Institut de Recherche en Math{\'e}matique et Physique, 
Universit{\'e} catholique de Louvain, Chemin du Cyclotron 2,  
1348 Louvain-la-Neuve, Belgium, \newline \href{https://perso.uclouvain.be/pedro.vaz}{https://perso.uclouvain.be/pedro.vaz}, \href{https://orcid.org/0000-0001-9422-4707}{ORCID 0000-0001-9422-4707}}
\email{pedro.vaz@uclouvain.be}

\begin{document}

\begin{abstract}
In this article, we develop a generalization of finitary birepresentation theory applicable to Soergel bimodules for infinite Coxeter groups. We establish a reduction process for the classification of simple birepresentations of {\em almost finitary} bicategories, and consider in detail the case of Soergel bimodules in extended affine type $A$.
\end{abstract}

\maketitle

\tableofcontents

\section{Introduction}

Over the past 15 years, starting with \cite{MM1} and culminating in \cite{MMMTZ}, the theory of finitary birepresentations of finitary bicategories has been developed with the goal of understanding important classes of examples appearing in representation theory, such as categorified quantum groups (Kac--Moody $2$-categories) and Hecke algebras (Soergel bimodules). This has resulted in the classification of simple birepresentations of Soergel bimodules for finite Coxeter groups in characteristic $0$ (except for some exceptions in types $H_3$ and $H_4$, see below) \cite{MMMTZ2}, by connecting their birepresentation theory to that of fusion categories as developed in e.g. \cite{EGNO}. 

Unfortunately, the setup of finitary birepresentation theory is too restrictive to be applicable to Soergel bimodules for infinite Coxeter groups or Kac--Moody $2$-categories for infinite Dynkin types. Hence, in this paper, we generalize the main results from \cite{MMMTZ} to the almost finitary setting, where the bicategories have countably many objects and isomorphism classes of indecomposable $1$-morphisms and the birepresentations have countably many isomorphism classes of indecomposable objects (instead of finitely many, as in the finitary case). 

We then apply these results to the almost finitary birepresentation theory of Soergel bimodules for Coxeter systems of finite rank. In particular, we relate birepresentations whose apex (see below for more detail) contain the longest element of a finite parabolic subgroup to those of Lusztig's associated asymptotic category for the corresponding $H$-cell. We go into further detail on some cases where these asymptotic categories are well-understood, such as (extended) affine type $A$ Soergel bimodules or universal Coxeter groups.

We will now explain the results from \cite{MMMTZ, MMMTZ2} in more detail and sketch the generalisations needed.

The main goal in \cite{MMMTZ} was to develop a method to classify the (equivalence classes of) simple birepresentations of fiab bicategories, where {\em fiab} stands for finitary together with a duality structure 
given by a functorial involution. In that paper, these birepresentations were still called {\em simple transitive}, but all simple birepresentations are automatically transitive, so we have decided to just call them {\em simple} from now on. The method consisted of various reduction steps to simplify the Classification Problem (as it will be called, for short) and led to its solution for Soergel bimodules of all finite Coxeter types (with only two exceptions) in \cite{MMMTZ2}, where an additional reduction step was 
introduced as well. To explain the new technical hurdles in the almost finitary setting, we will briefly recall this method 
and comment on the difference between the finitary and the almost finitary case in every reduction step. It would be very hard to make this paper completely self-contained, so we assume some familiarity with the results in \cite{MMMTZ} and \cite{MMMTZ2}. 

The first reduction step of the Classification Problem for a finitary bicategory $\cC$ uses the fact that every transitive 
finitary birepresentation $\mathbf{M}$ of $\cC$ has an {\em apex}, which is the unique maximal, w.r.t. the two-sided partial order, 
two-sided cell of $\cC$ not annihilated by $\mathbf{M}$. This fact, first proved in \cite[Lemma 1]{CM}, 
allows us to address the Classification Problem per apex. Moreover, for every two-sided cell $\J$ of $\cC$, there is a subquotient bicategory $\cC_\J$ of $\cC$ (the {\em $J$-simple quotient}) whose only two-sided cells are the trivial cell and $\J$, and there is a biequivalence between the 2-category of simple finitary $\cC$-birepresentations with apex $\J$ and the $2$-category of simple finitary $\cC_\J$-birepresentations with apex $\J$.

To guarantee existence and uniqueness of the apex in the almost finitary setting, we introduce the {\em $J$-artinian property} for almost finitary bicategories (Definition~\ref{cellfin}) and prove that every transitive almost finitary birepresentation of an almost finitary $J$-artinian bicategory has a unique apex (Lemma~\ref{lem:cellfin}). This property is certainly satisfied by any almost finitary 
bicategory with finitely many two-sided cells, such as Soergel bimodules for finite Coxeter groups, (extended) affine Weyl groups and 
universal Coxeter groups, but not all $J$-artinian almost finitary bicategories have only finitely many two-sided cells, e.g., the bicategory of all finite dimensional bimodules over dual numbers has infinitely many two-sided cells and is $J$-artinian, see \cite[Section 3.1]{Jo}.

In \cite[Section 13.12]{Lu}, Lusztig conjectured that any finite rank Coxeter group with bounded $\mathtt{a}$-function has finitely many two-sided Kazhdan--Lusztig cells. Recently, it was proved in~\cite{CH} that the $\mathtt{a}$-function on any finite rank Coxeter group is bounded, hence, if Lusztig's conjecture is true, it would imply that the bicategory of Soergel bimodules for any finite rank Coxeter group is $J$-artinian.

The second reduction step of the Classification Problem requires $\cC$ to be fiab. The functorial involution preserves 
the two-sided cells of $\cC$ and interchanges the left and the right cells within each two-sided cell. Given a left cell $\L$ 
in a two-sided cell $\J$, the {\em diagonal $H$-cell} $\H:=\L\cap \L^*$ is preserved by the involution. 
There is a $2$-full subbicategory $\cC_\H$ of $\cC_\J$ associated to $\H$, whose only two-sided cells are the 
trivial cell and $\H$. {\em Strong $H$-reduction} establishes a biequivalence between the $2$-category of simple finitary 
$\cC_\J$-birepresentations with apex $\J$ and the $2$-category of simple finitary $\cC_\H$-birepresentations with apex $\H$. Note that strong $H$-reduction is valid for any diagonal $H$-cell in $\J$, 
so the bicategories $\cC_\H$, for all diagonal $H$-cells $\H$ in $\J$, 
are Morita biequivalent to each other. This is significant, because they may not be biequivalent to each other, and it is also known 
to be false on the decategorified level, e.g., the subquotients of the Hecke algebras associated to diagonal $H$-cells in a two-sided cell 
of some Coxeter group are not Morita equivalent in general. 

Let $\X$ denote a two-sided cell $\J$ or a diagonal $H$-cell $\H$ in $\J$. In the finitary setting, 
the proof of strong $H$-cell reduction uses the correspondence between equivalence classes of simple birepresentations 
$\mathbf{M}$ of $\cC_\X$ with apex $\X$ on one hand and Morita equivalence classes of simple algebra $1$-morphisms $\rA$ in, a priori, the abelianisation of
the additive closure $\X^\oplus$ of $\X$ in $\cC_\X$ on the other hand. A major reduction in \cite{MMMTZ} is the proof that the relevant algebra $1$-morphisms actually exist in $\X^\oplus$ itself without needing an abelianisation. In the almost finitary setting, the algebra 
$1$-morphism $\rA$ can be constructed abstractly in a completion of $\X^{\oplus}$, and we develop a theory of completed bicategories and birepresentations that eventually allows us to conclude that $\rA$ can be chosen as a possibly countably infinite coproduct of indecomposable $1$-morphisms in $\X$. This correspondence only works if $\X$ satisfies certain technical conditions, which are automatic in the finitary case, but not in the almost finitary case. These conditions are not hard to formulate, but can be quite challenging to check in practice. Fortunately, there are cases, such as for Soergel bimodules in extended affine type $A$, in which each two-sided cell $\J$ contains a diagonal $H$-cell satisfying these conditions, as we will see. 

This brings us to the last reduction step. Any diagonal $H$-cell $\H$ inside a two-sided cell $\J$ 
contains a special indecomposable $1$-morphism $\mathrm{D}$, called the {\em Duflo involution}, which has a natural 
algebra structure in $\cC_\H$. Consider the bicategory of biprojective $\mathrm{D}\text{-}\mathrm{D}$-bimodule $1$-morphisms 
in $\cC_\H$. This bicategory has a maximal two-sided cell containing the (isomorphism classes of the) indecomposable projective bimodule $1$-morphisms, which can be identified with $\H$ by a double centraliser theorem \cite[Section 5]{MMMTZ}, and we denote the $H$-simple subquotient bicategory by $\cB_\H$.
In the finitary setting, if $D$ is a simple separable Frobenius algebra $1$-morphism, then $\cB_\H$ is a fusion category (with a unique cell) and there is a biequivalence between the $2$-category of simple birepresentations of $\cC_\H$ with apex $\H$ and the $2$-category of simple birepresentations of $\cB_\H$, thanks to a {\em Morita context} in which the double centraliser theorem and 
strong $H$-reduction can be applied. 

This biequivalence yields a significant reduction of the Classification Problem for Soergel bimodules $\cS=\cS(W)$ associated to 
a finite Coxeter group $W$, because $\mathrm{D}$ is always a simple separable Frobenius algebra $1$-morphism in that case, see \cite[Section 6]{MMMTZ2}. Moreover, by \cite[Proposition 7.5]{MMMTZ2}, the fiab bicategory $\cB_\H$ is biequivalent to the trivial $\bbZ$-cover of Lusztig's (one-object) semisimple {\em asymptotic bicategory} $\cA_\H$ associated to $\H$, which is almost always biequivalent to one of the well-known fusion categories whose simple birepresentations have been completely classified 
in the literature, see \cite[Section 8]{MMMTZ2} and the references therein. The only exceptions occur in Coxeter types $H_3$ and $H_4$, where some two-sided cells do not contain a diagonal $H$-cell $\H$ for which the structure of $\cA_\H$ is known, let alone its simple birepresentations.  

In the almost finitary setting, the story is more complicated. Provided $\H$ and $D$ satisfy the aforementioned conditions, 
the bicategory $\cB_\H$ is a semisimple almost fiab one-object bicategory (an almost fusion category), and there is still a Morita context, in which we can apply the analogous double centraliser theorem and strong $H$-reduction, so that 
there is a biequivalence between the $2$-category of simple birepresentations of $\cC_\H$ with apex $\H$ and the $2$-category of simple birepresentations of $\cB_\H$.

However, in the case of Soergel bimodules for an infinite Coxeter group $W$ of finite rank, we can only prove that $\H$ and $D$ 
satisfy the aforementioned conditions when $D$ is associated to the longest element of a finite parabolic subgroup of $W$. In this case, the bicategory $\cB_\H$ is again biequivalent to the trivial $\bbZ$-cover of Lusztig's almost fusion category $\cA_\H$. 
We conjecture that these results hold for all diagonal $H$-cells in any two-sided cell of any Coxeter group $W$ of finite rank, but we do not 
know how to prove this in general at the moment. 

Given a two-sided cell $\J$ in any Coxeter group $W$ of finite rank, the asymptotic bicategories $\cA_\H$, for all diagonal $H$-cells $\H$ in $\J$, are Morita biequivalent to each other. This is a consequence of the fact that $\cA_\J$, for the whole $\J$, always satisfies the aforementioned conditions for strong $H$-cell reduction, as we will show. In practice, one can thus choose one diagonal $H$-cell $\H_1$ in $\J$ for which one checks the above-mentioned conditions, and another diagonal $H$-cell $\H_2$ in $\J$ for which the precise structure of $\cA_{\H_2}$ is known, to address the Classification Problem at the asymptotic level. This is exactly what we will do in (extended) affine type $A$. As a matter of fact, in this case, the asymptotic bicategories $\cA_\H$, for all diagonal $H$-cells in $\J$, are even biequivalent (not just Morita biequivalent).

If $W$ is an (extended) affine Weyl group, then $\cA_\H$ is biequivalent to $\mathrm{Rep}(G)$, the monoidal category of finite dimensional rational representations of some reductive complex algebraic group $G$, as was shown in \cite{Be} and \cite{BO}. In particular, in extended affine type $A$, every two-sided cell contains a Duflo involution associated to the longest element of a finite parabolic subgroup, so our reduction steps hold for every apex, and $G$ is always a finite product of complex general linear groups. Although there is no general solution of the Classification Problem for $\mathrm{Rep}(G)$ either, there are interesting families of examples, as we will recall in Section~\ref{secsimpleextA}, which all give rise to simple birepresentations of $\cS$ by our results.   

This paper has two different parts (each containing several sections). In the first part, 
we develop the general theory of almost finitary birepresentations of almost finitary bicategories along the lines sketched above. In the second part, we show how these general results can be used to address the Classification Problem for Soergel bimodules of certain infinite Coxeter types. Since these bimodules naturally form a monoidal category (a one-object bicategory), we will switch to the terminology of monoidal categories and monoidal functors, although we will continue to use the terms birepresentation and morphism of birepresentations. We hope that this makes the second part more accessible to a wider audience, perhaps less familiar with bicategory theory, but we think that it is important to develop the abstract theory in the most general setting, because there 
are examples of bicategories with more than one object, such as Kac--Moody 2-categories and singular Soergel bimodules, whose birepresentations can also be studied with the results in the first part of this paper, although we will not do this here. Note also that the aforementioned Morita context involves a bicategory with two objects.  Moreover, as we explain in Remark~\ref{rem:asympJ}, there is a technical reason why $\cA_\J$ has to be seen as an almost finitary bicategory with countably infinitely many objects if $\J$ contains infinitely many left cells (as is known to be the case for some two-sided cells in 
certain infinite Coxeter groups), which is relevant for Theorem~\ref{thm:mainthmforJ}, so the theory really does require us to work in this more general setting.

{\bf Outline of the article.} In Section \ref{secaddcat}, we provide the necessary background on completions of additive $\Bbbk$-linear categories. In Section \ref{secbicatrepbg}, we recall some background on bicategories and birepresentations, adapted to our setting, before introducing their appropriate completions in Section \ref{sec:dayconv}. In Section \ref{inthom}, we internalise almost finitary birepresentation via algebra $1$-morphisms in the completions and prove our $H$-cell reduction theorem. In Section \ref{secalgcop}, we introduce further finiteness conditions that allow us to choose algebra $1$-morphisms as coproducts. In Section \ref{mainabs}, we prove our double centraliser theorem and the resulting biequivalence of $2$-categories of birepresentations described above. In Section \ref{sec:Soergel}, we consider the application of our results to Soergel bimodules, for cells containing the longest element of a finite parabolic. In Section \ref{secafftypeA}, we study in more detail the example of (extended) affine type $A$ before concluding with a short section on universal Coxeter groups.

{\bf Acknowledgments.} We thank Ben Elias, Nicol\'as Libedinsky and Victor Ostrik for many helpful email exchanges on various topics related to this paper over the last couple of years.

M.M. \ has been supported by CIDMA (https://ror.org/05pm2mw36)
under the Portuguese Foundation for Science and Technology (FCT, https://ror.org/00snfqn58), Grants 
UID/04106/2025 (https://doi.org/10.54499/UID/04106/2025) and UID/PRR/04106/2025 (https://doi.org/10.54499/UID/PRR/04106/2025). 
V.M.\ has been supported by EPSRC grants EP/S017216/1 and EP/Z533750/1.

\section{Additive $\Bbbk$-linear categories and their completions}\label{secaddcat}

Let $\Bbbk$ be an algebraically closed field. 

\subsection{Additive categories}

We denote by $\mathfrak{A}_{\Bbbk}$ the $2$-category whose objects are additive $\Bbbk$-linear (i.e. $\Vect_{\Bbbk}$-enriched)  categories, whose $1$-morphisms are $\Bbbk$-linear functors and whose $2$-morphisms are natural transformations.

Let $\C$ be an additive $\Bbbk$-linear category. For a subset $\tX$ of objects in $\C$, we denote by $\tX^{\oplus}_\C$ the additive closure of $\tX$ in $\C$, meaning the full subcategory of $\C$ whose objects are direct summands of direct sums of objects in $\tX$. If no confusion is possible about the category in which the additive closure is taken, we also omit the subscript and write  $\tX^{\oplus}$.

We write $\ov{\C}$ for the category
\begin{itemize}
\item  whose objects are diagrams of the form $X_1\xrightarrow{x}X_0$ in $\C$;
\item  whose morphism spaces consist of pairs $(\phi_0,\phi_1)$ of morphisms in $\C$ producing solid commutative diagrams of the form
\begin{equation*}
\vcenter{\hbox{
\xymatrix{ X_1 \ar[rr]^{x} \ar[d]^{\phi_1}&&\ar@{-->}[dll]_{\eta} X_0 \ar[d]^{\phi_0}  \\ 
Y_1\ar[rr]^{y} &&Y_0,
}}}
\end{equation*}
modulo the subspace generated by diagrams where there exists a morphism $\eta$, as indicated by the dashed arrow, such that $\phi_0=y\eta$.
\end{itemize}

\subsection{The presheaf category}\label{sec:presheaf}

Let $\C$ be a small additive $\Bbbk$-linear category. 

We note that $\Vect_{\Bbbk}$ is complete and cocomplete with respect to small weighted (and hence, in particular, conical) limits and colimits. This follows from \cite[Corollary 7.6.4]{Ri} using \cite[Example 3.7.5]{Ri} and the fact that $\Vect_{\Bbbk}$ is enriched over itself. 

This implies that the category of $\Bbbk$-linear functors $\cpl\C =\F un_\Bbbk(\C^{op},\Vect_{\Bbbk})$,  is also (co)complete under weighted (co)limits by \cite[Section 3.3]{Ke} with (co)limits being computed object-wise. Moreover, given that (co)limits are computed in the abelian category $\Vect_{\Bbbk}$,  the category $\cpl\C$ is also abelian.

As usual, we identify an object $X$ in $\C$ with its representables $X^\vee=\Hom_{\C}( -, X)$ under the Yoneda embedding. We denote by $\C^\vee$ the image of $\C$ under this embedding (which is, of course, equivalent to $\C$).

Note that we can realise a product indexed by a set $J$ as the conical limit over the diagram $J \to \C\colon j \mapsto X_j$. This is a special case of the weighted limit of the $\Vect_{\Bbbk}$-enriched functor $\tilde J \to \C$ where $\tilde J$ is the free $\Vect_{\Bbbk}$-enriched category on $J$ and the weight $\tilde J\to \Vect_{\Bbbk}$ is just the constant functor $j\mapsto \Bbbk$. Similarly, a coproduct is the conical colimit over the same diagram.
 
 Since $\C$ and hence $\cpl \C$ is additive, in particular has a zero object, there is always a canonical monomorphism 
 $$\nu_{(X_j)_{j\in J}} \colon \coprod_{j\in J} X_j \to \prod_{j\in J } X_j.$$ Explicitly, denoting by $\pi_i \colon \prod_{j\in J } X_j\to X_i$ and $\iota_i\colon X_i \to \coprod_{j\in J } X_j$ the canonical projections and injections, respectively, $\nu$ is defined by the condition that $$\pi_i\nu\iota_k = \begin{cases} \id_{X_i} & \text{if } i=k,\\ 0&\text{otherwise.} \end{cases}$$

 By the co-Yoneda Lemma, every object $X$ in $\cpl{\C}$ is isomorphic to a colimit of representables, namely
 $$X = \int^{D\in \C} \Hom_\C(-, D)\otimes X(D).$$
Expressed as a coequaliser, this means that for $C\in \C$, $X(C)$ is the cokernel of the map
$$\xymatrix{\underset{D_1,D_2\in \C}{\coprod} \Hom_{\C}(C,D_1)\otimes\Hom_{\C}(D_1,D_2)\otimes X(D_2)\ar^{\rho}[d]
\\
\underset{D\in \C}{\coprod} \Hom_{\C}(C,D)\otimes X(D)}$$
where $\rho(f\otimes g \otimes x)  = g\circ f \otimes x - f\otimes X(g)x$.

For a subset $\tX$ of objects of $\C$, we denote by $\cpl\tX$  the completion $\tX^{\oplus}$, i.e. the full subcategory of $\cpl{\C}$ whose objects are isomorphic to a cokernel of a map between coproducts of objects in $\tX$.

\subsection{Extensions of $\Bbbk$-linear functors}

A $\Bbbk$-linear functor $F\colon \C\to \D$ between additive $\Bbbk$-linear categories induces a restriction functor $F^*\colon\cpl{\D}\to \cpl{\C}$ by precomposition.

Define $F_* \colon  \cpl{\C}\to  \cpl{\D}$ by defining, for every $M\in  \cpl{\C}$, the evaluation of $F_*(M)$ at an object $Y\in \D$ by $F_*(M)=  \coker \psi$ where 
\begin{align*}
\coprod_{X_1,X_2\in \C} M(X_2)\otimes \Hom_\C(X_1,X_2)\otimes \Hom_\D(Y, FX_1)& \xrightarrow{\psi_Y} \coprod_{X\in \C} M(X)\otimes \Hom_\D(Y,FX) \\
m\otimes f\otimes g &\mapsto M(f)(m)\otimes g - m\otimes F(f)g.
 \end{align*}

{\bf Claim:} For $Z\in \C$, we have $F_*(Z^\vee) = (FZ)^\vee$, so $F_*$ indeed extends the Yoneda embedding.

{\it Proof of Claim.} For each $Y\in \D$, we have a morphism 
$$\coprod_{X\in \C} \Hom_\C(X,Z)\otimes \Hom_\D(Y,FX) \xrightarrow{\gamma_Y} \Hom_\D(Y,FZ) \colon f\otimes g \mapsto F(f)g$$
and it is easy to check that $\gamma_Y\psi_Y = 0$, so we obtain a unique morphism $F_*(Z^\vee) \to (FZ)^\vee$. This is clearly surjective, since $f\in \Hom_\D(Y,FZ)$ has preimage given by $\id_Z\otimes f$ in the $Z$-component and $0$ elsewhere.
Moreover, if $(f_X\otimes g_X)_{X\in \C}$ is such that $\sum_{X\in \C} F(f_X)g_X = 0$ (noting this makes sense, as an element in the coproduct $\coprod_{X\in \C} \Hom_\C(X,Z)\otimes \Hom_\D(Y,FX)$ of vector spaces only has finitely nonzero components), then $(f_X\otimes g_X)_{X\in \C}$ is the image under $\psi_Y$ of $(\id_Z\otimes f_X\otimes g_X)_{X\in \C}$ (which is a tuple in the domain of $\psi_Y$ where each component for $X_2\neq Z$ is zero).\endproof

For the convenience of the reader, we include a proof of the following well-known category-theoretic fact.
{\bf Claim:} $F_*(\coprod_{j\in J} Z_j^\vee) = \coprod_{i\in J} (FZ_j)^\vee$.

{\it Proof of Claim.}
By definition $F_*(\coprod_{j\in J} Z_j^\vee)(Y)$ is the cokernel of the morphism
\begin{align*}
\coprod_{X_1,X_2\in \C} \left(\coprod_{j\in J}\Hom_\C(X_2, Z_j)\right)\otimes &\Hom_\C(X_1,X_2)\otimes \Hom_\D(Y, FX_1)\\& \xrightarrow{\psi_Y} \coprod_{X\in \C} \left(\coprod_{j\in J}\Hom_\C(X, Z_j)\right)\otimes \Hom_\D(Y,FX) \\
(h_j)_{j\in J}\otimes f\otimes g &\mapsto (h_j\circ f)_{j\in J}\otimes g - (h_j)_{j\in J}\otimes F(f)g.
 \end{align*}
Under the isomorphisms
\begin{equation*}\begin{split}
\coprod_{X_1,X_2\in \C} &\left(\coprod_{j\in J}\Hom_\C(X_2, Z_j)\right)\otimes \Hom_\C(X_1,X_2)\otimes \Hom_\D(Y, FX_1)\\
&\cong \coprod_{j\in J}\big(\coprod_{X_1,X_2\in \C} \Hom_\C(X_2, Z_j)\otimes \Hom_\C(X_1,X_2)\otimes \Hom_\D(Y, FX_1)\big)
\end{split}\end{equation*}
and 
\begin{align*}
 \coprod_{X\in \C} \left(\coprod_{j\in J}\Hom_\C(X, Z_j)\right)\otimes \Hom_\D(Y,FX)\cong \coprod_{j\in J} \left( \coprod_{X\in \C}\Hom_\C(X, Z_j)\otimes \Hom_\D(Y,FX)\right)
 \end{align*}
 this corresponds to the map 
 \begin{equation*}\begin{split}(h_j\otimes f\otimes g)_{j\in J} &\mapsto (h_j\circ f\otimes g)_{j\in J} - (h_j\otimes F(f)g)_{j\in J}\\
 &= (h_j\circ f\otimes g - h_j\otimes F(f)g)_{j\in J},\end{split}\end{equation*}
hence the cokernel is indeed given by the coproduct of the cokernels and $F_*(\coprod_{j\in J} Z_j^\vee) = \coprod_{j\in J} (FZ_j)^\vee$.\endproof

\subsection{Coproduct completions of Krull-Schmidt categories}

We define $\copr{\C}$ as the full subcategory of $\cpl{\C}$ where we close $\C^\vee$ under set-indexed coproducts. We then denote by $\cop{\C}$ its idempotent completion.
 Note that both are again additive $\Bbbk$-linear categories.  Moreover, $\cop{\C}$ is closed under coproducts.

For a subset $\tX$ of objects of $\C$, we denote by $\copr\tX$ the full subcategory of $\copr{\C}$, whose objects are coproducts of objects in $\tX^{\oplus}$.
 
We call a $\Bbbk$-linear additive idempotent complete category which contains set-indexed coproducts {\em coproduct Krull--Schmidt} ({\em cop-KS} for brevity) provided every object can be written uniquely as $\coprod_{i\in I} X_i$ where each $X_i$ has local endomorphism ring. Here, by uniqueness we mean that if $\coprod_{i\in I} X_i= \coprod_{j\in J} Y_j$ are two decompositions of the same object into objects with local endomorphism rings, then there exists a bijection $I\xrightarrow{\sigma} J$ such that $X_i\cong Y_{\sigma(i)}$ for each $i\in I$.

\begin{lemma}\label{CtocopC}
Assume $\C$ is Krull--Schmidt. Then $\copr{\C}= \cop{\C}$ is cop-KS. Moreover, the full subcategory of $\copr{\C}$ consisting of objects which are compact/finitely presented in $\cpl{\C}$ is equivalent to $\C$.
\end{lemma}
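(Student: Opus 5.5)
Since $\C$ is Krull--Schmidt, every object of $\C$ is a finite direct sum of indecomposables with local endomorphism rings. Hence every object of $\copr{\C}$ is isomorphic to $\coprod_{i\in I} X_i^\vee$ with each $X_i$ indecomposable in $\C$. I would organise the proof in three steps: idempotent completeness, uniqueness of decompositions, and the identification of compact objects.

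\emph{Step 1: idempotent completeness.} The key computation is that representables are compact in $\cpl{\C}$. By Yoneda,
\[
\Hom_{\cpl\C}\Bigl(X^\vee,\coprod_{i} Y_i^\vee\Bigr)\cong \Bigl(\coprod_i Y_i^\vee\Bigr)(X)=\coprod_i \Hom_\C(X,Y_i),
\]
since colimits in $\cpl\C$ are computed objectwise. Thus any morphism from a representable to a coproduct factors through a finite subcoproduct. Consequently, an endomorphism of $M=\coprod_{i\in I}X_i^\vee$ is a column-finite matrix $(f_{ji})$ with $f_{ji}\in\Hom_\C(X_i,X_j)$.

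Now let $e$ be an idempotent on $M$; I want to show its image lies in $\copr{\C}$. I would use the standard Kaplansky/Warfield-type argument for modules with decompositions into summands with local endomorphism rings. By the Crawley--J\'onsson--Warfield theorem, $\mathrm{im}\,e$ is again a direct sum of modules isomorphic to some of the $X_i^\vee$. To apply this theorem, I would view $\cpl\C$ as a Grothendieck category (modules over the ring with several objects $\C$). In that setting the $X_i^\vee$ are finitely generated projectives with local endomorphism rings, hence have the exchange property. Combining this with column-finiteness via a transfinite (Kaplansky) filtration into countably generated pieces, one gets that every direct summand of $M$ is a coproduct of indecomposable representables. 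I expect this to be the main obstacle: it needs either citing Crawley--J\'onsson/Warfield (summands of direct sums of countably generated modules with local endomorphism rings) or reproving it in the functor-category setting. Since the paper works with countable data, one could alternatively restrict to countable $I$ and run Kaplansky's countable argument directly. Granting this, $\cop{\C}=\copr{\C}$.

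\emph{Step 2: uniqueness of decompositions.} Local endomorphism rings give the exchange property, so the Krull--Remak--Schmidt--Azumaya theorem (valid in any additive idempotent complete category with coproducts, in particular in $\cpl\C$) applies. It yields that any two decompositions into objects with local endomorphism rings are related by a bijection of index sets matching isomorphism classes. Hence $\copr{\C}$ is cop-KS.

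\emph{Step 3: compact objects.} Representables are compact by the computation in Step 1, and compactness is preserved by finite direct sums. So $\C^\vee\simeq\C$ (by Yoneda full faithfulness) sits inside the compact objects, and since $\C$ is closed under summands the same holds for $\C^\vee$. Conversely, let $M=\coprod_{i\in I}X_i^\vee$ be compact. Then $\id_M$ factors through a finite subcoproduct $\coprod_{i\in I_0}X_i^\vee$. Therefore $I=I_0$ modulo zero summands, so $M\cong (\bigoplus_{i\in I_0}X_i)^\vee\in\C^\vee$. The same argument works for finitely presented objects, since for projective-type objects finite presentation coincides with compactness. This gives the claimed equivalence with $\C$.
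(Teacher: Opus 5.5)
Your proposal is correct, but it takes a different route from the paper. The paper relies on two citations. For the compactness assertion it uses Crawley-Boevey's theorem that the finitely presented objects of $\cpl{\C}$ are the cokernels of maps between representables, and then simply asserts that such a cokernel lies in $\copr{\C}$ only if it is representable. For idempotent completeness and the cop-KS property it applies an external structure theorem ([Br, Theorem 4.1]) directly to $\cop{\C}$. This gives that $\cop{\C}$ is cop-KS with compact objects $\C^\vee$, so every object of $\cop{\C}$ is already a coproduct of representables and $\copr{\C}=\cop{\C}$.

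You instead work in the ambient Grothendieck category $\cpl{\C}$, i.e.\ unital modules over the ring with enough idempotents $\bigoplus_{X,Y}\Hom_\C(X,Y)$, and use classical module theory. Crawley--J\'onsson--Warfield together with Kaplansky's filtration gives idempotent completeness, and Azumaya gives uniqueness. The paper's route is shorter and stays inside additive categories with coproducts. Yours uses better-known theorems, at the cost of checking that they transfer to modules over a ring with several objects; this is harmless here, since each $X^\vee$ is a cyclic projective with local endomorphism ring $\End_\C(X)$.

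Your Step 3 is more complete than the paper's. The observation that $\id_M$ factors through a finite subcoproduct is exactly what justifies the paper's unexplained claim that a finitely presented coproduct of representables is representable. It also makes the Crawley-Boevey citation unnecessary.

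Two small cautions:
\begin{enumerate}
\item Your fallback of restricting to countable $I$ does not by itself prove the lemma. The category $\copr{\C}$ contains arbitrary set-indexed coproducts even when $\C$ has only countably many indecomposables, so you still need Kaplansky's reduction to countable subsums.
\item You do not need Azumaya's theorem in an arbitrary idempotent complete additive category with coproducts. Its usual proof uses elements (or compactness of the summands), which is not available in that generality; you only use it in $\cpl{\C}$, where this is available.
\end{enumerate}
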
 

\proof
By \cite[(1.2) Theorem]{CB}, the finitely presented objects in $\cpl{\C}$ are cokernels of morphisms between representable functors (i.e.\ equivalent to $\overline{\C}$). Such a cokernel is in $\copr{\C}$, i.e. a coproduct of representable functors, if and only if it is a representable functor itself and hence in $\C^\vee$, which is equivalent to $\C$. 

To see that $\copr{\C}$ is idempotent complete, note that the hypotheses of \cite[Theorem 4.1]{Br} apply to $\cop{\C}$, so $\cop{\C}$ is cop-KS with enough compact objects, the latter being the objects in $\C^\vee$. Hence every object in $\cop{\C}$ can be written (uniquely) as a coproduct of objects in $\C^\vee$. This implies that any object in $\cop{\C}$ is already in $\copr{\C}$ and hence $\copr{\C}$ is also cop-KS.
\endproof

\begin{lemma}\label{copCtoC}
Let $\D$ be a cop-KS category. Let $\C$ be the subcategory of finitely presented/compact objects in $\D$. Then $\C$ is Krull--Schmidt and $\D$ is equivalent to $\copr{\C}$.
\end{lemma}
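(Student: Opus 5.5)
Let $\D$ be cop-KS and let $\C\subseteq\D$ be the full subcategory of compact objects, i.e.\ those $X$ for which $\Hom_\D(X,-)$ commutes with coproducts. The plan has three steps: identify $\C$ with finite coproducts of compact indecomposables, show $\C$ is Krull--Schmidt, and then extend the inclusion $\C\hookrightarrow\D$ to an equivalence $\copr{\C}\to\D$.

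\textbf{Compact objects.} First I show that every indecomposable summand $X_i$ occurring in a decomposition $\coprod_{i\in I}X_i$ is compact. This is where the definition needs care, and I expect it to be the main obstacle. A cop-KS category with arbitrary objects with local endomorphism rings need not have compact indecomposables, so I would read the definition in the intended sense, as in \cite{Br}: the local objects are compact, i.e.\ ``enough compact objects''. With that reading, I next classify the compact objects. Let $Y=\coprod_{i\in I}X_i$ be compact. Then $\id_Y$ factors through a finite subcoproduct, since $\Hom(Y,\coprod_i X_i)=\coprod_i\Hom(Y,X_i)$ forces the image to have only finitely many nonzero components. So $Y$ is a summand of a finite coproduct $\coprod_{i\in F}X_i$. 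By uniqueness of decompositions, this means $I=F$ is finite. Conversely, finite coproducts of compact objects are compact. Hence $\C$ consists exactly of the finite direct sums of compact indecomposables with local endomorphism rings.

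\textbf{$\C$ is Krull--Schmidt.} $\C$ is additive and $\Bbbk$-linear. It is idempotent complete, because a summand of a compact object is compact and $\D$ is idempotent complete. Every object of $\C$ is a finite direct sum of objects with local endomorphism rings, so $\C$ is Krull--Schmidt.

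\textbf{The equivalence $\copr{\C}\simeq\D$.} Consider the restricted Yoneda functor $\Phi\colon\D\to\cpl{\C}$, $D\mapsto\Hom_\D(-,D)|_{\C}$. For $X\in\C$ we have $\Phi(X)=X^\vee$. By compactness,
\[
\Phi\Big(\coprod_i X_i\Big)(Z)=\Hom_\D\Big(Z,\coprod_i X_i\Big)=\coprod_i\Hom_\D(Z,X_i),
\]
so $\Phi$ sends coproducts of objects of $\C$ to the corresponding coproducts of representables. Every object of $\D$ is such a coproduct, so $\Phi$ lands in $\copr{\C}$ and is essentially surjective onto it. For full faithfulness I compute
\[
\Hom_\D\Big(\coprod_i X_i,\coprod_j Y_j\Big)=\prod_i\Hom_\D\Big(X_i,\coprod_j Y_j\Big)=\prod_i\coprod_j\Hom(X_i,Y_j).
\]
The same computation holds in $\cpl{\C}$: there the first step uses the universal property of coproducts, and the second uses Yoneda together with the fact that colimits in $\cpl{\C}$ are computed objectwise. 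The two identifications are compatible with $\Phi$, so $\Phi$ is fully faithful. Hence $\Phi\colon\D\to\copr{\C}$ is an equivalence.
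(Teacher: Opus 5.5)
Your proof is correct and follows the same route as the paper: compact objects are exactly the finite coproducts of the local summands, so $\C$ is Krull--Schmidt, and every object of $\D$ is a coproduct of objects of $\C$; your restricted Yoneda argument spells out the step the paper calls immediate. The hypothesis you make explicit, that the local summands are compact, is exactly what the paper's ``it is easy to see that such an object is finitely presented if and only if this coproduct is finite'' tacitly uses, and it cannot be dropped: take the injective objects in the category of all representations of the quiver $\cdots\to 3\to 2\to 1$ (this category is cop-af in the paper's sense). There each indecomposable injective $I(i)$ has a map to $\coprod_{k\geq i}I(k)$ with every component nonzero, so no nonzero object is compact and $\C=0$.
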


\proof
By assumption, $\D$ is idempotent complete, so every object in $\D$ can be written uniquely as $\coprod_{i\in I} X_i$ where the $X_i$ have local endomorphism rings. It is easy to see that such an object is finitely presented if and only if this coproduct is finite. It is also easy to see that $\C$ inherits idempotent completeness from $\D$ and since every object in $\C$ can be written uniquely as a finite direct sum of objects with local endomorphism rings, it follows that $\C$ is Krull--Schmidt. The fact that $\D$ is equivalent to $\copr{\C}$ is immediate.
\endproof

A $\Bbbk$-linear functor $\rF\colon \C\to \D$ between additive $\Bbbk$-linear categories induces $\Bbbk$-linear functors (for which we use the same notation) $\rF\colon \copr{\C} \to \copr{\D}$, sending an object $\coprod_{j\in J}X_j$ to $\coprod_{j\in J}\rF(X_j)$, and hence $\rF\colon \cop{\C} \to \cop{\D}$.

\subsection{Almost finitary categories and cop-af categories}

We say that a category $\C$ is {\it almost finitary} if it is an essentially small additive $\Bbbk$-linear Krull--Schmidt
category with at most countably many isomorphism classes of indecomposable objects and morphism spaces are finite-dimensional. 
In the terminology of \cite{Macph}, this means that an almost finitary category is a wide finitary category that is hom-finite.
We denote by $\ind{\C}$ a set of representatives of isomorphism classes of indecomposable objects in $\C$.

We denote the $2$-category whose objects are almost finitary categories, whose $1$-morphisms are $\Bbbk$-linear functors, and whose $2$-morphisms are natural transformations by $\mathfrak{A}^{af}_{\Bbbk}$.

Note that for any almost finitary category $\C$, we have $\cop\C = \copr\C$ by Lemma \ref{CtocopC}.

We say a category $\C$ is {\it cop-af (coproduct almost finitary)} if it is an additive $\Bbbk$-linear cop-KS
category with at most countably many isomorphism classes of indecomposable objects and finite-dimensional morphism spaces between indecomposable objects.

For an additive $\Bbbk$-linear category $\C$, we denote the subcategory consisting of compact/finitely presented objects by $\C^{cp}$. Then
we immediately obtain the following corollary:

\begin{corollary}\label{wfcopKSf}
The maps $\C\mapsto \cop\C$ and $\D\mapsto \D^{cp}$ provide a bijection between
\begin{itemize}
\item almost finitary categories up to equivalence and
\item cop-af categories, up to equivalence.
\end{itemize}
\end{corollary}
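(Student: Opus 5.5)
The plan is to deduce the corollary from Lemma~\ref{CtocopC} and Lemma~\ref{copCtoC}, checking in addition that the finiteness conditions (countably many indecomposables, finite-dimensional morphism spaces) pass back and forth. The argument has three steps.

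\emph{Step 1: $\cop\C$ is cop-af.} Let $\C$ be almost finitary. By Lemma~\ref{CtocopC}, $\cop\C=\copr\C$ is cop-KS. Its compact objects form a subcategory equivalent to $\C$, and every object is a coproduct of objects of $\C^\vee$. Hence the indecomposable objects of $\cop\C$ are exactly the representables $X^\vee$ with $X\in\ind\C$; objects with local endomorphism ring are indecomposable, so uniqueness of cop-KS decompositions gives this. There are therefore countably many isomorphism classes of indecomposables. For indecomposables, Yoneda gives $\Hom_{\cpl\C}(X^\vee,Y^\vee)=\Hom_\C(X,Y)$, which is finite-dimensional. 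So $\cop\C$ is cop-af.

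\emph{Step 2: $\D^{cp}$ is almost finitary.} Let $\D$ be cop-af. Lemma~\ref{copCtoC} shows that $\D^{cp}$ is Krull--Schmidt and $\D\simeq\copr{(\D^{cp})}$. In the proof of that lemma, the compact objects are the finite coproducts of indecomposables. Consequently, $\ind(\D^{cp})$ is the set of indecomposables of $\D$, which is countable. Morphism spaces in $\D^{cp}$ are finite direct sums of morphism spaces between indecomposables, so they are finite-dimensional. Essential smallness follows because every compact object is a finite sum drawn from a countable set of isoclasses. So $\D^{cp}$ is almost finitary.

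\emph{Step 3: the maps are mutually inverse up to equivalence.} The composite $\D\mapsto\cop{(\D^{cp})}$ returns $\D$ by Lemma~\ref{copCtoC}, together with $\cop=\copr$. The composite $\C\mapsto(\cop\C)^{cp}$ returns $\C$ by the second statement of Lemma~\ref{CtocopC}. Finally, both constructions respect equivalence: an equivalence $\C\simeq\C'$ induces $\cop\C\simeq\cop{\C'}$ via the induced functors on coproduct completions, and an equivalence of categories preserves compact objects.

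The main point to be careful about is Step 1's identification of the indecomposables of $\cop\C$ with the images of indecomposables of $\C$. Specifically, one must confirm that $X^\vee$ keeps a local endomorphism ring in $\cpl\C$ and that there are no further indecomposables. Both follow from Yoneda and the uniqueness part of the cop-KS property.
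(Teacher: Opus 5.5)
Your route is the same as the paper's: the corollary is read off from Lemmas~\ref{CtocopC} and~\ref{copCtoC}. Your Step~1 is fine.

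The gap is in Step~2, and it comes from the proof of Lemma~\ref{copCtoC}. You use that the compact objects of a cop-af $\D$ are exactly the finite coproducts of indecomposables. Only half of this is automatic: an infinite coproduct $\coprod_{i\in I}X_i$ is not compact, because its identity cannot factor through a finite subcoproduct. The other half needs every indecomposable of $\D$ to be compact, and the definitions of cop-KS and cop-af do not give this.

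Here is a counterexample. Let $\D$ be the category of injective objects in the category of all $\Bbbk$-representations of the quiver $\cdots\to 3\to 2\to 1$.
\begin{itemize}
\item That ambient category is a locally finite Grothendieck category, since the representable projectives are finite-dimensional.
\item Hence direct sums of injectives are injective, and every injective decomposes uniquely as a direct sum of the $I_n=E(S_n)$, $n\geq 1$.
\item Here $I_n(k)=\Bbbk$ for $k\geq n$ and $0$ otherwise, and $\End(I_n)\cong\Bbbk$.
\item Since $\Hom(M,I_n)\cong\Hom_\Bbbk(M(n),\Bbbk)$, every $\Hom(I_m,I_n)$ is $0$ or $\Bbbk$. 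So $\D$ is cop-af.
\end{itemize}
However, $\coprod_{m\geq1}I_m$ is finite-dimensional at every vertex, so it coincides with $\prod_{m\geq1}I_m$. Therefore $\Hom(I_1,\coprod_m I_m)\cong\prod_m\Hom(I_1,I_m)\cong\Bbbk^{\bbN}$, which is strictly larger than $\bigoplus_m\Hom(I_1,I_m)$. The same argument with $\coprod_{m\geq n}I_m$ shows that no $I_n$ is compact. Hence $\D^{cp}=0$ and $\cop{(\D^{cp})}=0\not\simeq\D$. In particular, $\D$ is not equivalent to any $\cop\C$.

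So your Steps~2 and~3, and the corollary as literally stated, need one more hypothesis. The definition of cop-af must require the indecomposable summands in the cop-KS decomposition to be compact; this is the ``enough compact objects'' condition that appears in the proof of Lemma~\ref{CtocopC}. It costs nothing on the other side: every $\cop\C$ with $\C$ almost finitary satisfies it, because representables are compact in $\cpl\C$ and coproducts in $\cop\C$ are computed in $\cpl\C$. Once this condition is part of the definition, your Steps~2 and~3 go through as written and give the bijection.
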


\section{Almost finitary bicategories and birepresentations} \label{secbicatrepbg}

\subsection{Almost finitary bicategories}

We say that a bicategory $\cC$ is {\it almost finitary} if 
\begin{itemize}
\item it has countably many objects;
\item $\cC(\ti,\tj)$ is in $\mathfrak{A}^{af}_{\Bbbk}$ for all $\ti,\tj \in \cC$;
\item horizontal composition is biadditive and $\Bbbk$-linear;
\item the identity $1$-morphism $\one_\ti$ is indecomposable for any $\ti\in\cC$.
\end{itemize}
In other words, an almost finitary bicategory is a locally hom-finite wide finitary bicategory in the sense of \cite{Macph}.
 
We use $\circh$ resp.\ $\circv$ for horizontal respectively vertical composition in a bicategory. 

\begin{remark}
When giving explicit constructions, we will generally omit unitors and associators in the formulae, in order to not overload notation.
\end{remark}

An almost finitary bicategory $\cC$ is said to be {\it almost quasi-fiab} if there exists a weak
equivalence $(-)^*\colon \cC\to \cC^{\mathsf{co},\mathsf{op}}$ such that, for any $1$-morphism $\rF\in \cC(\ti,\tj)$, there are
natural $2$-morphisms $\varepsilon_\rF\colon \rF\rF^* \to\one_\tj, \eta_\rF \colon \one_\ti\to \rF^*\rF$ satisfying the usual adjunction triangles, i.e. $(\alpha\circh\id_{\rF})\circv (\id_\rF \circh\beta) = \id_\rF$ and $(\beta\circh\id_{\rF^*})\circv(\id_{\rF^*}\circh\alpha)=\id_{\rF^*}$.
We write ${}^*(-)$ for a weak inverse, i.e. ${}^*(\rF^*)\cong \rF$. If $-^*$ is a weak involution, we say $\cC$ is {\it almost fiab}.

For any bicategory $\cC$ and an object $\ti\in \cC$, we denote by $\cC_\ti$ the endomorphism bicategory of $\ti$, that is the bicategory with single object $\ti$ and $\cC_\ti(\ti,\ti) = \cC(\ti,\ti)$ with composition inherited from $\cC$.

\subsection{Almost finitary birepresentations}

Throughout this subsection, let $\cC$ be an almost finitary bicategory.

\begin{definition}
 An {\it almost finitary birepresentation} of $\cC$ is a pseudofunctor from $\cC$ to $\mathfrak{A}^{af}_{\Bbbk}$.
\end{definition}

\begin{definition}
\begin{itemize}
\item Let $\bfM$ be an almost finitary birepresentation of $\cC$ and $X\in \bfM(\ti)$. We denote by $\bfG_\bfM(X)$ the sub-birepresentation of $\bfM$ such that $\bfG_\bfM(\tj)  = \{\bfM(\rF)X\mid \rF\in \cC(\ti,\tj)\}^\oplus$. We call $\bfG_\bfM(X)$ the sub-birepresentation of $\bfM$ {\it generated by} $X$.
\item An {\em ideal} $\bfI$ of an almost finitary birepresentation  $\bfM$ of an almost finitary bicategory $\cC$ consists of the data of an ideal $\bfI(\ti)\subset \bfM(\ti)$ for each $\ti \in \cC$, which is invariant under the action of $\cC$, meaning $\bfM(\rF)(f)\in \bfI(\tj)$ for any $\rF\in\cC(\ti,\tj)$, provided $f\in\bfI(\ti)$. Given an ideal $\bfI$ of $\bfM$, we can form the {\em quotient birepresentation} $\bfM/\bfI$ with $\left(\bfM/\bfI\right)(\ti) = \bfM(\ti)/\bfI(\ti)$ carrying the induced action of $\cC$.
\end{itemize}
\end{definition}

\begin{definition}
Let $\bfM$ be an almost finitary birepresentation of $\cC$.
\begin{itemize}
\item If there exists an $X$ in some $\bfM(\ti)$ with $\bfM=\bfG_{\bfM}(X)$, we say $\bfM$ is {\em cyclic}.
\item If $\bfM=\bfG_{\bfM}(X)$ for any $X$ in any $\bfM(\ti)$, we say $\bfM$ is {\em transitive}.
\item If $\bfM$ has no proper ideals, we say $\bfM$ is {\em simple}. 
\end{itemize}
\end{definition}

Note that, in particular, every simple birepresentation is necessarily transitive. This justifies the change of terminology in this paper compared with previous papers by the authors and collaborators, where simple birepresentations were called simple transitive birepresentations.

\subsection{Cells and cell birepresentations}

In this subsection, let $\cC$ be an almost finitary bicategory.

As in the finitary case, we define the partial preorders $\leq_L, \leq_R, \leq_J$ and the corresponding equivalence relations. The equivalence classes are called left, right and two-sided {\em cells}, respectively, and are partially ordered. The natural birepresentation $\bfN_\L$ on the left ideal of $\cC$ generated by the $\id_\rF$ for all $\rF$ in a fixed left cell $\L$ has, as in the finitary case, a unique maximal ideal $\bfI_\L$, and the simple quotient $\bfC_\L:=\bfN_\L/\bfI_\L$ is called the \emph{cell birepresentation} associated to $\L$. See \cite[Section 3.2]{Macph} for details on the slightly more general case of wide finitary bicategories.

When $\cC$ is almost quasi-fiab, the weak equivalence sends left cells to right cells and vice-versa, and preserves 
two-sided cells. When $\cC$ is almost fiab, it also preserves {\em diagonal $H$-cells}. The latter are intersections of the form $\L\cap \L^*$, where $\L$ is a left cell. Finally, we call a two-sided cell $\J$ in an almost quasi-fiab {\em regular} if no distinct left (or equivalently right) cells contained in $\J$ are comparable in the left (resp.\ right) partial order.

\subsection{$\J$-simple quotients and $\cC_\H$}\label{sec:Jsimple}
Let $\cC$ be an almost finitary bicategory and fix a two-sided cell $\J$ in $\cC$. By the same arguments as in the finitary case (which only use the Krull--Schmidt property of the underlying categories), the quotient of $\cC$ by the ideal generated by all identities $\id_\rF$ for $\rF \nleq \J$ has a unique maximal biideal not containing any identity $2$-morphisms, and we denote by $\cC_{\leq \J}$ the quotient of $\cC$ by this ideal. If $\cC=\cC_{\leq \J}$, we say $\cC$ is \emph{$\J$-simple}. In general, we call $\cC_{\leq \J}$ the \emph{$\J$-simple quotient} of $\cC$.

We define $\cC_\J$ as the $2$-full sub-bicategory of $\cC_{\leq \J}$ whose objects are those appearing as sources or targets of $1$-morphisms in $\J$ and whose $1$-morphisms are in the additive closure of the identity $1$-morphisms and those in $\J$.

If $\cC$ is almost fiab and $\H$ is a diagonal $H$-cell in $\J$ with source and target $\ti$, we denote by $\cC_\H$ the $2$-full sub-bicategory of $\cC_\J$ with object $\ti$ and $1$-morphisms in the additive closure of $\one_\ti$ and those in $H$.

\subsection{Apex} 

\begin{definition}\label{cellfin}
We say that $\cC$ is {\em J-artinian} if every ascending chain of two-sided cells stabilizes.
\end{definition}

\begin{remark}
The choice of terminology is motivated by the fact that $\cC$ being J-artinian implies that there are no infinite descending chains of bi-ideals generated by identities on $1$-morphisms, which implies that the birepresentation theory of $\cC$ behaves in an artinian fashion.
\end{remark}

\begin{lemma}\label{lem:cellfin}
Let $\cC$ be an almost finitary $J$-artinian bicategory and $\bfM$ a transitive almost finitary birepresentation of $\cC$. Denote by $\cC_\bfM$ the quotient of $\cC$ by the biideal annihilated by $\bfM$. Then $\cC_{\bfM}$ has a unique maximal two-sided cell, where maximal means w.r.t. the partial order on two-sided cells induced by $\leq_J$.
\end{lemma}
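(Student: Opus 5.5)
Existence of a maximal two-sided cell will come from the $J$-artinian hypothesis, and uniqueness will come from transitivity, following \cite[Lemma 1]{CM}. First note that $\cC_\bfM$ is again almost finitary, or at least that its indecomposable $1$-morphisms are the images of those indecomposable $1$-morphisms of $\cC$ not annihilated by $\bfM$. Its two-sided cells are therefore unions of pieces of two-sided cells of $\cC$, and the $J$-order on $\cC_\bfM$ refines to one compatible with that of $\cC$. Concretely, a chain in $\cC_\bfM$ gives a chain of two-sided cells in $\cC$: if $\rF\leq_J \rG$ in $\cC_\bfM$, then $\rG$ is a summand of $\rH\rF\rK$ in $\cC_\bfM$, hence also in $\cC$. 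So ascending chains of two-sided cells in $\cC_\bfM$ stabilise.

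For existence, observe that $\cC_\bfM$ has at least one non-annihilated indecomposable $1$-morphism, for instance some $\one_\ti$ with $\bfM(\ti)\neq 0$, assuming $\bfM\neq 0$. Pick any such cell. If it is not maximal, choose a strictly larger one and iterate; by the $J$-artinian property this process terminates (using dependent choice), so maximal cells exist. The care needed is that ``maximal'' must mean no strictly greater cell exists, rather than merely that a chain stabilises; this does follow, since failure of maximality would let us extend the chain indefinitely.

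For uniqueness, suppose $\J_1\neq \J_2$ are both maximal in $\cC_\bfM$, with $\rF\in\J_1$ and $\rG\in\J_2$. Since $\rF$ is not annihilated, there is an indecomposable $X\in\bfM(\ti)$ with $\bfM(\rF)X\neq 0$, and we may take an indecomposable summand $Y$ of $\bfM(\rF)X$. Since $\rG$ is not annihilated, there is also some $Z$ with $\bfM(\rG)Z\neq0$. By transitivity, $Z$ is a summand of $\bfM(\rH)Y$ for some $\rH$. Hence $\bfM(\rG\rH\rF)X\neq 0$, so $\rG\rH\rF$ has an indecomposable summand $\rK$ not annihilated by $\bfM$. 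Then $\rK\geq_J \rF$ and $\rK\geq_J\rG$ in $\cC$. The point I expect to be the main obstacle is whether these inequalities still hold in $\cC_\bfM$; they do, because $\rK$ is a summand of $\rG\rH\rF$ there too, as $\rK$ survives the quotient. By maximality, $\rK\in\J_1$ and $\rK\in\J_2$, so $\J_1=\J_2$.

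The one technical subtlety is choosing $\rH$ (and $\ti$, $\tj$ matching sources and targets) so that $Z$ lies in the correct $\bfM(\tj)$. This is handled by taking $Z$ in the source category of $\rG$, and it is exactly what transitivity provides, since $\bfG_\bfM(Y)(\tj)=\bfM(\tj)$. No finiteness is needed beyond Krull--Schmidt decompositions in $\bfM(\tj)$.
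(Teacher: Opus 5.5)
Your proof is correct and follows essentially the same route as the paper. Existence of a maximal cell comes from the $J$-artinian property inherited by $\cC_{\bfM}$. Uniqueness comes from using transitivity to produce a composite of $1$-morphisms from $\J_1$ and $\J_2$ acting nonzero on some $X$, whose surviving indecomposable summands would have to lie in both maximal cells.

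The only difference is minor. The paper first uses maximality of $\J_1$ together with transitivity to show that $\J_1$ (and likewise $\J_2$) acts nonzero on every object, which gives $\rG\rF X\neq 0$ directly. You instead insert a connecting $1$-morphism $\rH$ to obtain $\rG\rH\rF X\neq 0$, so you need maximality only at the final step.
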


\proof This is similar \cite[Lemma 1]{CM}, but given they work with direct sums over all (but finitely many) indecomposable objects up to isomorphism, we give a proof.

Note that $\cC_\bfM$ inherits the $J$-artinian property from $\cC$, so $\cC_\bfM$ has at least one maximal two-sided cell. Now, assume that $\J_1,\J_2$ are two distinct maximal two-sided cells in $\cC_{\bfM}$ and let $X\in \bfM(\ti)$. The assumptions imply that there exists an $\rF\in \J_1$ with $\rF\, X\neq 0$. Indeed, if $\rF\,X=0$ for all $\rF\in \J_1$, then by maximality of $\J_1$ in $\cC_\bfM$, $\rF\rH \,X = 0$ for all $1$-morphisms $\rH$ in $\cC_\bfM$ (since $\rF\rH\in \J_1$ or $\rF\rH=0$), but by transitivity of $\bfM$, the additive closure of the $\rH\,X$ is equal to $\coprod_{\ti\in\cC}\bfM(\ti)$, which is a contradiction to $\J_1$ not being annihilated by $\bfM$. 

Applying the same argument to $\rF\, X$, there exists a $\rG\in \J_2$ such that $\rG\rF\, X\neq 0$. But since $\J_1,\J_2$ are both maximal and distinct, $\rG\rF=0$, contradiction. 
\endproof

\section{Completions of almost finitary bicategories and birepresentations}
\subsection{Day convolution}\label{sec:dayconv}

Let $\cC$ be an almost finitary bicategory. We define the bicategory $\cpl{\cC}$ to be the bicategory on the same objects as $\cC$, but with morphism categories $\cpl{\cC}(\ti,\tj) =\cpl{ \cC(\ti,\tj)}$ and the horizontal composition given by Day convolution.

In particular, for $X\in \cpl{ \cC(\tj,\tk)},Y\in \cpl{ \cC(\ti,\tj)}$, the horizontal composition is thus defined as the coend
$$X\circ Y = \int^{\rG\in \cC(\ti, \tj),\rF\in\cC(\tj,\tk)} \Hom_{\cC(\ti,\tk)}(-, \rF\rG )\otimes_\Bbbk X(\rF) \otimes_\Bbbk Y(\rG).$$
Explicitly, we can describe the evaluation of $X\circ Y$ at $\rH\in \cC(\ti,\tk)$ as the cokernel of {\tiny
$$
\xymatrix{
 \underset{\substack{\rG_1,\rG_2\in \cC(\ti, \tj)\\ \rF_1,\rF_2\in \cC(\tj, \tk)}}{\coprod} \Hom_{\cC(\tj,\tk)} (\rF_1,\rF_2 ) \otimes \Hom_{\cC(\ti,\tj)} (\rG_1, \rG_2 ) \otimes
 \Hom_{\cC(\ti,\tk)} (\rH,\rF_1\rG_1 ) \otimes X(\rF_2)\otimes Y(\rG_2)
  \ar^{\psi}[d]\\
\underset{\rG\in \cC(\ti, \tj), \rF\in \cC(\tj, \tk)}{ \coprod}  \Hom_{\cC(\ti,\tk)} (\rH,\rF\rG ) \otimes_\Bbbk X(\rF)\otimes_\Bbbk Y(\rG).}$$
}
where $$\psi(\alpha\otimes\beta\otimes\gamma\otimes x\otimes y) = \gamma\otimes X(\alpha)(x)\otimes Y(\beta)(y) - (\alpha\circh\beta)\circv \gamma \otimes x\otimes y.$$
One verifies directly that for representables $X^\vee, Y^\vee$, we have $X^\vee \circ Y^\vee \cong (X\circ Y)^\vee$.

\subsection{Coproduct completions of bicategories}

Let $\cC$ be a locally $\Bbbk$-linear additive bicategory. We define the bicategory $\copr{\cC}$, resp.\ $\cop{\cC}$, to be the bicategory on the same objects as $\cC$, but with morphism categories $\copr{\cC}(\ti,\tj) =\copr{ \cC(\ti,\tj)}$, resp.\  $\cop{\cC}(\ti,\tj) =\cop{ \cC(\ti,\tj)}$, with horizontal composition given component-wise, i.e. 
$$\coprod_{i\in I}\rX_i\coprod_{j\in J}\rY_j = \coprod_{\substack{i\in I\\j\in J}}\rX_i\rY_j $$
and similarly for morphisms. Observe that $\copr{\cC}$ and $\cop{\cC}$ are locally additive and $\Bbbk$-linear, and that they naturally embed into the completion $\cpl{\cC}$. Observe also that, by Lemma \ref{CtocopC}, $\copr{\cC}=\cop{\cC}$ provided $\cC$ is almost finitary.

\subsection{Completions of almost finitary birepresentations}

In analogy to Section \ref{sec:dayconv}, we can extend a $\Bbbk$-linear additive birepresentation $\bfM$ of an almost finitary bicategory $\cC$ to a complete birepresentation $\cpl{\bfM}$ of $\cpl{\cC}$ by defining the action of $\rF\in \cpl{ \cC}(\ti,\tj)$ on $X\in \cpl{\bfM}(\ti) = \cpl{\bfM(\ti)}$ as the corresponding Day convolution. 

For any $\Bbbk$-linear additive birepresentation $\bfM$ of $\cC$, we can also consider the coproduct completions $\copr\bfM$ and $\cop\bfM$, which are defined by $\copr\bfM(\ti) = \copr{\bfM(\ti)}$ and $\cop\bfM(\ti) = \cop{\bfM(\ti)}$ with the obvious actions of $\copr\cC$ and $\cop\cC$, respectively. Note that by Lemma \ref{CtocopC}, $\cop\bfM = \copr\bfM$ provided $\bfM$ is almost finitary.

We say that a $\Bbbk$-linear additive birepresentation $\bfM$ of $\cC$ is {\it cop-af}, provided each $\bfM(\ti)$ is cop-af. By Corollary \ref{wfcopKSf}, there is a bijection between almost finitary birepresentations up to equivalence and cop-af birepresentations up to equivalence.

{\it Morphisms} of $\Bbbk$-linear additive birepresentations are given by strong natural transformations. We say that a morphism $\Psi\colon \bfM\to\bfN$  is {\it exact} if it extends to an exact morphism  $\cpl{\Psi}\colon \cpl{\bfM}\to\cpl{\bfN}$ between the completions, meaning that each component functor is an exact functor. 

\subsection{$2$-Categories of birepresentations}

For an almost finitary bicategory $\cC$, we denote by $\cC\wfmod$ the $2$-category of almost finitary birepresentations of $\cC$, with  exact morphisms of birepresentations as $1$-morphisms and modifications as $2$-morphisms. We denote by $\cC\cwfmod$ and $\cC\swfmod$ the $1$-full and $2$-full sub-$2$-categories of  cyclic and simple almost finitary birepresentations, respectively.

We further denote by $\cC\copwfmod$ the $2$-category of cop-af birepresentations of $\cC$, again with exact morphisms of birepresentations as $1$-morphisms and modifications as $2$-morphisms. Similarly to before, we denote by $\cC\ccopwfmod$ and  $\cC\scopwfmod$ the $1$-full and $2$-full sub-$2$-categories of coproduct completions of cyclic and simple birepresentations, respectively.

For a two-sided cell $\J$ of $\cC$, we denote by $\cC\swfmod_\J$ and $\cC\scopwfmod_\J$ the $1$-full and $2$-full sub-$2$-categories of 
$\cC\swfmod$ and $\cC\scopwfmod$ consisting of birepresentations with apex $\J$, respectively.

We observe that, in general, while there is a bijection between equivalence classes of objects in $\cC\wfmod$ and $\cC\copwfmod$ by the remark in the previous subsection, there is no biequivalence of $2$-categories in general, since not every morphism of cop-af birepresentations needs to restrict to one between the underlying almost finitary ones.

\section{Birepresentations via algebra $1$-morphisms}

\subsection{Internal hom}\label{inthom}

Let $\cC$ be an almost finitary bicategory and $\bfM$ an almost finitary birepresentation of $\cC$.

For $X\in \bfM(\ti)$, the evaluation $\Ev_X\colon \cC(\ti,\tj)\to \bfM(\tj), \rF\mapsto \bfM(\rF)\,X$ extends to the completions, yielding a functor $\Ev_X\colon \cpl{\cC}(\ti,\tj)\to \cpl{\bfM}(\tj), \rF\mapsto \cpl{\bfM}(\rF)\,X^\vee$. 
Note that both $\cpl{\cC}(\ti,\tj)$ and $\cpl{\bfM}(\tj)$ are locally presentable (in fact, both are Grothendieck using e.g. \cite[Theorem 14.2]{Fa} and the fact that $\Vect_{\Bbbk}$ is Grothendieck). Moreover, by construction of the action by Day convolution and hence as a coend, $\Ev_X$ preserves all small colimits. The adjoint functor theorem for locally presentable categories thus implies that $\Ev_X$ has a right adjoint $[X^\vee, -]$, called the internal hom. In particular, there is an isomorphism
$$\Hom_{\cpl{\cC}(\ti,\tj)}(\rF, [X^\vee, Y]) \cong   \Hom_{\cpl{\bfM}(\tj)}( \cpl{\bfM}(\rF)\,X^\vee, Y)$$
natural in $\rF\in  \cpl{\cC}(\ti,\tj)$ and $Y\in \cpl{\bfM}(\tj)$. To not overload notation, we will again identify an  object $X\in \bfM(\tj)$ with its representable $X^\vee\in \cpl\bfM(\tj)$ and henceforth write $[X,-]$ for the adjoint to $\Ev_X$.

We wish to describe the object $[X, Y]$ for $Y\in \bfM(\tj)$ better. Let $\mathtt{C}_{\ti,\tj}$ denote a set of representatives of isomorphism classes of indecomposables in $\cC(\ti,\tj)$ and, for each $\rF\in \mathtt{C}_{\ti,\tj}$, choose a basis $\{\alpha^\rF_k\,\vert\, k \in \tB_\rF\}$ (for some indexing set  $\tB_\rF$) of $\Hom_{\bfM(\tj)}(\rF X, Y)$. Set  
$$\rT_Y = \coprod_{\rF\in \mathtt{C}_{\ti,\tj}} \coprod_{k\in\tB_\rF} \rF_{(k)}$$ 
where each $\rF_{(k)}$ is just isomorphic to $\rF$ and let $f_Y \colon \rT_Y X\to Y$ be the unique morphism given by $\alpha^\rF_k$ on the component $\rF_{(k)}$. 

Then the (dual of the) {\em solution set condition} is satisfied in the sense that, for every $\rG\in \cC(\ti,\tj)$ and $g\colon \rG X\to Y$, there exists an $h\colon \rG\to \rT_Y$ such that the diagram
$$\xymatrix{
\rG X\ar_{\Ev_X(h)}[dr] \ar^g[rr]&& Y\\
&\rT_Y X \ar^{f_Y}[ur]&
}$$
commutes. Explicitly, writing $g = \sum_{k\in \tB_\rG} c_k\alpha^{\rG}_k$, such an h is given by having components $c_k \id_\rG \colon \rG\to \rG_{(k)},$ noting that this makes sense since only finitely many $c_k$ are nonzero.  Moreover, since for $\rG=\coprod_{i\in I} \rG_i$, maps $g\colon \rG X\to Y$ and $h\colon \rG\to \rT_Y$ are determined by their definitions on each component, the above condition is satisfied for any $\rG\in \copr\cC(\ti,\tj)$ and $g\colon \rG X\to Y$.

Thus, we have an epimorphism 
\begin{multline}
\Hom_{\copr\cC(\ti,\tj)}( - , \rT_Y) = \Hom_{\cpl\cC(\ti,\tj)}( - , \rT_Y) \tto \\ 
\Hom_{\cpl\bfM(\tj)}(\Ev_X(-), Y)\cong  \Hom_{\cpl\cC(\ti,\tj)}( - , [X,Y]).
\end{multline}
of functors defined on $\copr\cC(\ti,\tj)^{op}$. As a left exact functor, $ \Hom_{\cpl\cC(\ti,\tj)}( - , [X,Y]) $ is defined uniquely on injectives in $\cpl\cC(\ti,\tj)^{op}$, which are projective in $\cpl\cC(\ti,\tj)$, i.e. by Section \ref{sec:presheaf}, on $\copr\cC(\ti,\tj)$.
Consequently, we have an epimorphism of functors
$$\Hom_{\cpl\cC(\ti,\tj)}( - , \rT_Y) \tto \Hom_{\cpl\bfM(\tj)}(\Ev_X(-), Y)\cong  \Hom_{\cpl\cC(\ti,\tj)}( - , [X,Y])$$
defined on all of $\cpl\cC(\ti,\tj)^{op}$.

By the Yoneda lemma, this implies that $[X,Y]$ is a quotient of $\rT_Y$, and hence can be written as a cokernel of
$$ \rZ \xrightarrow{\zeta} \rT_Y$$
for some $\rZ\in \copr\cC(\ti,\tj)$.

\begin{remark}
In \cite{Macph}, the author proves the existence of the internal cohom in a pro-completion of $\cC(\ti,\tj)$, and we would analogously be able to obtain the  internal hom in an ind-completion. However, the chosen setup is better suited to proving the results in Section \ref{secalgcop}.
\end{remark}

\subsection{Categories of module $1$-morphisms and internalisation}\label{intsec}

For objects $\ti,\tj$ in a bicategory $\cC$ and an algebra $1$-morphism $\rA\in \cC(\ti,\ti)$, we denote by $(\rmod_\cC\rA)_\tj$ the category of right module $1$-morphisms over $\rA$ inside $\cC(\ti,\tj)$. The collection of these naturally carry the structure of a birepresentation of $\cC$ by left multiplication, which we denote by $\bfrmod_\cC\rA$.

The standard free-forgetful adjunction gives rise to isomorphisms
\begin{equation}\label{freeforget}
\begin{array}{rcl}
\Hom_{{\cC}(\ti,\tj)}(\rF,\rM)& \cong& \Hom_{(\rmod_\cC\rA)_\tj} (\rF\rA,\rM)\\
f &\mapsto &\rho_\rM \circv(f\circh \id_\rA)\\
g\circv(\id_\rF\circh \iota_\rA)&\mapsfrom &g\\
\end{array}
\end{equation}
for $F\in {\cC}(\ti,\tj)$ and $\rM\in(\rmod_\cC\rA)_\tj$.

The usual arguments show that, given an almost finitary birepresentation $\bfM$ of an almost finitary bicategory $\cC$ and an object $X\in \bfM(\ti)$, the internal hom $\rA = [X, X]$ has the structure of an algebra $1$-morphism in $\cpl{\cC}(\ti,\ti)$ and that $[X,Y]$ is a right module $1$-morphism over it. Thus $[X, -]$ defines a functor from $\cpl\bfM(\tj)$ to $(\rmod_{\cpl\cC}\rA)_\tj$ for each $\tj\in \cC$.

\begin{lemma}\label{intmor}
Let $\cC$ be an almost quasi-fiab bicategory, $\bfM$ an almost finitary birepresentation of $\cC$ and $X\in \bfM(\ti)$.
\begin{enumerate}[(a)]
\item\label{intmor2}  The functors $[X, -]$ assemble to a morphism $[X, -]\colon \cpl\bfM \to \bfrmod_{\cpl\cC}\rA$  of birepresentations of $\cC$ .
\item\label{intmor3} If $X$ is a generator for $\bfM$, the restriction of  $[X, -]$ to $\bfM$ is fully faithful.
Setting $\rA = [X,X]$, the essential image of the restriction of  $[X, -]$ to $\bfM(\tj)$ is given by $\{ \rF\rA\,\vert\, \rF\in \cC(\ti,\tj)\}^\oplus_{(\rmod_{\cpl\cC}\rA)_\tj}$.
\end{enumerate}
\end{lemma}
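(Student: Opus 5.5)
Part (a) follows the finitary argument of \cite{MMMTZ}, using only the adjunction $\Ev_X\dashv [X,-]$ from Section~\ref{inthom}. For $\rG\in\cC(\tj,\tk)$ and $Y\in\cpl\bfM(\tj)$ we need a natural isomorphism $[X,\cpl\bfM(\rG)Y]\cong \rG\circ[X,Y]$ of right $\rA$-modules. The comparison map comes from the unit and counit: the counit $\varepsilon_Y\colon \cpl\bfM([X,Y])X\to Y$, whiskered by $\rG$, gives $\cpl\bfM(\rG[X,Y])X\to \cpl\bfM(\rG)Y$, and its adjoint is $\rG[X,Y]\to[X,\rG Y]$.

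To show this map is invertible we use the quasi-fiab structure. For $\rF\in\cC(\ti,\tk)$ there is a chain of natural isomorphisms
\[
\Hom(\rF,[X,\rG Y])\cong\Hom(\rF X,\rG Y)\cong \Hom(\rG^*\rF X, Y)\cong\Hom(\rG^*\rF,[X,Y]).
\]
Here the middle step is the adjunction $\rG^*\dashv\rG$ induced by $\varepsilon_\rG,\eta_\rG$; note that ${}^*\rG$ is needed in one direction, depending on the convention. For the last term we need $\Hom(\rG^*\rF,[X,Y])\cong\Hom(\rF,\rG[X,Y])$, i.e.\ the same adjunction in $\cpl\cC$. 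The key point is that Day convolution with a representable $\rG$ on $\cpl\cC$ is computed objectwise by precomposition with $\rG^*\circ-$. So adjoint pairs of $1$-morphisms in $\cC$ stay adjoint in $\cpl\cC$ and $\cpl\bfM$, because the triangle identities are preserved by the extension $F_*$ of Section~\ref{secaddcat}. Since representables $\rF$ detect isomorphisms in $\cpl\cC(\ti,\tk)$, Yoneda gives the isomorphism. One then checks that it is the map above and that it is compatible with the $\rA$-action, pseudofunctoriality coherences and naturality; this is routine diagram chasing as in the finitary case. Extending from representable $\rG$ to all of $\cpl\cC$ is unnecessary, since the statement concerns birepresentations of $\cC$.

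For part (b), fully faithfulness on $\bfM$ comes from the free--forgetful isomorphism \eqref{freeforget} together with (a). Take $Y=\rF X$ with $\rF\in\cC(\ti,\tj)$. By (a), $[X,\rF X]\cong\rF[X,X]=\rF\rA$. Then for any $Z\in\bfM(\tj)$,
\[
\Hom_{\rA}([X,\rF X],[X,Z])\cong\Hom_{\rA}(\rF\rA,[X,Z])\cong\Hom(\rF,[X,Z])\cong\Hom(\rF X,Z),
\]
and one verifies that this composite is inverse to the map induced by $[X,-]$. Since $X$ is a generator, every object of $\bfM(\tj)$ is a summand of some $\rF X$. Both sides are additive and $[X,-]$ preserves summands, so fully faithfulness extends to all of $\bfM(\tj)$. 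The same computation shows the essential image consists of summands of the $\rF\rA$, i.e.\ it equals $\{\rF\rA\}^\oplus$: it contains the $\rF\rA$, it is closed under summands because $\bfM(\tj)$ is idempotent complete and the functor is full, and every object has this form.

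The main obstacle is the adjunction step in (a): making sure the duality $\rG\mapsto\rG^*$ on $\cC$ gives genuine adjunctions for the Day-convolution action on the completions $\cpl\cC$ and $\cpl\bfM$, with the correct handedness. It is also where one must check that $[X,-]$ commutes with the action only for representable $\rG$, which is all that is claimed.
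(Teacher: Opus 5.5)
Your proposal is correct and follows essentially the same route as the paper: part (a) rests on the same chain of adjunction isomorphisms $\Hom(\rF,[X,\rG Y])\cong\Hom(\rF,\rG[X,Y])$ followed by Yoneda, with coherence deferred to the finitary argument. Part (b) likewise reduces to objects $\rF X$ via the free--forgetful isomorphism and then uses idempotent completeness to identify the essential image with $\{\rF\rA\}^\oplus$. You are somewhat more careful than the paper in two places: you say explicitly why the duality on $\cC$ yields adjunctions for the Day-convolution actions on $\cpl\cC$ and $\cpl\bfM$, and you flag that the middle step needs ${}^*\rG$ or $\rG^*$ depending on the handedness convention.
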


\proof
\eqref{intmor2} Mutatis mutandis from the finitary situation for coalgebras. The key ingredient is that the existence of adjunctions provides, for all $\rF\in \cpl\cC(\ti,\tk), \rG\in \cC(\tj, \tk), Y\in \bfM(\tj)$,  an isomorphism
\begin{equation*}\begin{split}
\Hom_{\cpl\cC(\ti,\tk)}(\rF, [X, \rG Y])&\cong \Hom_{\cpl\bfM(\tk)}(\rF X, \rG Y)\\
&\cong  \Hom_{\cpl\bfM(\tj)}( \rG^*\rF X, Y)\\
&\cong \Hom_{\cpl\cC(\ti,\tj)}(\rG^*\rF, [X,  Y])\\
&\cong  \Hom_{\cpl\cC(\ti,\tj)}(\rF, \rG[X,  Y])
\end{split}\end{equation*}
which implies an isomorphism $[X, \rG Y]\cong \rG[X,  Y]$ for all $\rG\in \cC(\tj, \tk)$. The fact that these isomorphisms satisfy the required naturality conditions follows as in \cite[Lemma 4.4]{MMMT}.

\eqref{intmor3} Since $X$ is a generator, it suffices to verify fullness and faithfulness for objects of the form $\rF X$. For these, it follows in a similar way as in the proof of \cite[Theorem 4.7]{MMMT} or \cite[Proposition 4.9]{LM1}. For the second claim, recall that since $X$ is a generator of $\bfM$, we have $\bfM(\tj) = \{ \rF X\,\vert\, \rF\in \cC(\ti,\tj)\}^\oplus_{\bfM(\tj)} .$ Since $[X,-]$ is fully faithful and both ${\bfM(\tj)}$ and $(\rmod_{\cpl\cC}\rA)_\tj$ are idempotent complete, the claim follows.
\endproof

\begin{corollary}
In the setup of Lemma \ref{intmor}, set $\bfM_\rA(\tj) = \{ \rF\rA\,\vert\, \rF\in \cC(\ti,\tj)\}^\oplus_{(\rmod_{\cpl\cC}\rA)_\tj}$. Then  we obtain a birepresentation $\bfM_\rA$ of $\cC$ together with an  equivalence of birepresentations $[X,-]\colon\bfM \to \bfM_\rA$.
\end{corollary}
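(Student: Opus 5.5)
The corollary is essentially a repackaging of Lemma~\ref{intmor}, so I will assemble it from parts (a) and (b). First, I will check that the assignment $\tj\mapsto\bfM_\rA(\tj)$ is a sub-birepresentation of $\bfrmod_{\cpl\cC}\rA$ restricted to $\cC$. Each $\bfM_\rA(\tj)$ is by definition a full, additively closed and idempotent-closed subcategory of $(\rmod_{\cpl\cC}\rA)_\tj$. For $\rG\in\cC(\tj,\tk)$, the action of $\rG$ on $\rF\rA$ is $\rG\rF\rA$, and $\rG\rF\in\cC(\ti,\tk)$. Since $\rG$ acts additively and preserves direct summands, it maps $\bfM_\rA(\tj)$ into $\bfM_\rA(\tk)$. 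The structure $2$-morphisms (associators, unitors) are those of $\bfrmod_{\cpl\cC}\rA$, so $\bfM_\rA$ is a birepresentation of $\cC$.

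Second, I will restrict the morphism $[X,-]\colon\cpl\bfM\to\bfrmod_{\cpl\cC}\rA$ of Lemma~\ref{intmor}\eqref{intmor2} to $\bfM$, viewed inside $\cpl\bfM$ via the Yoneda embedding, and restrict it to $\cC\subset\cpl\cC$. By the second statement of Lemma~\ref{intmor}\eqref{intmor3}, the essential image of $[X,-]|_{\bfM(\tj)}$ is exactly $\bfM_\rA(\tj)$. Hence the components factor through $\bfM_\rA$. The coherence isomorphisms $[X,\rG Y]\cong\rG[X,Y]$ for $\rG\in\cC(\tj,\tk)$ and $Y\in\bfM(\tj)$ come from part (a). Their targets lie in $\bfM_\rA$, which is full, so we obtain a strong natural transformation $\bfM\to\bfM_\rA$.

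Finally, each component functor $\bfM(\tj)\to\bfM_\rA(\tj)$ is fully faithful by Lemma~\ref{intmor}\eqref{intmor3} and essentially surjective by the image description, so it is an equivalence of categories. A morphism of birepresentations whose components are all equivalences is an equivalence of birepresentations: choose quasi-inverses componentwise and transport the coherence isomorphisms along them, as for pseudonatural transformations in general.

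The only point needing care is the standing hypothesis that $X$ is a generator of $\bfM$. This is implicit in ``the setup of Lemma~\ref{intmor}'', and it is needed both for full faithfulness and for the image description. If $\bfM$ is transitive, any $X$ works; otherwise I would state the hypothesis explicitly. I expect no real obstacle beyond this bookkeeping, and beyond noting that $\bfM_\rA$ is almost finitary because it is equivalent to $\bfM$.
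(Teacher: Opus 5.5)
Your proposal is correct and follows the paper's route: the corollary is stated without a separate proof, as an immediate consequence of Lemma~\ref{intmor}. Part (a) supplies the coherence isomorphisms, and part (b) gives full faithfulness and identifies the essential image with $\bfM_\rA(\tj)$. You are also right that $X$ must be a generator; this is left implicit here and made explicit in the next result, Lemma~\ref{projectivesinmodcplC}.
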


\begin{lemma}\label{projectivesinmodcplC}
Assume $\cC$ is an almost quasi-fiab bicategory, $\bfM$ an almost finitary 2-representation of $\cC$, $X$ a generator for $\bfM$. Set $\rA = [X,X]$. Then
$$\bfrproj_{\cpl\cC}\rA  \cong  \cop\bfM_\rA$$
where $\bfrproj_{\cpl\cC}\rA $ denotes the restriction of $\bfrmod_{\cpl\cC}\rA$ to the subcategories of projective objects in each $(\rmod_{\cpl\cC}\rA)_\tj$.
\end{lemma}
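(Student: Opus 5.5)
We need to show that, for each $\tj$, the projective objects of $(\rmod_{\cpl\cC}\rA)_\tj$ are exactly the objects of $\cop{\bfM_\rA(\tj)}$, i.e.\ coproducts of objects of the form $\rF\rA$ (and their summands), compatibly with the $\cC$-action. The plan has two inclusions and then a comparison of morphisms and of the action.

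First, we show that each $\rF\rA$ with $\rF\in\cC(\ti,\tj)$ is projective in $(\rmod_{\cpl\cC}\rA)_\tj$. The free--forgetful adjunction \eqref{freeforget}, applied in $\cpl\cC$, gives $\Hom_{\rmod}(\rF\rA,\rM)\cong\Hom_{\cpl\cC(\ti,\tj)}(\rF,\rM)$. Here $\rF$ (identified with $\rF^\vee$) is representable, hence projective in the presheaf category $\cpl{\cC(\ti,\tj)}$. Epimorphisms of modules are epimorphisms of the underlying presheaves, since the forgetful functor is exact: kernels and cokernels of module maps are computed underlying, because $-\circ\rA$ is right exact, being a coend, and the module structure passes to cokernels. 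Hence $\Hom(\rF\rA,-)$ is exact.

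The same adjunction works for arbitrary coproducts $\coprod_k\rF_k$, because $(\coprod_k\rF_k)\rA\cong\coprod_k\rF_k\rA$ by cocontinuity of Day convolution. Coproducts of projectives are projective, and so are direct summands. Thus $\cop\bfM_\rA(\tj)$ lies in the projectives.

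Conversely, let $\rP$ be projective in $(\rmod_{\cpl\cC}\rA)_\tj$. The underlying presheaf of $\rP$ is a quotient of a coproduct of representables, $\coprod_k\rF_k\tto\rP$. By adjunction this yields a module epimorphism $(\coprod_k\rF_k)\rA\tto\rP$, namely the composite with the action $\rho_\rP$, which is split by the unit. Projectivity of $\rP$ then makes it a direct summand of $\coprod_k\rF_k\rA$. Each $\rF_k$ may be taken indecomposable.

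It remains to show that such a summand lies in $\cop{\bfM_\rA(\tj)}$. By Lemma \ref{intmor}, $\bfM_\rA(\tj)\simeq\bfM(\tj)$ is almost finitary, so $\cop{\bfM_\rA(\tj)}$ is cop-KS by Lemma \ref{CtocopC}, and in particular idempotent complete. So we need the full subcategory of $(\rmod_{\cpl\cC}\rA)_\tj$ on coproducts of objects $\rF\rA$ to coincide with $\copr{\bfM_\rA(\tj)}$ as a category, i.e.
$$\Hom_{\rmod}\Big(\coprod_k\rF_k\rA,\coprod_l\rG_l\rA\Big)\cong\prod_k\coprod_l\Hom(\rF_k\rA,\rG_l\rA).$$
By adjunction the left side is $\prod_k\Hom_{\cpl\cC}(\rF_k,\coprod_l\rG_l\rA)$. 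Since $\rF_k$ is representable, this equals $\prod_k\big(\coprod_l\rG_l\rA\big)(\rF_k)$, and coproducts of presheaves are computed objectwise. This gives the formula, so idempotents split inside $\cop{\bfM_\rA(\tj)}$. This compactness/evaluation step is the main point to check carefully.

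Finally, the $\cC$-action by left composition commutes with these identifications, since $\rG(\coprod_k\rF_k\rA)\cong\coprod_k(\rG\rF_k)\rA$. Hence the equivalences assemble to an equivalence of birepresentations $\bfrproj_{\cpl\cC}\rA\cong\cop\bfM_\rA$.
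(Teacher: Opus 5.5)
Your proof is correct and follows the paper's route: free modules $\rF\rA$, together with their coproducts and retracts, are projective, and every module is a quotient of some $(\coprod_i \rF_i)\rA$, so every projective is a retract of such an object. The differences are minor. You get projectivity of $\rF\rA$ directly from the free--forgetful adjunction (representables are projective presheaves), whereas the paper uses projectivity of $\rA$ plus the adjoints of $\rF$. You also verify explicitly, via compactness of the $\rF\rA$, that the coproduct closure inside $(\rmod_{\cpl\cC}\rA)_\tj$ agrees with the abstract completion $\cop{\bfM_\rA(\tj)}$; the paper builds this identification into its notation.
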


\proof
First note that every object in $\cop\bfM_\rA(\tj) = \cop{\{ \rF\rA\,\vert\, \rF\in \cC(\ti,\tj)\}}_{(\rmod_{\cpl\cC}\rA)_\tj}$ is projective in $(\rmod_{\cpl\cC}\rA)_\tj$ since $\rA$ is projective, the existence of adjunctions implies that left translation by $\rF$ preserves projectives, and coproducts and retracts of projectives are projective.
Let $X\in (\rmod_{\cpl\cC}\rA)_\tj$ be given as the cokernel of  $\coprod_{j\in J} X_{1,j}\xrightarrow{x}\coprod_{i\in I} X_{0,i}$ for objects $X_{1,j}, X_{0,i} \in \cC(\ti,\tj)$. Then,  as usual, we have a sequence of epimorphisms of right $A$-comodules
$$\xymatrix{
0 \ar[rr]\ar[d]&&\coprod_{i\in I} X_{0,i}\rA\ar^{\id}[d] \\
(\coprod_{j\in J} X_{1,j}\ar^{x}[rr]\ar[d]&&\coprod_{i\in I} X_{0,i})\rA\ar[d]\\
\coprod_{j\in J} X_{1,j}\ar^{x}[rr]&&\coprod_{i\in I} X_{0,i}
}$$
which shows that projective objects in $(\rmod_{\cpl\cC}\rA)_\tj$ are precisely those in the idempotent completion of objects of the form $\coprod_{i\in I} X_{0,i}\rA$, i.e. those in $\cop\bfM_\rA(\tj)$. \endproof
 
\begin{lemma}\label{lem:copKSbirep}
Assume $\cC$ is an almost quasi-fiab bicategory, $\bfM$ an almost finitary birepresentation of $\cC$, $X$ a generator for $\bfM$. Set $\rA = [X,X]$. Then each $\bfrproj_{\cpl\cC}\rA(\tj)  \cong  \cop\bfM_\rA(\tj) $ is locally cop-KS. Moreover, for each $\tj\in \cC$, $\bfM_\rA(\tj)$ consists of the compact objects in $\cop\bfM_\rA(\tj)$.
\end{lemma}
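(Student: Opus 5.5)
The strategy is to transport the cop-KS structure along the equivalence $[X,-]\colon \bfM\to\bfM_\rA$ and then use Lemma \ref{CtocopC} and Lemma \ref{copCtoC}. The first step is to observe that $\bfM_\rA(\tj)$ is equivalent to $\bfM(\tj)$ by Lemma \ref{intmor}\eqref{intmor3}. Hence $\bfM_\rA(\tj)$ is almost finitary: it is Krull--Schmidt, has countably many indecomposables and has finite-dimensional hom spaces. By Lemma \ref{projectivesinmodcplC}, $\bfrproj_{\cpl\cC}\rA(\tj)$ is the idempotent completion, inside $(\rmod_{\cpl\cC}\rA)_\tj$, of coproducts of objects $\rF\rA$ with $\rF\in\cC(\ti,\tj)$. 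Each such object lies in $\bfM_\rA(\tj)$, so this category is the closure of $\bfM_\rA(\tj)$ under coproducts and retracts in $(\rmod_{\cpl\cC}\rA)_\tj$.

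The second step identifies this closure with the abstract completion $\cop{\bfM_\rA(\tj)}$ of Section \ref{secaddcat}, so that Lemma \ref{CtocopC} applies. For this I need hom spaces out of a coproduct, computed in $(\rmod_{\cpl\cC}\rA)_\tj$, to agree with those in $\copr{\bfM_\rA(\tj)}\subset\cpl{\bfM_\rA(\tj)}$. Homs out of coproducts are products in both settings, so it suffices to check that each $\rF\rA$ is compact in $(\rmod_{\cpl\cC}\rA)_\tj$. By \eqref{freeforget},
\[
\Hom_{(\rmod_{\cpl\cC}\rA)_\tj}\Big(\rF\rA,\coprod_{i}\rM_i\Big)\cong\Hom_{\cpl\cC(\ti,\tj)}\Big(\rF,\coprod_i \rM_i\Big).
\]
The right-hand side commutes with coproducts, because $\rF$ is representable in $\cpl\cC(\ti,\tj)$ and coproducts there are computed pointwise (Yoneda). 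Coproducts of modules are computed on underlying objects, so compactness passes to the module category, and retracts of compact objects are compact. Thus, with $\C=\bfM_\rA(\tj)$, the fully faithful inclusion $\C\to(\rmod_{\cpl\cC}\rA)_\tj$ extends to a fully faithful, coproduct-preserving functor $\copr{\C}\to(\rmod_{\cpl\cC}\rA)_\tj$. Its idempotent-completed essential image is $\bfrproj_{\cpl\cC}\rA(\tj)$. Lemma \ref{CtocopC} then shows this is $\copr\C=\cop\C$ and is cop-KS; it is in fact cop-af, since $\C$ is almost finitary.

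The final step uses Lemma \ref{copCtoC}, or the second half of Lemma \ref{CtocopC}, to see that the compact objects of $\cop\C$ form exactly $\C=\bfM_\rA(\tj)$. Compactness here must be taken in the ambient category $\cop\C$, not merely in $\cpl\C$. One direction is given by the compactness of each $\rF\rA$ established above. For the converse, a compact object $\coprod_{i\in I}\rX_i$ with indecomposable summands must have $I$ finite: the identity map factors through a finite subcoproduct, which forces almost all $\rX_i$ to vanish. Such an object therefore lies in $\C$, since $\C$ is closed under finite sums and summands.

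I expect the main obstacle to be the compactness step. It requires care that "coproduct" in $(\rmod_{\cpl\cC}\rA)_\tj$ really is the underlying coproduct in $\cpl\cC(\ti,\tj)$. This holds because the right action by $\rA$ is given by Day convolution, a coend, which commutes with colimits in each variable, so coproducts of modules carry a canonical module structure.
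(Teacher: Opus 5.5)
Your proposal is correct and takes the same route as the paper, whose proof consists solely of citing Lemmas~\ref{CtocopC} and~\ref{copCtoC}. Your additional check that each $\rF\rA$ is compact in $(\rmod_{\cpl\cC}\rA)_\tj$ via \eqref{freeforget} ensures that the closure of $\bfM_\rA(\tj)$ under coproducts and retracts inside the module category really agrees with the abstract completion $\cop{\bfM_\rA(\tj)}$, a step the paper leaves implicit.
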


\proof
This follows immediately from Lemmas \ref{CtocopC} and \ref{copCtoC}.
\endproof

We say that two algebra $1$-morphisms $\rA\in \cpl{\cC}(\ti,\ti)$ and $\rA'\in \cpl{\cC}(\tj,\tj)$ are {\em 
Morita equivalent} if the birepresentations $\bfrmod_{\cpl\cC}\rA$ and  $\bfrmod_{\cpl\cC}\rA'$ are equivalent. By Lemma~\ref{lem:copKSbirep}, this is equivalent to the birepresentations $\bfrproj_{\cpl\cC}\rA \cong\cop\bfM_\rA$ and $\bfrproj_{\cpl\cC}\rA'\cong \cop\bfM_{\rA'}$ being equivalent. By the usual arguments, this is equivalent to the existence of an $\rA$-$\rA'$-bimodule $\rM$ in $\cpl{\cC}(\tj,\ti)$ and an $\rA'$-$\rA$-bimodule $\rN$ in $\cpl{\cC}(\ti,\tj)$ such that $\rM\circ_{\rA'}\rN\cong \rA$ and $\rN\circ_{\rA}\rM\cong \rA'$. Here $\rM\circ_{\rA'}\rN$ denotes the relative horizontal composition, defined as the cokernel of 
$$\rM\rA'\rN \xrightarrow{\id_\rM\circh\lambda_{\rA',\rN} - \rho_{\rM,\rA'}\circh\id_\rN} \rM\rN $$
for $\rho_{\rM, \rA'}, \lambda_{\rA',\rN}$ the right, resp. left, $\rA'$-actions on $\rM$, resp. $\rN$. Note that in particular, $\rM$ and $\rN$ have to be biprojective, i.e.\ projective on both sides.

If $\bfM$ is an almost finitary birepresentation of an almost fiab bicategory $\cC$ and $X$ and $X'$ are two generators for $\bfM$, then, setting $\rA=[X,X]$ and $\rA'=[X',X']$, we have $\bfM_\rA\cong \bfM\cong \bfM_{\rA'}$ by Lemma \ref{intmor}, thus $\cop\bfM_\rA\cong  \cop\bfM_{\rA'}$, so $\rA$ and $\rA'$ are Morita equivalent. Conversely, if $\rA$ and $\rA'$ are Morita equivalent, then $\cop\bfM_\rA\cong  \cop\bfM_{\rA'}$, and hence $\bfM_\rA\cong \bfM\cong \bfM_{\rA'}$.

This shows that we have bijections between
\begin{itemize}
\item almost finitary birepresentations of $\cC$ up to equivalence;
\item algebra $1$-morphisms in $\cpl\cC$ up to Morita equivalence;
\item cop-KS birepresentations of $\cC$ up to equivalence.
\end{itemize}

We say that an algebra $1$-morphism $\rA$ in any bicategory $\cC$ is {\em simple} if every morphism of algebras $\rA \to \rB$ in $\cC$, which is epimorphic as a $2$-morphism, is necessarily an isomorphism.

\begin{lemma}\label{simpleifsimple}
Let $\cC$ be almost quasi-fiab and $\rA$ an algebra $1$-morphism in $\cpl{\cC}$.
Then $\bfM_\rA$ is simple if and only if the algebra $1$-morphism $\rA$ is simple.
\end{lemma}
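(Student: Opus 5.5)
The plan is to follow the finitary argument (as in \cite{MMMTZ}): build a dictionary between ideals of $\bfM_\rA$ and algebra quotients of $\rA$, using the free--forgetful adjunction \eqref{freeforget} and the fact that $\bfM_\rA(\tj)$ is the additive closure of the free modules $\rF\rA$ (Lemma~\ref{intmor}). Both implications are proved by contraposition.

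\emph{If $\rA$ is not simple, then $\bfM_\rA$ is not simple.} Suppose $\phi\colon \rA\to\rB$ is an algebra morphism that is epimorphic as a $2$-morphism but not an isomorphism. For each $\tj$, define $\bfI(\tj)$ as the collection of morphisms $f\colon \rM\to\rN$ in $\bfM_\rA(\tj)$ such that
\begin{equation*}
f\circ_\rA \id_\rB \colon \rM\circ_\rA \rB\to \rN\circ_\rA\rB
\end{equation*}
is zero. Here $-\circ_\rA\rB$ is the relative composition, which is defined via cokernels in $\cpl\cC$.
\begin{enumerate}[(i)]
\item $\bfI$ is an ideal: it is closed under composition with arbitrary morphisms because $-\circ_\rA\rB$ is a functor.
\item $\bfI$ is stable under the $\cC$-action, since $(\rG\rM)\circ_\rA\rB\cong \rG(\rM\circ_\rA\rB)$.
\item $\bfI$ is nonzero. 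On the free module $\rA$, the adjunction identifies $\mathrm{End}_\rA(\rA)$ with $\Hom(\one_\ti,\rA)$, and $-\circ_\rA\rB$ sends $a$ to the corresponding element $\phi a$ of $\Hom(\one_\ti,\rB)$. A kernel element of $\phi$ on global sections, transported via $\rF\rA$ for a suitable $\rF$, gives a nonzero morphism in $\bfI$. To see that the kernel is visible this way, use that $\rA=[X,X]$ is a quotient of $\rT_X$, so $\rA$ is generated by the images of maps $\rF\to\rA$ with $\rF\in\cC$.
\item $\bfI$ is proper: $\id_{\rA}\notin\bfI$, because $\rA\circ_\rA\rB\cong\rB\neq0$. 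If $\rB=0$ then $\phi$ is not an algebra map unless $\rA=0$.
\end{enumerate}

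\emph{If $\bfM_\rA$ is not simple, then $\rA$ is not simple.} Let $\bfI$ be a proper nonzero ideal of $\bfM_\rA$. Extend $\bfI$ to $\cop\bfM_\rA$ componentwise, and define a $2$-morphism $\rA\to\rB$ by taking $\rB$ to be the quotient of $\rA$ by the image of all morphisms in $\bfI$ landing in $\rA$ from free modules. Concretely, $\rB$ is the cokernel in $\cpl\cC(\ti,\ti)$ of
\begin{equation*}
\coprod \rF\rA\to\rA,
\end{equation*}
where the coproduct runs over $f\in\bfI(\ti)(\rF\rA,\rA)$ with $\rF$ indecomposable. There are countably many such $f$ up to basis choice, since the morphism spaces are finite-dimensional.
\begin{enumerate}[(i)]
\item $\rB$ is an algebra and $\rA\to\rB$ is an algebra morphism. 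Stability of $\bfI$ under the action and under postcomposition with $\rA$-module maps (right multiplication) shows that the image is a two-sided ideal subobject of $\rA$.
\item The map $\rA\to\rB$ is an epimorphism by construction.
\item It is not an isomorphism, since $\bfI\neq0$: a nonzero $f\colon \rF\rA\to\rG\rA$ in $\bfI$ gives, via the adjunction $\rG^*\dashv\rG$, a nonzero morphism $\rG^*\rF\rA\to\rA$ in $\bfI$.
\item $\rB\neq$ anything forcing an isomorphism, and we need properness to ensure $\rA\to\rB$ is nonzero. If $\rB=0$, then $\id_\rA$ factors through $\bfI$ on a coproduct. Compactness of $\one_\ti$ means the unit $\one_\ti\to\rA$ factors through finitely many summands, so $\id_\rA\in\bfI$, contradicting properness.
\end{enumerate}

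The main obstacle is the passage between ideals in the small category $\bfM_\rA$ and subobjects of $\rA$ in the completion $\cpl\cC$, where $\rA$ may be an infinite, non-compact object. The key tools for this are:
\begin{itemize}
\item the compactness of the objects $\rF\in\cC$ (Lemma~\ref{CtocopC});
\item the identification $\bfrproj_{\cpl\cC}\rA\cong\cop\bfM_\rA$ (Lemma~\ref{projectivesinmodcplC});
\item Lemma~\ref{lem:copKSbirep}, which shows that the compact objects of $\cop\bfM_\rA$ recover $\bfM_\rA$.
\end{itemize}
Together these ensure that every morphism from a finite object into $\rA$ or $\rB$ factors through finitely many summands, which is exactly what the finitary argument needs.
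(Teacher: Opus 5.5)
Your argument is correct in substance and has the same shape as the paper's proof. For ``$\bfM_\rA$ simple $\Rightarrow$ $\rA$ simple'' the paper also uses the ideal of morphisms that vanish after composing with $\varphi$. Your version, applying $-\circ_\rA\rB$ to all morphisms $\rF\rA\to\rA$, is the precise form of this. Read literally as module endomorphisms of $\rA$, i.e.\ elements of $\Hom(\one_\ti,\rA)$, the paper's wording can miss $\ker\varphi$, e.g.\ for a trivial extension $\one_\ti\oplus\rX$ with $\Hom(\one_\ti,\rX)=0$.

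For the converse, the paper starts from a quotient $\Phi\colon\bfM_\rA\to\bfN$ and uses the internal hom of $\Phi(\rA)$ in $\bfN$ to get an epimorphic algebra morphism $\rA\to[\Phi(\rA),\Phi(\rA)]$. You instead build the quotient $\rA/\mathrm{J}$ by the two-sided ideal $\mathrm{J}$ by hand. These are the same algebra: for $\bfN=\bfM_\rA/\bfI$ one checks
$\Hom(\rF,[\Phi(\rA),\Phi(\rA)])\cong\Hom_{\bfM_\rA}(\rF\rA,\rA)/\bfI\cong(\rA/\mathrm{J})(\rF)$.
The internal-hom route gives the algebra structure, and the fact that $\Phi$ is an equivalence, for free. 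Your route is self-contained and makes explicit the check $\rB\neq 0$ (via projectivity and compactness of $\one_\ti$), which the paper leaves implicit.

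Two justifications need repair.

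First, the claim ``if $\rB=0$ then $\phi$ is not an algebra map unless $\rA=0$'' is false: $\rA\to 0$ is always an epimorphic algebra morphism. The definition of a simple algebra has to be read with $\rB\neq 0$, since otherwise no nonzero algebra would be simple. With that convention, $\rB\neq0$ is part of your hypothesis in the first direction. Your item (iv) in the second direction is then exactly what makes the constructed $\rB$ admissible.

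Second, in proving $\bfI\neq 0$, neither global sections nor the presentation of $[X,X]$ as a quotient of $\rT_X$ is the relevant point; the lemma concerns an arbitrary algebra $\rA$ in $\cpl{\cC}$. The correct argument is as follows. Since $\phi$ is epi but not iso in the abelian category $\cpl{\cC}(\ti,\ti)$, $\ker\phi$ is a nonzero presheaf on $\cC(\ti,\ti)$. Yoneda then gives some $\rF\in\cC(\ti,\ti)$ and a nonzero $g\colon\rF\to\rA$ with $\phi\circv g=0$. Its free extension $\mathsf{m}\circv(g\circh\id_\rA)\colon\rF\rA\to\rA$ is nonzero, since $g$ is recovered by precomposing with $\id_\rF\circh\mathsf{u}$. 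It is killed by $-\circ_\rA\rB$, so it lies in $\bfI$.

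Two minor points. $\Hom(\rF,\rA)$ need not be finite-dimensional for an arbitrary $\rA$, but countability plays no role, since $\cpl{\cC}(\ti,\ti)$ has all small coproducts. In the quasi-fiab setting, the adjunction you want in (iii) is ${}^*\rG\dashv\rG$.
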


\proof
Suppose $\varphi$ is a morphism of algebras $\rA \to \rB$ in $\cpl{\cC}$, which is epimorphic as a $2$-morphism. Then the endomorphisms of $\rA$ which become zero when composed with $\varphi$ generate an ideal in $\bfM_\rA$. Thus $\bfM_\rA$ being simple implies that every such $\varphi$ is an isomorphism. Conversely, assume $\Phi\colon \bfM_\rA\to \bfN$ is a full and dense morphism of birepresentations (i.e. $\bfN$ is a quotient of $\bfM_\rA$). Then by the same arguments as in \cite[Proposition 4.16 (a), (d)]{LM1}, we obtain a morphism of algebras $\rA \to [\Phi(\rA),\Phi(\rA)]$ in $\cpl{\cC}$, which is epimorphic as a $2$-morphism. Thus, this necessarily being an isomorphism implies that $\Phi$ is an equivalence and $\bfM_\rA$ is simple.
\endproof

\subsection{Framing}

Let $\cC$ be a $\J$-simple almost quasi-fiab bicategory and $\bfM$ a transitive almost finitary birepresentation of $\cC$ with apex $\J$. Assume $X\in \bfM(\ti)$ is a generator for $\bfM$ and let $\rC=[X,X]$ be the corresponding algebra $1$-morphism in $\cpl\cC(\ti,\ti)$ with multiplication $\mathsf{m}$  and unit $\mathsf{u}$. 

\begin{lemma}\label{framing}
Let $\rF\in \cC(\ti,\tj)$ such that $\rF X\neq 0$.
Then $[\rF X,\rF X] \cong \rF \rC\rF^*$ with multiplication and unit given by 
$$\tilde{\mathsf{m}}=  (\id_\rF\circh\mathsf{m}\circh\id_{\rF^*})\circv(\id_{\rF\rC}\circh \varepsilon_\rF\circh \id_{\rC\rF^*}  ) \quad \text{and} \quad \tilde{\mathsf{u}} = (\id_\rF\circh \mathsf{u} \circh \id_{\rF^*})\circv\eta_\rF,$$
respectively.
In particular, $\rC$ and $\rF\rC\rF^*$ are Morita equivalent.
\end{lemma}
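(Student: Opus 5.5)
We will use the Yoneda lemma together with the adjunction isomorphisms already employed in the proof of Lemma~\ref{intmor}\eqref{intmor2}. For $\rG\in\cpl\cC(\tj,\tj)$ we compute
\begin{equation*}\begin{split}
\Hom_{\cpl\cC(\tj,\tj)}(\rG,[\rF X,\rF X])&\cong \Hom_{\cpl\bfM(\tj)}(\rG\rF X,\rF X)\cong \Hom_{\cpl\bfM(\ti)}(\rF^*\rG\rF X, X)\\
&\cong \Hom_{\cpl\cC(\ti,\ti)}(\rF^*\rG\rF,\rC)\cong \Hom_{\cpl\cC(\tj,\tj)}(\rG,\rF\rC\rF^*).
\end{split}\end{equation*}
The second step uses the adjunction $(\rF^*\text{ left adjoint to }\rF)$ coming from $\varepsilon_\rF,\eta_\rF$; note that the pair $(\varepsilon_\rF,\eta_\rF)$ makes $\rF$ left adjoint to $\rF^*$ in the sense needed, and one of the two adjunctions is the one we need, with ${}^*(-)$ supplying the other if the quasi-fiab convention requires it. The third step is the defining adjunction of $[X,-]$. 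The last step uses that precomposition by $\rF$ and postcomposition by $\rF^*$ on $\cpl\cC$ have the appropriate adjoints; these extend from $\cC$ to the Day convolution completion because the unit and counit extend by cocontinuity. All isomorphisms are natural in $\rG$, so Yoneda gives $[\rF X,\rF X]\cong\rF\rC\rF^*$. The hypothesis $\rF X\neq 0$ is not needed for this isomorphism; it is only used later.

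Next we identify the algebra structure. The multiplication on an internal end $[Y,Y]$ is the map corresponding, under adjunction, to the double evaluation $[Y,Y][Y,Y]Y\to [Y,Y]Y\to Y$, and the unit corresponds to $\id_Y$. We will transport the evaluation $\mathrm{ev}_{\rF X}\colon \rF\rC\rF^*\rF X\to \rF X$ through the chain above. Unwinding the isomorphisms, it is $\id_\rF\circh \mathrm{ev}_X$ precomposed with the counit-type map $\rF\rC\rF^*\rF X\to\rF\rC X$ built from $\varepsilon_\rF$. Composing two evaluations then produces exactly $\id_\rF\circh\mathsf{m}\circh\id_{\rF^*}$ precomposed with the contraction of the middle $\rF^*\rF$ by $\varepsilon_\rF$, which is $\tilde{\mathsf{m}}$. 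Similarly, the adjoint of $\id_{\rF X}$ is $(\id_\rF\circh\mathsf u\circh\id_{\rF^*})\circv\eta_\rF$, which is $\tilde{\mathsf u}$. These checks are routine string-diagram computations using the triangle identities. The main obstacle is keeping the adjunction conventions consistent, so that $\varepsilon_\rF$ and $\eta_\rF$ land in the correct places, and verifying that the zigzag identities give exactly the stated formulas; we will do this diagrammatically, as in the finitary coalgebra case \cite{MMMT}.

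Finally, for Morita equivalence, $\bfM$ is transitive and $\rF X\neq 0$, so $\rF X$ is also a generator of $\bfM$. By the discussion following Lemma~\ref{lem:copKSbirep}, the internal ends of two generators are Morita equivalent, since $\bfM_{\rC}\cong\bfM\cong\bfM_{\rF\rC\rF^*}$. Hence $\rC$ and $\rF\rC\rF^*$ are Morita equivalent.
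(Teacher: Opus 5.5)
Your proposal is correct and follows the same route as the paper, which simply defers to the dual of the finitary coalgebra argument: an adjunction chain plus Yoneda for the underlying object, transport of the evaluation map to identify $\tilde{\mathsf m}$ and $\tilde{\mathsf u}$, and transitivity for the Morita equivalence. The one thing to tidy up is your self-contradictory remark on conventions: every step of your chain, and the formulas for $\tilde{\mathsf m}$ and $\tilde{\mathsf u}$, use the adjunction $\rF^*\dashv\rF$ with unit $\one_\tj\to\rF\rF^*$ and counit $\rF^*\rF\to\one_\ti$ (the same direction as in the proof of Lemma~\ref{intmor}), so state that once and use it consistently.
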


\proof
It is a straightforward computation to see that this does define an algebra structure on $\rF\rC\rF^*$. The isomorphism $[\rF X,\rF X] \cong \rF \rC\rF^*$ as objects in $\cpl\cC(\ti,\ti)$ follows exactly dually to the finitary case for coalgebras (\cite[(4.6)]{MMMTZ}). The proof that the  algebra structure on $[\rF X,\rF X]$ from the internal hom construction is the given one on $\rF\rC\rF^*$ is also completely analogous to the finitary coalgebra setting. The Morita equivalence follows immediately from transitivity of $\bfM$.
\endproof

\begin{corollary}\label{notniceinJH}
\begin{enumerate}
\item\label{notniceinJ} There is an algebra $1$-morphism $\rA  \in \cpl{\J}$
with $\bfM\simeq \bfM_\rA$.
\item\label{notniceinH} Let $\H$ be a diagonal $H$-cell in $\J$. There is an algebra $1$-morphism $\rA \in \cpl{H}$ 
with $\bfM\simeq \bfM_\rA$.
\end{enumerate}
\end{corollary}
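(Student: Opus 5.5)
The plan is to apply the framing Lemma~\ref{framing} with a cleverly chosen $1$-morphism $\rF$, as in the finitary case \cite{MMMTZ}.

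For (\ref{notniceinJ}), start from any generator $X\in\bfM(\ti)$ and $\rC=[X,X]$. Since $\bfM$ has apex $\J$ and is transitive, the argument in the proof of Lemma~\ref{lem:cellfin} gives some $\rF\in\J$ with $\rF X\neq 0$. By transitivity, $\rF X$ is again a generator of $\bfM$. Hence $\bfM\simeq\bfM_{\rA}$ for $\rA=[\rF X,\rF X]$, using Lemma~\ref{intmor} and the subsequent corollary. By Lemma~\ref{framing}, $\rA\cong\rF\rC\rF^*$. It remains to see that $\rF\rC\rF^*\in\cpl{\J}$.

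By Section~\ref{inthom}, $\rC$ is a cokernel of a map $\rZ\to\rT_X$ between coproducts of indecomposable $1$-morphisms of $\cC$. Composition in $\cpl\cC$ is Day convolution, which is a coend and so preserves colimits in each variable. Therefore $\rF\rC\rF^*$ is the cokernel of $\rF\rZ\rF^*\to\rF\rT_X\rF^*$, and each of these is a coproduct of objects $\rF\rG\rF^*$ with $\rG$ indecomposable. Each such $\rF\rG\rF^*$ decomposes into indecomposables that are $\leq_J\J$. Since $\cC$ is $\J$-simple, indecomposables outside the additive closure of $\J$ and identities are zero, except possibly identities. But an identity summand $\one$ of $\rF\rG\rF^*$ with $\rF\in\J$ would force $\one\geq_J\J$, so $\one\in\J$, which is fine anyway. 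Hence all summands lie in $\J$, and $\rF\rC\rF^*$ lies in $\cpl{\J}$, meaning the cokernels of maps between coproducts of objects of $\J$.

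For (\ref{notniceinH}), let $\H=\L\cap\L^*$ with object $\tk$. I would choose $\rF$ more carefully so that $\rF\rC\rF^*$ lands in $\cpl{\H}$. First pick $\rG\in\J$ with $\rG X\neq0$, and set $Y=\rG X$. Next I want some $\rF$ in the right cell $\L^*$, with source the object of $Y$ and target $\tk$, such that $\rF Y\neq0$. Here I use that $\J$ is the apex: $\rF' Y\neq 0$ for some $\rF'\in\J$ ending at $\tk$. Then, by the usual cell combinatorics (Krull--Schmidt suffices), some indecomposable $\rF$ in $\L^*\cap\J$, which is the right cell containing $\H$, satisfies $\rF Y\neq0$. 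Given this, $\rF\rC\rF^*$ is a cokernel of maps between coproducts of $\rF\rG\rF^*$. Every indecomposable summand of $\rF\rG\rF^*$ lies in $\J$, is $\geq_R$-bounded by $\rF$, and is $\geq_L$-bounded by $\rF^*$. Within the $\J$-simple quotient, this places them in $\rF$'s right cell intersected with $\rF^*$'s left cell, i.e.\ in $\L^*\cap\L=\H$.

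The main obstacle is this last cell-theoretic step, namely that summands in $\J$ of $\rF\rG\rF^*$ stay in the same left/right cell. In the finitary case this uses that $\J$-summands of $\rF\rG$ with $\rF\in\J$ remain in $\rF$'s right cell. This needs an argument that does not rely on finiteness, since the number of left cells could be infinite. I would try to deduce it from the adjunction $\Hom(\rF\rG,\rH)\cong\Hom(\rF,\rH\rG^*)$, giving $\rF\leq_R\rH$ whenever $\rH$ is a summand of $\rF\rG$. Combined with $\rH\leq_R\rF$, this yields equality of right cells, with no finiteness used.
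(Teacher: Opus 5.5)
Your part (1) is the paper's argument: frame a generator by some $\rF\in\J$ with $\rF X\neq 0$ and apply Lemma~\ref{framing}. One small correction there: $\J$-simplicity does not kill every indecomposable outside $\J$ and the identities, because non-identity cells lying strictly $J$-below $\J$ survive in $\cC_{\leq\J}$. The correct reason is a squeeze: the summands of $\rF\rG\rF^*$ lie in the two-sided ideal generated by $\rF\in\J$, while every non-zero indecomposable of $\cC_{\leq\J}$ lies $J$-below $\J$.

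Part (2) has a genuine gap, exactly at the step you flag. The adjunction $\Hom(\rF\rG,\rH)\cong\Hom(\rF,\rH\rG^*)$ only turns the split epimorphism $\pi\colon\rF\rG\to\rH$ into some $2$-morphism $\rF\to\rH\rG^*$, and its splitting $\iota$ into a $2$-morphism $\rH\rG^*\to\rF$. The composite $\rF\to\rH\rG^*\to\rF$ is a partial trace of the idempotent $\iota\pi$ over $\rG$, and it may lie in the radical of $\End(\rF)$. So nothing makes $\rF$ a direct summand of $\rH\rG^*$.

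The implication you state is in fact false. In $\cS$, take $\rF=R$ and $\rG=\rH=\rB_s$: then $\rH$ is a summand of $\rF\rG$, but $R$ is never a summand of $\rB_s\rK$ for any $\rK$. Restricted to $\rF,\rH\in\J$, your claim is precisely that $\J$ is regular. An argument ``with no finiteness used'' would therefore make every two-sided cell of every almost fiab bicategory regular. The paper instead imposes regularity as an explicit hypothesis exactly where such an upgrade from $\leq_R$ to $\sim_R$ is needed (Proposition~\ref{projfun}, Theorem~\ref{algcoprod}). Your existence step, that ``by the usual cell combinatorics'' some $\rF$ in $\L^*$ has $\rF Y\neq0$, relies on the same unproved input.

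The paper does not detour through the right cell $\L^*$. It uses the arguments of \cite[Lemmas 2.32, 2.33]{MMMTZ} to get some $\rF\in\H$ itself with $\rF X\neq 0$, and then applies Lemma~\ref{framing} with this $\rF$. The existence part is elementary: if $\rF'Y\neq 0$ with $\rF'\in\J$ and $\rH\in\H$, then $\rF'$ is a summand of some $\rK_1\rH\rK_2$. Hence $\rH(\rK_2Y)\neq 0$, and $\rK_2Y$ is again a generator by transitivity.

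That $\rF\rC\rF^*$ then lies in $\cpl{\H}$ is cell-theoretic input carried over from the finitary theory. It is the same kind of combinatorics that makes $\cC_\H$ a sub-bicategory in Section~\ref{sec:Jsimple}, and it is not extracted from an adjunction. You should replace your adjunction argument by an appeal to that input rather than try to derive it from scratch.
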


\proof
\begin{enumerate}
\item This follows directly from Lemma \ref{framing} by choosing $\rF\in \J$ with $\rF X\neq 0$ and taking $[\rF X,\rF X] \cong \rF \rC\rF^*$.
\item The same arguments as in \cite[Lemmas 2.32, 2.33]{MMMTZ} show that there exists some $\rF\in \H$ such that $\rF X\neq 0$. The statement now follows from Lemma \ref{framing}.
 \end{enumerate}
\endproof

\subsection{Bimodule bicategories and $H$-cell reduction}\label{bimodsec}

For any bicategory $\cC$, let $\cB_\cC$ denote the bicategory 
\begin{itemize}
\item whose objects are algebra $1$-morphisms in $\cC$;
\item whose morphism bicategory $\cB_\cC(\rA,\rB)$ is given by the category of biprojective $\rA$-$\rB$-bimodules, for any algebra $1$-morphisms $\rA$ and $\rB$;
\item where composition of $\rM\in \cB_\cC(\rA,\rB)$ and $\rN\in \cB_\cC(\rB,\rC)$ is given by $\rM\circ_\rA\rN$.
\end{itemize}

In analogy to \cite[Theorem 4.26]{MMMTZ}, for almost quasi-fiab $\cC$ we have a biequivalence 
\begin{equation}\label{2repbim}
\cB_{\cpl{\cC}} \simeq \cC\ccopwfmod
\end{equation}
given by 
\begin{align*}
\rA&\mapsto \cop\bfM_\rA\\
{}_\rA\rM_\rB&\mapsto -\circ_\rA \rM\\
\sigma&\mapsto \circ_\rA \sigma
\end{align*}
for an algebra $\rA$, a biprojective bimodule $\rM$ and a morphism $\sigma$ of bimodules.

For $\J$ a two-sided cell in $\cC$, we write $\cB^s_{\cpl\cC, \cpl{\J}}$ for the $1$-full $2$-full subbicategory of $\cB_{\cpl\cC}$ whose objects are simple algebra $1$-morphisms in $\cpl\J$.

\begin{proposition}\label{Hcellreduc} Let $\cC$ be a $\J$-simple almost fiab bicategory and $\H$ a diagonal $H$-cell in $\J$.
There are equivalences of bicategories
\[\cC \scopwfmod_\J\simeq\cC_\J \scopwfmod_\J  \simeq \cB^s_{\cpl\cC_\J, \cpl{\J}} \simeq \cB^s_{\cpl\cC_\H, \cpl{\H}} \simeq \cC_\H \scopwfmod_\H.\]
\end{proposition}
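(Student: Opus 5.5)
We prove the chain of equivalences one link at a time. Each link is the almost finitary analogue of a step in \cite[Section 4]{MMMTZ}, with the biequivalence \eqref{2repbim} and Corollary~\ref{notniceinJH} used in place of their finitary counterparts.

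\emph{First equivalence.} A simple birepresentation $\bfM$ of $\cC$ with apex $\J$ annihilates every $1$-morphism not $\leq_J\J$. By simplicity it also annihilates the maximal biideal defining $\cC_{\leq\J}$; otherwise the annihilator would generate a nonzero ideal of $\bfM$. Since $\cC$ is $\J$-simple, nothing changes in this step. Restriction along $\cC_\J\hookrightarrow\cC$ gives a $2$-functor $\cC\scopwfmod_\J\to\cC_\J\scopwfmod_\J$. For a quasi-inverse, take a simple $\cC_\J$-birepresentation $\bfN$ with apex $\J$, pick a generator $X$, and let $\rA=[X,X]$, which by Corollary~\ref{notniceinJH}\eqref{notniceinJ} may be taken in $\cpl\J$. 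Then $\bfM_\rA$, computed in $\cpl\cC$, is a $\cC$-birepresentation. Full faithfulness on $1$- and $2$-morphisms follows because exact morphisms and modifications are determined by their values on $\rA$, and $\bfrmod_{\cpl\cC}\rA$ and $\bfrmod_{\cpl\cC_\J}\rA$ have the same morphisms between $\rA$-modules $\rF\rA$ with $\rF\in\J$.

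\emph{Second and fourth equivalences.} These are restrictions of \eqref{2repbim}. By Corollary~\ref{notniceinJH} every object of $\cC_\J\scopwfmod_\J$ is equivalent to some $\cop\bfM_\rA$ with $\rA\in\cpl\J$. By Lemma~\ref{simpleifsimple}, $\bfM_\rA$ is simple if and only if $\rA$ is simple. Conversely, for simple $\rA\in\cpl\J$ we must check that $\bfM_\rA$ has apex exactly $\J$. That $\J$ is not annihilated is clear from $\rA\neq 0$ being built from $\J$. That nothing strictly larger acts nontrivially holds because $\cC_\J$ contains only identities and $\J$. Since \eqref{2repbim} is a biequivalence onto cyclic cop-af birepresentations and the $1$-fullness and $2$-fullness conventions match, the restriction is again a biequivalence. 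The same argument with $\H$ replacing $\J$, using Corollary~\ref{notniceinJH}\eqref{notniceinH}, gives the last equivalence.

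\emph{Third equivalence, the main obstacle.} We must compare simple algebras in $\cpl\J$ inside $\cpl\cC_\J$ with simple algebras in $\cpl\H$ inside $\cpl\cC_\H$. I would define a $2$-functor $\cB^s_{\cpl\cC_\H,\cpl\H}\to\cB^s_{\cpl\cC_\J,\cpl\J}$ as follows.
\begin{itemize}
\item On objects it is the inclusion.
\item On $1$-morphisms, it sends a biprojective $\rA$-$\rB$-bimodule in $\cpl\cC_\H$ to the same bimodule viewed in $\cpl\cC_\J$.
\end{itemize}
Essential surjectivity comes from Corollary~\ref{notniceinJH}\eqref{notniceinH}: any simple $\rA\in\cpl\J$ gives a simple birepresentation with apex $\J$, hence is Morita equivalent, via Lemma~\ref{framing}, to some simple $\rF\rC\rF^*\in\cpl\H$. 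The framing bimodules $\rF\rC$ and $\rC\rF^*$ supply the required biprojective equivalence bimodules.

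The hard part is local full faithfulness. For $\rA,\rB\in\cpl\H$, every biprojective $\rA$-$\rB$-bimodule in $\cpl\cC_\J$ must already lie in $\cpl\H$, and the hom-spaces must agree. I would argue as in \cite[Lemmas 2.32, 2.33]{MMMTZ}. Since $\rA$ is a quotient of a coproduct of elements of $\H=\L\cap\L^*$, we get $\rG\rA=0$ unless $\rG\geq_R\L^*$. Dually, on the other side $\rA\rG=0$ unless $\rG\geq_L\L$. Hence a bimodule, being a quotient of $\rA\rG\rB$ with $\rG$ ranging over $\J$, has only constituents in $\L^*\cap\L=\H$, using the fact that $\cC_\J$ involves only $\J$. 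The main technical point is that these arguments need Krull--Schmidt decompositions. I would handle this through Lemma~\ref{lem:copKSbirep}, which expresses projectives as coproducts of indecomposables $\rF\rA$, and apply the cell arguments componentwise. Since $\cC_\H$ is $2$-full in $\cC_\J$, fullness on $2$-morphisms is then automatic.
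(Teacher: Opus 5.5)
Your proposal follows the paper's proof. It uses the same ingredients in the same places: the biequivalence \eqref{2repbim}, Corollary~\ref{notniceinJH}, Lemma~\ref{simpleifsimple}, and the observation that bimodules between algebras in $\cpl\H$ are the same whether computed in $\cpl\cC_\J$ or $\cpl\cC_\H$. Your direct restriction argument for the first equivalence is an unpacking of the paper's identification $\cB^s_{\cpl\cC,\cpl\J}=\cB^s_{\cpl\cC_\J,\cpl\J}$.

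One intermediate claim in your local full faithfulness argument is false as stated, although the conclusion you draw is correct. It is not true that $\rG\rA=0$ unless $\rG\geq_R\L^*$, and the dual claim fails in the same way. Consider finite type $A_2$ with $\J$ the middle cell, $\L=\{s,ts\}$, $\L^*=\{s,st\}$, $\H=\{s\}$, and $\rA$ a shift of $\rB_s$. Then $\rB_t\rB_s\cong\rB_{ts}\neq 0$ in $\cC_\J$, even though $\rB_t$ is not $\geq_R$ anything in $\L^*$.

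Vanishing of products is not what matters; what matters is where the constituents land. For $\rH,\rH'\in\H$ and $\rG$ an identity or in $\J$, every indecomposable summand of $\rH\rG\rH'$ lies in $\J$ by $\J$-simplicity. Such a summand is $\geq_R\rH$ and $\geq_L\rH'$, so it lies in the right cell $\L^*$ and the left cell $\L$, i.e.\ in $\H$. This is the cell-theoretic input you meant to borrow from \cite[Lemmas 2.32, 2.33]{MMMTZ}.

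A bimodule $\rM$ is a retract of $\rA\rM\rB$ via the units, so it follows that $\rM\in\cpl\H$. This is exactly the step the paper calls ``easy to see''.
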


\proof
Under the biequivalence  \eqref{2repbim}, $\cC \scopwfmod_\J$ corresponds to $\cB^s_{\cpl\cC, \cpl{\J}}$ by Corollary \ref{notniceinJH} and Lemma \ref{simpleifsimple}. Moreover, $\cB^s_{\cpl\cC, \cpl{\J}} = \cB^s_{\cpl\cC_\J, \cpl{\J}}$, and the latter, again using Corollary \ref{notniceinJH}(i) and Lemma \ref{simpleifsimple}, is biequivalent to $\cC_\J \scopwfmod_\J$. Similarly $\cB^s_{\cpl\cC_\H, \cpl{\H}} \simeq \cC_\H \scopwfmod_\H$.
It now remains to observe that $\cC_\J \scopwfmod_\J \simeq \cB^s_{\cpl\cC_\H, \cpl{\H}}$ by Corollary \ref{notniceinJH}(ii), since the algebra $1$-morphisms can be chosen in $\cpl{\H}$ and it is easy to see that, given algebras $\rA,\rB$  in $\cpl\cC_\J$ which live in $\cpl{\H}$,  $\rA$-$\rB$-bimodules in $\cpl\cC_\J$ are the same as $\rA$-$\rB$-bimodules in $\cpl\cC_\H$.
\endproof

\section{Algebras as coproducts}\label{secalgcop}
\subsection{Finiteness conditions for almost finitary birepresentations}

In this section, we will introduce certain finiteness conditions on the (bi)categories we consider, which will allow us to only work with algebra $1$-morphisms in the coproduct completion of an almost finitary bicategory.

\begin{definition}\label{cosparsedef}
We call an almost finitary category $\C$ {\it cosparse} (cf.\  \cite[Definition 5.2.3]{Macphthesis}) if, for any object $Y\in \C$, there exist only finitely many indecomposable $X\in \C$ up to isomorphism with $\Hom_\C(X,Y)\neq 0$.
\end{definition}

\begin{lemma}\label{whenabelian}
If $\C$ is a cosparse almost finitary category, then $\ov{\C}$ is an abelian finite-length category.
\end{lemma}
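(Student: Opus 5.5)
We identify $\ov{\C}$ with the category of finitely presented objects of $\cpl{\C}$. By \cite[(1.2) Theorem]{CB}, as used in the proof of Lemma \ref{CtocopC}, a diagram $X_1\xrightarrow{x}X_0$ corresponds to $\coker(x^\vee)\in\cpl{\C}$. This identification is fully faithful: a morphism $\coker x^\vee\to\coker y^\vee$ lifts, by projectivity of representables, to a pair $(\phi_0,\phi_1)$, and such a pair induces zero exactly when $\phi_0$ factors through $y$, which is the relation defining $\ov{\C}$. So it suffices to show that finitely presented functors $M\colon\C^{op}\to\Vect_\Bbbk$ form an abelian subcategory of the abelian category $\cpl{\C}$ in which every object has finite length.

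The key step is a finiteness property of representables. Take $Y\in\C$. Cosparseness gives only finitely many indecomposables $X\in\ind{\C}$ with $Y^\vee(X)=\Hom_\C(X,Y)\neq0$, and hom-finiteness makes each of these spaces finite-dimensional. Hence $Y^\vee$ has finite total dimension $\sum_{X\in\ind\C}\dim Y^\vee(X)<\infty$. Every finitely presented $M$ is a quotient of some $Y^\vee$, so $M$ also has finite total dimension and finite support on $\ind\C$.

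Next we show closure under kernels. Let $f\colon M\to N$ be a morphism of finitely presented functors, and set $K=\ker f$, computed objectwise. Then $K$ is a subfunctor of a finite-dimensional functor, so we choose finitely many elements $k_i\in K(X_i)$, with $X_i$ indecomposable, that span $K$ objectwise. By Yoneda these give an epimorphism $\bigoplus_i X_i^\vee\tto K$, so $K$ is finitely generated. Its kernel is again a subfunctor of the finite-dimensional functor $\bigoplus X_i^\vee$, so the same argument shows it is finitely generated, and hence $K$ is finitely presented. Cokernels of maps between finitely presented functors are always finitely presented. Therefore the finitely presented functors form an abelian subcategory of $\cpl{\C}$, with kernels and cokernels computed objectwise.

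For finite length, note that any strictly increasing chain of subobjects in $\ov{\C}$ strictly increases the total dimension. That dimension is bounded by the total dimension of $M$, which is finite, so every chain has length at most $\dim M$ and $M$ has finite length. The main obstacle is the identification of $\ov{\C}$ with finitely presented functors, in particular checking that the quotient relation in the definition of $\ov{\C}$ matches morphisms of cokernels exactly. This is the standard Freyd/Auslander argument, and we would cite it via \cite{CB}.
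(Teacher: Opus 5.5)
Your proof is correct and is essentially the paper's argument. For $M=X^\vee$, $N=Y^\vee$, the finite spanning set you choose for $\ker(f^\vee)$ is a basis of $\{k\colon K_i\to X \mid fk=0\}$ for each of the finitely many indecomposables $K_i$ with $\Hom_\C(K_i,X)\neq 0$, which is exactly the weak kernel the paper constructs. Your finite-length bound via the finite total dimension $\sum_{X\in\ind\C}\dim\Hom_\C(X,Y)$ is also the paper's. The only difference is packaging: the paper invokes Freyd's criterion that $\ov{\C}$ is abelian once $\C$ has weak kernels, whereas you check directly inside $\cpl{\C}$ that finitely presented functors are closed under kernels and cokernels.
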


\proof
To show that $\ov{\C}$ is abelian, we need to verify that weak kernels exist in $\C$. 
So let $X\xrightarrow{f} Y$  be a morphism in $\C$. Let $K_1, \dots, K_r$ be the indecomposable objects (up to isomorphism) such that $\Hom_\C(K_i,X)\neq 0$ and for each $i$ let $\{ \kappa^{(i)}_{1},\ldots,  \kappa^{(i)}_{t_i}\}$ be a basis of the subspace of $\Hom_\C(K_i,X)$ consisting of morphisms $k_i$ with  $fk_i= 0$. We claim that setting $K=\bigoplus_{i=1}^r \bigoplus_{j=1}^{t_i} K_i$ and defining $K \xrightarrow{\kappa} X$ as $\kappa^{(i)}_j$ on the component of the direct sum indexed by $(i,j)$, $\kappa$ is a weak kernel of $f$.

By definition $f\kappa = 0$, since its restriction to each direct summand of $K$  is zero. Now let $Z\xrightarrow{h} X$ be a morphism in $\C$ such that $fh=0$. Note that, without loss of generality, we may assume $Z$ is indecomposable. If $h=0$, it trivially factors over $\kappa$ via $Z\xrightarrow{0} K$, so assume $h\neq 0$. This implies $Z\cong K_s$ for some $s$ and $h = \sum_{j=1}^{t_s} c_j\kappa_j^{(s)}$ for some scalars $c_j$. Then defining $Z\xrightarrow{g} K$ as the unique morphism whose composition with projection onto the $(i,j)$ component is $c_j\id_{K_s}$ for $i=s$  and $0$ otherwise, we obtain $h = \kappa g$, which proves the claim, and hence the fact that $\ov{\C}$ is abelian.

Since $\C$ is cosparse, the total morphism space $\prod_{X}\Hom_{\C}(X,Y)$, where $X$ runs over a set of representatives of isomorphism classes of indecomposable objects in $\C$, is finite-dimensional, which implies that $\ov{\C}$ is a finite-length category.
\endproof

\begin{definition}\label{HfromfinFG} 
Let $\cC$ be an almost finitary bicategory.
\begin{enumerate}[(a)]
\item We say that a left cell $\L$ in $\cC$ is {\it connection-finite} if for any indecomposable $1$-morphisms $\rH$ in $\L$ and $\rF$ in the right cell of $\rH$, there exist at most finitely many indecomposable $1$-morphisms $\rG$ in $\L$ up to isomorphism such that $\rH$ appears as a direct summand of $\rF\rG$. 
\item We say that a two-sided cell $\J$ in $\cC$ is {\it adjoint connection-finite} if, for any pair of indecomposable $1$-morphisms $\rF,\rH$ in $\J$, there are at most finitely many $1$-morphisms $\rG$ in $\cC$  up to isomorphism such that $\rH$ appears as a direct summand of $\rF\rG\rF^*$.
\end{enumerate}
\end{definition}

\begin{proposition}\label{cellrepfinrad}
Let $\cC$ be an almost finitary bicategory and $\L$ a left cell with domain $\tk$ inside a two-sided cell $\J$ of $\cC$.
Suppose that $\L$ is connection-finite. Then there exists an integer $t>0$, such that the $t$-th power of the radical of each $\bfN_\L(\ti)$ is contained in $\bfI_\L(\ti)$.
\end{proposition}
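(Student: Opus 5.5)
The plan is to reduce the statement to the nilpotency of the radical of one finite-dimensional algebra. Connection-finiteness is the input that makes that algebra finite-dimensional.

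\emph{Step 1: work with indecomposables in $\L$.} Recall that, as in the finitary case, the indecomposable objects of $\bfN_\L(\ti)$ not lying in $\L$ are annihilated modulo $\bfI_\L$. So the quotient $\bfC_\L(\ti)$ has indecomposables indexed by the $1$-morphisms of $\L$ with target $\ti$. Since $\bfI_\L$ is an ideal, it suffices to find $t$ such that every composite
\[\rG_0\xrightarrow{f_1}\rG_1\to\cdots\xrightarrow{f_t}\rG_t,\]
with the $\rG_s$ indecomposable in $\bfN_\L(\ti)$ and every $f_s$ radical, lies in $\bfI_\L(\ti)$. Any $\rG_s\notin\L$ already forces the composite into $\bfI_\L$, so we may assume all $\rG_s\in\L$.

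\emph{Step 2: transport to a fixed object.} Fix $\rH_0\in\L$, and suppose such a composite $f$ is nonzero in $\bfC_\L$. Simplicity of $\bfC_\L$ (maximality of $\bfI_\L$) means $f$ generates the unit ideal. Hence there are a $1$-morphism $\rF$, which we may take indecomposable and in the right cell of $\rH_0$, and morphisms $a\colon \rH_0\to \rF\rG_0$ and $b\colon \rF\rG_t\to \rH_0$ with $b\circ \rF(f)\circ a=\id_{\rH_0}$ modulo $\bfI_\L$. Then $\rF(f)=\rF(f_t)\cdots\rF(f_1)$, and each $\rF\rG_s$ contains $\rH_0$ as a direct summand in $\bfC_\L$. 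By connection-finiteness, for this fixed $\rH_0$ only finitely many $\rG\in\L$ have $\rH_0$ as a summand of $\rF\rG$.

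The main obstacle is that $\rF$ depends on $f$, so this finiteness is not yet uniform. I would first try to fix one $\rF$ (or finitely many) that works for all $f$. The idea is that, modulo $\bfI_\L$, only the components of $\rF\rG_s$ isomorphic to $\rH_0$ matter. Collecting the finitely many $\rG\in\L$ that admit an indecomposable $\rF$ in the right cell of $\rH_0$ with $\rH_0\mid\rF\rG$ should give a finite set. The justification is a Krull--Schmidt argument: the image of $\rH_0$ under each such $\rF$ only involves finitely many $\rG$, by the definition of connection-finiteness applied to the pair $(\rH_0,\rF)$. Bounding over all $\rF$ that actually occur will require showing that only finitely many $\rF$ (up to isomorphism) are needed. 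I would get this by choosing $\rF$ among summands of $\rH_0\rH_0^{\prime}$ for a fixed $\rH_0'$, via a generator argument for the cell birepresentation.

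\emph{Step 3: conclude.} With a finite set $\rG^{(1)},\dots,\rG^{(m)}\in\L$ in hand, set $Y=\bigoplus_j \rF\rG^{(j)}$, and let $t$ be the nilpotency index of the radical of the finite-dimensional algebra $\End_{\bfC_\L(\ti')}(Y)$. The morphisms $\rF(f_s)$ are then radical maps between summands of $Y$; radicality should be preserved after cutting down to $\rH_0$-isotypic components modulo $\bfI_\L$. So any composite of length $t$ is zero in $\bfC_\L$, which contradicts $b\circ\rF(f)\circ a=\id_{\rH_0}$. Finally, to make $t$ independent of $\ti$, I would take a maximum over the targets appearing. I expect that to be finite for the same connection-finiteness reason, since all these objects are controlled by the single object $\rH_0$.
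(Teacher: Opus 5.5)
\textbf{Step 3 fails.} Your Steps 1 and 2 follow the paper's proof: a composite $f=f_s\cdots f_1\notin\bfI_\L$ is detected by some $\rF$ such that $\rF(f)$ has an identity component on some $\rH\in\L$, so $\rH$ is a summand of every $\rF\rG_i$. But Step 3 breaks down. The maps $\rF(f_s)$ are not radical in general, not even after cutting down to $\rH_0$-isotypic parts modulo $\bfI_\L$. They cannot be: if they were, $\rF(f)$ would be radical, and $b\circ\rF(f)\circ a=\id_{\rH_0}$ would be impossible. Your argument would then prove that every single radical morphism lies in $\bfI_\L$. Applying $\rF$ is exactly what turns radical maps into maps with isomorphism components.

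The paper avoids this by using $\rF$ only to restrict which objects can occur, and it keeps the chain in $\bfN_\L(\ti)$, where the $f_i$ really are radical. Connection-finiteness for the pair $(\rH,\rF)$ leaves finitely many candidates $\rK_1,\dots,\rK_m$ for the $\rG_i$. Since $\rad^{\ell_r}\End(\rK_r)=0$, a chain with nonzero composite visits $\rK_r$ at most $\ell_r$ times, so $s<\ell m$. A smaller point: from $\rH_0$ being a summand of $\rF\rG_0$ you only get a one-sided comparison of $\rF$ and $\rH_0$ in the right order. Placing $\rF$ in the right cell of $\rH_0$, which connection-finiteness requires, needs an argument; the paper passes over this too.

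\textbf{The uniformity step cannot be done.} You are right that $m$ and $\ell$ depend on $(\rH,\rF)$, hence on $f$. But your remedy has no justification: taking finitely many $\rF$ among summands of some $\rH_0\rH_0'$ and then a maximum over targets. In fact it cannot be justified. The paper's proof silently treats $\ell m$ as independent of $f$, and without an extra uniformity hypothesis the statement is false. Here is a counterexample.
\begin{enumerate}
\item Let $\mathcal{Q}$ be the $\Bbbk$-linear path category of $0\to1\to2\to\cdots$.
\item Let $\cC$ be the one-object bicategory whose $1$-morphisms are finite direct sums of the identity bimodule and the bimodules $P_{ij}=\mathcal{Q}(i,-)\otimes_\Bbbk\mathcal{Q}(-,j)$, composed by $\otimes_{\mathcal{Q}}$.
\item Then $P_{ij}P_{kl}\cong P_{il}^{\oplus\dim\mathcal{Q}(k,j)}$ and $\Hom(P_{il},P_{kl})\cong\mathcal{Q}(k,i)$. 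One checks that $\cC$ is almost finitary; the identity bimodule has endomorphism ring the centre $\Bbbk$.
\item For fixed $l$, $\L=\{P_{il}\}_{i\in\bbN}$ is a left cell. It is connection-finite, because $\mathcal{Q}(k,j)\neq0$ only for $k\leq j$.
\item For any nonzero $f\colon P_{il}\to P_{kl}$, the $2$-morphism $\id_{P_{ai}}\circh f$ is an automorphism of $P_{al}$. Hence $\bfI_\L=0$.
\item Yet the radical maps $P_{tl}\to P_{t-1,l}\to\cdots\to P_{0l}$ compose to the nonzero path from $0$ to $t$, for every $t$.
\end{enumerate}
In this example the pairs detecting that composite are $(P_{0l},P_{0b})$ with $b\geq t$, and the corresponding $m$ is $b+1$. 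So no finite set of $\rF$ suffices. A uniform $t$ requires an additional hypothesis, for instance a uniform bound on $m$ and on the $\ell_r$ over all relevant pairs $(\rH,\rF)$.
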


\proof
Let $f \in \rad^s\Hom_{\bfN_\L(\ti)}(\rG_0,\rG_s)$ and write $f = f_1\circv \cdots\circv f_s$ with $f_i\in \rad\Hom_{\bfN_\L(\ti)}(\rG_{i-1},\rG_i)$, for indecomposable $\rG_0,\ldots \rG_s\in \L$. Assume that there exists a $1$-morphism $\rF\in \cC(\ti,\tj)$, which we can without loss of generality assume to be in $\J$, such that $\rF f$ is not in $\rad\Hom_{\bfN_\L(\tj)}(\rF\rG_0,\rF\rG_s)$ and has an identity component $\id_\rH$ for some indecomposable $1$-morphism $\rH$ in $\L$. Since $\rH$ appears as a direct summand of each $\rF\rG_i$, only finitely many non-isomorphic $1$-morphisms $\rK_1,\ldots,\rK_{m}$ can appear in the list $\rG_0,\dots, \rG_s$ by the assumption that $\L$ is connection-finite. For each such $1$-morphism $\rK_r$, $r=1\dots, m$, hom-finiteness guarantees that there is an integer $\ell_r$ such that $\rad^{\ell_r}\End_{\cC(\tk,\ti)}(\rK_r)=0$, hence $\rK_r$ can appear in the list $\rG_0,\dots, \rG_s$ at most $\ell_r -1$ times. Taking $\ell$ to be the maximum of the $\ell_r$, $s$ is thus bounded by $\ell m$, and for any $t>\ell m$, we have $\rad^t\bfN_\L(\ti)\subset \bfI(\ti)$.
\endproof

\begin{definition}\label{radicalfgdef}
We call an almost finitary category $\C$ {\it radical-f.g.} if its radical is locally finitely generated in the sense that  for any $X\in \C$, there exist indecomposable $W_1,\ldots , W_n$ and $g_i\colon W_i\to X \in \rad\Hom_\C(W_i,X)$ for $i = 1,\ldots, n$ , such that any  radical morphism with codomain $X$ factors over  $\bigoplus_{i=1}^{n} W_i \xrightarrow{(g_1,\ldots, g_n)} X$.
\end{definition}

\begin{lemma} \label{cellcosparse}Let $\cC$ be an almost finitary bicategory and $\L$ a left cell with domain $\tk$ inside a two-sided cell $\J$ of $\cC$.
Suppose that $\L$ is connection-finite and $\L^\oplus$ is radical-f.g.. 
Then, for any $\ti\in \cC$, $\rF\in \bfC_\L(\ti)$, the total morphism space $\prod_{\rG\in \L}\Hom_{\bfC_\L(\ti)}(\rG,\rF)$ is finite-dimensional. In particular, there are only finitely many $\rG\in \L$ with $\Hom_{\bfC_\L(\ti)}(\rG,\rF)\neq 0$, i.e. $\bfC_\L(\ti)$ is cosparse.
\end{lemma}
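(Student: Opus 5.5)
The plan is to combine Proposition~\ref{cellrepfinrad} with the radical-f.g.\ hypothesis. The radical filtration then shows that every morphism into $\rF$ is a combination of finitely many ``building block'' morphisms. We may assume $\rF$ is indecomposable, since the total hom space into a finite direct sum is the finite direct sum of the total hom spaces. Fix $t>0$ as in Proposition~\ref{cellrepfinrad}. The radical of $\bfN_\L(\ti)$ surjects onto the radical of $\bfC_\L(\ti)=\bfN_\L(\ti)/\bfI_\L(\ti)$, because a quotient by an ideal contained in the radical maps radical morphisms onto radical morphisms (and $\bfI_\L$ contains no identities on indecomposables). It follows that $\rad^t\bfC_\L(\ti)=0$.

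The next step is to iterate the radical-f.g.\ property. Here I take $\bfN_\L(\ti)$ to be the additive closure of $\{\rK\rG : \rG\in\L\}$, which is closely related to $\L^\oplus$. I expect the main subtlety to be transporting the radical-f.g.\ hypothesis from $\L^\oplus$, which lives in $\cC(\tk,\tk')$, to $\bfN_\L(\ti)\subseteq\cC(\tk,\ti)$ and then to $\bfC_\L(\ti)$. My first attempt would be to observe that $\bfN_\L(\ti)$ is itself of the form $\L'^{\oplus}$ for the part of $\L$ with target $\ti$, so that the hypothesis ``$\L^\oplus$ is radical-f.g.'' applies directly. I would then note that radical-f.g.\ passes to the quotient $\bfC_\L(\ti)$, because factorizations descend and the radicals correspond as above.

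With the property in hand, for $\rF$ choose $g_1,\dots,g_n$ with sources $W_1,\dots,W_n$ as in Definition~\ref{radicalfgdef}. Then apply the same construction to each $W_i$, and continue for $t$ steps. This yields a finite set $S$ of indecomposable objects, namely all sources appearing in chains of length $<t$, together with finitely many radical morphisms. Every morphism $\rG\to\rF$ with $\rG$ indecomposable is then $c\,\id$ (only when $\rG\cong\rF$) plus a radical morphism. Such a radical morphism factors through $\bigoplus W_i$, giving a map $\rG\to W_i$ that is again either a multiple of an isomorphism or radical, and so on. After $t$ steps the remaining radical part lies in $\rad^t=0$.

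Hence every nonzero morphism $\rG\to\rF$ forces $\rG\cong$ some object of $S$. The hom space is spanned by composites of the chosen finitely many morphisms, which arise from isomorphisms $\rG\cong W$ and endomorphisms of objects of $S$. Each of these is finite-dimensional by hom-finiteness. So $\prod_{\rG\in\L}\Hom_{\bfC_\L(\ti)}(\rG,\rF)$, taken over isoclasses, is finite-dimensional, and only finitely many $\rG$ contribute. In particular $\bfC_\L(\ti)$ is cosparse.
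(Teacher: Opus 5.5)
Your proposal is correct and is essentially the paper's argument: iterating the radical-f.g.\ property leaves only finitely many indecomposables $\rG$ with a nonzero map to $\rF$ in each radical layer, and Proposition~\ref{cellrepfinrad} gives $\rad^t\bfC_\L(\ti)=0$, so only finitely many layers contribute; you also spell out the transport of radical-f.g.\ to $\bfC_\L(\ti)$, which the paper leaves implicit. One correction to that transport step: $\bfN_\L(\ti)$ also contains indecomposables strictly $L$-above $\L$, whose identities lie in $\bfI_\L(\ti)$, so $\bfN_\L(\ti)$ is not just the additive closure of the part of $\L$ with target $\ti$, and $\bfI_\L(\ti)$ is not contained in its radical; since those objects become zero in $\bfC_\L(\ti)$, transport radical-f.g.\ directly from the full subcategory on $(\L\cap\cC(\tk,\ti))^{\oplus}$, where the restricted ideal does lie in the radical, and the rest of your argument goes through unchanged.
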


\proof 
The assumption that $\L^{\oplus}$ is radical-f.g.\ guarantees that for each $j\geq 0$, there exist only finitely many non-isomorphic indecomposable $\rG_i^{(j)}$ such that $\rad^j\Hom_{\bfC_\L(\ti)}(\rG_i^{(j)}, \rF)/\rad^{j+1}\Hom_{\bfC_\L(\ti)}(\rG_i^{(j)}, \rF)\neq 0$ and by hom-finiteness each such space is finite-dimensional. The statement now follows from the fact that a finite power of the radical of $\bfC_\L(\ti)$ is zero by Proposition \ref{cellrepfinrad}.
\endproof

\begin{corollary}\label{abfinlength}
Let $\L$ be a left cell with domain $\tk$ inside a two-sided cell $\J$ of $\cC$.
Suppose that $\L$ is connection-finite and $\L^\oplus$ is radical-f.g.. 
Then, for each $\ti\in \cC$, the category $\ov{\bfC_\L(\ti)}$ is an abelian finite-length category.
\end{corollary}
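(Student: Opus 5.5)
The corollary should follow by combining Lemma~\ref{cellcosparse} with Lemma~\ref{whenabelian}. By Lemma~\ref{whenabelian}, $\ov{\C}$ is abelian of finite length whenever $\C$ is a cosparse almost finitary category. So the plan is to take $\C=\bfC_\L(\ti)$ and check that it is almost finitary and cosparse.

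First I will confirm that $\bfC_\L(\ti)$ is almost finitary. It is a quotient of $\bfN_\L(\ti)$, which is a full subcategory of $\cC(\tk,\ti)$ closed under direct sums and summands. Morphism spaces therefore stay finite-dimensional. Idempotents lift modulo the ideal $\bfI_\L(\ti)$ because the endomorphism rings are finite-dimensional, so the quotient is still Krull--Schmidt. The indecomposables of the quotient are the images of those indecomposables of $\bfN_\L(\ti)$ not killed by $\bfI_\L(\ti)$, so there are at most countably many isomorphism classes.

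Second, cosparseness is exactly the ``in particular'' clause of Lemma~\ref{cellcosparse}. That lemma applies because $\L$ is connection-finite and $\L^\oplus$ is radical-f.g. One point needs care: indecomposables of $\bfC_\L(\ti)$ are represented by indecomposable $1$-morphisms in $\cC(\tk,\ti)$ that are direct summands of $\rF\rG$ with $\rG\in\L$, and these need not lie in $\L$ itself. I expect this to be harmless, since such a summand $\rH$ either lies in $\L$ or is annihilated in the cell quotient. Indeed, in $\bfC_\L$ any indecomposable object $\rH\not\in\L$ satisfies $\rH<_L\L$, so $\id_\rH$ lies in the unique maximal ideal $\bfI_\L$, as in the finitary case. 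Hence every nonzero indecomposable of $\bfC_\L(\ti)$ is in $\L$, and the finiteness statement of Lemma~\ref{cellcosparse} over $\rG\in\L$ is exactly cosparseness.

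Third, applying Lemma~\ref{whenabelian} to $\bfC_\L(\ti)$ gives the claim. The only potential obstacle is the identification in the second step that the indecomposables surviving in the cell birepresentation all lie in $\L$. If that needed more than the standard maximal-ideal argument, I would fall back on Proposition~\ref{cellrepfinrad} together with the radical-f.g.\ hypothesis to bound the relevant Hom-spaces directly.
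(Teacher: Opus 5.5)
Your proposal is correct and matches the paper: its proof simply combines Lemma~\ref{cellcosparse} (cosparseness of $\bfC_\L(\ti)$) with Lemma~\ref{whenabelian}. Your additional checks, that $\bfC_\L(\ti)$ is almost finitary and that its nonzero indecomposables come from $\L$, are harmless elaborations of what the paper takes as given. One minor slip: with the paper's convention, where products move up in the order, an indecomposable $\rH$ of $\bfN_\L(\ti)$ outside $\L$ satisfies $\rH>_L\L$, not $\rH<_L\L$. The maximal-ideal argument is unaffected by this.
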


\proof
This follows directly from  Lemma \ref{cellcosparse} and Lemma \ref{whenabelian}.
\endproof

\subsection{Algebras in the coproduct completion}\label{projfuncsec}

Throughout this subsection, let $\cC$ be an almost quasi-fiab bicategory.

The following proposition is an almost finitary analog of \cite[Theorem 2]{KMMZ}.

\begin{proposition}\label{projfun}
Let $\L$ be a left cell with domain $\tk$ inside a regular two-sided cell $\J$ of $\cC$. Assume that each $\bfC_\L(\ti)$ is cosparse and that, moreover, each $\ov{\bfC_\L(\ti)}$ is Frobenius.
Then, for $\rF$ in $\J\cap \cC(\ti, \tj)$ and any $X\in \ov{\bfC_\L(\ti)}$, the object $\rF\,X$ is either zero or projective in $\ov{\bfC_\L(\tj)}$, i.e. $\rF\,X$ can be viewed as an object in $\bfC_\L(\tj)$.
\end{proposition}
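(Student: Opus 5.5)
We follow the proof of \cite[Theorem 2]{KMMZ}, replacing finiteness by cosparseness. First, Lemma \ref{whenabelian} shows that each $\ov{\bfC_\L(\ti)}$ is an abelian finite-length category. Its projective objects are exactly the objects of $\bfC_\L(\ti)$, viewed as $0\to \rG$. Since $\ov{\bfC_\L(\ti)}$ is Frobenius, these projectives are also injective.

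The action of $\cC$ extends to the abelianisations, and each $\rF$ acts exactly there. Indeed, $\rF^*$ is left adjoint and ${}^*\rF$ is right adjoint to $\rF$ in the almost quasi-fiab setting, so $\rF$ is exact, sends projectives to projectives, and preserves injectives.

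Fix $\rF\in\J\cap\cC(\ti,\tj)$ and $X\in\ov{\bfC_\L(\ti)}$. Since $X$ has finite length and $\rF$ is exact, it suffices to show two things.
\begin{enumerate}[(1)]
\item For every simple object $S$ of $\ov{\bfC_\L(\ti)}$, the object $\rF S$ is projective (possibly zero).
\item Extensions of projective-injective objects split.
\end{enumerate}
Claim (2) is automatic in a Frobenius category: if $0\to P'\to Q\to P''\to 0$ is exact with $P'$ injective, the sequence splits. So $\rF X$ is an iterated extension of the projectives $\rF S$, hence projective.

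For claim (1), each simple $S$ is the top $L_\rG$ of the projective cover $P_\rG=(0\to\rG)$, with $\rG\in\L$ indecomposable. We then argue as in \cite{KMMZ}. By regularity of $\J$, the only left cell of $\J$ acting nontrivially in $\bfC_\L$ relative to $\L$ is $\L$ itself. The functor $\rF$ annihilates $L_\rG$ unless $\rF^*$ lands back in $\L$ appropriately. We compute $\Hom(\rF L_\rG, -)\cong\Hom(L_\rG,\rF^* -)$ and want to show this functor is exact on $\ov{\bfC_\L(\tj)}$. That amounts to showing that $\rF^*Y$ has $L_\rG$ appearing only in its top, or, dually, that the socle and top of $\rF^* P$ behave like those of a projective-injective.

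The key input is that $\rF^*$ maps objects of $\bfC_\L(\tj)$ to $\bfC_\L(\ti)$. By the Frobenius property these objects are injective, so $\Hom(L_\rG,\rF^*-)$ restricted to injectives is given by socle multiplicities. Using the Duflo-type argument for regular cells, each indecomposable $\rH\in\L$ has simple socle $L_{\rH'}$ with $\rH\mapsto \rH'$ a bijection. Exactness then follows by checking on short exact sequences, using that every object embeds into a projective-injective.

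Concretely, I would mimic \cite{KMMZ}. First show that $\rF L_\rG$ is either zero or has simple top $L_{\rK}$ for the unique $\rK$ with $\rF\rG\in\{\rK\}^\oplus$ up to the cell structure. Then show its top and socle match those of $P_\rK$, and conclude that $P_\rK\twoheadrightarrow \rF L_\rG$ is an isomorphism by comparing lengths, which are finite by cosparseness and Lemma \ref{whenabelian}.

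The main obstacle is step (1): identifying $\rF L_\rG$ with an indecomposable projective. The finitary proof uses the finite Cartan matrix and a Perron--Frobenius or length-counting argument over all indecomposables. Here I would instead use cosparseness, which gives only finitely many $\rG'$ with $\Hom(\rG',\rH)\neq0$. This makes the relevant Cartan submatrices finite, so the counting argument can be carried out locally around each $\rH\in\L$.
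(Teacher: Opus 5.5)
Your reduction to simple objects is correct. $\rF$ is exact on the abelianisation, and in a Frobenius category an extension of a projective by a projective splits, so induction on length reduces the claim to $\rF L$ with $L$ simple. The paper's proof only treats simple $L$ and leaves this step implicit.

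\textbf{Step (1) is the gap.} It is the whole content of the proposition, and you do not prove it. Rewriting projectivity of $\rF L_\rG$ as exactness of $\Hom(L_\rG,\rF^*-)$ only restates the claim, and ``checking on short exact sequences'' is not an argument. Your concrete plan also fails, because $\rF L_\rG$ need not have simple top or be indecomposable. Take the regular birepresentation of $\mathrm{Rep}(\mathrm{GL}(m,\mathbb{C}))$ with $m\geq 2$: it has one regular cell and is semisimple, so all hypotheses hold. With $V=\mathbb{C}^m$ we get $\rF L=V\otimes V\cong S^2V\oplus\Lambda^2V$. So there is no single $P_\rK$ with $P_\rK\tto\rF L$ an isomorphism. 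In any case, nothing in your setup computes the length of $\rF L_\rG$ for such a comparison.

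\textbf{You never use simplicity.} Your sketch never uses that $\bfC_\L$ is \emph{simple}, and without simplicity the statement is false. Let the one-object bicategory with endomorphism category finite-dimensional vector spaces act on projective $\Bbbk[x]/(x^2)$-modules. Everything your sketch invokes holds here: cosparseness, Frobenius abelianisation, an exact adjointable action, and indecomposables indexed by $\L=\{\one\}$. Yet $\one\,L=L$ is not projective.

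\textbf{The missing idea is the paper's ideal argument.}
\begin{enumerate}
\item Fix a simple $L$. By the analogue of \cite[Lemma~12]{MM1} and regularity of $\J$, all $\rG\in\J$ with $\rG L\neq 0$ lie in one left cell.
\item Each such $\rF$ is a summand of some $\rG'\rG$, and $1$-morphisms preserve projectives. Hence if one $\rF L$ is nonzero and non-projective, no nonzero $\rG L$ is projective.
\item The additive closure of these objects is a proper sub-birepresentation of $\ov{\bfC_\L}$. It generates a proper ideal $\bfI$ containing $\id_{\rF L}$.
\item So $\bfI$ contains the nonzero composite $\rH\tto\rF L\hookrightarrow\rH'$ of the projective cover and the injective hull of $\rF L$. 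By the Frobenius property, $\rH$ and $\rH'$ are objects of $\bfC_\L$.
\item Thus $\bfI\cap\bfC_\L\neq 0$, contradicting simplicity of $\bfC_\L$.
\end{enumerate}
Cosparseness is used only to make $\ov{\bfC_\L(\ti)}$ abelian of finite length, so that covers and hulls exist. No length or Cartan-matrix counting is needed.
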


\proof
As $\bfC_\L(\ti)$ is cosparse, Corollary~\ref{abfinlength} implies that each $\ov{\bfC_\L(\ti)}$ is an abelian finite-length category. 

Let $L$ be a simple in $\ov{\bfC_\L(\ti)}$ and note that all $\rF\in \J\cap \cC(\ti, \tj)$ with  $\rF\, L$ nonzero lie in the same left cell of $\J$, thanks to the appropriate generalisation of \cite[Lemma 12]{MM1} and the regularity of $\J$.

Now suppose that for some $\rF \in \J\cap \cC(\ti, \tj)$, the object $\rF\, L$ is nonzero and not projective in $\ov{\bfC_\L(\tj)}$. 
Then for any $\rG \in \J\cap \cC(\ti,\tk)$ such that $\rG\, L\neq 0$, the object $\rG\, L$ is also not projective, since $\rF$ is a direct summand of $\rG'\rG$ for some $\rG'$ by virtue of $\rF$ and $\rG$ being in the same left cell. Thus $\coprod_{\tj\in \cC}\{\rF L\vert \rF\in \J\cap\cC(\ti,\tj) \}^{\oplus_{\ov{\bfC_\L}(\tj)}}$ carries the structure of a proper almost finitary sub-birepresentation $\bfS$ of $\ov{\bfC_\L}$. This generates a proper ideal $\bfI$, which contains $\id_{\rF L}$. Suppose $\rH\to \rF \,L$ is the projective cover and $\rF\, L\to \rH'$ is the injective hull of $\rF\,L$. Then the natural map $\rH \to \rF\,L \to\H'$ is contained in the ideal $\bfI$. Since $\ov{\bfC_\L(\ti)}$ is assumed to be Frobenius, this is a morphism in $\bfC_\L(\ti)$ (without considering the abelianisation), so the intersection of $\bfI$ with $\bfC_\L$ is nontrivial - a contradiction to simplicity of $\bfC_\L$. 
\endproof

\begin{remark}
Note that the condition in Proposition \ref{projfun} of each $\bfC_\L(\ti)$ being cosparse  is satisfied by Proposition \ref{cellcosparse} provided that $\L$ is connection-finite and $\L^\oplus$ is radical-f.g..
\end{remark}

Now assume $\cC$ is $J$-simple for a regular two-sided cell $\J$. Set $\bfJ(\ti,\tj) =(\J\cap \cC(\ti,\tj))^{\oplus}$. Note that $\coprod_{(\ti,\tj)\in \cC\times\cC}\bfJ(\ti,\tj)$ carries the structure of a birepresentation of $\cC\boxtimes \cC^{\op}$, which we denote by $\bfJ$. Note further that this is the cell birepresentation of $\cC\boxtimes \cC^{\op}$ for the (left) cell $\J$ of $\cC\boxtimes \cC^{\op}$ and simple by construction. 

\begin{lemma}\label{splitepizero}
Suppose that each morphism category $\bfJ(\ti,\tj)$ is cosparse and each $\overline{\bfJ(\ti,\tj)}$ is a Frobenius category.
Let $$\bigoplus_{j=1}^r\rG_{1,j}\xrightarrow{(g_{i,j})_{i,j}}\bigoplus_{i=1}^s\rG_{0,i} \qquad \in \ov{\bfJ(\ti,\ti) }$$ for indecomposable $\rG_{0,i}, \rG_{1,j}$, and let $\rF\in \J\cap \cC(\ti, \tj)$. Then, for each $k\in \{1,\ldots, s\}$, the map $(\id_\rF\circh g_{k,j}\circh \id_{\rF^*})_{j} \colon \bigoplus_{j=1}^r\rF\rG_{1,j}\rF^*\to \rF\rG_{0,k}\rF^*$ is either zero or split epi. In particular, this means that 
\[
\bigoplus_{j=1}^r\rF\rG_{1,j}\rF^*\xrightarrow{(\id_\rF\circh g_{i,j}\circh \id_{\rF^*})_{i,j}}\bigoplus_{i=1}^s\rF\rG_{0,i}\rF^*
\]
is a projective object in $\ov{\bfJ(\tj, \tj)}$, hence isomorphic to an object in $\bfJ(\tj,\tj)$.
 \end{lemma}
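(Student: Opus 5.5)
We apply Proposition~\ref{projfun} to the birepresentation $\bfJ$ of $\cC\boxtimes\cC^{\op}$. By construction, $\bfJ$ is the cell birepresentation for the (left) cell $\J$ of $\cC\boxtimes\cC^{\op}$. Since $\J$ is regular in $\cC$, the cell $\J$ is regular as a two-sided cell of $\cC\boxtimes\cC^{\op}$. The action of the $1$-morphism $\rF\boxtimes\rF^*$ (which lies in the two-sided cell $\J\boxtimes\J^*$ of $\cC\boxtimes\cC^{\op}$) on $\bfJ(\ti,\ti)$ is $\rG\mapsto \rF\rG\rF^*$. The hypotheses that each $\bfJ(\ti,\tj)$ is cosparse and each $\ov{\bfJ(\ti,\tj)}$ is Frobenius are exactly those of Proposition~\ref{projfun}. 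We therefore conclude that, for any $X\in\ov{\bfJ(\ti,\ti)}$, the object $\rF X\rF^*$ is zero or projective in $\ov{\bfJ(\tj,\tj)}$. This already gives the ``in particular'' statement, once we interpret the displayed map as the image under $\rF\circh-\circh\rF^*$ of the object $X$ of $\ov{\bfJ(\ti,\ti)}$ given by $(g_{i,j})$; note that the extension of the action to $\ov{\bfJ}$ is componentwise on diagrams. Projective objects in $\ov{\bfJ(\tj,\tj)}$ are precisely those isomorphic to objects $\rH\to 0$ with $\rH\in\bfJ(\tj,\tj)$, which gives the final clause.

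For the first claim, fix $k$. We apply the same argument to the object $X_k:=\bigl(\bigoplus_j\rG_{1,j}\xrightarrow{(g_{k,j})_j}\rG_{0,k}\bigr)$, which is the composite of the given diagram with the projection onto the $k$-th summand. Since $\rG_{0,k}$ is indecomposable, $X_k$ is a quotient of the indecomposable projective $\rG_{0,k}$ in $\ov{\bfJ(\ti,\ti)}$. The object $\rF X_k\rF^*$ is then the cokernel in $\ov{\bfJ(\tj,\tj)}$ of $\phi:=(\id_\rF\circh g_{k,j}\circh\id_{\rF^*})_j$ into the projective $\rF\rG_{0,k}\rF^*$. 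By Proposition~\ref{projfun}, this cokernel $Q$ is zero or projective. If $Q=0$, then $\phi$ is an epimorphism onto a projective in the abelian category $\ov{\bfJ(\tj,\tj)}$ (abelian by Lemma~\ref{whenabelian}), hence split epi. If $Q\neq 0$ is projective, then the canonical surjection $\rF\rG_{0,k}\rF^*\to Q$ splits, so $\rF\rG_{0,k}\rF^*\cong Q\oplus \operatorname{im}\phi$, with $\operatorname{im}\phi$ projective and $\phi$ split onto its image.

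The main obstacle is upgrading ``split onto its image'' to ``zero or split epi''. Here I would use that the cokernel functor $\bfJ(\tj,\tj)\to\ov{\bfJ(\tj,\tj)}$ fixes projectives, together with an argument of the type used in Proposition~\ref{projfun}: a nonzero non-surjective image would produce a nonzero radical-type map in the ideal annihilating the simple top of $X_k$. Concretely, I would pass to a simple quotient $L$ of $X_k$, i.e.\ the simple top of $\rG_{0,k}$, and use that $\rF L\rF^*$ is projective. If $(g_{k,j})_j$ lies in the radical, then $X_k\tto L$ induces a surjection $\rF X_k\rF^*\tto\rF L\rF^*$. Composed with the projective cover data, this forces the radical part of $\phi$ to become zero after applying $\rF\circh-\circh\rF^*$. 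This is because $\rF\circh-\circh\rF^*$ sends radical maps between indecomposables in $\bfJ$ to maps killed in the cell quotient, using simplicity of $\bfJ$, exactly as at the end of the proof of Proposition~\ref{projfun}. Otherwise some $g_{k,j}$ is invertible, so $(g_{k,j})_j$ is already split epi, and hence so is $\phi$. I expect making this radical-versus-isomorphism dichotomy precise to be the delicate step.
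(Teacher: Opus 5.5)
Your first paragraph is the paper's proof. The paper identifies $\rG\mapsto\rF\rG\rF^*$ with the action of $(\rF,\rF^*)$ on the cell birepresentation $\bfJ$ of $\cC\boxtimes\cC^{\op}$ and applies Proposition~\ref{projfun} to get projectivity of the displayed object. It then simply asserts that the rest of the statement follows. (Small slip: in $\ov{\C}$ the projectives are the objects $0\to\rH$, not $\rH\to 0$.) You are also right about what projectivity gives for fixed $k$. With $\phi=(\id_\rF\circh g_{k,j}\circh\id_{\rF^*})_j$ and $Q=\coker\phi$, you only get $\rF\rG_{0,k}\rF^*\cong Q\oplus\operatorname{im}\phi$, i.e.\ $\phi$ is split onto its image.

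The step you propose to upgrade this is wrong. Simplicity of $\bfJ$ says the opposite of what you use. A nonzero radical morphism in $\bfJ$ generates an ideal containing identities, so applying $1$-morphisms to it produces isomorphism components rather than killing it.

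Concretely, for Soergel bimodules take $\rF=\rB_s$ and let $g\colon\rB_s\to\rB_s(2)$ be left multiplication by $\alpha_s$, a radical map. Compose the trivalent split $\rB_s\to\rB_s\rB_s$, then $\id_{\rB_s}\circh g$, then the trivalent merge. The result is a needle containing $\alpha_s$, which equals $\partial_s(\alpha_s)\,\id_{\rB_s}=2\,\id_{\rB_s}$ up to sign. Hence $\id_{\rB_s}\circh g\circh\id_{\rB_s}$ has an isomorphism component, and in particular $g\neq 0$ in $\bfJ$. On the other hand, its target $\rB_s\rB_s\rB_s(2)\cong\rB_s(4)\oplus\rB_s(2)^{\oplus 2}\oplus\rB_s$ contains the summand $\rB_s(4)$. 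That summand does not occur in the source $\rB_s(2)\oplus\rB_s^{\oplus 2}\oplus\rB_s(-2)$. So this map is neither zero nor split epi.

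Consequently, the radical/invertible dichotomy for $(g_{k,j})_j$ does not transfer to a zero/split-epi dichotomy for $\phi$. The per-$k$ claim, for the given decomposable target $\rF\rG_{0,k}\rF^*$, cannot be obtained this way, and in this example it fails. The paper's one-line deduction from projectivity does not supply it either.

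What your argument (and the paper's) does establish is a weaker form. Since $\phi$ is split onto its image, you can re-choose the decomposition of $\rF\rG_{0,k}\rF^*$ into indecomposables so that $\operatorname{im}\phi$ is a sub-sum. Then each component of $\phi$ is zero or split epi. You should state and prove that version, together with the projectivity statement, and carry it forward to Proposition~\ref{complsplitepi}, rather than try to force the literal one.
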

 
 \proof
The functor $\rG\mapsto \rF\rG\rF^* \colon \bfJ(\ti,\ti)\to \bfJ(\tj,\tj)$ is precisely the action of $(\rF,\rF)$ via the cell birepresentation of $\cC\boxtimes\cC^{\op}$ corresponding to $\J$. The assumptions guarantee that we can apply Proposition \ref{projfun}, so $$\bigoplus_{j=1}^r\rF\rG_{1,j}\rF^*\xrightarrow{(\id_\rF\circh g_{i,j}\circh \id_{\rF^*})_{i,j}}\bigoplus_{i=1}^s\rF\rG_{0,i}\rF^*$$ is a projective object in $ \ov{\bfJ(\tj,\tj)}$. The statement of the corollary follows.
 \endproof

\begin{proposition}\label{complsplitepi}
Suppose that each morphism category $\bfJ(\ti,\tj)$ is cosparse and  each $\overline{\bfJ(\ti,\tj)}$ is a Frobenius category.
Let $$\rG = \left(\coprod_{j\in J}\rG_{1,j}\xrightarrow{\theta}\coprod_{i\in I}\rG_{0,i}\right) \qquad \in \cpl\bfJ(\ti,\ti)$$ for indecomposable $\rG_{0,i}, \rG_{1,j}$ and $\rF\in \J\cap \cC(\ti, \tj)$.  Assume that, for each $\rG_{0,k}$, there are at most finitely many $\rG_{0,i}$ with $\rG_{0,i}\cong \rG_{0,k}$.

Then, for each $k\in I$, the map $$(\id_\rF\circh \pi_k \circh \id_{\rF^*})\circv(\id_\rF\circh \theta \circh \id_{\rF^*}) \colon \coprod_{j\in J}\rF\rG_{1,j}\rF^*\to \rF\rG_{0,k}\rF^*,$$
 where $\pi_k\colon \coprod_{i\in I}\rG_{0,i} \to \rG_{0,k} $ is the natural projection, is either zero or split epi, and thus $\rF\rG\rF^* \in \copr\bfJ(\ti,\ti)$.
\end{proposition}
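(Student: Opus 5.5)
We reduce to the finite situation of Lemma~\ref{splitepizero}. Fix $k\in I$ and consider the composite map $\theta_k:=\pi_k\circv\theta\colon \coprod_{j\in J}\rG_{1,j}\to\rG_{0,k}$. A morphism out of a coproduct is determined by its components $\theta_{k,j}\colon\rG_{1,j}\to\rG_{0,k}$. By hom-finiteness, $\Hom(\rG_{1,j},\rG_{0,k})$ is finite-dimensional. By cosparseness of $\bfJ(\ti,\ti)$, only finitely many isomorphism classes of indecomposables $\rG_{1,j}$ admit nonzero maps to $\rG_{0,k}$. Hence the span of the images of all the $\theta_{k,j}$ is generated by finitely many of them, and we pass to a finite subset $J_0\subseteq J$ with the following property: every component $\theta_{k,j}$ factors as a linear combination of the $\theta_{k,j'}$, $j'\in J_0$, precomposed with an isomorphism $\rG_{1,j}\cong\rG_{1,j'}$. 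In particular the image of $\theta_k$ equals the image of its restriction to $\bigoplus_{j\in J_0}\rG_{1,j}$, viewed in $\cpl\bfJ(\ti,\ti)$. Horizontal composition with $\rF$ and $\rF^*$ is additive and commutes with coproducts, so the same reduction applies after applying $\rF-\rF^*$.

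Next apply Lemma~\ref{splitepizero} to the finite map $\bigoplus_{j\in J_0}\rG_{1,j}\to\rG_{0,k}$, a single row with $s=1$. It says that $(\id_\rF\circh\theta_{k,j}\circh\id_{\rF^*})_{j\in J_0}$ is either zero or split epi. Because every other component is a combination of the $J_0$-components (up to isomorphism on the source), the full map $(\id_\rF\circh\pi_k\circh\id_{\rF^*})\circv(\id_\rF\circh\theta\circh\id_{\rF^*})$ is zero exactly when the $J_0$-restriction is zero. If it is nonzero, a splitting of the restricted map composed with the coproduct inclusion splits the full map. This proves the first assertion.

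For the second assertion we must show that the cokernel of $\id_\rF\circh\theta\circh\id_{\rF^*}$ is a coproduct of representables. Each $\rF\rG_{0,i}\rF^*$ decomposes as a finite direct sum of indecomposables in $\bfJ(\tj,\tj)$. We group the indices $i$ according to the isomorphism class of $\rG_{0,i}$. By the standing hypothesis each group is finite, and for a finite group the finite version of Lemma~\ref{splitepizero}, with $s$ equal to the group size, yields a cokernel isomorphic to an object of $\bfJ(\tj,\tj)$.

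The main obstacle is that $\theta$ may mix different groups, since components can go from $\rG_{1,j}$ into $\rG_{0,i}$ for many $i$ at once. To handle this, we filter $I$ by finite unions of groups, using cosparseness to ensure that each $\rG_{1,j}$ maps nontrivially into only finitely many isomorphism classes, hence into finitely many $\rG_{0,i}$ by the finiteness hypothesis. The source therefore decomposes compatibly, and each finite truncation is handled by Lemma~\ref{splitepizero}. Taking the colimit of these split cokernels, which is compatible with the splittings because they are chosen componentwise, exhibits $\rF\rG\rF^*$ as a coproduct of objects of $\bfJ(\tj,\tj)$, i.e.\ as an object of $\copr\bfJ$.
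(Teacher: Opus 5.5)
Your proof of the row statement is a valid deduction from Lemma~\ref{splitepizero}, and it is a slightly leaner version of the paper's step. The paper fixes the finite set $I_k$ of indices with $\rG_{0,i}\cong\rG_{0,k}$, which is where the multiplicity hypothesis enters. It then uses that $\ov{\bfJ(\ti,\ti)}$ has finite length (Lemma~\ref{whenabelian}) to find a finite $J_k\subseteq J$ giving the same cokernel over $\bigoplus_{i\in I_k}\rG_{0,i}$, and quotes Lemma~\ref{splitepizero}. Your explicit factorisation of $\pi_k\circv\theta$ through a finite subsum lets you take $s=1$ and needs no multiplicity hypothesis.

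Both arguments, however, are only as good as the row-wise clause of Lemma~\ref{splitepizero}, whose proof only shows that the cokernel is projective. That clause can actually fail. Take projective bimodules over $\Lambda=\Bbbk[t]/(t^2)$, with $\rF=\rG_{0}=\rG_{1}=\Lambda\otimes_\Bbbk\Lambda$ and $\theta(x\otimes y)=xt\otimes y$. Then $\id_\rF\circh\theta\circh\id_{\rF^*}$ is the identity of $\Lambda\otimes_\Bbbk\Lambda$ tensored with a rank-$2$ endomorphism of a $4$-dimensional multiplicity space, so it is neither zero nor split epi. The reliable content of the statement is therefore the claim about $\rF\rG\rF^*$.

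That claim is where your proposal has a genuine gap; the paper's proof, which only treats the block over $I_k$, also leaves it implicit. Two smaller points first:
\begin{itemize}
\item Cosparseness bounds the indecomposables mapping \emph{into} a fixed object, not those receiving a map from $\rG_{1,j}$. The fact that $\theta$ restricted to $\rG_{1,j}$ lands in a finite subcoproduct holds instead because representables are compact.
\item The set of $j$ whose image lies in a given $\bigoplus_{i\in I_n}\rG_{0,i}$ may be infinite, so each truncation must be cut down again before Lemma~\ref{splitepizero} applies.
\end{itemize}
The real problem is the final colimit. You obtain $\rF\rG\rF^*\cong\mathrm{colim}_n C_n$ with each $C_n$ in $\bfJ(\tj,\tj)$. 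But a directed colimit of representables is in general only a flat presheaf, not a coproduct of representables. To read off a coproduct decomposition you would need the maps $C_n\to C_{n+1}$ to be split monomorphisms. The splittings from Lemma~\ref{splitepizero} split row maps onto $\rF\rG_{0,k}\rF^*$ and say nothing about how $C_n$ sits in $C_{n+1}$. In fact $C_n\to C_{n+1}$ need not even be injective: a linear combination of sources indexed outside $J_n$ can have image inside $\bigoplus_{i\in I_n}\rF\rG_{0,i}\rF^*$.

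The gap closes once you use the Frobenius hypothesis through injectivity. Let $K$ be the image of $\id_\rF\circh\theta\circh\id_{\rF^*}$ in $P:=\coprod_{i\in I}\rF\rG_{0,i}\rF^*$.
\begin{itemize}
\item By compactness, $K$ is the directed union of the images $K'$ of finite restrictions $\bigoplus_{j\in J'}\rF\rG_{1,j}\rF^*\to\bigoplus_{i\in I'}\rF\rG_{0,i}\rF^*$.
\item By Lemma~\ref{splitepizero} their cokernels are projective. So each $K'$ is a direct summand of a projective, hence injective, object of $\ov{\bfJ(\tj,\tj)}$.
\item Cosparseness and hom-finiteness give every representable finite length, so $\cpl\bfJ(\tj,\tj)$ is locally noetherian.
\item Apply Baer's criterion to subobjects $U$ of representables. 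These are finitely presented, so any $U\to K$ factors through some $K'$ and then extends. Hence $K$ is injective.
\end{itemize}
Therefore $P\cong K\oplus P/K$. So $\rF\rG\rF^*\cong P/K$ is a direct summand of a coproduct of representables, and it lies in $\cop\bfJ(\tj,\tj)=\copr\bfJ(\tj,\tj)$ by Lemma~\ref{CtocopC}. This argument uses neither the row-wise clause nor the multiplicity hypothesis.
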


\proof

Fix $k\in I$ and let $I_k = \{i_1,\ldots, i_s\}\subset I$ be the set of precisely those indices with $\rG_{0,i}\cong \rG_{0,k}$ for $i\in I_k$.

Consider the quotient $$\coprod_{j\in J}\rG_{1,j}\xrightarrow{\theta}\bigoplus_{i\in I_k}\rG_{0,i}$$ of $\rG$. Observe that $\bigoplus_{i\in I_k}\rG_{0,i}$ can be viewed as a projective object in $\ov{\bfJ(\ti,\ti)}$. The latter is abelian and has finite length due to Lemma \ref{whenabelian}, hence there is a finite subset $J_k\subset J$ such that the cokernel of $$\coprod_{j\in J}\rG_{1,j}\xrightarrow{\theta}\bigoplus_{i\in I_k}\rG_{0,i}$$ in $\cpl\bfJ(\ti,\ti)$ is isomorphic to the cokernel of $$\bigoplus_{j\in J_k}\rG_{1,j}\xrightarrow{\theta}\bigoplus_{i\in I_k}\rG_{0,i},$$ 
which can be viewed as an object in $\ov{\bfJ(\ti,\ti)}$. The claim now follows from Lemma \ref{splitepizero}.
\endproof

\begin{remark}
Note that the assumption that each $\bfJ(\ti,\tj)$ be cosparse in Lemma \ref{splitepizero} and Proposition \ref{complsplitepi} is satisfied provided that $\J$ is  connection-finite as a left cell of $\cC\boxtimes \cC^{op}$ and $\J^\oplus$ is radical-f.g., due to Lemma \ref{cellcosparse}.
\end{remark}

We would like to apply Proposition \ref{complsplitepi} to deduce that we can choose our algebra $1$-morphisms in the coproduct completion rather than the full completion. For this, we need the following lemma.

\begin{corollary}\label{algshape} Let $\cC$ be a $\J$-simple almost quasi-fiab bicategory and $\bfM$ a transitive birepresentation with apex $\J$ and generator $X\in \bfM(\ti)$.
Assume that $\J$ is adjoint connection-finite. Then $[X,X]$ is Morita equivalent to a cokernel of 
$$ \coprod_{j\in J} \rG_{1,j} \xrightarrow{\theta} \coprod_{i\in I} \rG_{0,i} $$
where  each $\rG_{0,i}$ and $\rG_{1,j}$ is an indecomposable in $\J$. Moreover, for each $\rG_{0,i}$, there are at most finitely many $\rG_{0,k}$ with $\rG_{0,i}\cong \rG_{0,k}$.
\end{corollary}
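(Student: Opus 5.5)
We use framing. Since $\bfM$ has apex $\J$, as in Corollary~\ref{notniceinJH}\eqref{notniceinJ} we pick $\rF\in\J\cap\cC(\ti,\tj)$ with $\rF X\neq 0$. By Lemma~\ref{framing}, $[X,X]=\rC$ is Morita equivalent to $[\rF X,\rF X]\cong\rF\rC\rF^*$. We then show that $\rF\rC\rF^*$ has the claimed shape.

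First we present $\rC$. By Section~\ref{inthom}, $\rC=[X,X]$ is the cokernel of a map $\rZ\xrightarrow{\zeta}\rT_X$, where $\rT_X=\coprod_{\rG}\coprod_{k\in\tB_\rG}\rG_{(k)}$ and $\rZ\in\copr\cC(\ti,\ti)$ is a coproduct of indecomposables. Here each indecomposable $\rG$ occurs in $\rT_X$ with multiplicity $\dim\Hom_{\bfM(\ti)}(\rG X,X)<\infty$.

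Next we conjugate by $\rF$. The functor $\rF\circ-\circ\rF^*$ on $\cpl\cC$ is given by Day convolution with representables, so it commutes with coproducts (by the Claim on $F_*$ of coproducts) and is right exact, hence preserves cokernels. Therefore
\[
\rF\rC\rF^*\cong\coker\Bigl(\coprod \rF\rZ_j\rF^*\xrightarrow{\id_\rF\circh\zeta\circh\id_{\rF^*}}\coprod_{\rG}\coprod_{k\in\tB_\rG}\rF\rG_{(k)}\rF^*\Bigr).
\]
Decompose each $\rF\rZ_j\rF^*$ and each $\rF\rG\rF^*$ into indecomposables. Since $\cC$ is $\J$-simple and $\rF\in\J$, every nonzero indecomposable summand of $\rF\rG\rF^*$ lies in $\J$, as $\J$ is the maximal cell. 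Summands not in $\J$ are zero in $\cC$ and can simply be dropped. This gives a presentation $\coprod_{j\in J}\rG_{1,j}\xrightarrow{\theta}\coprod_{i\in I}\rG_{0,i}$ with all terms indecomposable in $\J$.

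The main step is the finiteness of multiplicities in the target. Fix an indecomposable $\rH\in\J$. Its multiplicity in $\coprod_i\rG_{0,i}$ is
\[
\sum_{\rG}\dim\Hom_{\bfM(\ti)}(\rG X,X)\cdot[\rF\rG\rF^*:\rH].
\]
By adjoint connection-finiteness (Definition~\ref{HfromfinFG}(b), applied to the pair $\rF,\rH\in\J$), only finitely many $\rG$ up to isomorphism have $[\rF\rG\rF^*:\rH]\neq0$. Each of these terms is finite because morphism spaces in $\bfM(\ti)$ are finite-dimensional and $\rF\rG\rF^*$ is a finite direct sum in the Krull--Schmidt category $\cC(\tj,\tj)$. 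So the sum is finite.

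The one point to check carefully is whether the multiplicity of $\rG$ in $\rT_X$ is the dimension of the hom space, as stated above, or something larger. It is, since $\rT_X$ was built from a basis of $\Hom(\rG X,X)$, which is finite-dimensional. No finiteness is needed for the source $\coprod_j\rG_{1,j}$, consistent with the statement.
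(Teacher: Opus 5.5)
Your proposal is correct and follows the paper's proof: present $[X,X]$ as $\coker(\rZ\to\rT_X)$, frame by some $\rF\in\J$ via Lemma~\ref{framing}, and combine the finite multiplicities of indecomposables in $\rT_X$ with adjoint connection-finiteness to get finite multiplicities in $\rF\rT_X\rF^*$. Your version is slightly more careful than the paper's in two places: you choose $\rF$ with $\rF X\neq 0$, as Lemma~\ref{framing} requires, and you correctly attribute the finiteness of multiplicities in $\rT_X$ to hom-finiteness of $\bfM(\ti)$ rather than of $\cC(\ti,\ti)$.
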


\proof
By Section \ref{inthom}, $[X,X]$ can be written as a cokernel of $\rZ \xrightarrow{\zeta} \rT_X$
for some $Z\in \copr\cC(\ti,\ti)$.  Let $\rF\in \J$ and consider the diagram $\rF \rZ \rF^* \xrightarrow{\id_\rF\circh\zeta\circh\id_{\rF^*}}\rF \rT_X\rF^*$ whose components are all in $\J$. Moreover, since $\cC(\ti,\ti)$ is hom-finite, each isomorphism class of indecomposable $1$-morphisms occurs at most finitely many times in $\rT_X$ and hence byadjoint connection-finiteness of $\J$  only finitely many times in $\rF \rT_X\rF^*$. 

Thus $[\rF X,\rF X]$ has the desired form and, by Lemma \ref{framing}, it is Morita equivalent to $[X,X]$.  \endproof

 \begin{remark}
 Note that if $\J$ is connection-finite as a left cell of $\cC\boxtimes \cC^{op}$, then it is in particular adjoint connection-finite as a $J$-cell for $\cC$, since the latter is a special case for the former, where instead of the action of a $1$-morphism $(\rF,\rG)$ in $\cC\boxtimes \cC^{op}$, we only restrict our attention to $1$-morphisms of the form $(\rF,\rF)$.
 \end{remark}

We thus arrive at the main theorem of this section.

\begin{theorem}\label{algcoprod}
Let $\cC$ be a $\J$-simple almost quasi-fiab bicategory for a regular cell $\J$. Let $\bfJ$ be the cell birepresentation of $\cC\boxtimes \cC^{\op}$ corresponding to $\J$.
Assume one of the following:
\begin{enumerate}
\item $\J$ is adjoint connection-finite and each $\bfJ(\ti,\tj)$ is cosparse;
\item $\J$ is connection-finite as a left cell of $\cC\boxtimes \cC^{op}$ and $\J^{\oplus}$ is radical f.g..
\end{enumerate}
Moreover, assume that  each $\ov{\bfJ(\ti,\tj)}$ is Frobenius.

Let $\bfM$ be a simple almost finitary birepresentation of $\cC$ with apex $\J$.
Then there exists an algebra $1$-morphism $\rA\in \copr\J$ (i.e. $\rA = \coprod_{i\in I} \rA_i$ with each $\rA_i\in \J$) such that $\bfM$ is equivalent to $\bfM_\rA$.
\end{theorem}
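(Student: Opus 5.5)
Choose a generator $X\in\bfM(\ti)$; since $\bfM$ is simple it is transitive, so any nonzero $X$ generates. First I reduce hypothesis (2) to hypothesis (1). By the Remark after Corollary~\ref{algshape}, connection-finiteness of $\J$ as a left cell of $\cC\boxtimes\cC^{\op}$ implies adjoint connection-finiteness. Since $\bfJ$ is the cell birepresentation of $\cC\boxtimes\cC^{\op}$ for $\J$, Lemma~\ref{cellcosparse}, applied to $\cC\boxtimes\cC^{\op}$, gives that each $\bfJ(\ti,\tj)$ is cosparse. From now on I work under hypothesis (1) together with the Frobenius assumption.

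By Corollary~\ref{algshape}, $[X,X]$ is Morita equivalent to an algebra $1$-morphism $\rA'=[\rF X,\rF X]\cong\rF[X,X]\rF^*$ (for some $\rF\in\J$ with $\rF X\neq 0$). This $\rA'$ is the cokernel of a map $\coprod_{j}\rG_{1,j}\xrightarrow{\theta}\coprod_i\rG_{0,i}$ with indecomposable terms in $\J$, and each isomorphism class occurs only finitely often among the $\rG_{0,i}$. So $\rA'\in\cpl\bfJ(\tj,\tj)$, and by Lemma~\ref{framing} we have $\bfM\simeq\bfM_{\rA'}$.

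I would then frame once more. Choose $\rK\in\J\cap\cC(\tj,\tk)$ with $\rK\rF X\neq 0$; such a $\rK$ exists by transitivity and the apex argument of Lemma~\ref{lem:cellfin}. Proposition~\ref{complsplitepi}, applied to $\rA'$ and $\rK$, says that each component $\coprod_j \rK\rG_{1,j}\rK^*\to\rK\rG_{0,k}\rK^*$ is zero or split epi. Hence $\rA:=\rK\rA'\rK^*$ lies in $\copr\bfJ(\tk,\tk)$, i.e.\ it is a coproduct of indecomposables in $\J$. One needs Day convolution to commute with cokernels, which holds because $\rK\circ-$ is a left adjoint (to $\rK^*\circ-$, using the quasi-fiab structure), and the decomposition into coproducts is justified by cop-KS (Lemma~\ref{CtocopC}). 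Lemma~\ref{framing} identifies $\rA\cong[\rK\rF X,\rK\rF X]$ as algebras, so $\bfM\simeq\bfM_{\rA}$, which finishes the proof.

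The main obstacle is the step from the componentwise statement of Proposition~\ref{complsplitepi} to the conclusion that the whole cokernel is a coproduct. Componentwise split-epi only controls each summand $\rK\rG_{0,k}\rK^*$ separately, while the cokernel of a map between infinite coproducts need not split globally. My first approach is to group the $\rG_{0,i}$ by isomorphism class of their $\rK(-)\rK^*$ images and use cosparseness, so that only finitely many $\rG_{1,j}$ map nontrivially into each finite block (as in the proof of Proposition~\ref{complsplitepi}). On each block, projectivity in $\ov{\bfJ}$ from Lemma~\ref{splitepizero} then shows the cokernel is a coproduct of projectives, i.e.\ lies in $\copr\bfJ$. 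If blocks interact, I would fall back on projectivity of the whole cokernel in $\cpl\bfJ(\tk,\tk)$: a cokernel that is locally projective in this sense is a coproduct of representables, by Lemma~\ref{CtocopC} and the characterisation of projectives as $\copr\bfJ$ (Section~\ref{sec:presheaf}).
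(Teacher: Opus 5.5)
Your route is the paper's route. You reduce (2) to (1) via Lemma~\ref{cellcosparse} and the remark that connection-finiteness in $\cC\boxtimes\cC^{\op}$ implies adjoint connection-finiteness. You get from Corollary~\ref{algshape} a framed algebra $\rA'=[\rF X,\rF X]$ presented with finite multiplicities, frame once more by some $\rK\in\J$, apply Proposition~\ref{complsplitepi}, and conclude with Lemma~\ref{framing}.

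The divergence is the last step, and there your proposal has a gap. The paper handles it by asserting that the cokernel equals $\coprod_{i\in I_0}\rG_{0,i}$ with $I_0=\{k : \pi_k\circ\theta=0\}$. You are right that componentwise zero-or-split-epi does not give this by itself. The diagonal map $\rG\to\rG\oplus\rG$ has both components split epi and $I_0=\emptyset$, yet its cokernel is $\rG$. The ``and thus'' in the statement of Proposition~\ref{complsplitepi} rests on the same inference, so citing it does not settle the point.

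Neither of your remedies closes the gap:
\begin{itemize}
\item In the block approach, it is not true that only finitely many $\rG_{1,j}$ map nontrivially into a finite block. Cosparseness bounds isomorphism classes, not multiplicities; only the image is finitely generated, which is what the proof of Proposition~\ref{complsplitepi} actually uses.
\item More seriously, projectivity of each $\coker(\pi_{I_k}\circ\rK\theta\rK^*)$ controls only certain quotients of $\rK\rA'\rK^*$, not $\rK\rA'\rK^*$ itself.
\item The fallback ``locally projective implies coproduct of representables'' is not what Lemma~\ref{CtocopC} says. It cannot hold without the Frobenius hypothesis: over the cosparse additive path category of $1\to 2\to 3\to\cdots$, the presheaf equal to $\Bbbk$ at every vertex with identity structure maps is a directed union of representables but is not projective.
\end{itemize}

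Here is one way to close it.
\begin{enumerate}
\item The index set $I$ is countable, since the $\rG_{0,i}$ arise from decomposing $\rF\rT_X\rF^*$. Choose finite sets $F_1\subset F_2\subset\cdots$ exhausting $I$, and let $\rA'_n\subseteq\rA'$ be the image of $\bigoplus_{i\in F_n}\rG_{0,i}$.
\item Since $\bfJ(\tj,\tj)$ is cosparse, representables have finite total dimension. Hence the kernel of $\bigoplus_{i\in F_n}\rG_{0,i}\to\rA'_n$ is finitely generated, so $\rA'_n\in\ov{\bfJ(\tj,\tj)}$, and $\rA'=\bigcup_n\rA'_n$.
\item Framing by $\rK$ is exact (translations have adjoints on both sides) and preserves colimits. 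So $\rK\rA'\rK^*=\bigcup_n\rK\rA'_n\rK^*$, and each $\rK\rA'_n\rK^*$ is projective in $\ov{\bfJ(\tk,\tk)}$ by Lemma~\ref{splitepizero}.
\item This is where the Frobenius hypothesis enters. Each $\rK\rA'_n\rK^*$ is then also injective in $\ov{\bfJ(\tk,\tk)}$, so every inclusion $\rK\rA'_n\rK^*\hookrightarrow\rK\rA'_{n+1}\rK^*$ splits, with complement $D_{n+1}\in\bfJ(\tk,\tk)$.
\item Setting $D_1=\rK\rA'_1\rK^*$ gives $\rK\rA'\rK^*\cong\coprod_n D_n\in\copr\bfJ(\tk,\tk)$.
\end{enumerate}
Your final appeal to Lemma~\ref{framing} then completes the proof.
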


\proof
Either choice of assumptions guarantees we can apply  Proposition \ref{complsplitepi} and Lemma \ref{algshape}.
Choosing $X\in\bfM(\ti)$ for some $\ti\in \cC$, we have $\bfM\cong \bfM_{[X,X]}$ by Section \ref{intsec}. By Corollary \ref{algshape}, possibly replacing $X$ by some $\rH X$, we may assume that $[X,X]$ is given as the cokernel of 
$$ \coprod_{j\in J} \rG_{1,j} \xrightarrow{\theta} \coprod_{i\in I} \rG_{0,i}, $$
where  each $\rG_{0,i}$ and $\rG_{1,j}$ is an indecomposable in $\J$ and, for each $\rG_{0,k}$, there are at most finitely many $\rG_{0,i}$ with $\rG_{0,i}\cong \rG_{0,k}$. 
By further framing, Proposition \ref{complsplitepi} guarantees that $[X,X]$ is Morita equivalent to the cokernel of some 
$$ \coprod_{j\in J} \rG_{1,j} \xrightarrow{\theta} \coprod_{i\in I} \rG_{0,i}, $$
where the composition with the natural projection onto each $\rG_{0,k}$ is either zero or split epi.

Let $I_0\subset I$ be the set of those $k\in I$,  such that $\pi_k\circ\theta = 0$. Then the cokernel of 
$$ \coprod_{j\in J} \rG_{1,j} \xrightarrow{\theta} \coprod_{i\in I} \rG_{0,i} $$ is isomorphic to $\rA = \coprod_{i\in I_0} \rG_{0,i}$ and the statement follows.
\endproof

\begin{corollary}\label{algcopinH}
In the setup of the Theorem \ref{algcoprod}, the algebra $1$-morphism $\rA$ can be chosen in $\copr\H$ for a diagonal $H$-cell in $\J$.
\end{corollary}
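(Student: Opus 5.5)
We start from the algebra $1$-morphism $\rA=\coprod_{i\in I_0}\rG_{0,i}\in\copr\J$ produced by Theorem~\ref{algcoprod}, with $\bfM\simeq\bfM_\rA$, and move it into a fixed diagonal $H$-cell $\H=\L\cap\L^*$ by one further framing, as in Corollary~\ref{notniceinJH}\eqref{notniceinH}. Concretely, we take a generator $X'$ of $\bfM$ with $[X',X']\cong\rA$; for instance $X'$ is the image of $\rA$ under the equivalence $\bfM_\rA\simeq\bfM$. Since $\bfM$ is transitive with apex $\J$, the argument cited from \cite[Lemmas 2.32, 2.33]{MMMTZ} in the proof of Corollary~\ref{notniceinJH} gives some $\rF\in\H$ with $\rF X'\neq0$. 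By Lemma~\ref{framing}, the algebra $[\rF X',\rF X']\cong\rF\rA\rF^*$ is Morita equivalent to $\rA$, and $\bfM\simeq\bfM_{\rF\rA\rF^*}$ because $\rF X'$ is again a generator by transitivity.

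Next we must show that $\rF\rA\rF^*$ lies in $\copr\H$. Because horizontal composition in $\copr\cC$ is componentwise, $\rF\rA\rF^*=\coprod_{i\in I_0}\rF\rG_{0,i}\rF^*$, and each summand is a finite direct sum of indecomposables. Since $\cC$ is $\J$-simple, every indecomposable summand of $\rF\rG_{0,i}\rF^*$ either lies in $\J$ or vanishes. Such a summand satisfies $\le_R\rF$ and $\le_L\rF^*$. Because $\J$ is regular and the summands lie in the same two-sided cell as $\rF$, they lie in the right cell of $\rF$ and the left cell of $\rF^*$. With $\rF\in\L\cap\L^*$, these cells are $\L^*$ and $\L$ respectively. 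Hence every such summand lies in $\L\cap\L^*=\H$, so after regrouping $\rF\rA\rF^*$ is a coproduct of indecomposables in $\H$, i.e. an object of $\copr\H$.

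The step I expect to need the most care is the claim that being $\le$ in a one-sided order and lying in the same $\J$ forces lying in the same one-sided cell. This is precisely what regularity provides, in the form of the generalisation of \cite[Lemma 12]{MM1} already used in Proposition~\ref{projfun}. It can also be handled through the observation, used in Corollary~\ref{algshape}, that each isomorphism class occurs only finitely often, which keeps the coproduct well behaved.

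Finally, the algebra structure on $\rF\rA\rF^*$ from Lemma~\ref{framing} consists of $2$-morphisms in $\copr\cC_\H$, and $\bfM_{\rF\rA\rF^*}\simeq\bfM$. This gives the required algebra $1$-morphism in $\copr\H$.
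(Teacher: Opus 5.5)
Your proposal is correct and follows the paper's argument. The paper's proof simply frames the algebra $1$-morphism from Theorem~\ref{algcoprod} by a $1$-morphism in $\H$. You supply the details it leaves implicit: $\rF\rA\rF^*=\coprod_{i\in I_0}\rF\rG_{0,i}\rF^*$ componentwise, and by $\J$-simplicity and regularity of $\J$ every nonzero indecomposable summand lies in the right cell $\L^*$ of $\rF$ and in the left cell $\L$ of $\rF^*$, hence in $\H$. Your remark about finite multiplicities plays no role in that step; regularity alone does it.

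One piece of bookkeeping is glossed over in the paper as well: $\rF\in\H$ can only act on $X'$ if $\H$ sits at the object where $\rA$ lives. You can arrange this, since the statement only asks for some diagonal $H$-cell of $\J$.
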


\proof
The algebra  $1$-morphism in $\copr\H$ is obtained by framing the algebra  $1$-morphism in $\copr\J$ provided by Theorem \ref{algcoprod} by a $1$-morphism in $\H$.\endproof 

We thus obtain a stronger version of Proposition \ref{Hcellreduc}.

\begin{theorem}\label{copHcellreduc}
Let $\cC$ be a $\J$-simple almost fiab bicategory, which satisfies the conditions in Theorem \ref{algcoprod}, and $\H$ a diagonal $H$-cell in $\J$.
There are equivalences of bicategories
\[\cC \scopwfmod_\J\simeq\cC_\J \scopwfmod_\J  \simeq \cB^s_{\cC_\J, \copr{\J}} \simeq \cB^s_{\cC_\H, \copr{\H}} \simeq \cC_\H \scopwfmod_\H.\]

Moreover, this restricts to a biequivalence 
\[\cC\swfmod_\J \simeq \cC_\H\swfmod_\H.\]
\end{theorem}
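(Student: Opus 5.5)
The plan is to follow the proof of Proposition~\ref{Hcellreduc} step by step. The only change is that the completion $\cpl{\J}$ (resp.\ $\cpl{\H}$) is replaced by the coproduct completion $\copr{\J}$ (resp.\ $\copr{\H}$), with Theorem~\ref{algcoprod} and Corollary~\ref{algcopinH} replacing Corollary~\ref{notniceinJH}.

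First, the biequivalence \eqref{2repbim} sends a simple cop-af birepresentation with apex $\J$ to its algebra $1$-morphism. By Theorem~\ref{algcoprod}, every $\bfM$ in $\cC\scopwfmod_\J$ is equivalent to $\cop\bfM_\rA$ with $\rA\in\copr\J$. This $\rA$ is simple by Lemma~\ref{simpleifsimple}. Conversely, a simple algebra in $\copr\J\subset\cpl\J$ gives a simple birepresentation, and its apex is $\J$ by $\J$-simplicity.

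Next I would restrict \eqref{2repbim} to the full subbicategory on simple algebras in $\copr\J$. Its $1$- and $2$-morphisms are the same as in $\cB^s_{\cpl\cC,\cpl\J}$, since biprojective bimodules between such algebras are computed in $\cpl\cC$. One point needs checking: the notation $\cB^s_{\cC_\J,\copr\J}$ suggests that the bimodules themselves can be taken in $\copr\cC_\J$. I would argue that a biprojective $\rA$-$\rB$-bimodule $\rM$ is a summand of $\rA\rM\rB$ (a free bimodule), and that $\rA\rM\rB$, a composite of coproducts of $1$-morphisms in $\J$, lies in $\copr\cC$, hence so does $\rM$ by idempotent completeness (Lemma~\ref{CtocopC}). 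This gives $\cC\scopwfmod_\J\simeq\cC_\J\scopwfmod_\J\simeq\cB^s_{\cC_\J,\copr\J}$, where the first equivalence holds because the action factors through $\cC_\J$, exactly as before.

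For the $H$-cell step, Corollary~\ref{algcopinH} lets every object be framed into $\copr\H$ within its Morita class. So $\cB^s_{\cC_\J,\copr\J}$ is biequivalent to its full subbicategory on algebras in $\copr\H$, and bimodules between algebras in $\copr\H$ are the same in $\cC_\J$ and $\cC_\H$. Finally, $\cB^s_{\cC_\H,\copr\H}\simeq\cC_\H\scopwfmod_\H$ is the same argument applied to $\cC_\H$. Here I must check that $\cC_\H$ with its cell $\H$ again satisfies the conditions of Theorem~\ref{algcoprod}, or else avoid this by noting that any simple $\cC_\H$-birepresentation with apex $\H$ comes from an algebra in $\cpl\H$ which, via the previous equivalences, is Morita equivalent to one in $\copr\H$.

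For the restriction to $\cC\swfmod_\J\simeq\cC_\H\swfmod_\H$, objects correspond bijectively by Corollary~\ref{wfcopKSf}. The issue is $1$-morphisms, because exact morphisms between cop-af birepresentations need not preserve compact objects. The composite biequivalence is given by functors of the form $-\circ_\rA\rM$, framing, and restriction. I would show that each of these sends compact objects to compact ones, using Lemma~\ref{lem:copKSbirep}: compact objects are exactly those in $\bfM_\rA$, and $\rF\rA\circ_\rA\rM\cong\rF\rM$ is compact when $\rM$ is a finite coproduct. A morphism between the almost finitary birepresentations extends to the coproduct completions, and conversely a compactness-preserving morphism restricts. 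This compactness step is the main obstacle. In particular, the Morita bimodules obtained by framing (such as $\rF\rA$) must give compact images on generators, and this should follow from $\rF X$ lying in $\bfM$, via Lemma~\ref{framing}.
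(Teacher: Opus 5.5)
Your treatment of the first line of equivalences is the paper's: you run Proposition~\ref{Hcellreduc} with $\cpl\J,\cpl\H$ replaced by $\copr\J,\copr\H$, using Theorem~\ref{algcoprod} and Corollary~\ref{algcopinH}.

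One side remark on that part: your argument that a biprojective $\rA$-$\rB$-bimodule $\rM$ lies in $\copr\cC$ is circular. The object $\rA\rM\rB$ is a composite of coproducts of $1$-morphisms in $\J$ only if $\rM$ already is one. Use instead that $\rM$ is projective as a right $\rB$-module. By Lemma~\ref{projectivesinmodcplC} it is then a retract of a coproduct of objects $\rF\rB$. These lie in $\copr\cC$ because $\rB\in\copr\J$, and $\copr\cC=\cop\cC$ is closed under retracts by Lemma~\ref{CtocopC}.

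The real gap is in the ``moreover'' part, which you leave as ``the main obstacle''. The criterion you give, that $\rF\rM$ is compact ``when $\rM$ is a finite coproduct'', is at best sufficient if it means a finite coproduct of $1$-morphisms. The identity of $\cop\bfM_\rA$ is $-\circ_\rA\rA$, and $\rA$ is typically an infinite coproduct (compare $\mathcal{O}(G)$ in Section~\ref{secsimpleextA}). Moreover, transporting compactness along object-level equivalences handles framing, but not the passage from $\cC$ to $\cC_\H$, where the underlying categories change.

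What is missing is an intrinsic if-and-only-if criterion. Choose both algebras in $\copr\H$. Then $-\circ_\rA\rM$ restricts to a morphism $\bfM_\rA\to\bfM_\rB$ if and only if $\rM\cong\rA\circ_\rA\rM$ is compact as a right $\rB$-module, i.e.\ lies in $\bfM_\rB(\ti)$ (Lemma~\ref{lem:copKSbirep}).
\begin{itemize}
\item Necessity holds because $\rA$ itself lies in $\bfM_\rA(\ti)$.
\item Sufficiency holds because $\rF\rA\circ_\rA\rM\cong\rF\rM$ and $\bfM_\rB$ is stable under the action.
\end{itemize}

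This condition depends only on $\rM$: compactness in a cop-KS category means being a finite coproduct of indecomposables. Also, bimodules between algebras in $\copr\H$ are the same over $\cC_\J$ and over $\cC_\H$. Hence exactly the same bimodules give $1$-morphisms in $\cC\swfmod_\J$ and in $\cC_\H\swfmod_\H$. That single observation is the paper's entire proof of the restriction, and it makes your step-by-step tracking of compactness through framing unnecessary.
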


\proof
The first line of equivalences follows directly from Proposition \ref{Hcellreduc} and Theorem \ref{algcoprod}. To see that this restricts to a biequivalence between the bicategories of simple almost finitary birepresentations, it suffices to notice that choosing (simple) algebra $1$-morphisms $\rA$ and $\rB$ in $\copr{\H}$, a morphism between the corresponding cop-af birepresentations $\cop{\bfM}_\rA$ and $\cop{\bfM}_\rB$ corresponds to a biprojective $\rA$-$\rB$-bimodules $\rM$ in $\copr{\H}$, which restrict to morphisms of almost finitary birepresentations  $\bfM_\rA$ and $\bfM_\rB$ (of $\cC$ of $\cC_\H$) if and only if $\rM$ is compact as a right $\rB$-module.
\endproof

\begin{remark}\label{checkonH}
In particular, it suffices to check the finiteness conditions in Theorem \ref{algcoprod} for one diagonal $H$-cell inside $\J$. Thus the theorem applies if $\J$ satisfies one of the following two conditions:
\begin{enumerate}
\item $\J$ contains a diagonal $H$-cell $\H$ (say with source $\ti$) which is adjoint connection-finite and where $\bfH(\ti,\ti)$ is cosparse;
\item $\J$ contains a diagonal $H$-cell $\H$ (say with source $\ti$) which is connection-finite as a left cell of $\cC_\H\boxtimes \cC_\H^{op}$ and such that $\H^{\oplus}$ is radical f.g..
\end{enumerate}
\end{remark}

\section{Double centraliser theorem and its applications}\label{mainabs}

Throughout this section, let $\cC$ be an almost fiab bicategory of the form $\cC = \cC_\H$ for an $H$-cell $\H$ with source and target $\ti$.

\subsection{Double centraliser theorem}\label{sec:dcthm}

Let $\rA\in \cC(\ti,\ti)$ be a simple algebra $1$-morphism in ${\H}^{\oplus}$. 

Consider the bicategory $\cB:=\cB_{\rA}$ defined as the $1$-full $2$-full subbicategory of $\cB_\cC$ with single object $\rA$. In other words, it has one object $\rA$ whose endomorphism category $\cB(\rA,\rA)$ is the category of biprojective $\rA$-$\rA$-bimodules in $\cC(\ti,\ti)$ with composition given by $-\circ_\rA -$. Note that all biprojective $\rA$-$\rA$-bimodules live in ${\H}^{\oplus}$.


Then the left action of $\cC$ on $\X= \{ \rF\rA\,\vert\, \rF\in \cC(\ti,\ti)\}^\oplus_{(\rmod_{\cpl\cC}\rA)_\ti}$ defines a natural structure of an almost finitary birepresentation of $\cC$. Moreover the action given by $- \circ_\rA \rM$ for $\rM\in \cB(\rA,\rA)$ defines a right birepresentation of $\cB$ on $\X$ and the two actions of $\cC$ and $\cB$ commute. Since $\H$ is an $H$-cell, the action of $\cB$ on $\X$ is also transitive with generator $\rA$.

The following lemma is the algebra analogue of the construction below the statement of \cite[Theorem 5.3]{MMMTZ}. 

\begin{lemma}
Let $\rX$ be a projective right $\rA$-module in $\cC(\ti,\ti)$. Then $\rX^*$ is a 
projective left $\rA$-module with action given by
\[\delta_{\rA,\rX^*}\colon \, \rA\rX^*\xrightarrow{\eta_{\rX}\circh\id_{\rA\rX^*}} \rX^*\rX\rA\rX^*\xrightarrow{\id_{\rX^*}\delta_{\rX,\rA}\circh\id_{\rX^*}}  \rX^*\rX\rX^*\xrightarrow{\id_{\rX^*}\circh\varepsilon_\rX}\rX^*.\]

Similarly, for a projective left $\rA$-module $\rY$ in $\cC(\ti,\ti)$, one obtains a projective right $A$-module-structure $\delta_{\rY^*,\rA}$ on $\rY^*$.
\end{lemma}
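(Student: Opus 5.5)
The argument has two parts: first check that $\delta_{\rA,\rX^*}$ is a left action, then deduce projectivity of $\rX^*$ from that of $\rX$. Throughout, I use the adjunction $(\rX,\rX^*)$ with unit $\eta_\rX\colon \one_\ti\to\rX^*\rX$ and counit $\varepsilon_\rX\colon \rX\rX^*\to\one_\ti$ from the almost fiab structure. The structure on $\rX^*$ is the mate of the right action $\delta_{\rX,\rA}\colon\rX\rA\to\rX$ under this adjunction, so $\delta_{\rA,\rX^*}$ is the image of $\delta_{\rX,\rA}$ under the bijection
\[\Hom(\rX\rA,\rX)\cong\Hom(\rA,\rX^*\rX)\cong\Hom(\rA\rX^*,\rX^*).\]

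\textbf{Step 1: the action axioms.} For unitality, compose $\delta_{\rA,\rX^*}$ with $\mathsf{u}\circh\id_{\rX^*}$. Interchange moves $\mathsf{u}$ past $\eta_\rX$, and right unitality of $\rX$ turns $\delta_{\rX,\rA}\circv(\id_\rX\circh\mathsf{u})$ into $\id_\rX$. What remains is $(\id_{\rX^*}\circh\varepsilon_\rX)\circv(\eta_\rX\circh\id_{\rX^*})$, which is $\id_{\rX^*}$ by the triangle identity. For associativity, expand $\delta_{\rA,\rX^*}\circv(\id_\rA\circh\delta_{\rA,\rX^*})$. It contains two units and two counits; I cancel an adjacent pair $\varepsilon_\rX$, $\eta_\rX$ by a zigzag identity, then apply associativity of the right action on $\rX$, and obtain $\delta_{\rA,\rX^*}\circv(\mathsf{m}\circh\id_{\rX^*})$. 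This is routine string-diagram bookkeeping, done exactly as in the coalgebra version below \cite[Theorem 5.3]{MMMTZ}.

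\textbf{Step 2: projectivity.} First treat free modules. For $\rX=\rF\rA$ with $\rF\in\cC(\ti,\ti)$ and action $\id_\rF\circh\mathsf{m}$, we have $\rX^*\cong\rA^*\rF^*$. Since $\cC$ is almost fiab, $(-)^*$ is an involutive contravariant duality on $\cC(\ti,\ti)$ reversing composition. It therefore identifies right $\rA$-modules with left $\rA^{*}$-modules. I would use the Frobenius-type identification coming from the duality: left $\rA$-module maps $\rX^*\to\rM$ correspond, via the adjunction, to $\rA$-module maps out of $\rX$ in the other variable. This shows that $\Hom_{\rA\text{-}}(\rX^*,-)$ is naturally isomorphic to a composite of exact functors, namely translation by $\rX$, which is exact because it has adjoints on both sides, followed by $\Hom_{\cC}(\one_\ti,-)$ restricted appropriately. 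Concretely, for a left $\rA$-module $\rM$,
\[\Hom_{\rA\text{-}}(\rX^*,\rM)\cong\Hom_{-\rA}(\rX\otimes\ldots)\]
is arranged so that exactness reduces to projectivity of $\rX$. The alternative is more direct. For $\rX=\rF\rA$, check that $\rA\rF^*$ with the multiplication action is isomorphic to $(\rF\rA)^*$ with $\delta_{\rA,\rX^*}$, provided $\rA\cong\rA^*$ compatibly. Failing that, verify that the mate structure makes $(\rF\rA)^*$ a direct summand of a free left module $\rA\rG$: take $\delta_{\rA,\rX^*}$ itself as the split epimorphism $\rA\rX^*\to\rX^*$, split by $\mathsf{u}\circh\id$. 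This is a module map precisely by Step 1's associativity. Then $\rX^*$ is a summand of $\rA\rX^*$, which is free with $\rX^*\in\H^{\oplus}\subset\cC(\ti,\ti)$, hence projective.

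This last argument works for every $\rX$, projective or not, so I would adopt it. It works whenever free modules $\rA\rG$ with $\rG\in\cC$ are projective, which follows from the left-hand analogue of the argument in Lemma~\ref{projectivesinmodcplC}. The case of a projective left module $\rY$ is symmetric, using ${}^*(-)$ in place of $(-)^*$, or $(-)^*$ directly since it is involutive. I expect the main obstacle to be the associativity check in Step 1, which involves tracking unitors and associators.
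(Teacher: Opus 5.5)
Your Step 1 is fine: $\delta_{\rA,\rX^*}$ is the mate of $\delta_{\rX,\rA}$, unitality is a triangle identity, and associativity is a zigzag cancellation.

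The gap is in Step 2, in the argument you end up adopting. The map $\mathsf{u}\circh\id_{\rX^*}$ is a section of $\delta_{\rA,\rX^*}\colon\rA\rX^*\to\rX^*$ in $\cC(\ti,\ti)$, but it is not a morphism of left $\rA$-modules. Since $(\mathsf{m}\circh\id_{\rX^*})\circv(\id_\rA\circh\mathsf{u}\circh\id_{\rX^*})=\id_{\rA\rX^*}$, compatibility with the actions would force $(\mathsf{u}\circh\id_{\rX^*})\circv\delta_{\rA,\rX^*}=\id_{\rA\rX^*}$, i.e.\ $\delta_{\rA,\rX^*}$ would have to be invertible. Associativity only makes the retraction $\delta_{\rA,\rX^*}$ a module map, not its section. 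So you only get $\rX^*$ as a retract of $\rA\rX^*$ in $\cC(\ti,\ti)$, which says nothing about projectivity in the module category.

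Your own remark that the argument works for every $\rX$ is the warning sign. It uses nothing beyond $\rX^*$ being a left module, so it would prove that every left module $\rN$ is a module summand of $\rA\rN$. That already fails in $\Vect_\Bbbk$ for $\rA=\Bbbk[x]/(x^2)$ and $\rN=\Bbbk$.

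The two other routes you sketch are not carried out: the Hom-isomorphism is left unfinished, and the ``direct alternative'' assumes $\rA\cong\rA^*$ compatibly. Also, $(-)^*$ lands in $\cC^{\mathsf{co},\mathsf{op}}$, so it reverses $2$-morphisms. It therefore turns right $\rA$-modules into left $\rA^*$-\emph{comodules}, not left $\rA^*$-modules.

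A correct argument first needs functoriality of the mate construction: for a morphism of right modules $f\colon\rX\to\rY$, its mate $f^*\colon\rY^*\to\rX^*$ intertwines the mate actions. Then $\rX$ being a retract of $\rF\rA$ makes $\rX^*$ a retract of $(\rF\rA)^*\cong\rA^*\rF^*$ as left modules. Since right translation by $\rF^*$ preserves projective left modules, everything reduces to showing that $\rA^*$, the mate of the right regular module, is a projective left $\rA$-module.

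That is the real content of the lemma, and it is not formal. In $\Vect_\Bbbk$ this module is $\mathrm{D}A$ with $(a\phi)(x)=\phi(xa)$, which is injective, and projective only when $A$ is self-injective. The paper gives no argument here beyond pointing to the analogous finitary construction, so this input has to come from the standing hypotheses. Possible sources:
\begin{itemize}
\item A Frobenius structure, which gives $\rA\cong\rA^*$ as left modules.
\item Separability: the bimodule section $\sigma$ of $\mathsf{m}$, rather than $\mathsf{u}$, does give a module splitting of $\rA\rN\to\rN$. This is the splitting your argument actually needs.
\item An analogue of Proposition~\ref{projfun} saying that translation by $1$-morphisms in $\H$ produces projectives. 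Combine it with the fact that $\rA^*$ is a genuine left-module summand of $\rA^*\rA$, split by $\id_{\rA^*}\circh\mathsf{u}$ and the right $\rA$-action on $\rA^*$.
\end{itemize}
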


\begin{lemma}\label{lem:AstarAproj}
The right birepresentation of $\cB$ on $\X$ is equivalent to $(\rA^*\rA)\bproj_\cB$, the equivalence being given by $\rX\mapsto \rA^*\rX$.
\end{lemma}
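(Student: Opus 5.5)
We want to show that the right birepresentation of $\cB$ on $\X$ is equivalent to the birepresentation of $\cB$ given by projective right modules over the algebra $1$-morphism $\rA^*\rA\in\cB(\rA,\rA)$, via $\rX\mapsto\rA^*\rX$. This is the algebra analogue of the construction after \cite[Theorem 5.3]{MMMTZ}, and I would follow it.

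\textbf{Step 1: the algebra $\rA^*\rA$ and the functor.} First, $\rA^*\rA$ is an $\rA$-$\rA$-bimodule, using the left $\rA$-module structure on $\rA^*$ from the preceding lemma and right multiplication on $\rA$. Since $\rA\in\H^\oplus$ and the actions come from adjunctions, it is biprojective, so it lies in $\cB(\rA,\rA)$. It is an algebra in $\cB$, with multiplication
\[
\rA^*\rA\circ_\rA\rA^*\rA\cong\rA^*\rA\rA^*\rA\to\rA^*\rA
\]
induced by $\varepsilon_\rA$ (the usual algebra $\rX^*\rX$ coming from an adjunction), and unit $\rA\to\rA^*\rA$ induced by $\eta_\rA$ and the multiplication of $\rA$. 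I check compatibility with the $\rA$-bimodule structures.

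For $\rX\in\X$, a projective right $\rA$-module in $\H^\oplus$, the preceding lemma makes $\rX^*$ a projective left $\rA$-module. Then $\rA^*\rX$ is an $\rA$-$\rA$-bimodule, and it is a right $\rA^*\rA$-module via
\[
\rA^*\rX\circ_\rA\rA^*\rA\cong\rA^*\rX\rA^*\rA\to\rA^*\rX\rA\to\rA^*\rX,
\]
using $\varepsilon_\rA$ and then the right action on $\rX$. Because $\cB$ acts on $\X$ by $-\circ_\rA\rM$ and $\rA^*(\rX\circ_\rA\rM)\cong(\rA^*\rX)\circ_\rA\rM$ naturally, the assignment $\rX\mapsto\rA^*\rX$ is a morphism of right $\cB$-birepresentations.

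\textbf{Step 2: the generator goes to the free module and the functor is fully faithful.} The generator $\rA\in\X$ goes to $\rA^*\rA$, the free rank-one module. Since both sides are transitive with these generators (the $\cB$-action on $\X$ is transitive with generator $\rA$, as noted above), it suffices to compare endomorphism spaces of the generators and their translates $\rM$ under the $\cB$-action. This amounts to identifying
\[
\Hom_{\rmod\rA}(\rA\circ_\rA\rM,\rA\circ_\rA\rN)=\Hom_{\cB}(\rM,\rN)\cong\Hom_{\rA^*\rA}(\rM',\rN')
\]
for the corresponding modules $\rM'=\rA^*\rA\circ_\rA\rM$ and $\rN'=\rA^*\rA\circ_\rA\rN$. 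By free–forgetful adjunction \eqref{freeforget} in $\cB$, the right-hand side is $\Hom_{\cB}(\rM,\rA^*\rA\circ_\rA\rN)$. It therefore suffices that the unit $\rN\to\rA^*\rA\circ_\rA\rN$ induces a bijection on homs from $\rM$.

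\textbf{Step 3 (main obstacle): the unit gives the right hom spaces.} I expect this to be the hardest step. The cleanest route is to show that $\rA^*\rA$ is Morita-trivial relative to the action, in the sense that $\rA\circ_\rA-$ is separable or that $\rA^*\rA\otimes$ splits. Concretely, I would use the adjunction isomorphism $\Hom(\rA\rY,\rZ)\cong\Hom(\rY,\rA^*\rZ)$ in $\cC(\ti,\ti)$, restrict it to module morphisms, and express $\Hom_{\rA\text{-}\rA}(\rM,\rA^*\rA\circ_\rA\rN)$ as $\Hom_{\rA\text{-}\rA}(\rA\circ_\rA\rM,\rA\circ_\rA\rN)$ for the right actions. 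Here the simplicity of $\rA$, and the fact that everything lives in $\H^\oplus$ with its $H$-cell structure, is what forces the left $\rA$-linearity to be automatic.

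\textbf{Step 4: essential surjectivity.} Projective $\rA^*\rA$-modules in $\cB$ are summands of $\rA^*\rA\circ_\rA\rM$. These equal $\rA^*(\rA\circ_\rA\rM)$, which is the image of $\rA\circ_\rA\rM\in\X$. Together with fullness and idempotent completeness of $\X$, this gives that the functor is dense, completing the equivalence.
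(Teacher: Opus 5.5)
\textbf{There is a genuine gap, in Step 2.} Morphisms in $\X$ are morphisms of \emph{right} $\rA$-modules only. So $\Hom_\X(\rA\circ_\rA\rM,\rA\circ_\rA\rN)\cong\Hom_{\rmod\rA}(\rM,\rN)$, and this space in general strictly contains the space $\Hom_\cB(\rM,\rN)$ of bimodule maps. Your equality $\Hom_{\rmod\rA}(\rA\circ_\rA\rM,\rA\circ_\rA\rN)=\Hom_\cB(\rM,\rN)$ is therefore false.

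As a result, the claim you set out to prove in Step 3 is also false. That claim is that the unit $\rN\to\rA^*\rA\circ_\rA\rN$ induces bijections $\Hom_\cB(\rM,\rN)\to\Hom_\cB(\rM,\rA^*\rA\circ_\rA\rN)$, i.e.\ that simplicity of $\rA$ and the $H$-cell structure make right $\rA$-linear maps automatically left $\rA$-linear.

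A counterexample occurs already for $\rM=\rN=\rA$. By \eqref{freeforget}, $\End_\X(\rA)\cong\Hom_{\cC(\ti,\ti)}(\one_\ti,\rA)$, whereas $\End_\cB(\rA)$ consists of bimodule endomorphisms only. Concretely, take $\cC=\Vect_{\Bbbk}$ with $\H=\{\one\}$ and the simple separable algebra $\rA=\mathrm{Mat}_n(\Bbbk)$. Then these spaces are $\mathrm{Mat}_n(\Bbbk)$ and $\Bbbk$ respectively, while $\Hom_\cB(\rA,\rA^*\rA)\cong\mathrm{Mat}_n(\Bbbk)$. The algebra $\rA^*\rA$ exists precisely to record this discrepancy; it is not Morita-trivial relative to the action in your sense.

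\textbf{How to repair it.} Identify $\Hom_{\rmod\rA}(\rM,\rN)$ directly with $\Hom_\cB(\rM,\rA^*\rA\circ_\rA\rN)\cong\Hom_\cB(\rM,\rA^*\rN)$. Here $\rA^*\rN$ carries the left $\rA$-action through $\rA^*$ from the preceding lemma. Under the adjunction $\rA\dashv\rA^*$, a map $\rM\to\rA^*\rN$ that is bilinear for these actions corresponds exactly to a right $\rA$-linear map $\rA\rM\to\rN$ that is balanced over $\rA$. That is the same as a morphism $\rA\circ_\rA\rM\cong\rM\to\rN$ in $\X$.

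This is essentially the computation you begin in Step 3, and it is what the paper does. The paper treats $\rN=\rA$, via \eqref{freeforget} and the adjunction, which exhibits $\rA^*\rA$ as the internal end of the generator $\rA$ for the right $\cB$-action. The generator/internal-hom argument, as in Lemma~\ref{intmor}, then gives the equivalence, and no automatic left-linearity is needed.

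\textbf{A smaller error in Step 1.} For compatibility with right translation by $\cB$, $\rA^*\rX$ must be a \emph{left} $\rA^*\rA$-module, not a right one. Its structure map is induced by $\id_{\rA^*}\circh\varepsilon_\rA\circh\id_\rX$ on $\rA^*\rA\rA^*\rX$, or equivalently by $\bfu_\rA^*$ on $\rA^*\rA\circ_\rA\rA^*\rX\cong\rA^*\rA^*\rX$.

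Your right-module structure map does not type-check. Since $\circ_\rA$ is a coequaliser, $\rA^*\rX\circ_\rA\rA^*\rA$ is a quotient of $\rA^*\rX\rA^*\rA$, not the composite itself. Moreover, that composite contains no adjacent $\rA\rA^*$ on which $\varepsilon_\rA$ could act. The same coequaliser remark applies to your description of the multiplication on $\rA^*\rA$, which must be read as the map induced on the quotient.
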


Note that  $(\rA^*\rA)\proj_{\cB(\ti,\ti)} = \{\rA^*\rA\circ_\rA\rM \,\vert \rM\in \cB(\rA,\rA)\}^{\oplus}$ with the  action of $\cB$ given by right translation.

\proof
First note that, using the structures from the previous lemma for $\rX = \rA$, we see that $\rA^*\rA$ is indeed an algebra $1$-morphism in $\cB$ with multiplication given by
\[\rA^*\rA\circ_\rA \rA^*\rA \cong \rA^* \rA^*\rA \xrightarrow{\id_{\rA^*}\circh\bfu_\rA^*\circh \id_{\rA}} \rA^*\rA\] 
and unit given by 
\[\rA = \rA\one_\ti \xrightarrow{\id_\rA\circh\eta_\rA}\rA \rA^*\rA\xrightarrow{\delta_{\rA,\rA^*}\circh\id_\rA} \rA^*\rA. \]

As $\rA$ generates $\X$ as a right birepresentation of $\cB$, it now suffices to show that for any $\rM\in \cB(\rA,\rA)$
\[ \Hom_{\X} (\rA\circ_\rA\rM,\rA) \cong  \Hom_{\cB(\rA,\rA)} (\rM,\rA^*\rA)\]
By definition,
\[ \Hom_{\X} (\rA\circ_\rA\rM,\rA) \cong \Hom_{\X} (\rM,\rA) =\Hom_{\rproj_{\cC(\ti,\ti)} \rA }(\rM,\rA) \]
and hence
\[\Hom_{\rproj_{\cC(\ti,\ti)} \rA }(\rM,\rA)\cong\Hom_{(\rA )\rproj_{\cC(\ti,\ti)} (\rA) }(\rA\rM,\rA)  = \Hom_{\cB(\rA ,\rA) }(\rA\rM,\rA) \]
where the isomorphism is the free forgetful adjunction, see e.g. \cite[Section 1.6]{MMS}.
Finally, we claim that 
\[\Hom_{\cB(\rA ,\rA) }(\rA\rM,\rA) \cong \Hom_{\cB(\rA ,\rA) }(\rM,\rA^*\rA) \]
which follows from the definition of the left $\rA$-module structure on $\rA^*$.
This completes the proof of the lemma.
\endproof

Now let $\cB$ be the one-object bicategory with single object $\rB:=\rA^*\rA$ and $\cB(\rB,\rB)$ the category of biprojective $\rB$-$\rB$-bimodules in $\cB(\rA,\rA)$ with horizontal composition given by $-\circ_\rB -$. Denote by $\I$ its maximal two-sided cell, which consists of projective $\rB$-$\rB$-bimodules.

\begin{lemma}\label{bimoddct}
There is an equivalence of categories between $\H^\oplus$ and $\I^\oplus$ given by
\[\Gamma\colon\rF\mapsto \rA^*\rF\rA\] with inverse \[\Gamma^{-1}\colon\rY\mapsto \rA\circ_{\rB} \rY\circ_{\rB} \rA^*\]
\end{lemma}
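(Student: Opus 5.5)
We follow the finitary double centraliser argument of \cite[Section 5]{MMMTZ}, transported to algebras. The plan is to show that $\Gamma$ and $\Gamma^{-1}$ are well defined and that the two composites are naturally isomorphic to the identity functors, using only the adjunction $\rA\dashv\rA^*$-type structures from the previous two lemmas and Lemma \ref{lem:AstarAproj}.

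\emph{Well-definedness.} For $\rF\in\H^\oplus$, $\rF\rA$ is a projective right $\rA$-module, so by the previous lemma $\rA^*\rF\rA=(\rF^*\rA)^*\circ\ldots$ is biprojective as an $\rA$-$\rA$-bimodule. It is moreover a $\rB$-$\rB$-bimodule via $\rA^*\rA\circ_\rA\rA^*\rF\rA\cong\rA^*\rA^*\rA\rF\rA\to\rA^*\rF\rA$, using the multiplication of $\rB$ from Lemma \ref{lem:AstarAproj}, and similarly on the right. We must check that $\rA^*\rF\rA\cong \rA^*\circ_{\rA}(\rA\rF\rA)\circ_\rA\rA$, which is a summand of a free bimodule $\rB\circ_\rA\rM\circ_\rA\rB$ for $\rM=\rA\rF\rA\in\cB(\rA,\rA)$; hence $\Gamma(\rF)$ is a projective $\rB$-$\rB$-bimodule, i.e.\ lies in $\I^\oplus$. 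Conversely, for $\rY$ a projective $\rB$-$\rB$-bimodule, reduce by additivity to $\rY=\rB\circ_\rA\rM\circ_\rA\rB$. Then Lemma \ref{lem:AstarAproj}, applied in the form $\rA\circ_\rB\rB\cong\rA$ as a right $\rA$-module, gives $\rA\circ_\rB\rY\circ_\rB\rA^*\cong \rA\circ_\rA\rM\circ_\rA\rA^*\cong\rM\circ_\rA \rA^*$, which lies in $\H^\oplus$ because all biprojective bimodules lie in $\H^\oplus$ and $\rA^*\in\H^\oplus$ (with $\H$ a diagonal cell, $\H^*=\H$, and $\J$-simplicity killing other summands).

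\emph{Composites.} For $\Gamma^{-1}\Gamma$ we compute $\rA\circ_\rB\rA^*\rF\rA\circ_\rB\rA^*$. The key identity is $\rA\circ_\rB\rA^*\cong\one_\ti$ on $\H^\oplus$, or at least $\rA\circ_\rB\rA^*\rF\cong\rF$ for $\rF\in\H^\oplus$. This is the Morita statement in Lemma \ref{lem:AstarAproj}: $\rA$ is a progenerator of $\X$ as a right $\cB$-birepresentation, and its endomorphism algebra is $\rB$. By the usual Morita theory, the functors $-\circ_\rA\rA^*$ and $-\circ_\rB\rA$ are mutually inverse equivalences between $\X$ and $\rproj\rB$. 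Writing $\rF\rA\in\X$ we obtain $\rF\rA\circ_\rA\rA^*\circ_\rB\rA\cong\rF\rA$, and stripping off $\rA$ via the free--forgetful adjunction \eqref{freeforget} gives the claim. For $\Gamma\Gamma^{-1}$, the same Morita equivalence gives $\rA^*\circ_\rA\rA\cong\rB$ as bimodules, whence $\rA^*(\rA\circ_\rB\rY\circ_\rB\rA^*)\rA\cong \rB\circ_\rB\rY\circ_\rB\rB\cong\rY$. Naturality in morphisms is formal, since all isomorphisms are built from structure maps.

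\emph{Main obstacle.} The hard step is the isomorphism $\rA\circ_\rB\rA^*\rF\cong\rF$, that is, transporting ``$\rA$ is a progenerator whose endomorphism algebra is $\rB$'' into an honest isomorphism of $1$-morphisms in $\cC(\ti,\ti)$ rather than of module categories. Here one must use simplicity of $\rA$ together with transitivity of $\X$ (so $\rA$ generates), and the fact that $\H$ is the unique nontrivial cell. I would first try to mimic \cite[Theorem 5.3]{MMMTZ} directly: realize $\H^\oplus$ as the category of endomorphisms of the birepresentation $\X$ commuting with $\cB$, identify those with $\rB$-$\rB$-bimodule functors via Lemma \ref{lem:AstarAproj}, and check fully faithfulness of $\Gamma$ on Hom spaces by adjunction.
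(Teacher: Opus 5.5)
Your well-definedness argument and the isomorphism $\Gamma\Gamma^{-1}(\rY)=\rA^*\rA\circ_\rB\rY\circ_\rB\rA^*\rA\cong\rY$ are fine in outline; the paper also regards these as clear. The one non-formal step, however, is not proved. That step is $\rA\circ_\rB\rA^*\rF\cong\rF$ and $\rF\rA\circ_\rB\rA^*\cong\rF$ for $\rF\in\H$, which is what $\Gamma^{-1}\Gamma\cong\mathrm{Id}$ reduces to.

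Your ``Morita equivalence, then strip off $\rA$'' argument does not work. The free--forgetful adjunction \eqref{freeforget} identifies morphisms out of $\rX\rA$ with morphisms out of $\rX$, but it gives no way to cancel $\rA$ from an isomorphism $\rX\rA\cong\rY\rA$. Moreover, nothing in that step uses $\rF\in\H$. The same reasoning applies verbatim to $\rF=\one_\ti$, where indeed $(\rA\circ_\rB\rA^*)\rA\cong\rA\circ_\rB\rB\cong\rA$. It would then yield $\rA\circ_\rB\rA^*\cong\one_\ti$, which is false: $\rA\circ_\rB\rA^*$ is a quotient of $\rA\rA^*\in\H^\oplus$ and so cannot be $\one_\ti$. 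Your Morita functors also have the sides reversed: in the statement, $\rA$ is a right $\rB$-module and $\rA^*$ a left one.

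Your fallback, realising $\H^\oplus$ as the endomorphisms of $\X$ commuting with the right $\cB$-action, is exactly the double centraliser theorem, Proposition~\ref{dct}. The paper derives that theorem \emph{from} this lemma, so you cannot invoke it here without an independent proof, which you do not supply.

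The missing idea is elementary and uses only the cell structure of $\cC=\cC_\H$, not simplicity of $\rA$ or transitivity of $\X$. Since $\H$ is both a left and a right cell, and $\rA,\rA^*\in\H^\oplus$ are nonzero, every $\rF\in\H$ is a direct summand of some $\rA\rG$ and of some $\rG'\rA^*$. On such objects the required isomorphisms are immediate: $\rA\circ_\rB\rA^*\rA\rG=\rA\circ_\rB\rB\rG\cong\rA\rG$, and $\rG'\rA^*\rA\circ_\rB\rA^*=\rG'\rB\circ_\rB\rA^*\cong\rG'\rA^*$. These are instances of the natural comparison maps induced by $\varepsilon_\rA\colon\rA\rA^*\to\one_\ti$, so they pass to direct summands. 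This gives $\rA\circ_\rB\rA^*\rF\cong\rF\cong\rF\rA\circ_\rB\rA^*$, and hence $\Gamma^{-1}\Gamma(\rF)\cong\rF$, for all $\rF\in\H$.
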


\proof
First note that $\rA^*$ is naturally a left $\rB$-module, so $\rA^*\rF\rA$ is indeed a projective $\rB$-$\rB$-bimodule and the functors are well-defined. 
We clearly have $\Gamma\Gamma^{-1}(\rY) = \rA^*\rA\circ_{\rB} \rY\circ_{\rB} \rA^*\rA\cong \rY$ so it suffices to prove that $\Gamma^{-1}\Gamma(\rF)=\rA\circ_{\rB} \rA^*\rF\rA\circ_{\rB} \rA^*\cong \rF$ for any $\rF\in \H$, or equivalently that $\rA\circ_{\rB} \rA^*\rF\cong \rF$ and $\rF\rA\circ_{\rB} \rA^*\cong \rF$ for any $\rF\in \H$. Since $\H$ is both a left and a right cell for $\cC=\cC_{\H}$, we can test the first of these isomorphisms for $\rF$ of the form $\rA\rG$ and the second for $\rF$ of the form $\rG\rA^*$, where they become obvious.
\endproof

Recall that there is a canonical pseudofunctor \[\mathrm{can}\colon \cC\to \cE nd_{\cE nd^{\ex}_{\cC}(\X)}(\X).\]
Noting that $\cB$ is equivalent to $\cE nd^{\ex}_{\cC}(\X)$ and $\cB$ is equivalent to $\cE nd_{\cB}(\X)$, we obtain the double centraliser theorem.

\begin{proposition}\label{dct}
The pseudofunctor $\mathrm{can}$ induces an equivalence between $\H^\oplus$ and $ \cE nd^{\proj}_{\cE nd^{\ex}_{\cC}(\X)}(\X)$.
\end{proposition}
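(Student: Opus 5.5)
The plan is to reduce the statement to Lemma~\ref{bimoddct}, using the two identifications stated just before the proposition. First we replace $\cE nd^{\ex}_{\cC}(\X)$ by $\cB=\cB_\rA$. Concretely, an exact morphism of $\cC$-birepresentations $\X\to\X$ is, by the biequivalence \eqref{2repbim}, given by $-\circ_\rA\rM$ for a biprojective $\rA$-$\rA$-bimodule $\rM$. Next, Lemma~\ref{lem:AstarAproj} identifies $\X$, as a right $\cB$-birepresentation, with $(\rA^*\rA)\bproj_\cB$ via $\rX\mapsto\rA^*\rX$.

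Applying the same bimodule description one level up, now in $\cB$ with the algebra $\rB=\rA^*\rA$, endomorphisms of $\X$ as a $\cB$-birepresentation correspond to biprojective $\rB$-$\rB$-bimodules in $\cB(\rA,\rA)$. Under this correspondence, those that preserve projectives, i.e.\ lie in $\cE nd^{\proj}$, are exactly the objects of $\I^\oplus$: a bimodule $\rY$ sends the generator $\rB$ to $\rB\circ_\rB\rY\cong\rY$, and this is projective as a right $\rB$-module precisely when the functor $-\circ_\rB\rY$ preserves projective objects. So the target category $\cE nd^{\proj}_{\cE nd^{\ex}_{\cC}(\X)}(\X)$ is equivalent to $\I^\oplus$.

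Then we trace $\mathrm{can}$ through these identifications. For $\rF\in\H^\oplus$, $\mathrm{can}(\rF)$ is left multiplication $\rX\mapsto\rF\rX$ on $\X$. Transported along $\rX\mapsto\rA^*\rX$, it becomes $\rA^*\rX\mapsto\rA^*\rF\rX$. Evaluated on the generator $\rA$ (corresponding to $\rB$), this gives the $\rB$-$\rB$-bimodule $\rA^*\rF\rA$, with left action through $\rA^*$ and right action induced by the right $\cB$-action. This is exactly $\Gamma(\rF)$. We also need to check that $\rA^*\rF\rA$ is projective, so that $\mathrm{can}(\rF)$ indeed lands in the $\proj$ part; this holds because $\rF\rA\in\X$ and the equivalence of Lemma~\ref{lem:AstarAproj} maps it to a projective object. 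A similar check on $2$-morphisms shows that $\mathrm{can}$ acts on morphisms as $\id_{\rA^*}\circh-\circh\id_\rA$. Lemma~\ref{bimoddct} then says that $\Gamma$ is an equivalence $\H^\oplus\to\I^\oplus$, which completes the argument.

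I expect the main obstacle to be the middle identification. It requires knowing that every $\cB$-endomorphism of $(\rA^*\rA)\bproj_\cB$ is representable by a bimodule, in the setting where $\rA\in\H^\oplus$ and everything is finitary at the level of $\cC(\ti,\ti)$. To handle it, I would use the $\cB$-analogue of \eqref{2repbim} together with transitivity of the $\cB$-action on $\X$ with generator $\rA$: any endomorphism is determined by the image of the generator, and that image carries a compatible left $\rB$-action coming from naturality. I would also have to verify that the various adjunction isomorphisms are compatible with $\mathrm{can}$, but that should be routine bookkeeping of the kind done in \cite[Section 5]{MMMTZ}.
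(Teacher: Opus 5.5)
Your overall route is the paper's: identify $\cE nd^{\ex}_{\cC}(\X)$ with $\cB=\cB_\rA$, describe endomorphisms of $\X$ over $\cB$ by $\rB$-$\rB$-bimodules via Lemma~\ref{lem:AstarAproj}, trace $\mathrm{can}$ to $\Gamma$, and conclude with Lemma~\ref{bimoddct}. The gap is in your middle step, which misreads the superscript $\proj$.

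In the paper, $\proj$ means \emph{projective functor}, the notion of Lemma~\ref{projfunFrob}. On the bimodule side these are the $\rB$-$\rB$-bimodules that are projective \emph{as bimodules}, i.e.\ retracts of free bimodules, and this is what $\I$ consists of. It does not mean ``preserves projective objects''. Every $1$-morphism of the bimodule bicategory over $\rB$ is biprojective by definition, so the endofunctor it induces on $\X\simeq(\rA^*\rA)\bproj_\cB$ automatically preserves projectives. Your criterion therefore selects all biprojective $\rB$-$\rB$-bimodules. That includes the identity bimodule $\rB$, i.e.\ the identity functor of $\X$, and $\rB$ is not a projective bimodule in general. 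This is exactly why the proof of Theorem~\ref{thm:biequivalence} adjoins the identity by hand in $\cD(\tx,\tx)=\Hom^{\proj,\one}_{\cB}(\X,\X)$. For a concrete case, take the projective functors over $\Lambda=\Bbbk[x]/(x^2)$ acting on projective $\Lambda$-modules: the identity functor preserves projectives but is not a projective functor, since $\Lambda$ is not projective as a $\Lambda$-bimodule. So under your reading the target is strictly larger than $\I^\oplus$, and $\Gamma$ being an equivalence onto $\I^\oplus$ would not prove the proposition.

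To repair this, use the correct notion. Once you have bimodule representability, the identification $\cE nd^{\proj}_{\cB}(\X)\simeq\I^\oplus$ is essentially the definition of $\I$. Likewise, to see that $\mathrm{can}(\rF)$ lands in the $\proj$ part, you need $\rA^*\rF\rA$ to be projective as a $\rB$-$\rB$-bimodule, not merely as a one-sided module. Lemma~\ref{lem:AstarAproj} only gives the one-sided statement; the two-sided one is the first step of Lemma~\ref{bimoddct}, which uses the left $\rB$-module structure on $\rA^*$. A minor convention slip: Lemma~\ref{lem:AstarAproj} realises $\X$ as left $\rB$-modules with $\cB$ acting on the right, so the equivariant endofunctors are $\rY\circ_\rB-$, not $-\circ_\rB\rY$. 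With these corrections your argument is the paper's. The representability you flag as the main obstacle is what the paper simply records as ``$\cB$ is equivalent to $\cE nd_{\cB}(\X)$''.
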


\subsection{Frobenius algebras}

\begin{lemma}\label{Frobalgprojinj}
Let $\rA$ be a Frobenius algebra $1$-morphism in ${\cC}(\ti,\ti)$. Then the projective and the injective objects in each $(\rmod_{\cC}\rA)_\tj$ coincide.
\end{lemma}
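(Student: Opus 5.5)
The plan is to show that $\rA$ is injective as a right module over itself, and then to propagate both injectivity and projectivity to all of $(\rmod_{\cC}\rA)_\tj$ using the free–forgetful adjunction \eqref{freeforget} together with the adjunctions in the almost fiab structure. We work in the abelian category of right $\rA$-modules in $\cpl\cC(\ti,\tj)$, which contains $(\rmod_{\cC}\rA)_\tj$. By the argument of Lemma~\ref{projectivesinmodcplC}, the projectives there are exactly the summands of (coproducts of) free modules $\rF\rA$ with $\rF\in\cC(\ti,\tj)$. So it suffices to show two things: first, each free module $\rF\rA$ is injective; second, every injective module is a summand of some $\rF\rA$.

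For the first step we use the Frobenius structure. It provides a counit $\epsilon\colon\rA\to\one_\ti$ and an isomorphism of right $\rA$-modules $\rA\cong {}^*\rA$ (or $\rA^*$, depending on conventions), induced by $\epsilon\circv\mathsf{m}$. Dually to \eqref{freeforget}, the functor $\rM\mapsto \rM\rA^*$ is equipped with a right $\rA$-action, and there is a cofree adjunction
\[
\Hom_{(\rmod\rA)_\tj}(\rM,\rF\rA^*)\cong\Hom_{\cC(\ti,\tj)}(\rM,\rF),
\]
obtained by composing with $\id_\rF\circh(\text{evaluation})$. Combining this with $\rA\cong\rA^*$ as right modules identifies $\rF\rA$ with the cofree module $\rF\rA^*$. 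The cofree module is right adjoint to the forgetful functor, and the forgetful functor is exact, so $\rF\rA^*$, and hence $\rF\rA$, is relatively injective.

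At this point one has to decide which notion of injectivity is meant. In the additive (non-abelian) category $(\rmod_\cC\rA)_\tj$, I would interpret ``injective'' relative to monomorphisms that split in $\cC(\ti,\tj)$, which is the standard convention in \cite{MMMTZ}. With that convention the argument closes as follows. Any module $\rM$ embeds, via its coaction $\rM\to\rM\rA^*$ (the adjoint of $\id_\rM$), into a cofree module, and this embedding is split as a morphism in $\cC$ by $\id_\rM\circh\epsilon$. Hence every injective module is a summand of a cofree module, which is free. Symmetrically, the action $\rM\rA\to\rM$ is a split epimorphism onto $\rM$, so every projective module is a summand of a free module, which is cofree. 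Therefore the projectives and the injectives coincide.

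The main obstacle is verifying that $\epsilon\circv\mathsf{m}$ really gives an isomorphism $\rA\cong\rA^*$ of right $\rA$-modules, with the duality conventions matching the actions $\delta$ of the preceding lemma. I expect this to be the usual Frobenius zig-zag computation, using the inverse copairing $\mathsf{u}^*\!\circ\Delta$ built from the comultiplication.
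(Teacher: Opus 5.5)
Your argument is correct, but it is organised differently from the paper's. The paper imports Abrams' theorem that, for a Frobenius algebra, right $\rA$-modules and right $\rA$-comodules in $\cC$ form isomorphic categories on the same underlying $1$-morphisms. This turns the question into one about free modules versus cofree comodules. The paper then notes that $\rA$ is both a free module and a cofree comodule, that left translation preserves projectives and injectives because $1$-morphisms have adjoints, and that projectives resp.\ injectives are the additive closure of the $\rF\rA$ by Lemma~\ref{projectivesinmodcplC} and its dual.

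You stay in the module category instead. You build the cofree module $\rF\,{}^*\rA$ from the duals; in the paper's conventions $-\circ\rA$ has right adjoint $-\circ{}^*\rA$, and ${}^*\rA\cong\rA^*$ here since $\cC$ is almost fiab. You identify this cofree module with $\rF\rA$ via the right-module isomorphism $\rA\cong{}^*\rA$ coming from the Frobenius pairing. You then run the standard relative argument with the $\cC$-split unit $\rM\to\rM\,{}^*\rA$ and the $\cC$-split action $\rM\rA\to\rM$. The two proofs are the same idea in two guises: Abrams' coaction $\rM\to\rM\rA$ is exactly your unit transported along $\rA\cong{}^*\rA$, which is why your splitting becomes $\id_\rM\circh\epsilon$.

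The paper's route needs no duality bookkeeping and no nondegeneracy check. Yours is self-contained and gets injectivity of each $\rF\rA$ straight from the cofree adjunction; exactness of the forgetful functor plays no role there. It also states which notion of injectivity is meant, and that choice is the right one. In the abelian category of modules in $\cpl\cC$ from your first paragraph, free modules need not be injective: for $\rA=\one_\ti$ this would require representable functors to be injective in $\cpl\cC(\ti,\tj)$. The paper's assertion that the cofree comodule $\rA$ is injective is likewise to be read relative to $\cC$-split monomorphisms.

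One small slip: the copairing for the zig-zag is comultiplication after unit, $\one_\ti\to\rA\rA$, not $\mathsf{u}^*\circ\Delta$.
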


\proof
Observe that, by exactly the same arguments as in \cite[Theorem 3.3]{A99}, the categories $(\rmod_{\cC}\rA)_{\tj}$ of right $\rA$-modules in $\cC$ and $(\comod_{\cC}\rA)_{\tj}$ are isomorphic and have the same underlying objects in $\cC(\ti,\tj)$. Explicitly, assume $\rA$ has comultiplication $\delta$, counit $\epsilon$, multiplication $\mu$ and unit $\eta$. Then, given a comodule $\rX$ with comultiplication $\delta_\rX$, we can define an action $\alpha_\rX\colon \rX$ by 
$$\rX\rA \xrightarrow{\delta_\rX\circh\id_\rA} \rX\rA\rA \xrightarrow{\id_\rX\circh\mu}\rX\rA \xrightarrow{\id_\rX\circh \epsilon} \rX$$
and given a module $\rX$ with action $\alpha_\rX$, we can define a coaction by
$$\rX\xrightarrow{\id_\rX\circh\eta} \rX\rA \xrightarrow{\id_\rX\circh\delta} \rX\rA\rA \xrightarrow{\alpha_\rX\circh\id_\rA} \rX\rA$$
and these constructions are mutually inverse. Moreover, module maps with respect to these are also comodule maps, and vice versa. This implies, in particular, that the categories of injective right $\rA$-comodules and that of injective right $\rA$-modules coincide and, moreover, that the categories of projective right $\rA$-comodules and that of projective right $\rA$-modules coincide.

Now $\rA$ is both a projective right $\rA$-module and an injective right $\rA$-comodule, hence a projective-injective object in $(\rmod_{\cC}\rA)_{\ti}$.
The action of a $1$-morphism by left translation is exact due to the existence of left and right adjoints, and hence preserves projectives and injectives, so each $\rF\rA$ is a projective-injective right comodule. Moreover, $\{ \rF\rA\,\vert\, \rF\in \cC(\ti,\tj)\}^\oplus_{(\rmod_{\cC}\rA)_\tj}$ coincides with the category of projective objects in $(\rmod_{\cC}\rA)_\tj$ by the analoguous argument as in the proof of Lemma \ref{projectivesinmodcplC} and with the category of injective objects in $(\rmod_{\cC}\rA)_\tj$ by the dual argument (see e.g. the proof of \cite[Theorem 4.7]{MMMT}), which completes the proof of the lemma.
\endproof

 \begin{lemma}\label{projfunFrob}
 Suppose $\X$ and $\Y$ are cosparse almost finitary categories such that $\ov{\X},\ov{\Y}$ are Frobenius. Then any projective functor 
$\Phi\colon \X\to\Y$ (i.e.\ projective in the category of exact $\Bbbk$-linear functors from $\ov{\X}$ to $\ov{\Y}$) is isomorphic to a retract of a coproduct of functors of the form $Y\otimes_\Bbbk \Hom_\X(X, -)$ for some $X\in\X, Y\in \Y$. Moreover, its adjoint 
is again a projective functor.
 \end{lemma}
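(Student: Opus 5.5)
The plan is to reduce to the classical finite-dimensional statement (projective functors between module categories of self-injective algebras are sums of $P\otimes \Hom(Q,-)$), done locally. By Lemma~\ref{whenabelian}, $\ov{\X}$ and $\ov{\Y}$ are abelian finite-length categories. Since $\ov{\X}$ is Frobenius, its projectives are the objects of $\X$ and coincide with its injectives. An exact functor $\Phi\colon\ov{\X}\to\ov{\Y}$ is determined by its restriction to $\X$, which is a $\Bbbk$-linear functor $\X\to\ov{\Y}$. Thus the category of exact functors is equivalent to $\Bbbk$-linear functors $\X\to\ov{\Y}$, i.e.\ to $\X^{\op}\otimes\Y$-type bimodules, and can be viewed inside $\cpl{(\X^{\op}\boxtimes\Y)}$, since every object of $\ov{\Y}$ is a cokernel between objects of $\Y$.

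First, I will show that each functor $Y\otimes_\Bbbk\Hom_\X(X,-)$ is exact and projective. Exactness holds because $X$ is projective in $\ov{\X}$. For projectivity, I will use a Yoneda-type adjunction: natural transformations $Y\otimes\Hom_\X(X,-)\to\Psi$ correspond to $\Hom_{\ov{\Y}}(Y,\Psi(X))$. This is exact in $\Psi$ because $Y$ is projective in $\ov{\Y}$, and epimorphisms of exact functors are objectwise epimorphisms. Coproducts make sense, and stay exact, because of cosparseness: when evaluated on a fixed object, $\Hom_\X(X,-)$ is nonzero for only finitely many indecomposable $X$. Hence $\coprod_{X\in\ind\X}\Phi(X)\otimes\Hom_\X(X,-)$ is a well-defined exact functor; I will use the dual (covariant) form $\Hom_\X(X,-)$ on an object $Z$ being nonzero only for $X$ mapping to $Z$.

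Second, I will take the canonical map
\[\coprod_{X\in\ind\X}P(\Phi X)\otimes_\Bbbk\Hom_\X(X,-)\to\Phi,\]
where $P(\Phi X)\in\Y$ is a projective cover. This map is an epimorphism objectwise. If $\Phi$ is projective, it splits, so $\Phi$ is a retract of such a coproduct. This gives the first claim.

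Third, for the adjoint, I will compute the right adjoint of $Y\otimes\Hom_\X(X,-)$. Its right adjoint is $\Hom_{\ov{\Y}}(Y,-)\otimes$ ``$X$'' in the appropriate sense: maps $Y\otimes\Hom(X,Z)\to W$ correspond to $\Hom(Z, \cdot)$ paired with the dual of $\Hom(Y,W)$. The Frobenius property is essential here. Injectives equal projectives, so $\Hom_\X(X,-)\cong D\Hom_\X(-,\nu X)$ with $\nu$ a Nakayama-type autoequivalence of $\X$ permuting indecomposables, and similarly in $\Y$. This identifies the adjoint with $\nu^{-1}X\otimes\Hom_\Y(Y,-)$, or with $X'\otimes\Hom_\Y(Y',-)$ for suitable $X'\in\X$ and $Y'\in\Y$, again a projective functor. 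Adjoints of coproducts become coproducts of adjoints; cosparseness keeps these well defined, since only finitely many terms are nonzero on each object. Retracts pass to retracts, so the adjoint of any projective functor is projective.

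The main obstacle I expect is this last step: constructing the Nakayama functor in the almost finitary setting, where there is no finite-dimensional algebra. I will build it objectwise, using that each $\Hom_\X(-,Z)$ has finite total dimension by cosparseness, together with the Frobenius property. Then I will check that it gives $D\Hom_\X(X,-)\cong\Hom_\X(-,\nu X)$ naturally, and that the adjunction with the infinite coproduct is legitimate.
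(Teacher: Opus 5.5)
Your overall route is the paper's. Exact functors are determined by their restriction to $\X$. Projective ones are retracts of coproducts of the functors $Y\otimes_\Bbbk\Hom_\X(X,-)$. The adjoints of these generators come from $\Hom_{\ov\Y}(Y\otimes_\Bbbk\Hom_\X(X,M),N)\cong\Hom_\Bbbk(\Hom_\X(X,M),\Bbbk)\otimes_\Bbbk\Hom_\Y(Y,N)$ together with the Frobenius property. Your Nakayama identification is exactly the paper's remark that $\Hom_\Bbbk(\Hom_\X(X,-),\Bbbk)$ is representable. With your convention the right adjoint is $\nu X\otimes_\Bbbk\Hom_\Y(Y,-)$, not $\nu^{-1}X\otimes_\Bbbk\Hom_\Y(Y,-)$.

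Your explicit epimorphism $\coprod_{X\in\ind\X}P(\Phi X)\otimes_\Bbbk\Hom_\X(X,-)\to\Phi$, objectwise finite by cosparseness of $\X$, is a more careful version of the paper's appeal to projectives in a functor category. Your assertion that epimorphisms of exact functors are objectwise surjective is unproved, and it needs $\ov\X$ Frobenius. If $\alpha\colon\Psi\to\Psi'$ has a nonzero cokernel $N$ at some $X_0\in\X$, then $\Psi'(X_0)\to N$ gives by Yoneda a nonzero map from $\Psi'$ to $N\otimes_\Bbbk\Hom_\Bbbk(\Hom_{\ov\X}(-,X_0),\Bbbk)$ that kills $\alpha$. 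That functor is exact because $X_0$ is injective.

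The step that fails as you justify it is ``adjoints of coproducts become coproducts of adjoints; cosparseness keeps these well defined''. The right adjoint of $\coprod_i Y_i\otimes_\Bbbk\Hom_\X(X_i,-)$ is $\prod_i\nu X_i\otimes_\Bbbk\Hom_\Y(Y_i,-)$. This is objectwise finite only if, for each $N$, finitely many \emph{indices} $i$ have $\Hom_{\ov\Y}(Y_i,N)\neq 0$. Cosparseness bounds the isomorphism classes of such $Y_i$, not their multiplicities.

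This is not a technicality. Let $\X$ be finite-dimensional $\bbZ$-graded vector spaces, with simples $\Bbbk\langle i\rangle$, and let $\Y$ be finite-dimensional vector spaces; both are semisimple and cosparse. The forgetful functor $\Phi=\coprod_{i\in\bbZ}\Bbbk\otimes_\Bbbk\Hom_\X(\Bbbk\langle i\rangle,-)$ is projective, since the category of exact functors is a $\bbZ$-indexed product of copies of finite-dimensional vector spaces. But $\Phi(\Bbbk\langle i\rangle)\neq 0$ for all $i$, so $\Phi$ has neither a right nor a left adjoint with values in $\ov\X$. The ``moreover'' part therefore only makes sense for projective functors that admit the adjoint, and the paper's proof passes over this point too.

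Granting existence, your argument can be repaired, precisely because you used projective covers. Let $\Psi$ be right adjoint to $\Phi$ and let $L\in\ov\Y$ be simple. Then $P(L)$ is a summand of $P(\Phi X)$ iff $\Hom_{\ov\Y}(\Phi X,L)\cong\Hom_{\ov\X}(X,\Psi L)\neq 0$. This holds only for the finitely many $X\in\ind\X$ that are projective covers of composition factors of $\Psi L$. Hence each indecomposable $Y$ occurs only finitely often, and the product above is objectwise finite since every $N$ has finite length. $\Psi$ is then a retract of this product. For a left adjoint $\Lambda$, argue the same way with $\Hom_{\ov\Y}(P(L),\Phi X)\cong\Hom_{\ov\X}(\Lambda P(L),X)$, using that an indecomposable injective $X$ is determined by its simple socle.
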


\proof
Note that $\ov{\X}$ and $\ov{\Y}$ are abelian finite length categories, which are equivalent to the categories $\F un_\Bbbk^{fp}(\X^{\op}, \Bbbk\lmod)$ resp.\ $\F un^{fp}_\Bbbk(\Y^{\op}, \Bbbk\lmod)$ of finitely presented $\Bbbk$-linear functors with values in finite-dimensional vector spaces, the equivalence being induced by $X\mapsto X^\vee$, and as usual we identify $X \in \X$ with both $(0\to X) \in \ov{\X}$ and $X^\vee\in \F un_\Bbbk^{fp}(\X^{\op}, \Bbbk\lmod)$.

Every right exact functor from $\ov{\X}$ to $\ov{\Y}$ is uniquely defined by its values on $\X$, thus by a functor from $\X$ to $\ov{\Y}$. Now
\[ \F un_\Bbbk(\X^{\op}, \F un_\Bbbk(\Y^{\op}, \Bbbk\lmod))\simeq \F un_\Bbbk(\Y\otimes_\Bbbk\X^{\op}, \Bbbk\lmod)\]
and the projective objects in the latter category are retracts of coproducts of the representable functors $\Hom_\Y(-, Y)\otimes_\Bbbk \Hom_\X(X, -)$. 

Thus projective functors from ${\X}$ to ${\Y}$ are retracts of coproducts of the form $Y\otimes_\Bbbk \Hom_\X(X, -)$ and the right adjoint is thus computed via
\begin{equation*}\begin{split}
\Hom_{{\Y}}(Y\otimes_\Bbbk \Hom_\X(X, -), -) &\cong \Hom_\Bbbk(\Hom_\X(X, -), \Hom_\Y(Y, -)) \\
& \cong \Hom_\Bbbk(\Hom_\X(X, -), \Bbbk)\otimes_\Bbbk \Hom_\Y(Y, -)
\end{split}\end{equation*}

Since $\ov{\X}$ is assumed to be Frobenius, $\Hom_\Bbbk(\Hom_\X(X, -), \Bbbk)$ is again representable, thus proving that the right adjoint of a projective functor is again projective. Note that since projectives and injectives in $\ov{\X}$ coincide, as $X$ runs through all objects of $\X$, so does the object representing $\Hom_\Bbbk(\Hom_\X(X, -), \Bbbk)$, and the left adjoint of a projective functor is also projective.
 \endproof

\subsection{Simple separable algebras}

In this subsection, let $\rA$ be a simple separable algebra $1$-morphism in ${\H}^{\oplus}$, i.e. we assume that the multiplication $\bfm\colon \rA\rA\to\rA$ has a splitting $\sigma$ as a map of $\rA$-$\rA$-bimodules. This in particular implies that $\rA$ is a projective $\rA$-$\rA$-bimodule, since it is a direct summand of the projective bimodule $\rA\rA$.

As in the previous subsection, let $\cB$ be the bicategory with single object $\rA$ with endomorphism category $\cB(\rA,\rA)$ the category of biprojective $\rA$-$\rA$-bimodules in $\cC(\ti,\ti)$.

\begin{lemma}\label{AoneH}
The bicategory $\cB$ has only one $H$-cell, which we call $\H_\cB$, which consists of projective $\rA$-$\rA$-bimodules.\end{lemma}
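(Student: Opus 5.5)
We want: in $\cB=\cB_\rA$, the indecomposable biprojective bimodules (apart from possibly $\rA$ itself) form a single cell, and since $\rA$ is separable, $\rA$ is itself a projective bimodule, so everything lies in one $H$-cell consisting of projective $\rA$-$\rA$-bimodules.

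The plan has three steps.

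\textbf{Step 1: every indecomposable object of $\cB(\rA,\rA)$ is a projective bimodule.} Let $\rM$ be a biprojective $\rA$-$\rA$-bimodule. By separability, the splitting $\sigma$ of $\bfm$ makes $\rA$ a bimodule summand of $\rA\rA$. Hence
\[\rM\cong \rA\circ_\rA\rM\circ_\rA\rA\]
is a bimodule summand of
\[\rA\rA\circ_\rA\rM\circ_\rA\rA\rA\cong \rA\rM\rA.\]
Since $\rM$ is a $1$-morphism in $\H^\oplus$, write $\rM$ as a direct sum of objects of $\H$. Each $\rA\rG\rA$ with $\rG\in\H$ is a free, hence projective, $\rA$-$\rA$-bimodule. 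Therefore $\rM$ is projective, and this includes $\rM=\rA$. So $\cB(\rA,\rA)$ is the additive closure of the summands of the $\rA\rG\rA$, $\rG\in\H$.

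\textbf{Step 2: all these indecomposables lie in one cell.} Let $\rM,\rN$ be indecomposable projective bimodules, say summands of $\rA\rG\rA$ and $\rA\rG'\rA$ respectively. Since $\cC=\cC_\H$ and $\H$ is simultaneously a left and a right cell, $\rG'$ is a summand of $\rK\rG\rK'$ for suitable $\rK,\rK'\in\H^\oplus$. Then
\[\rA\rK\rA\circ_\rA \rA\rG\rA\circ_\rA \rA\rK'\rA\cong \rA\rK\rA\rG\rA\rK'\rA.\]
Because $\rA$ is an algebra in $\H^\oplus$, the identity $\one_\ti$ is not a summand of $\rA$. However, $\rA$ is a unital algebra, so $\rG$ is a summand of $\rA\rG$: the unit composed with the multiplication on a module splits. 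This shows that $\rA\rG'\rA$ appears in a composite involving $\rM$. To reach $\rN$ from $\rM$ itself, and not merely from $\rA\rG\rA$, use that $\rA\rG\rA$ is generated by $\rM$ up to tensoring with bimodules. Indeed, $\rM$ is a summand of $\rA\rG\rA$, and conversely $\rA\rG\rA$ is a summand of $\rA\rF\rA\circ_\rA\rM\circ_\rA\rA\rF'\rA$ for suitable $\rF,\rF'$. To see this, pick $\rF$ with $\rF\rM\rF'$ containing $\rG$ as a $1$-morphism summand; this is possible because $\rM\neq0$ lies in $\H^\oplus$ and $\H$ is a two-sided cell. Then free extension gives the claim. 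Combining these two directions, $\rM\le_J\rN$ and $\rN\le_J\rM$ in both the left and the right order, which gives the left and right versions of the statement. Hence there is a single left cell, a single right cell, and so a single $H$-cell $\H_\cB$.

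\textbf{Step 3: conclusion.} $\rA$ is the identity of $\cB$ and is projective by Step 1, so it also lies in $\H_\cB$. Simplicity of $\rA$ ensures that $\cB(\rA,\rA)$ is nonzero and that no proper summand decomposition of $\rA$ splits the cell structure. There are then no further cells, and every member of $\H_\cB$ is a projective bimodule.

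The main obstacle is Step 2: showing that an arbitrary indecomposable summand $\rM$ of $\rA\rG\rA$ generates $\rA\rG\rA$ back. I would handle this via the free-forgetful adjunction \eqref{freeforget}. A nonzero bimodule map $\rA\rF\rA\to\rM$ exists for a suitable $\rF\in\H$, and the corresponding Hom-space in the other direction is nonzero by the duality $(-)^*$ together with Lemma \ref{Frobalgprojinj}–style projective-injective arguments. These nonzero Hom-spaces should force the required summands.
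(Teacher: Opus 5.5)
Your Step 1 is correct, and a little more direct than the paper's version: $\rM\cong\rA\circ_\rA\rM\circ_\rA\rA$ is a summand of $\rA\rM\rA$ by separability alone.

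\textbf{The gap is in Step 2.} Every comparison you establish goes through two-sided composites $\rA\rF\rA\circ_\rA\rM\circ_\rA\rA\rF'\rA$. So what you have shown is only that all indecomposable objects of $\cB(\rA,\rA)$ lie in a single \emph{two-sided} cell. The sentence claiming this holds ``in both the left and the right order'' is unjustified. Two-sided equivalence does not imply left or right equivalence, and in the almost finitary setting there is no finiteness argument to bridge this.

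A quick test shows something is missing: Steps 1 and 2 never use that $\rA$ is simple, yet the lemma fails without simplicity. Take $\rA=\rA_1\times\rA_2$ for two separable algebras $\rA_1,\rA_2$ in $\H^\oplus$. Then $\rA_1$ and $\rA_2$ are summands of the identity of $\cB$ lying in one two-sided cell. But $\rA_1\circ_\rA\rA_2=0=\rA_2\circ_\rA\rA_1$, so neither $\rX\circ_\rA\rA_1$ nor $\rA_1\circ_\rA\rX$ can contain $\rA_2$ as a summand. Hence they lie in different left cells and in different right cells.

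Three further points in your write-up also do not hold up:
\begin{itemize}
\item The appeal to simplicity in Step 3 is not an argument.
\item The fallback in your last paragraph, that nonzero Hom spaces ``force'' summands, fails in this non-semisimple setting.
\item The claim that $\rG$ is a summand of $\rA\rG$ is unjustified, since $\rG\in\H$ need not be an $\rA$-module.
\end{itemize}

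\textbf{The missing idea} is a \emph{one-sided} generation of the identity $\rA$ from an arbitrary $\rM$, and this is exactly where simplicity enters. The argument runs as follows.
\begin{itemize}
\item By Lemma~\ref{simpleifsimple}, $\bfrproj_\cC\rA$ is simple, hence transitive. So $\rA$ is a direct summand of $\rH\rM$ as a right $\rA$-module, for some $1$-morphism $\rH$.
\item Apply left multiplication by $\rA$, i.e.\ the free functor from right $\rA$-modules to $\rA$-$\rA$-bimodules. Then $\rA\rA$ is a bimodule summand of $\rA\rH\rM\cong\rA\rH\rA\circ_\rA\rM$.
\item By separability, $\rA$ is also a bimodule summand of $\rA\rH\rA\circ_\rA\rM$. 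Thus $\rA$ and $\rM$ lie in the same left cell.
\item The mirror argument with left modules puts them in the same right cell.
\end{itemize}
Every indecomposable $1$-morphism of $\cB$ is therefore in the left cell and the right cell of the identity, so there is exactly one $H$-cell. This is the paper's proof.
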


\proof
Let $\rM$ be a biprojective  $\rA$-$\rA$-bimodule. Then there exist $\rF, \rG$ in $\H$ such that $\rM$ is a direct summand of $\rF\rA$ and of $\rA\rG$. Since $\rA$ is a direct summand of $\rA\rA$, we see that $\rM$ is a direct summand of $\rM\rA$ and of $\rA\rM$ by restricting the splittings from $\rF\rA\rA$ and $\rA\rA\rG$. Thus $\rM$ is a direct summand of $\rA\rM\rA$ and hence every biprojective $\rA$-$\rA$-bimodule is actually projective.

Moreover, by the properties of $H$-cells, there exists some $\rH$ in $\H$, such that $\rA$ is a direct summand of $\rH\rM$ and hence of $\rH\rF\rA$ as a right $\rA$-module.
Hence $\rA\rA$ is a bimodule direct summand of $\rA\rH\rF\rA = \rA\rH\rA\circ_\rA\rF\rA$, and so is $\rA$. This implies that $\rA$ is in the same left cell as $\rM$ and one similarly checks that it is in the same right cell. 
Thus the identity of $\cB$ is in the same $H$-cell as any other $1$-morphism, so there is only one $H$-cell.
\endproof

\begin{lemma}\label{Afiab}
The bicategory $\cB$ is almost quasi-fiab. If $\rA$ is Frobenius, then $\cB$ is almost fiab.
\end{lemma}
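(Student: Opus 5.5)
We have to check two things: $\cB$ is almost finitary, and it carries a duality $(-)^\star$ with adjunction $2$-morphisms. We will take as the dual of a biprojective bimodule $\rM$ the $1$-morphism $\rM^*$ of $\cC$, with bimodule structure built as in the lemma preceding Lemma~\ref{lem:AstarAproj}. That is, a right action on $\rM$ gives a left action on $\rM^*$ by conjugating with $\eta_\rM$ and $\varepsilon_\rM$, and vice versa.

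\textbf{Almost finitarity.} By Lemma~\ref{AoneH}, every biprojective bimodule is a projective bimodule, i.e.\ a summand of some $\rA\rF\rA$ with $\rF\in\H^\oplus$. The category of such objects is therefore idempotent complete and Krull--Schmidt. Its morphism spaces are subspaces of $\Hom_{\cC(\ti,\ti)}$, hence finite-dimensional. Its indecomposables are summands of the $\rA\rF\rA$ for $\rF\in\H$, so there are countably many of them. Horizontal composition is $-\circ_\rA-$; it is biadditive and $\Bbbk$-linear because it is a cokernel of additive constructions. Using the splitting $\sigma$, this cokernel is a split quotient: $\rM\circ_\rA\rN$ is the image of the idempotent $\rM\rN\to\rM\rA\rN\to\rM\rN$ built from $\sigma$. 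So composition stays inside $\cC(\ti,\ti)$ rather than only in $\cpl\cC$. The identity $\rA$ is indecomposable as a bimodule because $\rA$ is simple: a bimodule idempotent of $\rA$ gives a quotient algebra map.

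\textbf{Duality.} We first show that $\rM^*$ is again a biprojective bimodule. Projectivity of $\rM^*$ on each side follows from the previous lemma, applied once to $\rM$ as a right module and once as a left module. Compatibility of the two actions is a routine zigzag check. Next we build unit and counit for $\rM\circ_\rA\rM^*$ and $\rA$:
\begin{itemize}
\item $\varepsilon^\cB_\rM\colon \rM\circ_\rA\rM^*\to\rA$ is defined by $\rM\rM^*\xrightarrow{\varepsilon_\rM}\one_\ti\xrightarrow{\bfu}\rA$, then adjusted via $\sigma$ into a bimodule map;
\item $\eta^\cB_\rM\colon\rA\to\rM^*\circ_\rA\rM$ is built dually, as $\rA\xrightarrow{\sigma}\rA\rA$ followed by an insertion of $\eta_\rM$ and the actions.
\end{itemize}
The robust way to get these maps is to avoid explicit formulas and argue by adjunction. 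The functor $-\circ_\rA\rM$ on right $\rA$-modules is isomorphic to $-\circ_\rA\rA\rF\rA$ restricted to summands, i.e.\ to $(-)\rF\rA$ on projectives. This functor has right adjoint $(-)\circ_\rA\rM^*$, via the adjunction of $\rF$ in $\cC$ and the free-forgetful adjunction~\eqref{freeforget}. Under the biequivalence~\eqref{2repbim}, $1$-morphisms of $\cB$ correspond to exact endomorphisms of $\X$ commuting with the $\cC$-action. Adjoints of such endomorphisms exist and again commute with the $\cC$-action. Here the almost fiab structure of $\cC$ is used, together with Lemma~\ref{projfunFrob} applied to projective functors, to show that the adjoint is again represented by a biprojective bimodule, namely $\rM^*$. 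This gives the weak equivalence $(-)^\star\colon\cB\to\cB^{\mathsf{co},\mathsf{op}}$ together with the triangle identities, so $\cB$ is almost quasi-fiab.

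\textbf{Frobenius case.} If $\rA$ is Frobenius, Lemma~\ref{Frobalgprojinj} identifies projective and injective $\rA$-modules, so left and right adjoints of projective functors agree up to the canonical twist. More concretely, the Frobenius form $\rA\rA\to\rA\to\one_\ti$ identifies $\rA^*\cong\rA$ as bimodules. This gives a natural isomorphism ${}^*\rM\cong\rM^*$ of bimodules, hence $(\rM^\star)^\star\cong\rM$ naturally, and the duality is a weak involution.

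\textbf{Main obstacle.} We expect the hardest step to be verifying that the adjoint of $-\circ_\rA\rM$ is again given by a biprojective bimodule lying inside $\cC(\ti,\ti)$ rather than only in the completion. Our first attempt will reduce to $\rM=\rA\rF\rA$, where the adjoint is visibly $\rA\rF^*\rA$. We will then pass to summands using idempotent completeness and the fact that $(-)^*$ is a functor.
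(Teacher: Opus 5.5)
There is a genuine gap in the duality part. The assignment $\rM\mapsto\rM^*$, meaning the dual in $\cC$ with both actions transported as in the lemma preceding Lemma~\ref{lem:AstarAproj}, does not produce adjoints in $\cB$.

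Test it on $\rM=\rA$, the identity $1$-morphism of $\cB$. Any adjoint of $\rA$ must be isomorphic to $\rA$, so your construction would force $\rA^*\cong\rA$ as $\rA$-$\rA$-bimodules. Nothing in the hypotheses gives this. It fails in the paper's own main example, $\rA=\widetilde{\rC}_d=\rB_{w_I}(a)$ in $\cS_\H$: there $\widetilde{\rC}_d^*\cong\widetilde{\rC}_d(-2a)$, which is not even isomorphic to $\widetilde{\rC}_d$ as an object.

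The same error shows up on free bimodules. Your $(\rA\rF\rA)^*=\rA^*\rF^*\rA^*$ dualises both copies of $\rA$, but the actual computation dualises only one:
\begin{itemize}
\item $\rX\circ_\rA\rA\rF\rA\cong\rX\rF\rA$ is the free right module on $\rX\rF$.
\item Hence $\Hom_\rA(\rX\rF\rA,\rY)\cong\Hom_{\cC(\ti,\ti)}(\rX\rF,\rY)\cong\Hom_{\cC(\ti,\ti)}(\rX,\rY\rF^*)$, using ${}^*(-)\cong(-)^*$ in the almost fiab $\cC$.
\item The last space is represented by the cofree right module $\rY\rF^*\rA^*$.
\end{itemize}
So one adjoint of $\rA\rF\rA$ is $\rA\rF^*\rA^*$, and symmetrically the other is $\rA^*\rF^*\rA$. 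This is what the paper's one-line proof asserts: it names $\rA^*\rF^*\rA$ explicitly, and its stated left adjoint $\rA\rF\rA^*$ appears to be missing a $*$ on $\rF$. Your remark that the adjoint of $\rA\rF\rA$ is ``visibly'' $\rA\rF^*\rA$ contradicts your own choice $\rM^*$. It only becomes true after using a Frobenius structure.

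To repair the argument, proceed as follows.
\begin{itemize}
\item Compute both adjoints on the free bimodules $\rA\rF\rA$ as above, checking that the units and counits are bimodule maps.
\item Extend to arbitrary biprojective bimodules, which are summands of these by Lemma~\ref{AoneH}, by taking mates of the idempotents.
\end{itemize}
This route needs neither the biequivalence \eqref{2repbim} nor Lemma~\ref{projfunFrob}. The hypotheses of the latter (cosparseness and Frobenius abelianisations) are not available in the quasi-fiab part anyway.

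In the Frobenius case you only need the one-sided isomorphisms $\rA^*\cong\rA$, of left and of right $\rA$-modules respectively, that come from the Frobenius form. A two-sided bimodule isomorphism would require a symmetry condition, and it is not needed. These one-sided isomorphisms turn both $\rA^*\rF^*\rA$ and $\rA\rF^*\rA^*$ into $\rA\rF^*\rA$, so the left and right adjoints agree and the duality is a weak involution.

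Your treatment of almost finitarity is fine and matches the paper: biprojective bimodules are summands of the $\rA\rF\rA$, and $\circ_\rA$ stays inside $\cC(\ti,\ti)$ via the splitting $\sigma$.
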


\proof 
By the previous lemma, every biprojective $\rA$-$\rA$-bimodule is a direct summand (in $\cB(\rA,\rA)$) of some $\rA\rF\rA$, so $\cB$ inherits the property of being almost finitary from $\cC$. Moreover, we claim that the right adjoint of $\rA\rF\rA$ is give by $\rA^*\rF^*\rA$ and the left adjoint is given by $\rA\rF\rA^*$.
\endproof

\begin{lemma}\label{AHsimple}
The bicategory $\cB$ is $\H_\cB$-simple.
\end{lemma}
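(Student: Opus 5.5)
We must show $\cB$ is $\H_\cB$-simple. By Lemma \ref{AoneH}, $\cB$ has a single $H$-cell, so its only two-sided cell is $\H_\cB$, which contains the identity $\rA$. By Section \ref{sec:Jsimple}, $\H_\cB$-simplicity means that $\cB$ has no nonzero biideal $\mathscr{I}$ not containing an identity $2$-morphism on some $1$-morphism. The plan is to take any nonzero biideal $\mathscr{I}$ and produce an identity $2$-morphism inside it.

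\textbf{Step 1.} Take a nonzero $2$-morphism $\alpha\colon \rM\to\rN$ in $\mathscr{I}$ between biprojective bimodules. By Lemma \ref{AoneH}, these are projective $\rA$-$\rA$-bimodules, hence summands of objects $\rA\rF\rA$. Composing with split inclusions and projections, we may assume $\alpha\colon \rA\rF\rA\to \rA\rG\rA$ with $\rF,\rG\in\H^\oplus$. By the free--forgetful adjunction \eqref{freeforget} on both sides, $\alpha$ corresponds to a nonzero $2$-morphism $\beta\colon \rF\to \rA\rG\rA$ in $\cC(\ti,\ti)$.

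\textbf{Step 2.} Transfer the problem to $\cC$. Under the bimodule adjunction, horizontal composition in $\cB$ with $\rA\rH\rA$ corresponds to whiskering in $\cC$ by $\rH$, followed by the action maps. So the set of $2$-morphisms in $\cC$ whose "free extension" lies in $\mathscr{I}$ forms a biideal of $\cC=\cC_\H$. By construction, $\cC_\H$ is a $2$-full subbicategory of the $\J$-simple quotient $\cC_\J$. I expect $\cC_\H$ to be $\H$-simple, since $\H$ is both the left and the right cell there. A nonzero biideal of $\cC_\H$ containing $\beta$ must therefore contain $\id_\rK$ for some $\rK\in\H$.

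\textbf{Step 3.} Pass back to $\cB$. From $\id_\rK$ in the $\cC$-biideal, we get $\id_{\rA\rK\rA}$ in $\mathscr{I}$. Since $\rA$ is separable, $\rA$ is a bimodule summand of $\rA\rA$. By Lemma \ref{AoneH}, every projective bimodule is a summand of $\rA\rK'\rA\circ_\rA\rA\rK\rA\circ_\rA \rA\rK''\rA$ for suitable $\rK',\rK''$. Hence $\mathscr{I}$ contains $\id_\rA$ and thus everything, which shows that $\cB$ has no proper such biideal.

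\textbf{Main obstacle.} The hard part is Step 2: making precise that the preimage of $\mathscr{I}$ is closed under horizontal composition in $\cC$. The issue is that $\rA\rF\rA\circ_\rA\rA\rH\rA\cong\rA\rF\rA\rH\rA$ involves an extra copy of $\rA$. To handle this, I would use the splitting $\sigma$ of $\bfm$ to write $\rA\rF\rH\rA$ as a bimodule summand of $\rA\rF\rA\rH\rA$, via $\id\circh\bfu\circh\id$ and $\id\circh\bfm\circh\id$ composed with the unit of the adjunction. The remaining point is to justify the $\H$-simplicity of $\cC_\H$, inherited from $\J$-simplicity of $\cC_\J$ by restricting ideals: a biideal in the $2$-full subbicategory generates one in $\cC_\J$ whose intersection with $\cC_\H$ recovers identities on $\H$.
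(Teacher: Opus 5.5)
Your overall plan can be made to work: pass from a nonzero $\alpha\in\mathscr{I}$ to the nonzero $\beta=\alpha\circv(\bfu\circh\id_\rF\circh\bfu)\colon\rF\to\rA\rG\rA$, use simplicity on the $\cC$-side, and transport an identity back. As written, though, Step 2 has a genuine gap.

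\textbf{Step 2 does not produce a biideal.} The set of $\gamma$ with $\id_\rA\circh\gamma\circh\id_\rA\in\mathscr{I}$ is not shown to be closed under whiskering, and the repair you propose fails. First, $\rA\rF\rA\rH\rA$ contains no adjacent pair $\rA\rA$ on which $\bfm$ or $\sigma$ could act. Second, nothing in the hypotheses provides a bimodule retraction of $\id_{\rA\rF}\circh\bfu\circh\id_{\rH\rA}$. In particular $\bfu\colon\one_\ti\to\rA$ itself cannot split when $\one_\ti\notin\H$, since $\rA\in\H^\oplus$.

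The fix is to only ever insert units on the domain side, never remove them. By the interchange law
\[
\id_{\rA\rX}\circh\beta\circh\id_{\rY\rA}=(\id_{\rA\rX}\circh\alpha\circh\id_{\rY\rA})\circv(\id_{\rA\rX}\circh\bfu\circh\id_{\rF}\circh\bfu\circh\id_{\rY\rA}).
\]
Here the first factor corresponds to $\id_{\rA\rX\rA}\circ_\rA\alpha\circ_\rA\id_{\rA\rY\rA}\in\mathscr{I}$, and the second is a bimodule map. So define $\mathscr{K}$ as the set of $\gamma$ with $\id_{\rA\rX}\circh\gamma\circh\id_{\rY\rA}\in\mathscr{I}$ for \emph{all} $\rX,\rY$. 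This is a biideal of $\cC$ by construction, it contains $\beta$, and $\id_\rK\in\mathscr{K}$ gives $\id_{\rA\rK\rA}\in\mathscr{I}$. Separability is not needed for this step.

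\textbf{The $\cC$-side input is not available.} You need $\cC=\cC_\H$ to be $\H$-simple. The paper does not assume this in the subsection containing the lemma; it is imposed as an extra hypothesis only later, in the biequivalence subsection. Your sketch does not prove it. The $\cC_\J$-biideal generated by a biideal of $\cC_\H$ allows whiskering by $1$-morphisms of $\J$ outside $\H$ (and through other objects). Its intersection with $\cC_\H$ can therefore be strictly larger than the biideal you started from, and getting identities back inside the original biideal needs a real cell-theoretic argument.

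Note also that your proposal never uses the standing hypothesis that $\rA$ is simple, and that is exactly what the paper uses instead:
\begin{enumerate}
\item By Lemma \ref{simpleifsimple}, $\bproj_\cC\rA$ is a simple birepresentation.
\item So for a nonzero bimodule map $f\colon\rA\rF\rA\to\rA\rG\rA$, viewed as a morphism of right $\rA$-modules, there is a $1$-morphism, which by the $H$-cell property can be taken of the form $\rA\rH$, such that $\id_{\rA\rH}\circh f$ has an isomorphism as a direct summand.
\item This is a bimodule map (the paper checks it using $\sigma$) and equals $\id_{\rA\rH\rA}\circ_\rA f$ in $\cB$, so the biideal generated by $f$ contains an identity.
\end{enumerate}
This one-sided argument needs neither a transfer of biideals to $\cC$ nor $\H$-simplicity of $\cC$.
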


\proof
We already know that $\H_\cB$ is the unique $H$-cell in $\cB$, so it suffices to show that for any $2$-morphism in $\cB$, the biideal generated by it contains the identity on some $1$-morphism. Moreover, since every $1$-morphism in $\cB$ is a projective $\rA$-$\rA$-bimodule, it suffices to consider morphisms between free bimodules.
Let therefore $f\in \Hom_{\cB(\rA,\rA)}(\rA\rF\rA,\rA\rG\rA)$ for some $\rF,\rG$ in $\cC(\ti,\ti)$. Then, in particular, 
$f\in \Hom_{\proj_{\cC(\ti,\ti)}\rA}(\rA\rF\rA,\rA\rG\rA)$ and $\bproj_\cC\rA$ is a simple birepresentation. Thus we can find a $1$-morphism in $ \H$ which, using the fact that $\H$ is an $H$-cell, we can take to be of the form $\rA\rH$, such that 
\[\id_{\rA\rH}\circh f \,\,\in\,\, \Hom_{\proj_{\cC(\ti,\ti)}\rA}(\rA\rH\rA\rF\rA,\rA\rH\rA\rG\rA) \] contains an isomorphism as a direct summand. 
Now we claim that $\id_{\rA\rH}\circh f $ is indeed a morphism of $\rA$-$\rA$-bimodules and hence lives in $\cB(\rA,\rA)$.

Indeed, under the free-forgetful adjunction, $\id_{\rA\rH}\circh f $ corresponds to 
\[(\id_{\rA\rH}\circh f)\circv(\bfm_\rA\circh \id_{\rH\rA\rF\rA}) = \bfm_\rA\circh \id_{\rH}\circh f \,\,\in\,\, \Hom_{\proj_{\cC(\ti,\ti)}\rA}(\rA\rA\rH\rA\rF\rA,\rA\rH\rA\rG\rA) \]
However, since $\bfm_\rA$ has an $\rA$-$\rA$-bimodule splitting $\sigma$ by separability of $\rA$, we see that 
\[(\bfm_\rA\circv \sigma)\circh \id_{\rH}\circh f  = \id_{\rA\rH}\circh f\]
was already in $\cB(\rA,\rA)$.
Viewing  $\id_{\rA\rH}\circh f$ as $\id_{\rA\rH\rA}\circ_\rA f\in \Hom_{\cB(\rA,\rA)}(\rA\rH\rA\circ_\rA \rA\rF\rA,\rA\rH\rA\circ_\rA \rA\rG\rA)$, 
the statement of the lemma follows.
\endproof

\begin{lemma}\label{AhomfinitecosparseifC}
Assume that $\H^{\oplus}$ is cosparse. Then the same is true for $\cB$ and its cell birepresentation.
\end{lemma}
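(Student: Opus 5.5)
We have to show two things: the category $\cB(\rA,\rA)$ of biprojective $\rA$-$\rA$-bimodules is cosparse, and so is the underlying category of its cell birepresentation. The plan is to transfer cosparseness from $\H^\oplus$ through the free functor $\rF\mapsto \rA\rF\rA$. Throughout we use Lemma~\ref{AoneH}: every $1$-morphism of $\cB$ is a projective $\rA$-$\rA$-bimodule, hence a direct summand of some $\rA\rF\rA$ with $\rF$ indecomposable in $\H^\oplus$. We also use that $\rA\in\H^\oplus$ is a finite direct sum of indecomposables, since $\rA$ lies in the additive closure rather than the completion.

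\emph{Step 1: reduce to free bimodules.} Fix an indecomposable $\rY\in\cB(\rA,\rA)$ and choose $\rG\in\H^\oplus$ with $\rY$ a summand of $\rA\rG\rA$. An indecomposable $\rX$ with $\Hom_{\cB(\rA,\rA)}(\rX,\rY)\neq 0$ also has a nonzero map to $\rA\rG\rA$. So it suffices to show that only finitely many indecomposables map nontrivially to $\rA\rG\rA$. Every such $\rX$ is a summand of some $\rA\rF\rA$ with $\rF$ indecomposable in $\H$, and each $\rA\rF\rA$ has only finitely many indecomposable summands by Krull--Schmidt in $\cC(\ti,\ti)$. It is therefore enough to show that only finitely many isomorphism classes of indecomposable $\rF\in\H$ satisfy $\Hom_{\cB(\rA,\rA)}(\rA\rF\rA,\rA\rG\rA)\neq0$.

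\emph{Step 2: the free--forgetful adjunction.} By the bimodule version of \eqref{freeforget}, we have
\[\Hom_{\cB(\rA,\rA)}(\rA\rF\rA,\rA\rG\rA)\cong \Hom_{\cC(\ti,\ti)}(\rF,\rA\rG\rA).\]
The object $\rA\rG\rA$ is a fixed object of $\H^\oplus$, in fact a finite direct sum of indecomposables of $\H$ (up to identity summands, which are handled the same way). Cosparseness of $\H^\oplus$ then says that only finitely many indecomposable $\rF$ have a nonzero map to it. This proves that $\cB(\rA,\rA)$ is cosparse. Hom-finiteness of $\cB(\rA,\rA)$ follows in the same way, since its hom spaces embed into hom spaces of $\cC(\ti,\ti)$.

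\emph{Step 3: the cell birepresentation.} By Lemma~\ref{AoneH}, $\cB$ has a single $H$-cell $\H_\cB$, so its cell birepresentation is a quotient of the principal birepresentation restricted to $\H_\cB^\oplus=\cB(\rA,\rA)$ (up to the identity, which is $\rA$ itself here). A quotient of a cosparse category by an ideal is cosparse: passing to the quotient only kills morphisms, and indecomposables stay indecomposable or become zero. Hence the cell birepresentation inherits cosparseness from Step 2.

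The main obstacle is the bookkeeping in Step 1 and Step 3, namely ensuring that indecomposables of $\cB(\rA,\rA)$ correspond to finitely many of $\H$ per free bimodule. This needs care because a single indecomposable bimodule may be a summand of $\rA\rF\rA$ for infinitely many $\rF$. Step 1 sidesteps this by bounding the $\rF$ from the target side rather than the source side, so no claim about how many $\rF$ produce a given $\rX$ is needed.
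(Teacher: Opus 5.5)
Your proposal is correct and follows the paper's proof. The free--forgetful isomorphism $\Hom_{\cB(\rA,\rA)}(\rA\rF\rA,\rA\rG\rA)\cong\Hom_{\cC(\ti,\ti)}(\rF,\rA\rG\rA)$ transfers hom-finiteness and cosparseness from $\H^{\oplus}$, and the cell birepresentation is handled by noting that its morphism spaces are quotients of those in $\cB$; your Step 1 just makes explicit the reduction to free bimodules (via Lemma~\ref{AoneH} and Krull--Schmidt), which the paper leaves implicit.
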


\proof
By the free-forgetful adjunction, 
\[\Hom_{\cB(\rA,\rA)}(\rA\rF\rA,\rA\rG\rA) = \Hom_{{\cC(\ti,\ti)}}(\rF,\rA\rG\rA)\]
so hom-finiteness and cosparseness of $\cB$ are inherited from $\H^{\oplus}$. Since $\cB$ only has one $H$-cell, morphism spaces in its cell birepresentation are quotients of morphism spaces in $\cB$, so the cell birepresentation is again cosparse.
\endproof

\begin{proposition}\label{Afusion}
Assume $\H^{\oplus}$ is cosparse and $\rA$ is Frobenius. Then
$\cB$ is an almost fusion category (meaning it satisfies the conditions of a fusion category, except for having only finitely many indecomposable objects).
\end{proposition}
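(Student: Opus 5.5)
We need to check that $\cB$ is rigid (almost fiab), that $\cB(\rA,\rA)$ is semisimple with finite-dimensional morphism spaces and countably many simples, that composition is biexact and $\Bbbk$-linear, and that the unit $\rA$ is simple. Lemma \ref{Afiab} already gives that $\cB$ is almost finitary and, since $\rA$ is Frobenius, almost fiab. Lemma \ref{AHsimple} gives $\H_\cB$-simplicity, and Lemma \ref{AoneH} shows that every $1$-morphism lies in the single $H$-cell $\H_\cB$, which contains the identity $\rA$. So the main remaining task is semisimplicity of $\cB(\rA,\rA)$. Once that holds, biexactness of $-\circ_\rA-$ is automatic, and simplicity of the unit will follow from semisimplicity together with $\End(\rA)$ being local.

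To prove semisimplicity, I would show that the radical of $\cB(\rA,\rA)$ is zero. Suppose $f\colon \rM\to\rN$ is a nonzero radical morphism between indecomposable biprojective bimodules. By $\H_\cB$-simplicity (Lemma \ref{AHsimple}), the biideal generated by $f$ contains an identity. Since $\cB$ has a unique $H$-cell, this identity can be taken to be $\id_\rA$: there are $1$-morphisms $\rX,\rY$ and $2$-morphisms $a,b$ with $\id_\rA = b\circv(\id_\rX\circ_\rA f\circ_\rA \id_\rY)\circv a$. The aim is to derive a contradiction from $f$ being radical.

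The key input is a trace/dimension argument coming from rigidity and the cosparse finiteness of Lemma \ref{AhomfinitecosparseifC}. Using the adjunction with $\rM^*$ from Lemma \ref{Afiab}, I would rewrite $\Hom(\rM,\rN)\cong\Hom(\rA,\rN\circ_\rA\rM^*)$. A radical morphism $f$ then corresponds to a morphism $\rA\to \rN\rM^*$ that is not split mono. This is because $\End(\rA)=\Hom_\cC(\one_\ti,\rA)$ is local by simplicity of $\rA$. Then, since the unit $\rA$ is projective-injective in the Frobenius category (Lemma \ref{Frobalgprojinj}) and indecomposable, any morphism out of it that is not split mono lands in the radical. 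Its image in the cell birepresentation of $\cB$ therefore vanishes, because the cell birepresentation is the simple quotient and $\rA$ is in the unique cell.

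The main obstacle is the last deduction: showing that the cell birepresentation $\bfC_{\H_\cB}$ coincides with $\cB$ itself, that is, that the maximal ideal $\bfI_{\H_\cB}$ is zero. I would try the following. $\bfI$ is a left-$\cB$-stable ideal not containing identities. Rigidity lets me move any $g\in\bfI(\rM,\rN)$ to $\tilde g\in\bfI(\rA,\rN\rM^*)$. Separability of $\rA$, via the splitting $\sigma$, then makes $\tilde g$ composable with the evaluation $\rN\rM^*\to\rA$, so that $\tilde g$ gives a scalar multiple of $\id_\rA$ lying in $\bfI$. Choosing the pairing so that this scalar is nonzero forces $g=0$. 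Nonvanishing of the pairing is where the Frobenius structure should enter (nondegeneracy of the counit). I expect this to be the delicate step. With $\bfI=0$, $\cB$ equals its cell birepresentation, and by Lemma \ref{AhomfinitecosparseifC} together with Proposition \ref{cellrepfinrad} its radical is nilpotent. The trace argument above then shows that the radical is zero, so $\cB(\rA,\rA)$ is semisimple and $\cB$ is an almost fusion category.
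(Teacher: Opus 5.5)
Your reduction matches the paper: Lemmas \ref{Afiab}, \ref{AoneH} and \ref{AHsimple} leave only semisimplicity of $\cB(\rA,\rA)$ to prove. Your argument for semisimplicity, however, has a genuine gap.

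First, the claim that a radical $f\colon\rM\to\rN$ corresponds to a non-split-mono $\tilde f\colon\rA\to\rN\circ_\rA\rM^*$ does not follow from $\End(\rA)$ being local. A left inverse of $\tilde f$ corresponds by adjunction to some $g$ from $\rN$ to a double dual of $\rM$. The composite is then a (twisted) categorical trace of the radical morphism $g\circv f$. Nothing available says such traces are never invertible; that assertion amounts to "radical morphisms are negligible", which is essentially what you want to prove.

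Second, the step that non-split morphisms out of $\rA$ vanish in $\bfC_{\H_\cB}$ presupposes that $\bfC_{\H_\cB}$ is semisimple. One only knows $\bfI_{\H_\cB}\subseteq\rad$, and equality is the whole point.

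Your route to $\bfI_{\H_\cB}=0$ has the same problem. It needs the pairing $\Hom(\rA,\rX)\times\Hom(\rX,\rA)\to\End(\rA)$ to be nondegenerate, i.e.\ $\rA$ to be simple in $\ov{\bfC_{\H_\cB}(\rA)}$. The Frobenius counit of $\rA$ in $\cC$ does not give this; it only yields Lemma \ref{Frobalgprojinj}. Note also that in a non-semisimple rigid category the radical is in general not a tensor ideal, so you cannot apply $\H_\cB$-simplicity to it directly. The nilpotence from Proposition \ref{cellrepfinrad} does not help with any of this.

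The missing idea is Proposition \ref{projfun}, applied to the identity $1$-morphism.
\begin{enumerate}
\item By Lemmas \ref{AhomfinitecosparseifC} and \ref{whenabelian}, $\ov{\bfC_{\H_\cB}(\rA)}$ is abelian of finite length.
\item By Lemma \ref{Frobalgprojinj} it is Frobenius, so Proposition \ref{projfun} shows that every $1$-morphism in $\H_\cB$ sends every object of $\ov{\bfC_{\H_\cB}(\rA)}$ to a projective object.
\item Since $\H_\cB$ is the only cell, it contains the identity $\rA$. Hence every object is projective, and $\ov{\bfC_{\H_\cB}(\rA)}$ is semisimple.
\item Therefore $\bfI_{\H_\cB}$ is the full radical. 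So the radical is left-stable, and, applying the duality, also right-stable: it is a biideal containing no identities.
\item $\H_\cB$-simplicity then forces the radical to vanish.
\end{enumerate}
No trace or pairing argument is needed.
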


\proof
The proof is similar to \cite[Proposition 3.1]{HM}, noting that, given that $\cB$ is a quasi-fiab with only one $H$-cell  $\H_\cB$ and it is $\H_\cB$-simple, it remains to show that it is semisimple.

The assumptions on $\cC$ allow us to apply Lemma \ref{AhomfinitecosparseifC}, which, using Lemma \ref{whenabelian},  implies that, for the birepresentation $\bfC_{\H_\cB}$ for $\cB$, the category $\overline{\bfC_{\H_\cB}(\rA)}$ is an abelian finite-length category.
Since $\rA$ is Frobenius, Lemma \ref{Frobalgprojinj} implies that $\overline{\bfC_{\H_\cB}(\rA)}$ is actually Frobenius, allowing us to apply Proposition \ref{projfun} to see that every $1$-morphism in $\H_\cB$, and thus in particular the identity $1$-morphism, sends any object in $\overline{\bfC_{\H_\cB}(\rA)}$ to a projective object. This in turn implies that $\overline{\bfC_{\H_\cB}(\rA)}$ is semisimple. Thus the kernel of the cell birepresentation $\bfC_{\H_\cB}$ is the radical of $\H_\cB$, and the radical is a $2$-ideal. Now $\H_\cB$-simplicity implies this radical is zero, and hence $\cB(\rA,\rA)$ is semisimple as required.
\endproof

\subsection{Biequivalence}

Assume $\cC=\cC_\H$ is $\H$-simple, $\H^{\oplus}$ is cosparse, and $\rA$ is a simple separable Frobenius algebra $1$-morphism in $\H^\oplus$. As before, we denote by $\cB$ the one object bicategory whose $1$-morphisms are projective $\rA$-$\rA$-bimodules under composition $-\circ_\rA -$. 
\begin{theorem}\label{thm:biequivalence}
There is a biequivalence \[\cC\swfmod_\H \simeq \cB\swfmod. \] 
\end{theorem}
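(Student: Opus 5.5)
We will prove the theorem with a Morita context. Consider the two-object bicategory $\cD$ with objects $\ti$ and $\rA$, where:
\begin{itemize}
\item $\cD(\ti,\ti)=\H^\oplus$ together with the identity;
\item $\cD(\rA,\rA)=\cB(\rA,\rA)$;
\item the off-diagonal morphism categories are the projective right $\rA$-modules $\{\rF\rA\}^\oplus$ and the projective left $\rA$-modules $\{\rA\rF\}^\oplus$ in $\cC(\ti,\ti)$.
\end{itemize}
Composition is ordinary composition or $-\circ_\rA-$, as appropriate. Using Lemma \ref{Afiab} and the duality $\rX\mapsto \rX^*$ from the lemma preceding Lemma \ref{lem:AstarAproj}, we check that $\cD$ is almost fiab. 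Its $1$-morphisms outside the identities then form a single two-sided cell $\J_\cD$, which contains $\H$ as the diagonal $H$-cell at $\ti$ and $\H_\cB$ as the diagonal $H$-cell at $\rA$. For this we use Lemma \ref{AoneH}, the fact that $\rA\in\H^\oplus$, and that $\rA$ is a summand of $\rA\rA$ by separability.

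We next show that $\cD$ is $\J_\cD$-simple. At $\ti$ this is the $\H$-simplicity of $\cC$, and at $\rA$ it is Lemma \ref{AHsimple}. The off-diagonal parts follow by composing with suitable $1$-morphisms to move into a diagonal block.

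We then apply strong $H$-reduction, Theorem \ref{copHcellreduc}, to $\cD$ twice: once with the diagonal cell $\H$ and once with $\H_\cB$. This gives
\[\cC\swfmod_\H=\cD_\H\swfmod_\H\simeq \cD\swfmod_{\J_\cD}\simeq \cD_{\H_\cB}\swfmod_{\H_\cB}=\cB\swfmod.\]
The last equality holds because $\cB$ is semisimple with a single cell (Proposition \ref{Afusion}), so every simple birepresentation of $\cB$ has apex $\H_\cB$.

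To use Theorem \ref{copHcellreduc}, the hypotheses of Theorem \ref{algcoprod} must hold, and by Remark \ref{checkonH} it is enough to check them for one diagonal $H$-cell. We check them on $\H_\cB$, where they are most accessible.
\begin{itemize}
\item Regularity of $\J_\cD$ is automatic, since each diagonal block consists of a single $H$-cell.
\item Cosparseness of $\bfH_\cB$ follows from Lemma \ref{AhomfinitecosparseifC}.
\item $\overline{\bfJ(\rA,\rA)}$ is Frobenius because it is semisimple by Proposition \ref{Afusion}.
\item For adjoint connection-finiteness, $\rH$ is a summand of $\rF\rG\rF^*$ only if $\rG$ is a summand of $\rF^*\rH\rF$ by adjunction. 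In the semisimple category $\cB(\rA,\rA)$ with finite-dimensional homs, $\rF^*\rH\rF$ has only finitely many indecomposable summands, so only finitely many $\rG$ occur.
\end{itemize}

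The main obstacle is transporting these hypotheses between blocks. Remark \ref{checkonH} is stated for a given bicategory, and we must make sure that checking them on $\H_\cB$ really gives the reduction at the $\ti$-corner as well. The point to verify is that the $\cD_\J$-level Morita equivalences of Proposition \ref{Hcellreduc} involve only biprojective bimodules whose compactness is preserved. For this we will use the equivalence $\H^\oplus\simeq\I^\oplus$ of Lemma \ref{bimoddct} and the double centraliser Proposition \ref{dct}: the off-diagonal bimodules $\rF\rA$ and $\rA\rF$ are compact on both sides, so the biequivalence restricts from the cop-af level to the almost finitary level, as in the proof of Theorem \ref{copHcellreduc}. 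If this transport fails, the fallback is to build the two mutually inverse pseudofunctors directly, $\bfM\mapsto \mathrm{Hom}$-functors over $\rA$ and $\bfN\mapsto \bfN\circ_\cB(\text{left }\rA\text{-modules})$, and to check that the unit and counit are equivalences using Proposition \ref{dct}.
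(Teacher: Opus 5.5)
Your proof is correct. Its skeleton is the paper's: a two-object Morita context $\cD$ whose non-identity part is one two-sided cell $\J_\cD$ with diagonal $H$-cells $\H$ and $\H_\cB$, followed by strong $H$-reduction via Theorem~\ref{copHcellreduc}. You differ in how $\cD$ is realised.

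The paper takes as objects $\X=\proj_{\cC(\ti,\ti)}\rA$ and $\Y=\cB(\rA,\rA)$, and as morphism categories the (projective) right $\cB$-module functors between them. Then $\cD_\ty=\cB$ is immediate, and almost fiab-ness comes for free from Lemma~\ref{projfunFrob}. However, identifying $\cD_\tx$ with $\cC$ requires the double centraliser theorem (Proposition~\ref{dct}, resting on Lemmas~\ref{lem:AstarAproj} and~\ref{bimoddct}).

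You instead take the full subbicategory of the bimodule bicategory $\cB_\cC$ on the algebras $\one_\ti$ and $\rA$. Then $\cD_\ti=\cC$ and $\cD_\rA=\cB$ hold by definition, and Proposition~\ref{dct} is never needed. The two constructions agree by identifying projective module functors with projective modules, which is essentially what the double centraliser supplies.

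The price is that you must build the adjunctions for the off-diagonal $1$-morphisms by hand. For $\rX=\rF\rA$, give $\rX^*$ the left $\rA$-action from the lemma preceding Lemma~\ref{lem:AstarAproj}, build units and counits from $\eta_\rF$, $\varepsilon_\rF$ and the structure maps of $\rA$, and check the zigzag identities. This is routine. As in Lemma~\ref{Afiab}, it is the Frobenius property of $\rA$ that makes $(-)^*$ a weak involution rather than only a quasi-fiab structure. Your explicit checks of $\J_\cD$-simplicity, and of the hypotheses of Theorem~\ref{algcoprod} on the semisimple corner $\H_\cB$, make precise what the paper leaves implicit.

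Two corrections.
\begin{itemize}
\item The ``main obstacle'' is not one. Remark~\ref{checkonH} is applied to $\cD$ itself. Once the hypotheses hold on $\H_\cB$, Theorem~\ref{copHcellreduc} holds for $\cD$ with every diagonal $H$-cell of $\J_\cD$, including $\H$. Its final assertion already handles the passage from cop-af to almost finitary birepresentations. Concretely, framing an algebra in $\copr{\H_\cB}$ by some $\rF\rA$ gives one in $\copr{\H}$ by Lemma~\ref{framing}, as in Corollary~\ref{algcopinH}. No transport argument or fallback is needed.
\item The identity $1$-morphism of the object $\rA$, namely $\rA$ itself, lies in $\J_\cD$ by Lemma~\ref{AoneH}, so only $\one_\ti$ sits outside $\J_\cD$. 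Also, $\cB\swfmod=\cB\swfmod_{\H_\cB}$ holds because $\H_\cB$ is the only cell of $\cB$, not because of semisimplicity.
\end{itemize}
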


\proof
To prove the theorem, we set up a Morita context in analogy to the proof of \cite[Proposition 7.8]{MMMTZ2}.

Consider the bicategory $\cD$ with two objects $\tx$, which we identify with $\X := \proj_{\cC(\ti,\ti)}\rA$,  and $\ty$, which we identify with the category $\Y:=\cB(\rA,\rA)$, i.e. the principal birepresentation of $\cB$. Note that $\cB$ acts on the right of both $\tx$ and $\ty$. Morphism categories in $\cD$ are defined by
\begin{equation*}\begin{split}
\cD(\tx,\tx)&= \Hom_{\cB}^{\proj,\one}(\X,\X)\\
\cD(\tx,\ty)&= \Hom_{\cB}^{\proj}(\X,\Y)\\
\cD(\ty,\tx)&= \Hom_{\cB}^{\proj}(\Y,\X)\\
\cD(\ty,\ty)&= \Hom_{\cB}(\Y,\Y)
\end{split}\end{equation*}
where the superscript $\one$ on the first morphism category refers to adding the identity functor.

Writing $\cD_{\tx}$ and $\cD_{\ty}$ for the endomorphism bicategories of the two objects, we observe that $\cD_{\ty} = \cB$ by construction and $\cD_{\tx}\simeq \cC$ by Proposition \ref{dct}.

Moreover, $\cD$ is fiab, since taking adjoints preserves projective functors by Lemma \ref{projfunFrob}. It has precisely two two-sided cells, namely one consisting of only the identity functor on $\X$ (identified with $\one_\ti\in \cC(\ti,\ti)$), and one consisting of all other $1$-morphisms. The latter two-sided cell has two diagonal $H$-cells, one (with source and target $\tx$) being identified with $\H$ under Proposition \ref{bimoddct} and the other (with source and target $\ty$)  consisting of all indecomposable $1$-morphisms in $\cB$.

The theorem now follows from Theorem \ref{copHcellreduc}.
\endproof


\section{Simple birepresentations of Soergel bimodules}\label{sec:simplebirepsforS}
In this section, we apply the results from the previous sections to study the almost finitary birepresentation theory of $\cS=\cS(W,S)$, 
the one-object wide fiab bicategory of Soergel bimodules of arbitrary finite rank Coxeter type $(W,S)$. As we already 
explained in the introduction, such a bicategory can be seen as a pivotal category, so we will switch to the terminology 
of monoidal categories and monoidal functors in this part of the paper, although we will continue to use the terms birepresentation and morphism of birepresentations. By convention, the word monoidal will always mean {\em weak monoidal}, 
unless explicitly stated otherwise. Recall that $1$-morphisms and $2$-morphisms in a one-object bicategory correspond 
to objects and morphisms in the associated monoidal category.

As already mentioned in the introduction, our reduction results only hold for simple birepresentations whose apex contains the longest element of a finite parabolic subgroup of $W$. The classification of the simple birepresentations of $\cS$ with such an apex $\J$ reduces to the classification of the simple birepresentations of Lusztig's asymptotic almost fusion category $\cA_\H$, for any choice of diagonal $H$-cell in $\J$, thanks to Theorem~\ref{thm:biequivalence}

\subsection{Soergel bimodules}\label{sec:Soergel}
\subsubsection{Grading conventions}
Soergel bimodules are $\bbZ$-graded, so we first have to fix our grading conventions. 
Given a ($\mathbb{Z}$-)graded vector space $M=\oplus_{i\in \mathbb{Z}} M_i$, we define the shift $M(1)$ by 
$M(1)_i:=M_{i+1}$ and use the shorthand  
\[
M^{\oplus m}:= M(i_1)^{\oplus m_1} \oplus \cdots \oplus M(i_k)^{\oplus m_k}, 
\]
for $m=m_1\vpar^{i_1}+\cdots + m_k\vpar^{i_k}\in \mathbb{N}[\vpar, \vpar^{-1}]$.  Note that $M(1)=M^{\oplus \vpar}$ 
according to this convention, which is opposite to the convention in \cite{MMMTZ2}, but coincides with the convention in \cite{EMTW}. 

In this paper, we will mostly avoid the use of graded categories, graded functors, etc., which were used in \cite{MMMTZ2}. 
In the setup of this paper, the homogeneous bimodule maps between two Soergel bimodules always span an infinite dimensional graded $\Bbbk$-vector space (for the experts: in this paper, 
Soergel bimodules are defined over the usual polynomial ring, which is infinite dimensional, instead of the coinvariant algebra, which is finite dimensional for finite 
Coxeter types). Fortunately, for all $t\in \mathbb{Z}$, the 
homogeneous bimodule maps of degree $t$ span a finite dimensional $\Bbbk$-vector space, so it is more natural to work with categories $\cC$ admitting shifts and, for two objects $\rX,\rY$ in $\cC$ and an integer $t\in \bbZ$, 
to consider the finite dimensional $\Bbbk$-vector space $\mathrm{Hom}_\cC(\rX,\rY(t))$ of degree preserving morphisms in 
$\cC$. For more information, see \cite[Sections 2.6 and 2.7]{MMMTZ2} and Remark \ref{rem:HOM}.

\subsubsection{Soergel bimodules}
From now on, we take $\Bbbk=\mathbb{C}$. 

Let $(W,S)$ be any Coxeter system of finite rank with Coxeter matrix $(m_{st})_{s,t\in S}$. An {\em expression} is a sequence $\underline{w}=(s_1,s_2,\ldots,s_r)$ of elements in $S$ (called {\em simple reflections}) and the corresponding element in $W$ is given by the product $w=s_1s_2\cdots s_r$. We say that an expression $\underline{w}$ of length $r$ is a {\em reduced expression} (rex) for $w$ if there is no shorter expression $\underline{v}$ such that $v=w$, in which case we define the {\em length} $\ell(w)$ of $w$ to be $r$, which is independent of the choice of rex for $w$. Finally, we denote the Bruhat order by $\preceq$, and recall that, for $v,w\in W$, we have $v\preceq w$ if and only if there is a rex for $w$ which 
contains a rex for $v$ as a subexpression.

Let $\mathfrak{h}$ be a {\em real Soergel realization} of 
$(W,S)$. For the complete definition of the latter, we refer to \cite[Sections 3.1 and 3.2 ]{EW2}, but we recall that $\mathfrak{h}$ 
is a finite dimensional real representation of $W$ with certain special properties. Let 
$\mathfrak{h}_\mathbb{C}:=\mathfrak{h}\otimes_{\mathbb{R}}\mathbb{C}$ and take $R$ to be the complex symmetric algebra on the 
dual vector space $\mathfrak{h}_\mathbb{C}^*$, with $\mathbb{Z}$-grading determined 
by putting $\mathfrak{h}_\mathbb{C}^*$ in degree two. The contragredient action of $W$ on $\mathfrak{h}_\mathbb{C}^*$ extends to an action 
on $R$ by degree-preserving algebra automorphisms. 

\begin{remark} In their seminal paper \cite{EW1}, Elias and Williamson use a particular realization of $(W,S)$ over $\mathbb{R}$, due to Soergel. However, we need a slightly more general setup below and we need to work over $\mathbb{C}$ to use the geometric results in \cite{Be} and \cite{BO} for (extended) affine Weyl groups. For a thorough discussion of the general notion of a 
realization (over a commutative ring $\Bbbk$) of a Coxeter group, see \cite[Sections 3.1 and 3.2 ]{EW2}.
\end{remark}

For any parabolic subset $I\subset S$, let $W_I$ be the 
corresponding parabolic subgroup of $W$ and $R^I\subset R$ the subalgebra of elements that are $W_I$-invariant. When $I=\{s\}$, for some $s\in S$, we simply write $R^s:=R^I$. For any $s \in S$, define the graded 
$R$-$R$-bimodule $\rB_s$ (from now on, $R$-bimodule for short) by  
\begin{equation}\label{eq:Bi}
\rB_s:=R\otimes_{R^s} R(1).
\end{equation} 
Note that $\rB_s$ is an indecomposable $R$-bimodule since it is generated by $1\otimes 1$ (which lives in degree $-1$, due to the shift). For any expression 
$\underline{w}=(s_1,\ldots, s_r)$, the corresponding {\em Bott-Samelson bimodule} $\rBS(\underline{w})$ (from now on, BS-bimodule, for short) is defined as 
\begin{equation}\label{eq:BS}
\rBS(\underline{w}):=\rB_{s_1}\otimes_R \cdots \otimes_R \rB_{s_r}.
\end{equation}
Note that $\underline{w}$ is not required to be a rex. To simplify notation, we will write these tensor products simply as products from now on. In general, the 
BS-bimodules are not indecomposable. For example, if $m_{st}=3$, then there exists a degree-preserving isomorphism of $R$-bimodules
\[
\rB_s\rB_t\rB_s\cong \rB_{sts}\oplus \rB_s,
\]
where $\rB_{sts}$ is the simplest example of an indecomposable Soergel bimodule that is not a BS-bimodule. As the notation suggests, we have $\rB_{sts}\cong \rB_{tst}$, so the isomorphism class of the indecomposable bimodule does not depend on the choice of rex. 
As a matter of fact, one can show that 
\[
\rB_{sts}\cong \rB_{tst}\cong R\otimes_{R^{\{s,t\}}} R(3),
\]
see \cite[Example 4.39 and Exercise 4.41]{EMTW}. 

Now, let $R$-gbim be the monoidal category of all graded $R$-bimodules which are finitely generated both as left and right $R$-modules, and degree-preserving bimodule maps. All BS-bimodules are free of finite rank both as a left and as a right graded $R$-module (with the rank being equal in both cases), see \cite[Lemma 4.27]{EMTW}, hence they form a full monoidal subcategory of 
$R$-gbim, which has finite dimensional morphism spaces, but is not closed under degree shifts or taking direct sums of direct summands.

\begin{definition}\label{def:Soeaff} The category $\cS=\cS(W,S)$ of {\em Soergel bimodules} is the smallest full 
subcategory of $R$-gbim containing all $R$-bimodules that are isomorphic to direct sums of direct summands of degree shifted BS-bimodules. This is closed under tensoring over $R$, hence it is a monoidal category.
\end{definition}

\begin{remark}\label{rem:HOM} Note that $\cS$ is not a graded monoidal category, because its morphism spaces, which are finite dimensional vector spaces, are not graded. In particular, for any $S\in S$, we have $\rB_s\cong \rB_s\{ t\}$ if and only if $t=0$. 

However, sometimes it will be useful to consider the graded monoidal category $\cS^*$, which has the same objects as $\cS$, but morphism spaces defined by 
\begin{equation}\label{eq:HOM}
\mathrm{HOM}_{\cS^*}(\rB_x,\rB_y):=\bigoplus_{t\in \mathbb{Z}}\mathrm{Hom}_{\cS}(\rB_x,\rB_y(t)),
\end{equation}
for $x,y\in W$. In this category, we have $\rB_s\cong \rB_s\{ t\}$ for all $s\in S$ and $t\in \mathbb{Z}$. Its morphism spaces 
are infinite dimensional graded vector spaces, but they are free of finite rank both as left and right $R$-modules (with the rank being the same in both cases again). 
\end{remark}

The underlying category of $\cS$ is Krull--Schmidt and Soergel proved that there is a set of indecomposable Soergel bimodules $\{\rB_w\mid w\in W\}$, such that any indecomposable object in $\cS$ is isomorphic to $\rB_w(t)$, for some $w\in W$ and $t\in \mathbb{Z}$. Moreover, for any $w\in W$ and any rex $\underline{w}$ for $w$, the decomposition of $\rBS(\underline{w})$ into indecomposable objects contains precisely one direct summand isomorphic to $\rB_w$ and all other direct summands are 
isomorphic to shifts of $\rB_u$ with $u\prec w$. Given $\rB_u, \rB_v\in \cS$, for some $u,v\in W$, the product $\rB_u\rB_v$ can be decomposed as 
\begin{equation}\label{eq:proddecomp}
\rB_u\rB_v\cong \bigoplus_{w\in W} \rB_w^{\oplus h_{u,v,w}}
\end{equation}
with $h_{u,v,w}\in \mathbb{N}[\vpar, \vpar^{-1}]$. Moreover, these $h_{u,v,w}$ are  invariant under the $\vpar$-antilinear involution of $\mathbb{Z}[\vpar,\vpar^{-1}]$ defined by swapping $\vpar$ and $\vpar^{-1}$. The {\em Soergel categorification theorem} says that that there is an isomorphism of $\mathbb{Z}[\vpar,\vpar^{-1}]$-algebras between 
the split Grothendieck group of $\cS=\cS(W,S)$ and the Hecke algebra $\rH=\rH(W,S)$ which maps $[\rB_w]$ to the Kazhdan-Lusztig basis element $b_w$, for $w\in W$. 

As already remarked, the morphism spaces in $\cS^*$ are free left (resp. right) 
$R$-modules of finite rank, and {\em Soergel's hom-formula} expresses this rank in terms of a $\vpar$-sesquilinear form on $H$, see e.g. \cite[Theorem 5.27]{EMTW}. We do not need to recall the whole formula here, but we will use a special case of it, which can be stated for morphisms in $\cS$ (i.e., for degree-preserving morphisms). It says that, for any $u,v\in W$, we have  
\begin{equation}\label{eq:Soehomform1}
\dim_{\mathbb{C}}\left(\mathrm{Hom}_{\cS}(\rB_u, \rB_v(t))\right) =
\begin{cases}
\delta_{u,v}, & \text{if}\; t=0;\\
0, & \text{if}\; t<0,
\end{cases}
\end{equation}
where $\delta_{u,v}$ is the Kronecker delta.

\subsection{The asymptotic bicategory for a diagonal $H$-cell}
\label{sec:asymptotic}
Lusztig defined a function $\mathtt{a}\colon W\to \mathbb{N}_0$, which is constant on two-sided KL cells and, therefore, also on 
left, right and diagonal $H$-cells. It thus makes sense to speak of the $\mathtt{a}$-value of a cell $\mathcal{Z}$ and write 
$\mathtt{a}(\mathcal{Z})$. Lusztig proved that, for any $x,y,z\in W$, we have  
\begin{equation}\label{eq:gamma}
h_{x,y,z}=\vpar^{-\mathtt{a}(z)}\gamma_{x,y,z^{-1}} \left(\bmod\; \vpar^{-\mathtt{a}(z)+1}\mathbb{N}_0[\vpar]
\right)
\end{equation} 
for some $\gamma_{x,y,z^{-1}}\in \mathbb{N}_0$. Recall that $h_{x,y,z}$ is invariant under 
switching $\vpar$ and $\vpar^{-1}$, so the statement in \eqref{eq:gamma} is equivalent 
to Lusztig's statement in \cite[13.6 (c)]{Lu}. The following lemma is simply a copy of \cite[Lemma 3.3]{MMMTZ2}, which holds for any Coxeter type (not just for 
finite ones, as was the assumption in that paper).
\begin{lemma}\label{lem:ashiftedS} 
Let $\J$ be a two-sided cell of $W$ and $a=\mathtt{a}(\J)$. For any $x\in \J$, define 
$\widetilde{\rC}_x:=\rB_x(a)\in \cS$. Then, in $\cS_{\J}$, for all $x,y\in \J$, there is an isomorphism   
\[
\widetilde{\rC}_x\widetilde{\rC}_y \cong \bigoplus_{z\in \J} \widetilde{\rC}_z^{\oplus \vpar^{a} h_{x,y,z}}.
\]
In particular, if $\widetilde{\rC}_z(k)$ is a direct summand of $\widetilde{\rC}_x\widetilde{\rC}_y$, 
then $k\geq 0$. 
\end{lemma}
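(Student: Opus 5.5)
The plan is to transport the decomposition \eqref{eq:proddecomp} of $\rB_x\rB_y$ from $\cS$ to the $\J$-simple quotient and then account for the shift by $a$. Since $\widetilde{\rC}_x=\rB_x(a)$ and $\widetilde{\rC}_y=\rB_y(a)$, and degree shifts commute with the tensor product over $R$, we have
\[
\widetilde{\rC}_x\widetilde{\rC}_y\cong (\rB_x\rB_y)(2a)\cong \bigoplus_{w\in W}\rB_w(2a)^{\oplus h_{x,y,w}} .
\]
In $\cS_\J$ every summand $\rB_w$ with $w\notin\J$ is killed. If $w\not\leq_J\J$, it is zero already in $\cS_{\leq\J}$. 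If $w<_J\J$ strictly, its identity lies in the maximal biideal not containing identities on $\J$. This holds because products of elements of $\J$ only produce summands $\leq_J \J$, and the summands strictly below $\J$ become zero in the $\J$-simple quotient by the usual argument from the finitary case, which uses only the Krull--Schmidt property. The same argument appears in \cite[Lemma 3.3]{MMMTZ2}, and I will cite it. So only $w=z\in\J$ survive, and in $\cS_\J$
\[
\widetilde{\rC}_x\widetilde{\rC}_y\cong\bigoplus_{z\in\J}\rB_z(2a)^{\oplus h_{x,y,z}}=\bigoplus_{z\in\J}\widetilde{\rC}_z(a)^{\oplus h_{x,y,z}} .
\]

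Next I rewrite the multiplicity. By the grading convention $M(1)=M^{\oplus\vpar}$, we have $M(a)^{\oplus h}=M^{\oplus \vpar^{a}h}$. This gives exactly $\widetilde{\rC}_z^{\oplus \vpar^{a}h_{x,y,z}}$, which proves the displayed isomorphism.

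For the positivity statement I use \eqref{eq:gamma} with $\mathtt{a}(z)=a$ for $z\in\J$. It gives $h_{x,y,z}\in \vpar^{-a}\gamma_{x,y,z^{-1}}+\vpar^{-a+1}\mathbb{N}_0[\vpar]$, which must be read together with the bar-invariance of $h_{x,y,z}$. The precise content needed is that $h_{x,y,z}$ has no monomials of degree below $-a$. This is Lusztig's bound $\deg h_{x,y,z}\leq \mathtt{a}(z)$ combined with $\vpar\leftrightarrow\vpar^{-1}$ symmetry. Hence $\vpar^{a}h_{x,y,z}\in\mathbb{N}_0[\vpar]$, so each summand $\widetilde{\rC}_z(k)$ has $k\geq 0$. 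Krull--Schmidt uniqueness of decompositions in $\cS_\J$ ensures that no other shifts of $\widetilde{\rC}_z$ appear. Indecomposables $\rB_z(k)$ for distinct $k$ are non-isomorphic by \eqref{eq:Soehomform1}, and remain so in $\cS_\J$ since identities on $\J$ are not killed.

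The main subtlety is the vanishing of the summands $\rB_w$ with $w<_J\J$ in $\cS_\J$ in this infinite, almost finitary setting. I would handle it as in the finite case, using that the maximal biideal of Section~\ref{sec:Jsimple} contains all identities of $1$-morphisms strictly below $\J$.
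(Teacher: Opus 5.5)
Your overall route is the paper's: the paper just transports \cite[Lemma 3.3]{MMMTZ2}, i.e.\ decompose $\rB_x\rB_y$ in $\cS$ via \eqref{eq:proddecomp}, discard the summands outside $\J$, and turn the shift $(2a)$ into the multiplicity $\vpar^{a}h_{x,y,z}$ of $\widetilde{\rC}_z$. Your shift bookkeeping is correct. With $\mathtt{a}(z)=a$, \eqref{eq:gamma} gives $\vpar^{a}h_{x,y,z}\in\mathbb{N}_0[\vpar]$ directly, so bar-invariance is not needed there. Your Krull--Schmidt remark for the ``in particular'' is also fine.

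The gap is in the vanishing step, which you argue with the cell order reversed, and one claim you rely on is false. In the paper's convention, inherited from the finitary theory, $\rF\le_L\rG$ means that $\rG$ is a direct summand of some $\rH\rF$. So if $\rB_w$ is a summand of $\rB_x\rB_y$, then $x\le_R w$ and $y\le_L w$, hence $\J\le_J w$. Products of elements of $\J$ therefore only produce summands $\ge_J\J$, not $\le_J\J$. Every $w\notin\J$ that occurs thus satisfies $w\not\le_J\J$, and $\rB_w$ is already zero after the first step in forming $\cS_{\le\J}$. That is your first case, and it is the entire argument.

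Your second case would fail. If $\rG<_J\J$, then some $\rB_x$ with $x\in\J$ is a direct summand of $\rH\rG\rH'$. Any biideal containing $\id_\rG$ then contains $\id_{\rB_x}$, so identities on $1$-morphisms strictly below $\J$ are never in that biideal; for example, $\id_R$ is never killed. The ``main subtlety'' you single out therefore does not arise, and the argument you propose for it is wrong. It looks as though you mixed Kazhdan--Lusztig's order, where products move down and $e$ is maximal, with the order used to define $\cS_{\le\J}$.
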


\begin{remark} 
Note that the grading shift of $\rB_x$ in \cite[Lemma 3.2 and Section 3.2]{MMMTZ2} is opposite to the one 
in Lemma~\ref{lem:ashiftedS}. The reason is that we work with algebra objects in this paper, instead of coalgebra objects. 
For example, given a Duflo involution $d\in \J$, the object $\rB_d(a)$ in $\cC_\J$ is naturally an algebra object, 
whereas $\rB_d(-a)$ is naturally a coalgebra object. We will see this in more detail below.  
\end{remark}

Now, let $\H$ be a diagonal $H$-cell in $W$ and $a=\mathtt{a}(\H)$, and let $d\in \H$ be the so-called {\em Duflo involution} (a.k.a. distinguished involution). This element in $\H$ is uniquely determined by a certain property, which we will not recall, but which implies that there is a non-zero morphism $\eta_d\colon R\to \widetilde{\rC}_d$, unique up to a non-zero 
scalar, in $\cS_\H$. By a slight abuse of terminology, we will also refer to $\widetilde{\rC}_d$ as a Duflo involution. Before we discuss the analog of 
\cite[Section 3.2]{MMMTZ2}, we introduce some notation. For $s\in \bbZ$, let
\begin{gather}
\H^{\oplus}_s:=\left\{\rB_w(s)\mid w\in \H \right\}^\oplus, \\
 \H^{\oplus}_{\geq s}:=\left\{\rB_w(t)\mid w\in \H,\; t\geq s \right\}^\oplus, \quad 
\H^{\oplus}_{>s}:=\left\{\rB_w(t)\mid w\in \H,\; t>s \right\}^\oplus.
\end{gather}
By definition, we have $\rB_w\in \H^\oplus_0$ and $\widetilde{\rC}_w\in \H^{\oplus}_a$, for $w\in \H$. 
Note that $\H^\oplus_{\geq a}$ and $\H^\oplus_{> a}$ are not monoidal categories, because they do not contain an identity object. 
By Lemma~\ref{lem:ashiftedS}, both inherit the monoidal product from $\cS_\H$, which is associative up to the usual canonical isomorphisms (even the usual tensor product of vector spaces is not strictly associative, but only up to canonical isomorphisms). 

The oplax identity object in $\H^\oplus_{\geq a}$ is given by $\widetilde{\rC}_d$ with the left and right oplax unitors $\ell_{\rX}$ and $r_{\rX}$, for $\rX\in \H^\oplus_{\geq a}$, 
being induced by the aforementioned morphism $\eta_d\colon R\to \widetilde{\rC}_d$ via 
\begin{equation}\label{eq:unitors}
\ell_{\rX}\colon \rX\cong R\rX\xrightarrow{\eta_d \circ_{\mathrm{h}} \mathrm{id}_{\rX}} \widetilde{\rC}_d\rX\quad \text{and}\quad 
r_{\rX}\colon \rX\cong \rX R\xrightarrow{\mathrm{id}_{\rX}\circ_{\mathrm{h}} \eta_d} \rX\widetilde{\rC}_d.
\end{equation}
Just as in \cite[(3.4)]{MMMTZ2}, left and right unitality of $R$ in $\cS_\H$ implies left and right oplax unitality of $\widetilde{\rC}_d$ in $\H^\oplus_{\geq a}$. For any 
$\rX\in \H^\oplus_{\geq a}$, there is an isomorphism of the form 
\begin{equation}\label{eq:oplaxidentity}
\widetilde{\rC}_d \rX \cong \rX\widetilde{\rC}_d\cong \rX \oplus \rX'
\end{equation}
for some $\rX'\in \H^\oplus_{>a}$, which is often non-zero, so $\widetilde{\rC}_d$ is indeed only an oplax identity object in general.  

As for $\H^\oplus_{> a}$, there is not even an oplax identity object, because that would have to be $\widetilde{\rC}_d$, but 
$\widetilde{\rC}_d$ does not belong to $\H^\oplus_{>a}$. Thus $\H^\oplus_{>a}$ is only a non-unital monoidal category. Thanks to Lemma~\ref{lem:ashiftedS}, it is a non-unital monoidal subcategory of $\H^\oplus_{\geq a}$, so we can define 
the quotient   
\begin{equation}\label{eq:AH}
\cA_{\H}:=\H^\oplus_{\geq a} / (\H^\oplus_{>a}),
\end{equation} 
where $(\H^\oplus_{>a})$ is the monoidal ideal generated by $\H^\oplus_{>a}$ in $\H^\oplus_{\geq a}$. 
We will denote the image of $\widetilde{\rC}_w$ in the $\cA_\H$ by $\rA_w$, for $w\in \cS$. The isomorphisms in~\eqref{eq:oplaxidentity} imply that $\rA_d$ 
is an honest identity object in $\cA_\H$, hence $\cA_\H$ is a monoidal category, which we call the {\em asymptotic category} 
for $\H$.  

Soergel's hom-formula in \eqref{eq:Soehomform1} implies that, for all $u,v\in \H$, we have    
\begin{equation}\label{eq:Soehomformassymp}
\dim_\mathbb{C}\left(\mathrm{Hom}_{\cA_\H}(\rA_u, \rA_v)\right) =\delta_{u,v},
\end{equation}
so $\cA_\H$ is semisimple.

\begin{remark}\label{rem:asympJ}
Although we will mostly consider asymptotic categories associated to diagonal $H$-cells, one can easily extend the above definition 
to two-sided cells. For every two-sided cell $\J$ with finitely many left cells in $W$, there is an asymptotic category $\cA_\J$, 
which is semisimple and has a monoidal product, with the identity object being given by the direct sum of all Duflo involutions in $\J$. Moreover, it is a pivotal category, with duality defined as above. 

This happens, of course, when $W$ is a finite Coxeter group, because then it has only finitely many cells of any type (two-sided, left, right and diagonal), but it can also happen when $W$ is infinite. For example, Lusztig proved in \cite[Theorem 2.2]{Lu2} that the same happens when $W$ is any affine Weyl group, although each cell contains infinitely elements in general, and Dyer proved in \cite[Lemmas 3.3 and 6.1]{Dy} that it also happens when $W$ is any finite rank universal Coxeter group.

If $W$ is a Coxeter group with a two-sided cell $\J$ containing infinitely many left cells, 
then $\cA_\J$ has infinitely many Duflo involutions (one for each left cell), so the identity object of the monoidal structure would be 
an infinite direct sum. One way to get around this technical problem is to consider $\cA_J$ as a bicategory whose objects are the left cells $\Gamma$ in $\J$ and whose Hom-categories are given by 
\[
\mathrm{Hom}_{\cA_\J}(\Gamma_1,\Gamma_2):=\cA_{\Gamma_1\cap \Gamma_2^*}
\]  
for any two left cells $\Gamma_1, \Gamma_2$ in $\J$. Horizontal and vertical composition are defined in the obvious way. This bicategory is almost fiab and semisimple, in other words, it is an almost fusion bicategory. 
\end{remark}

\subsection{Relation with Lusztig's definition}\label{sec:relLusztig} 
Our definition of $\cA_{\H}$ differs from Lusztig's definition in \cite[\S 18.15]{Lu}, because he defines the asymptotic category 
as $\H^\oplus_0\subset \cS_\H$ with a modified monoidal product (the {\em truncated monoidal product}).
To recall the latter monoidal product, we adapt (part of) the excellent discussion in \cite[Sections 5C and 5D]{ERT} to our setting. 
Let $s\in \mathbb{Z}$. Every $\rX\in \H^{\oplus}_{\geq s}$ admits a decomposition 
of the form 
\[
\rX\cong \rX_s\oplus \rX_{>s}
\] 
such that $\rX_s\in \H^\oplus_{s}$ and $\rX_{>s}\in \H^\oplus_{s}$. This decomposition is essentially unique, thanks to the Krull-Schmidt property, but it is not canonical. However, there is a canonical choice for $\rX_s$ and the projection $\rX\to \rX_s$ is independent of the choice of $\rX_{>s}$, because $\mathrm{Hom}_{\Soeext_n}(\rX_{>s},\rX_s)=0$ for all possible choices of 
$\rX_{>s}$ by \eqref{eq:Soehomform1}. This result can be obtained from \cite[Lemma 5C.2]{ERT} by applying the contravariant involution $\mathbb{D}\colon \cS\to \cS$, called {\em duality} in that paper. We call $\rX_s$ the {\em $(-s)$-degree quotient} of 
$\rX$ and note that it can be zero. In general, there is no canonical choice for $\rX_{>s}$ nor for the inclusion of $\rX_s$ into $\rX$. 

Given $\rX, \rY\in \H^\oplus_0$, the formulas in \eqref{eq:proddecomp} and \eqref{eq:gamma} imply that 
$\rX\rY\in \H^\oplus_{\geq -a}$, and Lusztig's truncated monoidal product 
$\rX\bullet\rY$ is defined as 
\[
((\rX\rY)_{-a})(a) \in \H^\oplus_0.
\]  
The identity object for this truncated monoidal product is given by $\rB_d$. The canonicity of the 
$(-a)$-degree quotients implies that in this way $\H^\oplus_0$ becomes a monoidal category, see~\cite[\S 18.15]{Lu}. 

By the above, there is an oplax monoidal functor $\Pi\colon \H^\oplus_{\geq a}\to \H^\oplus_0$ which is full and essentially 
surjective, sending every object 
$\rX\in \H^\oplus_{\geq a}$ to $(\rX_a)(-a)\in \H^\oplus_0$ and every morphism $f\colon \rX\to \rY$ in $\H^\oplus_{\geq a}$ to the induced morphism 
$f'\colon (\rX_a)(-a)\to (\rY_a)(-a)$ in $\H^\oplus_0$, which is well-defined by \eqref{eq:Soehomform1}. 
Moreover, there is a natural $2$-isomorphism 
\[
\Pi_{\rX,\rY}\colon \Pi(\rX\rY)=(\rX\rY)_a(-a)\to \rX_a(-a)\bullet \rY_a(-a)=\Pi(\rX)\bullet\Pi(\rY), 
\] 
for $\rX,\rY\in \H^\oplus_{\geq a}$. In particular, for $\rX\in \H^\oplus_{\geq a}$, we have 
\begin{gather*}
(\widetilde{\rC}_d \rX)_a(-a)\cong \rB_d\bullet \rX_a(-a)\cong \rX_a(-a);\\
(\rX \widetilde{\rC}_d)_a(-a)\cong \rX_a(-a) \bullet \rB_d\cong \rX_a(-a),
\end{gather*} 
so $\Pi_{\widetilde{\rC}_d, \rX}$ and $\Pi_{\rX,\widetilde{\rC}_d}$ define the unital isomorphisms for the 
image of the oplax identity object. By definition, the induced morphism $f'$ is zero if and only if $f'$ factors through $\H^\oplus_{>a}$, so $\Pi$ descends to $\cA_\H$ and the result is an equivalence $\cA_\H\simeq \H^\oplus_0$ of monoidal categories. 
Henceforth, we will tacitly identify $\cA_\H$ and $\H^\oplus_0$, but with a caveat. 

\begin{remark}\label{eq:strictification} By definition, the monoidal category $\cA_\H$ is essentially strict, because all coherers are canonical, but $\H^\oplus_0$ can have non-canonical coherers, see e.g. \cite[Section 8]{MMMTZ2}. This shows that the 
isomorphisms $\Pi_{\rX,\rY}$ can be non-trivial in general and that our monoidal category $\cA_\H$ is 
a strictification of the monoidal category $\H^\oplus_0$. 
\end{remark}

Elias and Williamson~\cite[Proposition 5.5]{EW3} showed that $\H^\oplus_0$ is a pivotal category. 
The dual of $\rB_w$ is $\rB_{w^{-1}}$, just as in $\cS$, but the coevalution and evaluation morphisms are different. Using Hodge theoretic arguments, Elias and Williamson defined these morphisms and showed that they satisfy the usual coherence relations. 
Via the monoidal equivalence above, we can transport this pivotal structure to $\cA_\H$. 

The oplax monoidal functor $\Pi\colon \H^\oplus_{\geq a}\to \H^\oplus_0$ has a lax left adjoint 
$\Theta\colon \H^\oplus_0\to \H^\oplus_{\geq a}$, which sends every object $\rX\in \H^\oplus_0$ to 
$\rX(a)\in \H^\oplus_{\geq a}$ and every morphism 
$f\colon \rX\to \rY$ in $\H^\oplus_0$ to the induced morphism $f\colon  \rX(a)\to \rY(a)$ 
in $\H^\oplus_{\geq a}$. By definition, $\Theta$ is a lax monoidal functor 
because there is a canonical projection $\Theta_{\rX,\rY}\colon (\rX\rY)(2a) \to (\rX\bullet \rY)(a)$, for all $\rX,\rY\in \H^\oplus_0$. 
One immediately sees that $\Theta$ is left adjoint to $\Pi$, because, for all $t\geq a$, we have 
\begin{eqnarray*}
\mathrm{Hom}_{\H^\oplus_{\geq a}}(\Theta(\rB_x), \rB_y(t))&= &\mathrm{Hom}_{\H^\oplus_{\geq a}}(\rB_x(a), 
\rB_y(t)) \\
&\cong& \mathrm{Hom}_{\H^\oplus_0}(\rB_x, \rB_y(t-a))\\
& \cong & \mathrm{Hom}_{\H^\oplus_0}(\rB_x, \Pi(\rB_y(t))),
\end{eqnarray*} 
which is natural in $x,y\in \H$. Note that these morphism spaces are all zero for $t< a$ and one-dimensional for $t=a$, thanks to \eqref{eq:Soehomform1}, and that $\Pi\circ \Theta\cong \mathrm{Id}_{\H^\oplus_0}$ holds. Under the identification of 
$\cA_\H$ with $\H^\oplus_0$, we obtain an oplax monoidal functor $\Pi\colon \H^\oplus_{\geq a}\to \cA_\H$ and its left adjoint 
$\Theta\colon \cA_\H\to \H^\oplus_{\geq a}$, which is a lax monoidal functor. These are the analogs of $\Pi$ and $\Theta$ 
in \cite[Sections 3.2 and 3.3]{MMMTZ2}, but there $\Pi$ is lax and $\Theta$ oplax, because in that paper the authors used opposite shifts and inclusions.   

\subsection{Duflo involutions}
Let $W$ be a Coxeter group of finite rank, $\H$ a diagonal $H$-cell in $W$ and $d$ the (unique) Duflo involution in $\H$. 

First, suppose that $d=w_I$, the longest element of some finite parabolic subgroup $W_I$ of $W$. Then $\mathtt{a}(\H)=\ell(w_I)$, which we denote by $a$ for short. 

\begin{lemma}\label{lem:hvsgamma}
For any $x,y,z\in \H$, we have 
\begin{equation}\label{eq:gammacan}
h_{x,y,z}=\pi(I)\gamma_{x,y,z^{-1}},
\end{equation}
where $\pi(I):=\vpar^{\ell(w_I)}\sum_{w\in W_I}\vpar^{-2\ell(w)}$.
\end{lemma}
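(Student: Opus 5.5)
The strategy is to show that every indecomposable $\rB_x$ with $x\in\H$ is "divisible" on both sides by $\rB_{w_I}$, up to a fixed graded multiplicity, and then compare degrees. The key input is the classical fact that $\rB_{w_I}\cong R\otimes_{R^I}R(\ell(w_I))$. Consequently
\[
\rB_{w_I}\rB_{w_I}\cong \rB_{w_I}^{\oplus \pi(I)},
\]
because $R^I\otimes_{R^I}R\otimes_{R^I}R^I$ reduces to $R$ as a graded $R^I$-bimodule free of graded rank $\sum_{w\in W_I}\vpar^{-2\ell(w)}$ (Poincaré polynomial), shifted appropriately. Equivalently, $b_{w_I}^2=\pi(I)b_{w_I}$ in the Hecke algebra.

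Next I claim that, since $x\in\H$ and $d=w_I$ lies in the same left and right cell as $x$, we have $xs<x$ and $sx<x$ for all $s\in I$. This uses that left and right descent sets are constant on right and left cells, respectively, and that $\mathcal R(w_I)=\mathcal L(w_I)=I$. Descent on the right by all of $I$ implies that $b_x b_{w_I}=\pi(I)b_x$, and similarly on the left. Concretely, $x$ factors as $x=x'w_I$ with $x'$ minimal in $xW_I$, and $b_x$ lies in the right ideal $b_{w_I}$-eigenspace $\{h: hb_s=(\vpar+\vpar^{-1})h\ \forall s\in I\}$, which equals $H b_{w_I}$. This is standard, since $b_xb_s=(\vpar+\vpar^{-1})b_x$ whenever $xs<x$. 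Hence $b_x=h_x b_{w_I}=b_{w_I}h'_x$ for some $h_x,h'_x\in H$.

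Now compute $b_xb_y$ for $x,y\in\H$. Write $b_x=b_{w_I}h'_x$ on the left side of $b_y$'s decomposition, or better, use $b_xb_y=h_xb_{w_I}b_y$. Since $b_{w_I}b_y=\pi(I)b_y$, we get $b_xb_y=\pi(I)\,h_x b_y$. Rather than $h_xb_y$, I will use the symmetric version: $b_x b_y = h_x b_{w_I}b_{w_I}h'_y=\pi(I) h_xb_{w_I}h'_y$, and $h_xb_{w_I}h'_y$ lies in $b_{w_I}Hb_{w_I}$-type subspace. The aim is to show that its projection onto the span of $\{b_z: z\in\H\}$ (modulo lower cells, which is what $h_{x,y,z}$ for $z\in\H$ sees) has coefficients in $\mathbb{N}_0$ independent of $\vpar$. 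Then $h_{x,y,z}=\pi(I)c_{x,y,z}$ with $c\in\mathbb{Z}[\vpar^{\pm1}]$ bar-invariant. Comparing with \eqref{eq:gamma}, note that $\pi(I)=\vpar^{-a}+(\text{higher powers})$ and is bar-invariant, so $\vpar^{-a}$ times the lowest term of $c$ must have degree $\ge -a$. Bar-invariance then forces $c$ to be a constant, namely $c=\gamma_{x,y,z^{-1}}$.

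The main obstacle is justifying that $c_{x,y,z}=h_{x,y,z}/\pi(I)$ has no negative powers of $\vpar$, or more precisely is a Laurent polynomial at all with nonnegativity. I would attack this categorically. Since $\rB_x$ is a direct summand of $\rX\rB_{w_I}$ for suitable $\rX$, the product $\rB_x\rB_y$ is a summand of $\rX\rB_{w_I}\rB_{w_I}\rY\cong(\rX\rB_{w_I}\rY)^{\oplus\pi(I)}$. Better: $\rB_x\rB_y\cong \rB_x\otimes_{R^I}\rB_y$ shifted by $\pi(I)$ as graded bimodules. The reason is that $\rB_x$ is a right $R\otimes_{R^I}R$-module via the summand structure, so $\rB_x\otimes_R\rB_y\cong (\rB_x\otimes_{R^I}\rB_y)\otimes$ (free $R^I$-module of graded rank $\pi(I)$) after adjusting shifts. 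Here $\rB_x\otimes_{R^I}\rB_y$ (with shift $-a$) is an honest Soergel bimodule, a summand of singular Soergel compositions, so its decomposition multiplicities $c_{x,y,z}$ lie in $\mathbb{N}_0[\vpar^{\pm1}]$. Then the degree argument above, together with Lemma~\ref{lem:ashiftedS} (all summands in the cell appear with nonnegative shift relative to $\widetilde{\rC}_z$), pins $c$ to the constant $\gamma_{x,y,z^{-1}}$. Making the $R^I$-tensor factorisation precise is the step I expect to require the most care.
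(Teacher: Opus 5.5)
Your Hecke-algebra computation already proves the lemma. The ``main obstacle'' you name is not one, and the categorical paragraph you add to overcome it is wrong as stated.

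\emph{The decategorified argument is complete.} You have $b_x=h_xb_{w_I}$ with $h_x\in H$. This needs only that the right descent set of $x$ contains $I$; the identification of the eigenspace with $Hb_{w_I}$ does hold over $\bbZ[\vpar,\vpar^{-1}]$. You also have $b_{w_I}b_y=\pi(I)b_y$, which needs only that the left descent set of $y$ contains $I$. Together these give $b_xb_y=\pi(I)\,h_xb_y$. Hence $h_{x,y,z}=\pi(I)c_{x,y,z}$, where $c_{x,y,z}\in\bbZ[\vpar,\vpar^{-1}]$ is simply the coefficient of $b_z$ when $h_xb_y\in H$ is expanded in the KL basis.

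Now use three facts. First, $\bbZ[\vpar,\vpar^{-1}]$ is a domain. Second, $\pi(I)$ has extreme degrees $\pm a$, each with coefficient $1$. Third, all degrees of $h_{x,y,z}$ lie in $[-a,a]$ by \eqref{eq:gamma} and bar-invariance. These force $c_{x,y,z}$ to be an integer, namely the $\vpar^{-a}$-coefficient of $h_{x,y,z}$, which is $\gamma_{x,y,z^{-1}}\geq 0$.

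So nonnegativity comes for free, and no categorical input is needed. Your sentence ``the aim is to show that its projection \ldots\ has coefficients in $\mathbb{N}_0$ independent of $\vpar$'' states the conclusion, not a step.

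\emph{The categorical step fails.} The claimed isomorphism $\rB_x\rB_y\cong\rB_x\otimes_{R^I}\rB_y$ ``shifted by $\pi(I)$'' has the multiplicity factor on the wrong side. Since $\rB_{w_I}\cong R\otimes_{R^I}R(a)$, one has $\rB_x\otimes_{R^I}\rB_y\cong\rB_x\rB_{w_I}\rB_y(-a)$. That is $\rB_x\rB_y$ with every multiplicity multiplied by $\widetilde{\pi}(I)$, so it is larger than $\rB_x\rB_y$, not smaller.

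The correct version goes through singular Soergel bimodules. Write $\rB_x\cong R\otimes_{R^I}\rB_x^I\otimes_{R^I}R$ up to shift. Then, up to an overall shift,
\[
\rB_x\rB_y\cong\left(R\otimes_{R^I}\rB_x^I\otimes_{R^I}\rB_y^I\otimes_{R^I}R\right)^{\oplus\widetilde{\pi}(I)}.
\]
This is \cite[(2.2.4)]{Wi}.

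\emph{Comparison with the paper.} The paper uses exactly this result of Williamson to get $h_{x,y,z}=\pi(I)\lambda$ with $\lambda\in\mathbb{N}_0[\vpar,\vpar^{-1}]$, and then runs the same degree comparison you describe. The two routes differ only in how divisibility by $\pi(I)$ is obtained:
\begin{itemize}
\item the paper gets it from Williamson's singular Soergel bimodule theory;
\item your argument, with the categorical paragraph deleted, is purely Hecke-algebraic. It uses only that descent sets are constant on left and right cells and that $b_{w_I}$ acts on $b_y$ by the scalar $\pi(I)$.
\end{itemize}
Your route is more elementary, and it also shows that the positivity of $\lambda$ in the paper's argument is never actually used.
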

\begin{proof} Let $x,y,z\in \H$. Then \cite[(2.2.4)]{Wi} implies that $h_{x,y,z}=\pi(I)\lambda$, for some $\lambda\in \mathbb{N}_0[\vpar,\vpar^{-1}]$.  We also know that, for $x,y,z\in \H$, the minimal and the maximal degree of $h_{x,y,z}$ belong to the interval $[-a,a]=
[-\ell(w_I),\ell(w_I)]$, which is also true for the minimal and the maximal degree of $\pi(I)$. This shows that $h_{x,y,z}=\pi(I) \lambda$, for some $\lambda\in \mathbb{N}_0$. By comparing the coefficients of the terms of minimal (or, alternatively, maximal) degree, we see that \eqref{eq:gamma} implies that $\lambda=\gamma_{x,y,z^{-1}}$.
\end{proof}

The algebra $R$ is a graded Frobenius extension of $R^I$ of graded rank $\tilde{\pi}(I)=\sum_{w\in W_I}\vpar^{-2\ell(w)}$, 
with non-degenerate trace $\partial_I\colon R\to R^I(2a)$, where $\partial_I:=\partial_{w_I}$ is the Demazure operator associated to $w_I$. Choose any pair of dual bases $\{f_1,\ldots, f_r\}$ and $\{f^1,\ldots, f^r\}$ of $R$ over 
$R^I$, such that $\partial_I(f_i f^j)=\partial_I(f^j f_i)=\delta_{i,j}$, where $\delta_{i,j}$ is the Kronecker delta and $r=\vert W_I \vert$. By \cite[Proposition 7.4.3]{Wi}, we have
\begin{equation}\label{eq:canDuflo}
\widetilde{\rC}_d\cong R \otimes_{R^I} R(2a),
\end{equation}
as (indecomposable) objects in $\cS$. Moreover, \cite[Theorem 24.36]{EMTW} shows that $\widetilde{\rC}_d$ has naturally the structure of a graded Frobenius algebra (object) in $\cS$ with multiplication (morphism) $\mathbf{m}_d\colon \widetilde{\rC}_d\widetilde{\rC}_d\to 
\widetilde{\rC}_d$, unit $\mathbf{u}_d\colon R\to \widetilde{\rC}_d$, comultiplication $\mathbf{\delta}_d\colon \widetilde{\rC}_d\to 
\widetilde{\rC}_d\widetilde{\rC}_d(-2a)$ and counit $\mathbf{\epsilon}_d\colon \widetilde{\rC}_d(-2a)\to 
R$ being defined by  
\begin{eqnarray}\label{eq:algebrastructure}
\mathbf{m}_d(f\otimes g\otimes h):= f\partial_I(g)\otimes h=f\otimes \partial_I(g)h\;, & &\mathbf{u}_d\colon 1\mapsto \sum_{i=1}^r f_i\otimes f^i,\\ 
\label{eq:coalgebrastructure}
\mathbf{\delta}_d(f\otimes g):= f\otimes 1\otimes g\; , & & \quad \mathbf{\epsilon}_d\colon f\otimes g\mapsto fg,
\end{eqnarray} 
for $f,g,h\in R$.
Note that 
\begin{equation}\label{eq:decompCd1}
\begin{aligned}
\widetilde{\rC}_d \widetilde{\rC}_d&=R\otimes_{R^I} R \otimes_R R\otimes_{R^I} R(4a) \\
&\cong  R\otimes_{R^I} R \otimes_{R^I} R(4a)\\
& \cong  R\otimes_{R^I} R^{\oplus \vpar^{4a}\widetilde{\pi}(I)}\\
& \cong  \widetilde{\rC}_d\oplus \cdots \oplus \widetilde{\rC}_d(2a),
\end{aligned}
\end{equation}
and that the definition of $\mathbf{u}$ does not depend on the choice of dual bases. In the last line of \eqref{eq:decompCd1}, 
the multiplicity of $\widetilde{\rC}_d$ and $\widetilde{\rC}_d(2a)$ is exactly one, whereas the other direct summands 
are all of the form $ \widetilde{\rC}_d(b)$, for an even integer $b$ satisfying $0<  b < 2a$, and can have higher multiplicities. 

\begin{lemma}\label{lem:sep1}
The graded Frobenius algebra object $\widetilde{\rC}_d$ is separable. 
\end{lemma}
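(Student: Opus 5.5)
We must show that the multiplication $\mathbf{m}_d\colon \widetilde{\rC}_d\widetilde{\rC}_d\to\widetilde{\rC}_d$ admits a splitting $\sigma$ which is a morphism of $\widetilde{\rC}_d$-$\widetilde{\rC}_d$-bimodules in $\cS$, i.e.\ is degree-preserving. The natural candidate comes from the Frobenius structure in \eqref{eq:coalgebrastructure}: the comultiplication $\mathbf{\delta}_d(f\otimes g)=f\otimes 1\otimes g$ is a bimodule map by the Frobenius axioms, but it lands in $\widetilde{\rC}_d\widetilde{\rC}_d(-2a)$ rather than in $\widetilde{\rC}_d\widetilde{\rC}_d$. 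I would therefore set
\[
\sigma(f\otimes g):= \sum_{i=1}^r f\otimes f_i\partial_I(f^i\cdot ?)\ldots
\]
and, more concretely, take $\sigma(f\otimes g):=f\otimes c\otimes g$ for a suitable $W_I$-anti-invariant element $c\in R$ of degree $2a$. I will use the identification $\widetilde{\rC}_d\widetilde{\rC}_d\cong R\otimes_{R^I}R\otimes_{R^I}R(4a)$ from \eqref{eq:decompCd1}. The choice is $c$ with $\partial_I(c)=1$, for instance $c=f_i f^i$ summed appropriately, or more simply any preimage of $1$ under $\partial_I$, e.g.\ $c=\frac{1}{|W_I|}\prod$ of positive roots of $W_I$ suitably normalised. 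Then $\sigma$ has degree $2a$, which cancels the shift $-2a$.

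The steps are as follows. First I check that $\sigma$ is well defined on $R\otimes_{R^I}R$. Since $c$ sits in the middle tensor factor, moving an element $p\in R^I$ across $\otimes_{R^I}$ on either side is compatible, because $p$ commutes with $c$ in $R$. Second, $\sigma$ is clearly a left and right $R$-module map, and it is also a bimodule map for the $\widetilde{\rC}_d$-actions. Those actions are given by $\mathbf{m}_d$, which only applies $\partial_I$ to the outer junctions, so checking this is a direct computation with \eqref{eq:algebrastructure}: for instance $(x\otimes y)\cdot(f\otimes c\otimes g)$ produces $x\otimes \partial_I(y f)c\otimes g$ on both sides, using $R^I$-linearity of $\partial_I$. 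Third, I verify the degree: $\widetilde{\rC}_d=R\otimes_{R^I}R(2a)$ and the target has shift $(4a)$. Inserting $c$ of degree $2a$ makes $\sigma$ degree-preserving, so it is a morphism in $\cS$. Fourth, I compute $\mathbf{m}_d\sigma(f\otimes g)=f\partial_I(c)\otimes g=f\otimes g$, so $\sigma$ is indeed a splitting.

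The main obstacle is the bookkeeping of gradings, namely confirming that a degree-$2a$ element $c$ with $\partial_I(c)=1$ exists. It does: $\partial_I$ lowers degree by $2a$ and is surjective onto $R^I$, since $\partial_I(f_if^i)=1$ for the dual bases. So I can take $c=f_1f^1$ (with the $f_i$ homogeneous), or its degree-$2a$ component. A secondary subtlety is making sure the bimodule-compatibility signs and sidedness match the convention $\mathbf{m}_d(f\otimes g\otimes h)=f\partial_I(g)\otimes h$; I would double-check this on generators $1\otimes 1$.
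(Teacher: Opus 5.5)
Your proposal is correct and matches the paper's proof: the paper sets $\sigma_d(f\otimes g)=f\otimes p_{\mathrm{top}}\otimes g$ with $p_{\mathrm{top}}=\frac{1}{r}\sum_{i=1}^r f_if^i\in R_{2a}$, and any homogeneous degree-$2a$ element $c$ with $\partial_I(c)=1$, such as your $c=f_1f^1$, works by the same computation you give. When you write it up, delete the abandoned displayed formula containing the question mark and drop the requirement that $c$ be $W_I$-anti-invariant, since the argument only uses $\deg c=2a$ and $\partial_I(c)=1$.
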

\begin{proof} Define the element $p_{\,\mathrm{top}}\in R_{2a}$ by
\[
p_{\,\mathrm{top}}:=\dfrac{1}{r}\sum_{i=1}^r f_i f^i 
\]
and the morphism $\sigma_d\colon \widetilde{\rC}_d\to  \widetilde{\rC}_d \widetilde{\rC}_d$ in $\cS$ by 
\[
\sigma_d(f\otimes g):=f\otimes p_{\,\mathrm{top}}\otimes g 
\]
for $f,g\in R$. Then $\sigma_d$ is a $\widetilde{\rC}_d\text{-}\widetilde{\rC}_d$-bimodule map, which defines a splitting 
of $\mathbf{m}_d$ because $\partial_I(p_{\,\mathrm{top}})=1$. 
\end{proof}

Of course, the image of $\widetilde{\rC}_d$ in $\cS_\H$ (for which we use the same notation) is also a separable Frobenius 
algebra object.

Klein~\cite[Conjecture 5.2.3]{Kl} and Elias and Hogancamp~\cite[Conjecture 4.41]{EH} conjectured that $\widetilde{\rC}_d$ is always a graded Frobenius algebra object in $\cS$, even when $d$ is not of the form $w_I$ for some finitary parabolic subset $I\subset S$. 
\begin{remark}
Below, we will formulate their conjecture for the image of $\widetilde{C}_d$ in $\cS_\H$ (using the same notation), rather than $\cS$. Strictly speaking, this version is slightly weaker than their original one, but it is slightly easier to formulate, it has already been proved for all finite Coxeter groups, and it is exactly what we need in this paper. 
\end{remark}
First note that, in $\cS_\H$, there is an isomorphism   
\begin{equation}\label{eq:decompCd2}
\widetilde{\rC}_d\widetilde{\rC}_d\cong \widetilde{\rC}_d\oplus \cdots \oplus  \widetilde{\rC}_d(2a),
\end{equation}
where $a=\mathtt{a}(\H)$ and all intermediate direct summands are of the form $\widetilde{\rC}_w(b)$ for some $w\in \H$ 
(in general, they do not need to be equal to $d$, contrary to the special case in \eqref{eq:decompCd1}) and some integer $b$ satisfying 
$0<b<2a$. This is a consequence of \eqref{eq:proddecomp}, \eqref{eq:gamma}, the invariance of $h_{x,y,z}$ under the $\mathbb{Z}$-linear involution $\vpar \leftrightarrow \vpar^{-1}$, the fact that $\cS_\H$ has only one non-trivial two-sided cell, namely $\H$, and the fact that $\gamma_{d,d,x}=\delta_{d,x}$, for $x\in \H$ (see \cite[Conjectures 14.2: P2, P7)]{Lu}). Translated to our setting, Klein, Elias and Hogancamp's (KEH) conjecture can now be formulated as follows:
 
\begin{conjecture}[Weak KEH Conjecture]\label{conj:EH}
Let $d$ be any Duflo involution in $\H$ with $\mathtt{a}(d)=a$. The $1$-morphism $\widetilde{\rC}_d$ is a graded Frobenius 
algebra object in $\cS_\H$ with product $\mathbf{m}_d$ and coproduct $\delta_d$ being defined by a compatible choice of projection $\widetilde{\rC}_d\widetilde{\rC}_d \to \widetilde{\rC}_d$ and inclusion $\widetilde{\rC}_d\to \widetilde{\rC}_d\widetilde{\rC}_d(-2a)$ in \eqref{eq:decompCd2}, respectively, and unit $\mathbf{u}_d$ and counit $\epsilon_d$ being defined by a compatible 
choice of morphisms $R\to \widetilde{\rC}_d$ and $\widetilde{\rC}_d(-2a)\to R$, respectively.
\end{conjecture}

Each of these four morphisms is unique up to a non-zero scalar, thanks to the general version of Soergel's hom-formula, 
which we did not recall. To get a Frobenius structure, one has to fix one of them and show that the others can be scaled in 
a compatible way (there is at most one such way). The conjecture says that this is always possible, which was indeed shown to be true for any Duflo involution $d$ in a finite Coxeter group $W$, see \cite[Section 4.4]{MMMTZ2}, but remains open for infinite Coxeter groups $W$ when $d$ is not the longest element of some finite parabolic subgroup of $W$. 

If the Weak KEH Conjecture turns out to be true in general, then it is not hard to show that $\widetilde{\rC}_d$ is also separable in general, which was actually included, but not proved, in the formulation of \cite[Conjecture 5.2.3]{Kl}. In this more general case, the role of $p_{\,\mathrm{top}}$ is 
played by $\rho^a\in R_{2a}$, where $\rho$ is the image in $\mathfrak{h}^*_\mathbb{C}$ of a suitably scaled {\em dominant regular element} in $\mathfrak{h}^*$, see e.g. \cite[Definition 5D.3]{ERT}. By an analog of \cite[Corollary 5D.8]{ERT},  the morphism $\sigma_d\colon \widetilde{\rC}_d\to \widetilde{\rC}_d\widetilde{\rC}_d$, defined by $\sigma_d:=M_\rho^a\circ \delta_d$, is a splitting of $\mathbf{m}_d$. Here $M_\rho^a \colon \widetilde{\rC}_d\widetilde{\rC}_d(-2a)
\to \widetilde{\rC}_d\widetilde{\rC}_d$ is the morphism given by 
\[
M_\rho(c\otimes c'):=c\rho^a \otimes c'=c\otimes \rho^a c',
\]
for $c,c'\in \widetilde{\rC}_d$, as in \cite[(5D.4)]{ERT}.  


\subsection{The monoidal category $\cB_\H$.} As in the previous subsection, let $\H$ be a diagonal $H$-cell in $W$ and $d\in \H$ the unique Duflo involution. Define 
\[
\H^{\oplus}_\mathbb{Z}=\left\{\rB_x(t)\mid x\in \H,\; t\in \mathbb{Z} \right\}^{\oplus}.
\]
and denote by $\cB_\H$ the $H$-simple quotient of the monoidal category of biprojective 
$\widetilde{\rC}_d\text{-}\widetilde{\rC}_d$-bimodules in $\H^{\oplus}_\mathbb{Z}\subset \cS_\H$, as in Section~\ref{sec:dcthm}. The following proposition is the analog of \cite[Proposition 6.11]{MMMTZ2} in the special case when $d=w_I$, the longest element 
of a finite parabolic subgroup $W_I$ of $W$. Unfortunately, in its proof we cannot invoke Proposition~\ref{Afusion} to conclude that $\B_\H$ is semisimple, because we have not yet proved that $\H^{\oplus}_{\mathbb{Z}}$ is cosparse. The latter fact cannot be proved directly because, while we show in Proposition~\ref{prop:connfin} that $\H$ is connection-finite and adjoint connection-finite, we do not a priori know that $\H^{\oplus}_{\mathbb{Z}}$ is radical-f.g. Both cosparseness of $\H$ and the radical-f.g. property of $\H^{\oplus}_{\mathbb{Z}}$ are only established in Corollary~\ref{cor:finitepropsH2} as a consequence of Proposition~\ref{prop:HomSH}. So far, we can only prove all these properties of $\H$ and $\H^{\oplus}_{\mathbb{Z}}$ 
when $d=w_I$, but we conjecture that they hold for any diagonal $H$-cell.

\begin{proposition}\label{prop:bimodbicat}
Suppose that $d=w_I$, where $w_I$ is the longest element in some finite parabolic subgroup $W_I\subset W$. 
There is an equivalence of almost fusion categories
\[
\bigoplus_{t\in \mathbb{Z}}\cA_\H(t)\simeq \cB_\H.
\]
\end{proposition}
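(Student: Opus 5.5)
We will follow the strategy of \cite[Proposition 6.11]{MMMTZ2}, replacing the coalgebra $\widetilde{\rC}_d(-2a)$ used there by the algebra $\widetilde{\rC}_d=R\otimes_{R^I}R(2a)$ from \eqref{eq:canDuflo}. The functor we use is the free bimodule functor
\[
\Phi\colon \H^{\oplus}_{\mathbb{Z}}\to \cB_\H,\qquad \rX\mapsto \widetilde{\rC}_d\,\rX\,\widetilde{\rC}_d,
\]
composed with the projection onto the $H$-simple quotient.

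The first step is to understand morphisms between free bimodules. By the free-forgetful adjunction \eqref{freeforget} on both sides,
\[
\Hom(\widetilde{\rC}_d\rB_x\widetilde{\rC}_d,\widetilde{\rC}_d\rB_y(t)\widetilde{\rC}_d)\cong\Hom_{\cS_\H}(\rB_x,\widetilde{\rC}_d\rB_y\widetilde{\rC}_d(t)).
\]
Since $d=w_I$, Lemma~\ref{lem:hvsgamma} gives $\widetilde{\rC}_d\rB_y\widetilde{\rC}_d\cong\bigoplus_z\widetilde{\rC}_z^{\oplus\vpar^{a}\pi(I)\gamma}$, which in degree $a$ is exactly $\widetilde{\rC}_y$ once (using $\gamma_{d,y,z^{-1}}=\delta_{y,z}$), and otherwise consists of shifts in a controlled range. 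More usefully, $\widetilde{\rC}_d\rB_y\cong R\otimes_{R^I}\rB_y$ up to shift, so the free bimodule $\widetilde{\rC}_d\rB_y\widetilde{\rC}_d$ splits as a direct sum of $|W_I|^2$-many shifts (graded by $\pi(I)^2$) of a smaller bimodule. The plan is to show that the indecomposable biprojective bimodules are exactly $\rM_y(t)$, $y\in\H$, $t\in\bbZ$, where $\rM_y$ is the unique summand of $\widetilde{\rC}_d\rB_y\widetilde{\rC}_d$ of ``lowest'' shift (morally $R\otimes_{R^I}\rB_y\otimes_{R^I}R$ with $\rB_y$ viewed as an $R^I$-bimodule). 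Lemma~\ref{lem:hvsgamma} is the key input here: it forces every product in $\H$ to be divisible by $\pi(I)$ on both sides, so $\rB_y\cong \widetilde{\rC}_d\otimes_{\widetilde{\rC}_d}\ldots$ factors through $R^I$ on both sides.

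Next we compute the relative product: $\rM_x\circ_{\widetilde{\rC}_d}\rM_y$ corresponds to $\rB_x\rB_y$ with one factor $\pi(I)$ removed, so by Lemma~\ref{lem:hvsgamma} it is $\bigoplus_z\rM_z^{\oplus\gamma_{x,y,z^{-1}}}$ with \emph{no} additional shifts. Together with the hom computation, which should give $\Hom(\rM_x,\rM_y(t))=\delta_{x,y}\delta_{t,0}\mathbb{C}$ after passing to the $H$-simple quotient (the degree-zero part coming from \eqref{eq:Soehomform1}, and positive-degree morphisms lying in the radical, which is killed in the $H$-simple quotient as in Proposition~\ref{Afusion}), this shows $\cB_\H$ is semisimple with simples $\rM_y(t)$ and fusion rules $\gamma$. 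The functor $\rA_y(t)\mapsto\rM_y(t)$, defined via $\Theta$ followed by $\Phi$ and the identification $\cA_\H\simeq\H^\oplus_0$, is then fully faithful and essentially surjective; monoidality is obtained by checking that the canonical projection $\Theta_{\rX,\rY}$ matches the structure map of $\circ_{\widetilde{\rC}_d}$, exactly as in \cite[Proposition 6.11]{MMMTZ2}, with $\Pi,\Theta$ lax/oplax roles swapped.

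The main obstacle is the hom computation in the presence of infinite-dimensional $R$: we must verify that, in the $H$-simple quotient, all morphisms of nonzero degree between the $\rM_y$ vanish. This is where we cannot appeal to Proposition~\ref{Afusion}, because cosparseness is not yet known. We would try to show directly that such morphisms generate a proper biideal not containing identities, using that their images under $\Pi$ vanish by \eqref{eq:Soehomform1}, so that they lie in the maximal biideal defining the quotient.
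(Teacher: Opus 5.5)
Your outline has the same overall structure as the paper's proof: bimodule structures on the $\widetilde{\rC}_y(t)$, shift-free fusion rules $\gamma_{x,y,z^{-1}}$ for $\circ_{\widetilde{\rC}_d}$, vanishing of positive-degree morphisms by $H$-simplicity, and comparison with $\cA_\H$ via $\Theta$. However, the step everything rests on is not supplied. That $\widetilde{\rC}_y$ carries a $\widetilde{\rC}_d$-bimodule structure, and that $\widetilde{\rC}_d\rB_y\widetilde{\rC}_d$ splits \emph{as a bimodule} into shifted copies of it, are categorical statements. Lemma~\ref{lem:hvsgamma} is only their decategorified shadow (it is itself deduced from Williamson's theory), so it cannot produce a bimodule structure or a splitting. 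The missing input is \cite[Proposition 7.4.3]{Wi}: every object of $\H^\oplus_{\geq a}$ factors as $R\otimes_{R^I}\rA^I\otimes_{R^I}R$ with $\rA^I$ a singular Soergel bimodule. This gives the biaction explicitly via $\partial_I$ on the outer factors, and it is unique by \eqref{eq:Soehomform1} and unitality. It is also what lets you compute the coequalizer defining $\circ_{\widetilde{\rC}_d}$ as $R\otimes_{R^I}\rA^I\otimes_{R^I}\rB^I\otimes_{R^I}R(-2a)$. Only then does your ``remove one factor $\pi(I)$'' become a proof; the decomposition then follows from \cite[Theorems 2 and 3]{Wi}, or by Krull--Schmidt cancellation of $\tilde{\pi}(I)$ in $\rB_x\rB_y$. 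Note also that your description of $\rM_y$ as $R\otimes_{R^I}\rB_y\otimes_{R^I}R$ is, up to shift, the entire free bimodule $\widetilde{\rC}_d\rB_y\widetilde{\rC}_d$. Its summands are shifted copies of $\widetilde{\rC}_y$ itself with the canonical structure.

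The comparison functor is also misidentified. $\Phi\circ\Theta$ sends $\rA_y$ to the free bimodule $\widetilde{\rC}_d\widetilde{\rC}_y\widetilde{\rC}_d$, a sum of $|W_I|^2$ shifted copies of $\widetilde{\rC}_y$, not to $\rM_y$. Moreover $\Phi$ is not monoidal, since $\Phi(\rX)\circ_{\widetilde{\rC}_d}\Phi(\rY)\cong\widetilde{\rC}_d\rX\widetilde{\rC}_d\rY\widetilde{\rC}_d$, which differs from $\Phi(\rX\rY)$. Use $\Theta$ itself. It is lax monoidal and $\rA_d$ is the unit of $\cA_\H$, so each $\Theta(\rX)$ is a $\widetilde{\rC}_d$-bimodule, necessarily with the canonical structure. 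The coequalizer computation above then yields $\Theta(\rX\rY)\cong\Theta(\rX)\circ_{\widetilde{\rC}_d}\Theta(\rY)$.

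Finally, your ``main obstacle'' needs neither cosparseness nor $\Pi$; the latter relates to the product of $\cS_\H$, not to $\circ_{\widetilde{\rC}_d}$. By \eqref{eq:Soehomform1}, $\cB_\H$ has no morphisms $\widetilde{\rC}_x\to\widetilde{\rC}_y(t)$ with $t<0$, and only $\delta_{x,y}\mathbb{C}$ for $t=0$. Hence its radical is spanned by positive-degree morphisms. Since $\widetilde{\rC}_x(r)\circ_{\widetilde{\rC}_d}\widetilde{\rC}_y(t)\cong\bigoplus_{z}\widetilde{\rC}_z^{\oplus\gamma_{x,y,z^{-1}}}(r+t)$ has no extra shifts, composing with identities keeps all components of positive degree. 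So the radical is a monoidal ideal containing no identities, and it vanishes by $H$-simplicity; this is exactly how the paper concludes.
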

\begin{proof} 

We claim that every object $\rA\in \H^\oplus_\bbZ$, where $a=\ell(w_I)$, has a canonical 
$\widetilde{\rC}_d\text{-}\widetilde{\rC}_d$-bimodule structure. Since $d=w_I$, there is, 
for any $\rA\in \H^\oplus_{\geq a}$, an isomorphism  
\begin{equation}\label{eq:AI}
\rA\cong R\otimes_{R^I} \rA^I\otimes_{R^I} R,
\end{equation}
for some $R^I\text{-}R^I$-bimodule $\rA^I$, see \cite[Proposition 7.4.3]{Wi}. In particular, we have $\widetilde{C}_d^I\cong R^I(2a)$, because 
\begin{equation}\label{eq:BdI}
R\otimes_{R^I} R^I \otimes_{R^I} R(2a)\cong R\otimes_{R^I} R(2a)\cong \widetilde{C}_d.
\end{equation}
The bimodules $\rA^I$ are called {\em singular Soergel bimodules}, which were introduced and studied in depth by Williamson in \cite{Wi}. By \eqref{eq:canDuflo}, \eqref{eq:algebrastructure} and \eqref{eq:coalgebrastructure}, we have 
\begin{equation}\label{eq:CdXCd}
\widetilde{\rC}_d\rA \widetilde{\rC}_d\cong R\otimes_{R^I} R\otimes_R R\otimes_{R^I} \rA^I\otimes_{R^I} R\otimes_R R\otimes_{R^I} R(4a) 
\end{equation}
and the biaction $ \widetilde{\rC}_d\rA \widetilde{\rC}_d\to \rA$ is given by
\begin{eqnarray*}
a\otimes b\otimes c\otimes p \otimes d\otimes e \otimes f \mapsto a\partial_I(bc)\otimes x\otimes \partial_I(de)f,
\end{eqnarray*}
for $a,b,c,d,e,f\in R$ and $p\in \rA^I$. This proves the claim. 

In particular, we see that, for every $x\in \H$ and $t\in \mathbb{Z}$, the object $\widetilde{\rC}_x(t)$ in $\cS_\H$ has a 
canonical $\widetilde{\rC}_d\text{-}\widetilde{\rC}_d$-bimodule structure. By \eqref{lem:ashiftedS} and \eqref{eq:gammacan}, the decompositions of $\widetilde{\rC}_d \widetilde{\rC}_x(t)$ and $\widetilde{\rC}_x(t)\widetilde{\rC}_d$ contain each exactly one indecomposable object isomorphic to $\widetilde{\rC}_x(t)$, which implies that the left and the right 
$\widetilde{\rC}_d$-action morphisms are both unique up to a non-zero scalar, thanks to Soergel's hom-formula in \eqref{eq:Soehomform1}. By the unitality condition for the algebra structure of $\widetilde{\rC}_d$, there is actually only one possible choice for these scalars and the action morphisms are unique. 

Let $\rA\in \H^{\oplus}_\bbZ$. Then $\rA$ can be decomposed into indecomposable objects of the form 
$\widetilde{\rC}_x(t)$, for certain $x\in \H$ and $t\in\mathbb{Z}$. By the same arguments as in the proof of \cite[Proposition 6.11]{MMMTZ2}, this decomposition is also the decomposition of $\rA$ into indecomposable $\widetilde{\rC}_d\text{-}\widetilde{\rC}_d$-bimodule objects.  Note that the dot diagram in that proof can be defined as in \cite[Remark 6.4]{MMMTZ2}, where the dumbbell corresponds to the endomorphism of $R$ given by multiplication with the element $p_{\mathrm{top}}$, defined in the proof of Lemma~\ref{lem:sep1}. This implies, in particular, that $\rA$ has a canonical bimodule structure.

Next, we are going to relate $\cB_\H$ to $\cA_\H$. Every $\rX\in \cA_\H$ is naturally an $\rA_d\text{-}\rA_d$-bimodule 
object, because $\rA_d$ is the identity object in $\cA_\H$. Since $\Theta$ is a lax monoidal functor, this implies that, for every 
$\rX\in \cA_H$, its image $\Theta(\rX)$ is naturally a $\widetilde{\rC}_d\text{-}\widetilde{\rC}_d$-bimodule object in 
$\H^\oplus_{\geq a}\subset \cS_\H$. From the above, we know that this bimodule structure on $\Theta(\rX)$ has to be the canonical one. In particular, this is true for the objects $\widetilde{\rC}_x=\Theta(\rA_x)$, with $x\in \H$. 
Similarly, the image $\Theta(f)\colon \Theta(\rX)\to \Theta(\rY)$ of a morphism $f\colon \rX\to \rY$ in $\cA_\H$ is naturally a 
morphism of $\widetilde{\rC}_d\text{-}\widetilde{\rC}_d$-bimodule objects in $\H^\oplus_{\geq a}\subset \cS_\H$. 

We claim that $\Theta$ induces a monoidal functor (with the same notation) 
\begin{equation}\label{eq:Thetabim}
\Theta\colon \cA_\H\to \cB_\H.
\end{equation}
To prove this claim, it only remains to show that, for any $\rX,\rY\in \cA_\H$, there is a natural isomorphism 
\begin{equation}\label{eq:ThetaXY}
\Theta(\rX\rY)\cong \Theta(\rX)\otimes_{\widetilde{\rC}_d} \Theta(\rY) 
\end{equation}
in $\cB_\H$. Let $\rA:=\Theta(\rX)$ and $\rB:=\Theta(\rY)$. To determine 
$\rA\otimes_{\widetilde{\rC}_d} \rB$, we have to compute the coequalizer of the morphisms $\rho_{\rA,d,\rB}: \rA\widetilde{\rC}_d\rB=(\rA\widetilde{\rC}_d)\rB\to \rA\rB$ 
and  $\lambda_{\rA,d,\rB}: \rA\widetilde{\rC}_d\rB=\rA(\widetilde{\rC}_d\rB)\to \rA\rB$, 
defined by the right $\widetilde{\rC}_d$-action on $\rA$ and the left $\widetilde{\rC}_d$-action on $\rB$, respectively. As above, let 
$A^I$ and $B^I$ be two $R^I\text{-}R^I$-bimodules such that 
\[
A\cong R\otimes_{R^I} A^I \otimes_{R^I} R\quad \text{and}\quad B\cong R\otimes_{R^I} B^I \otimes_{R^I} R.
\]
Using \eqref{eq:AI} and \eqref{eq:CdXCd} and the isomorphisms 
$R\otimes_R R\cong R$ and $R\otimes_{R^I} R^I\cong R\cong R^I \otimes_{R^I} R$, the $2$-morphisms  $\rho_{\rA,d,\rB}$ 
and $\lambda_{\rA,d,\rB}$ are given by the $R\text{-}R$-bimodule maps 
\begin{multline}
R\otimes_{R^I} \rA^I \otimes_{R^I} R\otimes_{R^I} R \otimes_{R^I} \rB^I \otimes_{R^I} R(2a)  \xrightarrow{\rho_{\rA,d,\rB}, \;\lambda_{\rA,d,\rB}} \\ 
\hfill R\otimes_{R^I} \rA^I \otimes_{R^I} R\otimes_{R^I} \rB^I \otimes_{R^I} R 
\end{multline}
\begin{eqnarray*}
\rho_{\rA,d,\rB}\colon a \otimes x \otimes b \otimes c \otimes y \otimes e &\mapsto&  
a\otimes x\partial_I(b) \otimes c \otimes y \otimes e  \\
\lambda_{\rA,d,\rB}\colon a \otimes x \otimes b \otimes c \otimes y \otimes e & \mapsto&  
a \otimes x \otimes b\otimes \partial_I(c)y \otimes e, 
\end{eqnarray*}
for $a,b,c,e\in R$, $x\in \rA^I$ and $y\in \rB^I$. The fact that $\partial_I(z)=0$ unless $z$ is a non-zero multiple of 
$p_{\mathrm{top}}$, implies that, in the coequalizer of 
$\rho_{\rA,\rB}$ and $\lambda_{\rA,\rB}$, only elements of the form $a\otimes x \otimes p_{\mathrm{top}} \otimes y \otimes e$ survive. The coequalizer is thus isomorphic to    
\begin{equation}\label{eq:coeqrholambda}
R\otimes_{R^I} \rA^I \otimes_{R^I} R^I\otimes_{R^I} \rB^I \otimes_{R^I} R(-2a)\cong  R\otimes_{R^I} \rA^I \otimes_{R^I} \rB^I \otimes_{R^I} R(-2a),
\end{equation}
which, by \cite[Theorems 2 and 3, and Definition 2.3.1]{Wi}, is isomorphic to the direct sum of indecomposable regular Soergel bimodules of the form $\widetilde{\rC}_x$, for $x\in \H$, where one has to take into account that we are working in $\cS_\H$. To see this more explicitly, suppose that $A^I=\widetilde{\rC}_x$ and $B^I=\widetilde{\rC}_y$, for some $x,y\in \H$. Then Lemma~\ref{lem:ashiftedS}, 
\eqref{eq:gammacan} and \cite[Theorems 2 and 3, and Definition 2.3.1]{Wi} imply that there is an isomorphism   
\[
\widetilde{\rC}_x^I\otimes_{R^I}\widetilde{\rC}_y^I\cong \bigoplus_{z\in \H} (\widetilde{\rC}_z^I)^{\oplus \vpar^{2a}\gamma_{x,y,z^{-1}}}.
\]
This proves the existence of the isomorphism in \eqref{eq:ThetaXY}, which implies that 
$\Theta$ induces a monoidal functor $\Theta\colon \cA_\H\to \cB_\H$. 

By Soergel's hom-formula in \eqref{eq:Soehomform1}, the full monoidal subcategory $\H^\oplus_a$ of $\cB_\H$ is semisimple 
and $\Theta$ is fully faithful and essentially surjective when corestricted to this monoidal subcategory. 
The pivotal structure of $\cA_\H$ is naturally transported to $\H^\oplus_a$, so that both are equivalent as almost fusion categories. 

To complete the proof of the proposition, it now suffices to show that 
\begin{equation}\label{eq:BimSchur}
\dim\left(\mathrm{Hom}_{\cB_\H}(\widetilde{\rC}_x,\widetilde{\rC}_y(t))\right)=\delta_{x,y}\delta_{t,0}
\end{equation} 
for $x,y\in \H$ and $t\in \mathbb{Z}$. By Soergel's hom-formula in \eqref{eq:Soehomform1}, we know that \eqref{eq:BimSchur} is true for $t\leq 0$. Now, let $\mathrm{rad}(\cB_\H)$ be the radical of the underlying category of $\cB_\H$. 
Since \eqref{eq:BimSchur} holds for $t\leq 0$ and we have 
\[
\widetilde{\rC}_x(r)\otimes_{\widetilde{\rC}_d} \widetilde{\rC}_y(t)\cong \bigoplus_{z\in \H} \widetilde{\rC}_z^{\oplus \gamma_{x,y,z^{-1}}}(r+t),
\]
for all $x,y\in H$ and $r,t\in \mathbb{Z}$, the radical $\mathrm{rad}(\cB_\H)$ is actually a monoidal ideal. This implies that it has to be zero, because $\cB_\H$ has no non-zero proper monoidal ideals, as $\cS_\H$ is $H$-simple by definition, 
see Section \ref{sec:Jsimple}. We, therefore, see that $\cB_\H$ is semisimple, which shows that \eqref{eq:BimSchur} 
holds for all $t\in \mathbb{Z}$. 
\end{proof}

If Conjecture \ref{conj:EH} is true for all Duflo involutions, then we conjecture 
that Proposition~\ref{prop:bimodbicat} also holds for all Duflo involutions, although the proof of the 
isomorphism in \eqref{eq:ThetaXY} might not be completely straightforward.


\subsection{Finiteness properties of $\cS_\H$}\label{sec:actfin}

\begin{proposition}\label{prop:connfin}
Let $(W,S)$ be any Coxeter system of finite rank. Suppose that $\H$ is a diagonal $H$-cell in $W$ containing an element of the form $d=w_I$, where $W_I$ is a finite parabolic subgroup of $W$. Then $\H$ is connection-finite and adjoint connection-finite. 
\end{proposition}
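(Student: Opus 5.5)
We work in the graded setting: the relevant $1$-morphisms of $\cS_\H$ are the shifts $\rB_x(t)$ with $x\in\H$ and $t\in\bbZ$. Both properties say that certain multiplicities, taken over all shifts, are finite. The plan is to reduce them to two facts: for $x,y,z\in\H$ the graded multiplicity $h_{x,y,z}$ is a Laurent polynomial, and for fixed $x,z$ only finitely many $y\in\H$ have $\gamma_{x,y,z^{-1}}\neq 0$. The key input is Lemma~\ref{lem:hvsgamma}: $h_{x,y,z}=\pi(I)\gamma_{x,y,z^{-1}}$, so the graded multiplicities inside $\H$ are governed entirely by the nonnegative integers $\gamma$. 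Every product below will be computed in $\cS_\H$, where all summands outside $\H$ vanish.

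\textbf{Connection-finiteness.} Take $\rH=\rB_z(t)$ in $\H$ and $\rF=\rB_x(r)$ in the right cell of $\rH$; inside $\cS_\H$ this is again $\H$. Then $\rH$ is a summand of $\rF\rG$ for $\rG=\rB_y(s)$ exactly when $\vpar^{t-r-s}$ occurs in $h_{x,y,z}=\pi(I)\gamma_{x,y,z^{-1}}$. First, for fixed $x,y,z$ only finitely many shifts $s$ qualify, since $\pi(I)$ is a Laurent polynomial. Second, only finitely many $y$ qualify, which follows from the standard symmetry of Lusztig's $\gamma$: $\gamma_{x,y,z^{-1}}=\gamma_{z^{-1},x,y}$, i.e.\ property P7 of \cite[Conjectures 14.2]{Lu}. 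For $d=w_I$ these properties are available inside the cell, via Williamson's singular Soergel calculus and the $R^I$-description used in Proposition~\ref{prop:bimodbicat}. By P7, $\gamma_{x,y,z^{-1}}\ne0$ forces $y^{-1}$ to occur in the product $\rA_{z^{-1}}\rA_x$ in $\cA_\H$. That product is a finite direct sum, being an object of the additive category $\H^\oplus_0$, so only finitely many $y$ remain.

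To avoid depending on P7 in general, one can argue directly with adjunction instead. $\rH$ is a summand of $\rF\rG$ iff there are maps $\rH\to\rF\rG\to\rH$ composing to the identity. These correspond to maps $\rF^*\rH\rightleftarrows\rG$, and such a pair exhibits $\rG$ as a summand of $\rF^*\rH=\rB_{x^{-1}}\rB_z(t-r)$. That object is a finite direct sum of indecomposables, so only finitely many $\rG$ up to isomorphism are possible. This argument is cleaner and holds for any fiab bicategory, so I would use it as the main proof.

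\textbf{Adjoint connection-finiteness.} Fix indecomposable $\rF,\rH\in\H$ and suppose $\rH$ is a summand of $\rF\rG\rF^*$. The same adjunction argument, applied twice, gives split maps $\rG\rightleftarrows\rF^*\rH\rF$ through which the identity of $\rH$ factors. I expect the main obstacle to lie here, because this does not immediately make $\rG$ a summand. To resolve it, I would first reduce to $\rG\in\H$ up to shift, since summands outside $\H$ die in $\cS_\H$. I would then use the $\pi(I)$-factorisation: in $\cS_\H$ the product $\rB_x\rB_y\rB_{x^{-1}}$ decomposes with graded multiplicities of the form $\pi(I)^2$ times sums of $\gamma$'s. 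Because everything is determined by $\gamma$'s, the claim reduces to finiteness of $\{y:\gamma_{x,y,u}\gamma_{u^{-1}... }\neq0\}$ for fixed $x$ and $z$. This in turn reduces to finite products in the semisimple category $\cA_\H$, where $\rA_y$ is a summand of $\rA_{x^{-1}}\rA_z\rA_x$ by rigidity; that rigidity is the pivotal structure of Elias--Williamson transported to $\cA_\H$. That finite set, combined with the finitely many shifts allowed by the Laurent polynomial $\pi(I)^2$, gives the bound.
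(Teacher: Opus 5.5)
Your first argument for connection-finiteness is the paper's proof and is correct. The adjunction argument that you promote to the main proof is wrong.

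\textbf{The adjunction argument fails.} Adjunction does give bijections $\Hom(\rH,\rF\rG)\cong\Hom(\rF^*\rH,\rG)$ and $\Hom(\rF\rG,\rH)\cong\Hom(\rG,\rF^*\rH)$. But it does not turn $p\circv\iota=\id_\rH$ into a splitting of $\rG$. The composite $\rG\to\rF^*\rH\to\rG$ you get is the partial trace over $\rF$ of the idempotent $\iota\circv p\in\End(\rF\rG)$. Outside the semisimple setting this need not be invertible; for Soergel bimodules even the trace of $\id_{\rB_s}$ is zero. Concretely, take finite type $A_2$ with simple reflections $s,t$. Then $\rB_s\rB_t\cong\rB_{st}$, with $\rB_s$ in the right cell and $\rB_t$ in the left cell of $\rB_{st}$. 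Yet $\rB_s\rB_{st}\cong\rB_{st}(1)\oplus\rB_{st}(-1)$ has no summand isomorphic to a shift of $\rB_t$. So the argument is false for fiab bicategories in general; if it held, Definition~\ref{HfromfinFG}(a) would be automatic for every almost fiab bicategory. The ``split maps'' $\rG\rightleftarrows\rF^*\rH\rF$ in your adjoint part fail for the same reason. In $\cS_\H$ the conclusion ``$\rG$ is a summand of $\rF^*\rH$'' does hold, but only because Lemma~\ref{lem:hvsgamma} gives $h_{x,y,z}=\pi(I)\gamma_{x,y,z^{-1}}$. One then needs $\gamma_{x,y,z^{-1}}=\gamma_{x^{-1},z,y^{-1}}$, from the cyclic (P7) and inversion symmetries of $\gamma$. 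That is exactly the input you wanted to avoid.

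\textbf{What to keep, and how your adjoint part compares.} Keep your first argument as the proof: Lemma~\ref{lem:hvsgamma} plus $\gamma_{x,y,z^{-1}}=\gamma_{z^{-1},x,y}$ forces $y^{-1}$ to occur in the finite product $\rA_{z^{-1}}\rA_x$. P7 needs nothing special about $d=w_I$ or singular Soergel calculus; it holds for all finite rank Coxeter groups, which is how the paper uses it.

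Your adjoint part is correct once stated precisely, but it takes a different route from the paper. The multiplicity of $\rB_z$ in $\rB_x\rB_y\rB_{x^{-1}}$ in $\cS_\H$ is $\pi(I)^2\sum_{w\in\H}\gamma_{x,y,w^{-1}}\gamma_{w,x^{-1},z^{-1}}$. By nonnegativity this is nonzero iff $\rA_z$ is a summand of $\rA_x\rA_y\rA_{x^{-1}}$ in $\cA_\H$. By semisimplicity and rigidity of $\cA_\H$ with $\rA_x^*\cong\rA_{x^{-1}}$, that holds iff $\rA_y$ is a summand of the finite object $\rA_{x^{-1}}\rA_z\rA_x$. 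This is the legitimate, semisimple version of your adjunction idea, and it would also prove connection-finiteness.

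The paper instead derives adjoint connection-finiteness formally from the one-sided statement. Using $h_{y,x,z}=h_{x^{-1},y^{-1},z^{-1}}$, it also gets finiteness of $\{y\mid h_{y,x,z}\neq 0\}$. Hence only finitely many $w$ have $h_{w,x^{-1},z}\neq 0$, and for each such $w$ only finitely many $y$ have $h_{x,y,w}\neq 0$. The paper's route needs no pivotal structure on $\cA_\H$. Yours relies on the Elias--Williamson rigidity, but isolates the finite set in one step.
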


\begin{proof} We first show connection-finiteness. By Lemma~\ref{lem:ashiftedS}, we have to show that, given $x,z\in \H$, the set 
\begin{equation}\label{eq:connfin1}
\left\{ y\in \H\mid h_{x,y,z}\ne 0\right\} 
\end{equation}
is finite. 

Before we do that, note that the set 
\begin{equation}\label{eq:leftactfin0}
\left\{ y\in \H\mid \gamma_{x,y,z}\ne 0\right\}
\end{equation}
is finite, because $\gamma_{x,y,z}=\gamma_{z,x,y}$, see \cite[Proposition 13.9(b) and Conjectures 14.2 P7]{Lu} (Lusztig's conjectures are now known to be true for all Coxeter groups of finite rank, see \cite{CH} and references therein). 

By Lemma \ref{lem:hvsgamma}, we have $h_{x,y,z}=\pi(I)\gamma_{x,y,z^{-1}}$, for any $x,y,z\in \H$. The finiteness of the set in 
\eqref{eq:leftactfin0} implies, therefore, that $\left\{y\in \H\mid h_{x,y,z}\ne 0\right\}$ is finite, which completes the 
proof that $\H$ is connection-finite. 

Next, we show that $\H$ is adjoint connection-finite. Since $h_{y,x,z}=h_{x^{-1},y^{-1},z^{-1}}$ by \cite[13.1(e)]{Lu}, finiteness of the set in 
\eqref{eq:connfin1} for $x^{-1},z^{-1}\in \H$ implies finiteness of the set 
\begin{equation}\label{eq:connfin2}
\left\{y\in \H\mid h_{y,x,z}\ne 0\right\}.
\end{equation} 
To prove adjoint connection-finiteness of $\H$ we have to show that, for any $x,z\in \H$, the set 
\begin{equation}\label{eq:connfin3}
\left\{y\in \H\mid \sum_{w\in \H} h_{x,y,w} h_{w,x^{-1},z} \ne 0\right\}
\end{equation}
is finite. Given $x,z\in \H$, finiteness of the sets in \eqref{eq:connfin1} and \eqref{eq:connfin2} implies that there are only finitely many $w\in \H$ such that $h_{w,x^{-1},z}\ne 0$ and that, for every $w\in \H$, there are only finitely many $y\in \H$ such that $h_{x,y,w}\ne 0$. This shows that the set in \eqref{eq:connfin3} is finite.
\end{proof}

\begin{remark}
We conjecture that every two-sided cell $\J$ of $W$ is both connection-finite and adjoint connection-finite as a two-sided cell of $\cS$ (as a two-sided cell of $\cA_\J$, this is always true, see Remark \ref{rem:finAJ}).

We currently do not know how to prove this, but we can show that the proof reduces to the case when $z=d$, the unique 
Duflo involution in the right cell of $z$. For connection-finiteness, note that \cite[18.8(b)]{Lu} can be restricted to $\J$, giving  
\begin{equation}\label{eq:leftactfin1}
\sum_{w\in J} h_{x_1,x_2,w}\gamma_{w,x_3,y^{-1}}=\sum_{w\in J} h_{x_1,w,y}\gamma_{x_2,x_3,w^{-1}}
\end{equation}
for all $x_1,x_2,x_3,y\in \J$. Suppose that $y=d$ and $d\sim_L x_3$ (which is equivalent to $d\sim_R x_3^{-1}$). Then \eqref{eq:leftactfin1} becomes 
\begin{equation}\label{eq:leftactfin2}
\sum_{w\in J} h_{x_1,x_2,w}\gamma_{w,x_3,d}=\sum_{w\in J} h_{x_1,w,d}\gamma_{x_2,x_3,w^{-1}}.
\end{equation}
Since $\gamma_{w,x_3,d}=\delta_{w,x_3^{-1}}$, see  \cite[Conjectures 14.2 P2]{Lu} (which is now known to be true for 
all Coxeter groups of finite rank, see our comment above), this reduces to 
\begin{equation}\label{eq:leftactfin3}
h_{x_1,x_2,x_3^{-1}}=\sum_{w\in \J} h_{x_1,w,d}\gamma_{x_2,x_3,w^{-1}}.
\end{equation}
As in \eqref{eq:leftactfin0}, the set $\{x_2\in \J\mid \gamma_{x_2,x_3,w^{-1}}\}$ is finite for any choice of $x_3,w\in \J$. Therefore, 
the equation in \eqref{eq:leftactfin3} implies that, for any $x_1,x_3\in J$, the set $\{x_2\in J\mid h_{x_1,x_2,x_3^{-1}}\ne 0\}$ is finite if and only if the set $\{w\in J\mid h_{x_1,w,d}\ne 0\}$ is finite. 

As in the proof of Proposition~\ref{prop:connfin}, connection-finiteness of $\J$ implies adjoint connection-finiteness of $\J$.
\end{remark}

\begin{remark}\label{rem:finAJ}
Note that the arguments, which prove that the set in \eqref{eq:leftactfin0} is finite, also prove that 
\begin{equation}\label{eq:leftactfinJ0}
\left\{ y\in \J\mid \gamma_{x,y,z}\ne 0\right\}
\end{equation}
is finite. By arguments analogous to the ones in the proof of Proposition~\ref{prop:connfin}, 
this implies that $\J$ is adjoint connection-finite as a two-sided cell of $\cA_\J$.
\end{remark}

\begin{proposition}\label{prop:HomCH}
Let $W$ be any Coxeter group of finite rank. Suppose that $\H$ is a diagonal $H$-cell in $W$ containing an element of the form $d=w_I$, where $W_I$ is a finite parabolic subgroup of $W$, 
and let $\mathbf{C}_\H$ be the associated left $\cS_\H$-birepresentation. 
Then, for any $x,y\in \H$, we have 
\begin{equation}\label{eq:HomCH}
\mathrm{grdim}_{\mathbb{C}}\left(\mathrm{Hom}_{\mathbf{C}_\H}(\widetilde{\rC}_x,\widetilde{\rC}_y)\right)=
\begin{cases} 
\vpar^{\ell(w_I)}\pi(I), & \text{if}\; x=y;\\
0, &\text{if}\; x\ne y.
\end{cases}
\end{equation}
\end{proposition}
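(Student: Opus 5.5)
We determine the cell birepresentation $\mathbf{C}_\H$ explicitly, using the singular Soergel bimodule description available because $d=w_I$. Write $a=\ell(w_I)$. The graded dimension on the left of \eqref{eq:HomCH} is meant as $\sum_t \dim\mathrm{Hom}_{\mathbf{C}_\H}(\widetilde{\rC}_x,\widetilde{\rC}_y(t))\vpar^{t}$ (up to the sign convention for the shift). Thus $\vpar^{a}\pi(I)=\vpar^{2a}\tilde{\pi}(I)$ is the graded rank of $R$ over $R^I$, shifted so that it is concentrated in degrees $0,\dots,2a$. This already suggests the answer: $\mathrm{Hom}_{\mathbf{C}_\H}(\widetilde{\rC}_x,\widetilde{\rC}_x)\cong R\otimes_{R^I}\Bbbk$, the coinvariant algebra of $W_I$.

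\emph{Step 1: identify $\mathbf{C}_\H$ with a birepresentation on projective right $\widetilde{\rC}_d$-modules.}
\begin{itemize}
\item $\widetilde{\rC}_d$ is a simple separable Frobenius algebra in $\H^\oplus_\bbZ$ by Lemma~\ref{lem:sep1}.
\item The cell birepresentation is generated by $\widetilde{\rC}_d$, via the unit $\eta_d$ of the Duflo involution.
\item So $\mathbf{C}_\H$ should be equivalent to the simple quotient of $\bfM_{\widetilde{\rC}_d}$, consisting of objects $\widetilde{\rC}_x\cong\widetilde{\rC}_x\otimes_{\widetilde{\rC}_d}\widetilde{\rC}_d$ viewed as right modules.
\end{itemize}
By \eqref{eq:gammacan} and Lemma~\ref{lem:ashiftedS}, $\widetilde{\rC}_x\widetilde{\rC}_d\cong\widetilde{\rC}_x^{\oplus\vpar^{a}\pi(I)}$. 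This is precisely the free module on $\widetilde{\rC}_x$, so $\widetilde{\rC}_x$ is a projective right module, namely a direct summand of $\widetilde{\rC}_x\widetilde{\rC}_d$.

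\emph{Step 2: compute morphism spaces in $\bfM_{\widetilde{\rC}_d}$.}
\begin{itemize}
\item Using \eqref{eq:AI}, write $\widetilde{\rC}_x\cong R\otimes_{R^I}\rA_x^I\otimes_{R^I}R$ with right action via $\partial_I$, as in the proof of Proposition~\ref{prop:bimodbicat}.
\item Right $\widetilde{\rC}_d$-module maps $\widetilde{\rC}_x\to\widetilde{\rC}_y(t)$ are then $R$-$R^I$-bimodule maps $R\otimes_{R^I}\rA_x^I\to R\otimes_{R^I}\rA_y^I(t)$, up to fixed shifts.
\item By Williamson's singular hom formula, these are free over $R^I$ (or $R$) with graded rank controlled by $\delta_{x,y}$ in the lowest degree.
\end{itemize}

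\emph{Step 3: pass to the quotient $\mathbf{C}_\H$ by the maximal ideal.} In $\mathbf{C}_\H$ the action of $R$ factors through its $\cS_\H$-action. The polynomial part of positive degree in $R^I$, coming from the centre, must die, since maps multiplying by a positive-degree $W_I$-invariant generate a proper ideal not containing identities. This leaves $\mathrm{End}(\widetilde{\rC}_x)\cong R\otimes_{R^I}\Bbbk$, of graded dimension $\vpar^{2a}\tilde{\pi}(I)=\vpar^{a}\pi(I)$ after the shift. For the upper bound I would argue via Proposition~\ref{cellrepfinrad} and the decomposition of $\widetilde{\rC}_d\widetilde{\rC}_x$.

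For $x\neq y$, nonzero maps would give a nonzero degree-$0$ component after applying $\mathrm{Hom}(\widetilde{\rC}_d,-)$ adjunctions. They would therefore contradict \eqref{eq:Soehomform1}: positive-degree maps between distinct cell elements lie in the radical, and I would show they are annihilated by checking, with $\gamma$-multiplicities, that they never produce identity components under the action of $\H$.

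\emph{Main obstacle.} The hard part is showing the ideal is exactly what is claimed: the coinvariant part $R\otimes_{R^I}\Bbbk$ of $\mathrm{End}(\widetilde{\rC}_x)$ survives, while all off-diagonal maps and positive-degree invariants die. For survival I would use that $\widetilde{\rC}_d$ is Frobenius, so the socle element $p_{\mathrm{top}}$ pairs nondegenerately via $\partial_I$ with every nonzero coinvariant. Acting by a suitable $\widetilde{\rC}_{x^{-1}}$ and projecting to $\widetilde{\rC}_d$ would then produce an identity component, which forces nonzero coinvariant endomorphisms to lie outside the maximal ideal.
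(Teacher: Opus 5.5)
Your Step 1, the multiplicity $\widetilde{\rC}_x\widetilde{\rC}_d\cong\widetilde{\rC}_x^{\oplus\vpar^{a}\pi(I)}$, and your predicted answer (the coinvariant algebra $R\otimes_{R^I}\Bbbk$, of graded dimension $\vpar^{a}\pi(I)$) are all fine. The gap is exactly where you locate the ``main obstacle'': Step 3, deciding which morphisms survive in $\mathbf{C}_\H$, is never carried out, and the tools you cite cannot carry it out.

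\begin{itemize}
\item Step 2 describes module maps in $\cS$. There one generally has nonzero maps $\widetilde{\rC}_x\to\widetilde{\rC}_y(t)$ with $x\neq y$ and $t>0$. Also, $\mathrm{End}(\widetilde{\rC}_x)$ is a free $R$-module of graded rank generally larger than $1$, so there is more than $R\cdot\mathrm{id}$ to kill.
\item All of this must be shown to lie in the maximal ideal, while all of $R\otimes_{R^I}\Bbbk$ must be shown to survive. Your Frobenius-pairing sketch for survival is plausible but not executed.
\item Soergel's hom formula \eqref{eq:Soehomform1} only controls shifts $t\le 0$, so it says nothing about the positive-degree maps at stake. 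Proposition~\ref{cellrepfinrad} only gives nilpotency of the radical modulo the ideal, not a dimension count.
\item Even ``positive-degree invariants die'' needs an argument. For left multiplication by $p\in R^I_+$, which is what cuts $R\cdot\mathrm{id}$ down to $R\otimes_{R^I}\Bbbk$, the map $\mathrm{id}_\rF\circh(p\cdot-)$ places $p$ between $\rF$ and $\widetilde{\rC}_x$, where it is not central.
\item Most importantly, a degree argument inside $\cS_\H$ of the kind you sketch (positive-degree off-diagonal maps ``never produce identity components'') cannot work as stated. By Lemmas~\ref{lem:ashiftedS} and~\ref{lem:hvsgamma}, $\widetilde{\rC}_z\widetilde{\rC}_x$ is spread over all shifts occurring in $\vpar^{a}\pi(I)$. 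That spreading is precisely how positive-degree endomorphisms acquire isomorphism components and survive. You need something that separates $x=y$ from $x\neq y$ in positive degree, and ``checking with $\gamma$-multiplicities'' is not such an argument.
\end{itemize}

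The missing ingredient is the paper's: move the computation into $\cB_\H$.

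\begin{itemize}
\item By Lemma~\ref{lem:AstarAproj}, $\mathbf{C}_\H\simeq\mathbf{proj}_{\cS_\H}(\widetilde{\rC}_d)\simeq(\widetilde{\rC}_d^*\widetilde{\rC}_d)\mathbf{proj}_{\cB_\H}$, via $\widetilde{\rC}_x\mapsto\widetilde{\rC}_d^*\widetilde{\rC}_x\cong\widetilde{\rC}_x^{\oplus\vpar^{a}\pi(I)}$.
\item The free--forgetful adjunction then turns $\mathrm{Hom}_{\mathbf{C}_\H}(\widetilde{\rC}_x,\widetilde{\rC}_y)$ into $\mathrm{Hom}_{\cB_\H}(\widetilde{\rC}_x,\widetilde{\rC}_y^{\oplus\vpar^{a}\pi(I)})$.
\item In $\cB_\H$, the relative product $\widetilde{\rC}_x(r)\otimes_{\widetilde{\rC}_d}\widetilde{\rC}_y(t)$ is concentrated in the single shift $r+t$. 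Hence the radical is a monoidal ideal, and it vanishes by $H$-simplicity.
\item This gives \eqref{eq:BimSchur} for all $t$, including $t>0$, which is exactly the positive-degree control your argument lacks.
\end{itemize}

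The graded dimension is then read off from multiplicities. Upper and lower bounds come at once, without ever identifying the maximal ideal by hand.
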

\begin{proof}
Recall that $\mathbf{C}_\H\cong \mathbf{proj}_{\cS_\H}(\widetilde{\rC}_d)$ and that Lemma~\ref{lem:AstarAproj} tells us that   
\begin{equation}\label{eq:HomCH1}
\mathbf{proj}_{\cS_\H}(\widetilde{\rC}_d)\cong (\widetilde{\rC}_d^*\widetilde{\rC}_d)\mathbf{proj}_{\cB_\H},
\end{equation}
where the equivalence is given by $\widetilde{\rC}_x\mapsto \widetilde{\rC}_d^*\widetilde{\rC}_x$, for $x\in \H$.

Write $a:=\ell(w_I)$ for short. As in \eqref{eq:decompCd1}, the isomorphism in \eqref{eq:AI} for $\rA=\widetilde{\rC}_x$ implies that
\begin{equation}\label{eq:HomCH2}
\widetilde{\rC}_d^* \widetilde{\rC}_x\cong \widetilde{\rC}_d(-2a) \widetilde{\rC}_x \cong \widetilde{\rC}_x^{\oplus \vpar^a\pi(I)},
\end{equation} 
for all $x\in \H$. Thanks to \eqref{eq:HomCH2} and \eqref{freeforget}, we have 
\begin{equation}\label{eq:HomSH3}
\begin{aligned}
\mathrm{Hom}_{\mathbf{proj}_{\cS_\H}(\widetilde{\rC}_d)}(\widetilde{\rC}_x,\widetilde{\rC}_y) & \cong 
\mathrm{Hom}_{(\widetilde{\rC}_d^*\widetilde{\rC}_d)\mathbf{proj}_{\cB_\H}}(\widetilde{\rC}_x^{\oplus \vpar^a\pi(I)},\widetilde{\rC}_y^{\oplus \vpar^a\pi(I)}) \\
&\cong  \mathrm{Hom}_{\cB_\H}(\widetilde{\rC}_x,\widetilde{\rC}_y^{\oplus \vpar^a\pi(I)}),
\end{aligned}
\end{equation}
for all $x,y\in \H$. By \eqref{eq:BimSchur}, the latter morphism space has graded dimension equal to $\vpar^a\pi(I)$.
\end{proof}

\begin{corollary}\label{cor:finitepropCH}
In the same setting as in Proposition~\ref{prop:HomCH}, the underlying category of $\overline{\mathbf{C}_\H}$ is finite-length abelian.
\end{corollary}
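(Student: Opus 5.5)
The natural route is Lemma~\ref{whenabelian}: if the underlying category of $\mathbf{C}_\H$ is cosparse and almost finitary, then $\overline{\mathbf{C}_\H}$ is abelian and of finite length. So the proof reduces to showing that $\mathbf{C}_\H$ is almost finitary and cosparse. Here cosparse means that for each object $\rY$ only finitely many indecomposables $\rX$ (up to isomorphism, with shifts counted separately) satisfy $\Hom_{\mathbf{C}_\H}(\rX,\rY)\neq 0$.

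First, describe the indecomposable objects. Those of $\mathbf{C}_\H$ are the $\widetilde{\rC}_x(t)$ with $x\in\H$ and $t\in\bbZ$, and there are countably many because $W$ is countable. Morphism spaces are finite-dimensional since they are quotients of degree-zero morphism spaces in $\cS$. By additivity it is enough to treat indecomposable $\rY=\widetilde{\rC}_y(s)$ and indecomposable $\rX=\widetilde{\rC}_x(t)$.

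Then
\[
\Hom_{\mathbf{C}_\H}(\widetilde{\rC}_x(t),\widetilde{\rC}_y(s))=\Hom_{\mathbf{C}_\H}(\widetilde{\rC}_x,\widetilde{\rC}_y(s-t)),
\]
which is the degree-$(s-t)$ part of the graded space in Proposition~\ref{prop:HomCH}. By that proposition, this space is zero unless $x=y$. When $x=y$, its graded dimension is $\vpar^{\ell(w_I)}\pi(I)$, a Laurent polynomial with finitely many nonzero coefficients. So only finitely many shifts $s-t$ give a nonzero degree component.

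Consequently, for fixed $\rY=\widetilde{\rC}_y(s)$, the only $\rX$ with nonzero morphisms to $\rY$ are $\widetilde{\rC}_y(t)$ for $t$ in a finite set. This proves cosparseness. In fact the total morphism space $\prod_{\rX}\Hom(\rX,\rY)$ is finite-dimensional, of dimension $|W_I|$ (that is, $\pi(I)$ evaluated at $\vpar=1$), which gives the finite-length claim directly via the proof of Lemma~\ref{whenabelian}.

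The one delicate point is bookkeeping of the grading conventions. Proposition~\ref{prop:HomCH} computes graded dimensions, meaning $\mathrm{HOM}$-style spaces summed over all shifts, whereas $\mathbf{C}_\H$ as an almost finitary category uses degree-preserving morphisms with shifts treated as distinct objects. I would state the identification above explicitly. Beyond that, the argument is a direct application of Proposition~\ref{prop:HomCH} and Lemma~\ref{whenabelian}.
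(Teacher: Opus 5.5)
Your proposal is correct and follows the paper's argument. The paper likewise reads off hom-finiteness and cosparseness of the underlying category of $\mathbf{C}_\H$ from Proposition~\ref{prop:HomCH}, and then invokes Lemma~\ref{whenabelian}; it also mentions the radical-f.g.\ property, which Lemma~\ref{whenabelian} does not need, so leaving it out is harmless.
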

\begin{proof} The underlying category of $\mathbf{C}_\H$ is hom-finite by definition, and cosparse and radical-f.g. by Proposition~\ref{prop:HomCH}. By Lemma~\ref{whenabelian}, this implies that the underlying category of 
$\overline{\mathbf{C}_\H}$ is finite-length abelian.
\end{proof}

\begin{proposition}\label{prop:HomSH}
Let the setting be the same as in Proposition~\ref{prop:HomCH}. Then, for any $x,y\in \H$, we have 
\begin{equation}\label{eq:HomSH}
\mathrm{grdim}_{\mathbb{R}}\left(\mathrm{Hom}_{\cS_\H}(\widetilde{\rC}_x,\widetilde{\rC}_y)\right)= 
\begin{cases} 
d_x, & \text{if}\; x=y;\\
0, &\text{if}\; x\ne y
\end{cases}
\end{equation}
for some non-zero $d_x\in \mathbb{N}[v,v^{-1}]$.
\end{proposition}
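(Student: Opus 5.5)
The plan is to reduce $\mathrm{Hom}_{\cS_\H}(\widetilde{\rC}_x,\widetilde{\rC}_y)$ to Hom spaces in $\cB_\H$, where Proposition~\ref{prop:bimodbicat} (in the form \eqref{eq:BimSchur}) gives complete control, via the free-forgetful adjunction for the separable Frobenius algebra $\widetilde{\rC}_d$. The basic tool is the splitting of multiplication from Lemma~\ref{lem:sep1}. By the proof of Proposition~\ref{prop:bimodbicat}, every $\widetilde{\rC}_x(t)$ with $x\in\H$ carries a canonical $\widetilde{\rC}_d$-$\widetilde{\rC}_d$-bimodule structure. In particular, $\widetilde{\rC}_x$ is a direct summand of the free bimodule $\widetilde{\rC}_d\widetilde{\rC}_x\widetilde{\rC}_d$, using $\sigma_d$ on both sides.

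First, I would apply the two-sided analogue of \eqref{freeforget}:
\[
\mathrm{Hom}_{\cS_\H}(\widetilde{\rC}_x,\widetilde{\rC}_y)\cong \mathrm{Hom}_{\cB_\H}(\widetilde{\rC}_d\widetilde{\rC}_x\widetilde{\rC}_d,\widetilde{\rC}_y),
\]
where the bimodule structure on $\widetilde{\rC}_y$ is the canonical one. One caveat is that $\cB_\H$ was defined as an $H$-simple quotient. Since the radical turned out to be zero in the proof of Proposition~\ref{prop:bimodbicat}, I expect bimodule Homs in $\cS_\H$ to agree with those in $\cB_\H$ on these objects, and I would verify this as a preliminary step.

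Second, I would decompose $\widetilde{\rC}_d\widetilde{\rC}_x\widetilde{\rC}_d$ as a bimodule. By \eqref{eq:AI} and \eqref{eq:decompCd1}, as an object of $\cS_\H$,
\[
\widetilde{\rC}_d\widetilde{\rC}_x\widetilde{\rC}_d\cong R\otimes_{R^I}R\otimes_{R^I}\widetilde{\rC}_x^I\otimes_{R^I}R\otimes_{R^I}R(4a)\cong \widetilde{\rC}_x^{\oplus \vpar^{2a}\tilde{\pi}(I)^2}.
\]
The argument from the proof of Proposition~\ref{prop:bimodbicat}, that the decomposition into indecomposables is also the bimodule decomposition, should show that this is a sum of shifts $\widetilde{\rC}_x(b)$ as bimodules, with multiplicities given by the graded polynomial $\vpar^{2a}\tilde{\pi}(I)^2=\pi(I)^2$.

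Third, combining this decomposition with \eqref{eq:BimSchur}, which says that $\mathrm{Hom}_{\cB_\H}(\widetilde{\rC}_x(b),\widetilde{\rC}_y(t))$ is $\delta_{x,y}\delta_{b,t}$-dimensional, gives the result. The graded dimension vanishes for $x\neq y$, and for $x=y$ it equals $d_x=\pi(I)^2$, up to the inversion $\vpar\leftrightarrow\vpar^{-1}$ forced by the grading conventions, which is harmless since $\pi(I)$ is bar-invariant. This is a nonzero element of $\mathbb{N}[\vpar,\vpar^{-1}]$, as required. As a sanity check, the answer should be consistent with Proposition~\ref{prop:HomCH}, since $\mathbf{C}_\H$ is a quotient of the regular action.

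The main obstacle is the second step, namely identifying the free bimodule $\widetilde{\rC}_d\widetilde{\rC}_x\widetilde{\rC}_d$ with a direct sum of shifted copies of $\widetilde{\rC}_x$ carrying the canonical bimodule structure, rather than merely as objects. I would first try to prove this through the singular Soergel description. Under $\rA\mapsto\rA^I$, the free bimodule corresponds to $R\otimes_{R^I}\widetilde{\rC}_x^I\otimes_{R^I}R$ with its obvious $R$-$R$-action. A basis of $R$ over $R^I$ then splits it explicitly, and the action maps are given by $\partial_I$ as in the displayed biaction formula.
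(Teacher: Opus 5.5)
Your argument is correct, but it takes a different route from the paper.

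\textbf{Comparison of the two routes.} The paper passes through the double-centraliser equivalence of Lemma~\ref{bimoddct}, $\rF\mapsto\widetilde{\rC}_d^*\rF\widetilde{\rC}_d$. Together with \eqref{eq:HomSH2}, this identifies $\mathrm{Hom}_{\cS_\H}(\widetilde{\rC}_x,\widetilde{\rC}_y)$ with the $\rB$-$\rB$-bimodule maps ($\rB=\widetilde{\rC}_d^*\widetilde{\rC}_d$) from $\widetilde{\rC}_x^{\oplus\pi(I)^2}$ to $\widetilde{\rC}_y^{\oplus\pi(I)^2}$. That is a subspace of a Hom space of $\cB_\H$, so \eqref{eq:BimSchur} gives vanishing for $x\neq y$ and finiteness, and the identity gives non-vanishing. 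The exact $d_x$ is never computed.

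You instead use the one-step free--forgetful adjunction for $\widetilde{\rC}_d$-$\widetilde{\rC}_d$-bimodules. The price is that you must split the free bimodule $\widetilde{\rC}_d\widetilde{\rC}_x\widetilde{\rC}_d$ as a bimodule, not just as an object. The gain is that \eqref{eq:BimSchur} then determines $d_x$ exactly as a multiplicity polynomial.

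Your proposed verification of the splitting via \eqref{eq:AI} works. In $R\otimes_{R^I}R\otimes_{R^I}\widetilde{\rC}_x^I\otimes_{R^I}R\otimes_{R^I}R$, both actions are built from $\partial_I$, which takes values in $R^I$. So splitting the two inner factors $R=\bigoplus_i f_iR^I$ is compatible with the actions, and each summand is a shift of $\widetilde{\rC}_x$ with exactly its canonical bimodule structure.

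\textbf{Two small corrections.}

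First, the agreement of bimodule maps in $\cS_\H$ with morphisms in $\cB_\H$ does not follow from the radical of the quotient being zero. It follows from Lemma~\ref{AHsimple}: since $\cS_\H$ is $\H$-simple and $\widetilde{\rC}_d$ is simple and separable, the category of biprojective $\widetilde{\rC}_d$-$\widetilde{\rC}_d$-bimodules is already $\H_\cB$-simple. Hence the quotient defining $\cB_\H$ is trivial. The paper's proof uses the same identification.

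Second, your multiplicity is off by a shift. In fact $\widetilde{\rC}_d\widetilde{\rC}_x\widetilde{\rC}_d\cong\widetilde{\rC}_x^{\oplus\vpar^{4a}\widetilde{\pi}(I)^2}=\widetilde{\rC}_x^{\oplus\vpar^{2a}\pi(I)^2}$. The polynomial $\pi(I)^2$ is the multiplicity for $\widetilde{\rC}_d^*\widetilde{\rC}_x\widetilde{\rC}_d$, because $\widetilde{\rC}_d^*\cong\widetilde{\rC}_d(-2a)$.

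Your argument therefore gives $d_x=\vpar^{2a}\pi(I)^2$, or its image under $\vpar\leftrightarrow\vpar^{-1}$ depending on the convention for $\mathrm{grdim}$. This has constant term $1$ and all exponents of the same sign, as \eqref{eq:Soehomform1} forces. $\pi(I)^2$ itself cannot be right: for $I=\{s\}$ it equals $\vpar^2+2+\vpar^{-2}$, with constant term $2$. Bar-invariance of $\pi(I)$ does not absorb a shift.

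Neither point affects the statement, which only asks for some nonzero $d_x$.
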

\begin{proof}
By Lemma~\ref{bimoddct}, we have $\H^{\oplus}\cong (\widetilde{\rC}_d^*\widetilde{\rC}_d)\mathbf{biproj}_{\cB_\H}(\widetilde{\rC}_d^*\widetilde{\rC}_d)$, where the equivalence is given by $\widetilde{\rC}_x\mapsto \widetilde{\rC}_d^*\widetilde{\rC}_x\widetilde{\rC}_d$, for $x\in \H$.

By \eqref{eq:decompCd1}, we have 
\begin{equation}\label{eq:HomSH2}
\widetilde{\rC}_d^* \widetilde{\rC}_x \widetilde{\rC}_d\cong \widetilde{\rC}_x^{\oplus \pi(I)^2},
\end{equation} 
for all $x\in \H$. Thanks to \eqref{eq:HomSH2} and \eqref{freeforget}, we have 
\begin{equation}\label{eq:HomSH3}
\begin{aligned}
\mathrm{Hom}_{\cS_\H}(\widetilde{\rC}_x,\widetilde{\rC}_y) & \cong 
\mathrm{Hom}_{ (\widetilde{\rC}_d^*\widetilde{\rC}_d)\mathbf{biproj}_{\cB_\H}(\widetilde{\rC}_d^*\widetilde{\rC}_d)}(\widetilde{\rC}_x^{\oplus \pi(I)^2},\widetilde{\rC}_y^{\oplus \pi(I)^2}),
\end{aligned}
\end{equation}
for all $x,y\in \H$. By \eqref{eq:BimSchur}, the latter morphism space is zero if $x\ne y$ and has non-zero finite graded dimension if $x=y$.
\end{proof}

\begin{corollary}\label{cor:finitepropsH2}
In the same setting as in Proposition~\ref{prop:HomCH}, the cell $\H$ is cosparse and $\H^\oplus_\bbZ$ is radical-f.g.
\end{corollary}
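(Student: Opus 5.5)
By hypothesis we are in the setting of Proposition~\ref{prop:HomCH}: $W$ has finite rank and $\H$ contains $d=w_I$ for a finite parabolic subgroup $W_I$. These are exactly the hypotheses of Proposition~\ref{prop:HomSH}, so the plan is to read both properties off the graded hom-formula \eqref{eq:HomSH}. The key input is that the graded dimension $d_x\in\mathbb{N}[\vpar,\vpar^{-1}]$ of $\bigoplus_{t}\mathrm{Hom}_{\cS_\H}(\widetilde{\rC}_x,\widetilde{\rC}_x(t))$ is a Laurent polynomial. Hence it has finite support, and the total graded endomorphism space of $\widetilde{\rC}_x$ in $\cS_\H$ is finite-dimensional. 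In addition, all morphisms between $\widetilde{\rC}_x$ and shifts of $\widetilde{\rC}_y$ vanish for $x\neq y$.

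\emph{Cosparseness.} The indecomposable objects of $\H^\oplus_\bbZ$ are, up to isomorphism, the $\widetilde{\rC}_x(t)$ with $x\in\H$ and $t\in\bbZ$. Fix $\rY=\widetilde{\rC}_y(s)$. By \eqref{eq:HomSH}, $\mathrm{Hom}_{\cS_\H}(\widetilde{\rC}_x(t),\rY)\cong \mathrm{Hom}_{\cS_\H}(\widetilde{\rC}_x,\widetilde{\rC}_y(s-t))$ vanishes unless $x=y$ and $\vpar^{s-t}$ lies in the support of $d_y$. This leaves only finitely many indecomposables with nonzero morphisms into $\rY$. For a general object of $\H^\oplus_\bbZ$, which is a finite direct sum of such $\rY$, we take the union of these finite sets. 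The same argument applied to $\H^\oplus$ (no shifts) gives cosparseness of $\H$ directly, since only $x=y$ contributes.

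\emph{Radical-f.g.} It suffices to treat $X=\widetilde{\rC}_y(s)$ indecomposable and then take direct sums of the resulting generators. Let $W_1,\dots,W_n$ be the finitely many indecomposables $\widetilde{\rC}_y(t)$ with $\vpar^{s-t}$ in the support of $d_y$. For each $i$, choose a basis $g_{i,1},\dots,g_{i,m_i}$ of the finite-dimensional space $\rad\mathrm{Hom}(W_i,X)$, and let $\rW=\bigoplus_{i,k}W_i$ with the map $g=(g_{i,k})$. Now let $f\colon Z\to X$ be a radical morphism. Decompose $Z$ into indecomposables. Each component either vanishes, by \eqref{eq:HomSH}, or has source isomorphic to some $W_i$. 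In the latter case it is a linear combination $\sum_k c_k g_{i,k}$, so it factors through $g$ via the scalar matrix $(c_k\id)$, exactly as in the proof of Lemma~\ref{whenabelian}.

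\emph{Main obstacle.} The only genuinely nontrivial point is the finiteness of the total graded hom space, i.e.\ that $d_x$ is a Laurent polynomial rather than a power series. This is exactly what Proposition~\ref{prop:HomSH} provides, via the equivalence with biprojective bimodules over $\widetilde{\rC}_d^*\widetilde{\rC}_d$ in $\cB_\H$ and the semisimplicity statement \eqref{eq:BimSchur}. With that in hand, the remaining steps are formal.
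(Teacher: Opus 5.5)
Your proposal is correct and takes the same route as the paper. The paper records this corollary as an immediate consequence of Proposition~\ref{prop:HomSH} without further argument: the finite Laurent-polynomial graded dimensions $d_x$, together with the vanishing for $x\neq y$, give cosparseness, and cosparseness plus hom-finiteness gives radical-f.g. via the factorisation argument of Lemma~\ref{whenabelian}. You have made explicit exactly the formal deduction the paper leaves to the reader.
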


The theorem below is the analog of \cite[Theorems 2.20 and 7.1]{MMMTZ2}. Denote by $\cA_\H\text{-}\mathrm{safmod}^\bbZ$ the trival $\bbZ$-cover of $\cA_\H\text{-}\mathrm{safmod}$, which is the $2$-category whose objects and $1$-morphisms are formal shifts of objects and $1$-morphisms in $\cA_\H\text{-}\mathrm{safmod}$, respectively, and 
\[
2\mathrm{Hom}_{\cA_\H\text{-}\mathrm{safmod}^\bbZ}(\rX(s),\rY(t)):=
\begin{cases}
2\mathrm{Hom}_{\cA_\H\text{-}\mathrm{safmod}}(\rX,\rY),&\text{if}\; s=t;\\
0, &\text{else},
\end{cases}
\]
for any $1$-morphisms $\rX,\rY\in \cA_\H\text{-}\mathrm{safmod}$ and any $s,t\in \bbZ$.
\begin{theorem}\label{thm:mainthmforS}
Let $(W,S)$ be any Coxeter system of finite rank. Suppose that $\J$ is a two-sided cell in $W$ and $\H$ is a diagonal $H$-cell in 
$\J$ containing a Duflo involution of the form $d=w_I$, where $W_I$ is a finite parabolic subgroup of $W$. Then there are biequivalences 
\begin{equation}\label{eq:mainthmforS}
\cS\text{-}\mathrm{safmod}_\J\simeq \cS_\H\text{-}\mathrm{safmod}_\H\simeq \cB_\H\text{-}\mathrm{safmod}\simeq 
\cA_\H\text{-}\mathrm{safmod}^\bbZ.
\end{equation}
\end{theorem}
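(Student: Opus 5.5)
We prove the three biequivalences in \eqref{eq:mainthmforS} one at a time, from left to right, each time checking that the hypotheses of an earlier abstract result hold for $\cS$.

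\textbf{First biequivalence.} We apply Theorem~\ref{copHcellreduc} (in the restricted form $\cC\swfmod_\J\simeq \cC_\H\swfmod_\H$) together with Remark~\ref{checkonH}. This lets us verify the finiteness hypotheses of Theorem~\ref{algcoprod} only on the chosen diagonal $H$-cell $\H$ containing $d=w_I$. We need the following:
\begin{enumerate}
\item Regularity of $\J$. This is known for Kazhdan--Lusztig cells, since left cells in one two-sided cell are incomparable by Lusztig's P-properties, now proved in finite rank.
\item Adjoint connection-finiteness of $\H$, from Proposition~\ref{prop:connfin}.
\item Cosparseness of $\H$ and the radical-f.g.\ property of $\H^\oplus_\bbZ$, from Corollary~\ref{cor:finitepropsH2}.
\item The Frobenius property of $\overline{\bfH(\ti,\ti)}$.
\end{enumerate}
Before invoking Theorem~\ref{copHcellreduc}, we pass from $\cS$ to its $\J$-simple quotient $\cS_{\leq\J}$. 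Simple birepresentations with apex $\J$ factor through this quotient, exactly as in the finitary case.

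\textbf{The Frobenius step (main obstacle).} Item 4 is the only hypothesis not already established. The plan is to realize $\bfH(\ti,\ti)$ as projective right (or bi-) modules over the Frobenius algebra $\widetilde{\rC}_d\otimes\widetilde{\rC}_d^{\op}$ in the $\cS_\H\boxtimes\cS_\H^{\op}$ setting. Lemma~\ref{Frobalgprojinj} then shows that projectives and injectives in $\overline{\bfH}$ coincide. This requires showing that the cell birepresentation is $\bfM_\rA$ for the Frobenius algebra $\widetilde{\rC}_d$, i.e.\ $\mathbf{C}_\H\cong \bfrproj\,\widetilde{\rC}_d$. Here we would use \eqref{eq:oplaxidentity} and Proposition~\ref{prop:HomSH}, which identify the cell birepresentation with $\H^\oplus$ itself, so $\cS_\H$ has no nontrivial radical ideal on $\H$. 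If this identification is awkward, the fallback is the semisimplicity of $\cB_\H$ from Proposition~\ref{prop:bimodbicat}: combined with Lemma~\ref{bimoddct} it gives $\overline{\H^\oplus}$ as modules over the semisimple-type algebra $\rA^*\rA$ in $\cB_\H$, which are self-injective by Lemma~\ref{Frobalgprojinj}.

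\textbf{Second biequivalence.} We apply Theorem~\ref{thm:biequivalence} with $\cC=\cS_\H$ and $\rA=\widetilde{\rC}_d$, which needs four inputs:
\begin{enumerate}
\item $\cS_\H$ is $\H$-simple, by construction in Section~\ref{sec:Jsimple}.
\item $\H^\oplus$ is cosparse, by Corollary~\ref{cor:finitepropsH2}.
\item $\widetilde{\rC}_d$ is separable Frobenius, by \eqref{eq:algebrastructure} and Lemma~\ref{lem:sep1}.
\item $\widetilde{\rC}_d$ is simple. This follows from Lemma~\ref{simpleifsimple}, since $\bfM_{\widetilde{\rC}_d}$ is the cell birepresentation $\mathbf{C}_\H$, which is simple.
\end{enumerate}
The resulting $\cB$ is the category of biprojective $\widetilde{\rC}_d$-bimodules. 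Passing to its $H$-simple quotient does not change simple birepresentations, and the proof of Proposition~\ref{prop:bimodbicat} shows this quotient is $\cB_\H$.

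\textbf{Third biequivalence.} By Proposition~\ref{prop:bimodbicat}, $\cB_\H\simeq\bigoplus_{t\in\bbZ}\cA_\H(t)$, the trivial $\bbZ$-cover of $\cA_\H$. The remaining step is the standard observation, as in \cite[Theorem 2.20]{MMMTZ2}, that simple birepresentations of a trivial $\bbZ$-cover correspond to the trivial $\bbZ$-cover of $\cA_\H\swfmod$. On such a birepresentation the shift acts as an autoequivalence, and the objects split into $\bbZ$-orbits of shifts of a birepresentation of $\cA_\H$. Transitivity forces a single orbit, and the $2$-morphisms are concentrated in degree zero, which gives exactly the definition of $\cA_\H\swfmod^\bbZ$.
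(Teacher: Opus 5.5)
Your decomposition matches the paper's own. Proposition~\ref{prop:connfin} and Corollary~\ref{cor:finitepropsH2} feed Theorems~\ref{copHcellreduc} and~\ref{thm:biequivalence} (with $\rA=\widetilde{\rC}_d$), and the last step is Proposition~\ref{prop:bimodbicat} plus the argument of \cite[Proposition 7.5]{MMMTZ2}.

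\textbf{The Frobenius step.} You are right that the Frobenius hypothesis of Theorem~\ref{algcoprod} must be verified; the paper's one-line proof leaves it implicit. But your verification does not establish it.
\begin{enumerate}
\item What is needed is that $\overline{\bfH(\ti,\ti)}$ is Frobenius. Here $\bfH(\ti,\ti)=\H^\oplus_\bbZ$ carries the $\cS_\H\boxtimes\cS_\H^{\op}$-action, so its morphism spaces are those of $\cS_\H$, of graded dimension $d_x$ by Proposition~\ref{prop:HomSH}.
\item The identification $\mathbf{C}_\H\cong\bfrproj\,\widetilde{\rC}_d$ you invoke concerns the \emph{left} cell birepresentation. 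This is a proper quotient of $\H^\oplus_\bbZ$, of graded dimension $\vpar^{\ell(w_I)}\pi(I)$ by Proposition~\ref{prop:HomCH}. So Lemma~\ref{Frobalgprojinj} only gives that $\overline{\mathbf{C}_\H(\ti)}$ is Frobenius.
\item Nor can $\bfH(\ti,\ti)$ be realised as projective $\widetilde{\rC}_d$-bimodules. By Lemma~\ref{AoneH} and Proposition~\ref{prop:bimodbicat} these form the semisimple $\cB_\H$. By contrast, $\H^\oplus_\bbZ$ has non-zero radical once $I\neq\emptyset$, since it surjects onto $\mathbf{C}_\H$.
\item Your fallback points the right way but is misstated. $\rB=\widetilde{\rC}_d^*\widetilde{\rC}_d$ is not semisimple: its underlying object $\widetilde{\rC}_d^{\oplus\vpar^{\ell(w_I)}\pi(I)}$ makes it a connected graded algebra with non-zero positive part. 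Also, Lemma~\ref{bimoddct} identifies $\H^\oplus_\bbZ$ with projective $\rB$-$\rB$-\emph{bimodules} in $\cB_\H$, not with modules.
\end{enumerate}
The missing step is to prove that $\rB$ is Frobenius in $\cB_\H$. For instance, Lemma~\ref{lem:AstarAproj} transports the Frobenius property of $\overline{\mathbf{C}_\H(\ti)}$ to left $\rB$-modules. Then apply the bimodule analogue of Lemma~\ref{Frobalgprojinj}.

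\textbf{The third step.} Your orbit argument tacitly assumes that the shift $\widetilde{\rC}_d(1)$ acts freely on isomorphism classes. Only then does a simple $\cB_\H$-birepresentation split as $\bigoplus_{t\in\bbZ}\mathbf{N}(t)$ for a simple $\cA_\H$-birepresentation $\mathbf{N}$. The paper's appeal to \cite[Proposition 7.5]{MMMTZ2}, a result from the graded setting of that paper, makes the same assumption.

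Nothing in the definition of $\cB_\H\swfmod$ forces it: that definition allows all simple almost finitary birepresentations, with shifts as objects and degree-zero morphisms. Since $\cB_\H\simeq\bigoplus_t\cA_\H(t)$ by Proposition~\ref{prop:bimodbicat}, the functor $\rX(t)\mapsto\rX$ forgetting the shift is monoidal. Pulling back any simple $\cA_\H$-birepresentation along it gives a simple $\cB_\H$-birepresentation on which $\widetilde{\rC}_d(1)$ acts as the identity.

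Concretely, take $\J=\{e\}$, so $I=\emptyset$, $\cB_\H\simeq\mathrm{Vect}_\bbZ$ and $\cA_\H\simeq\mathrm{Vect}$. The rank-$m$ birepresentations $\mathrm{Vect}_{\bbZ/m\bbZ}$ are simple, and pulled back to $\cS$ they have apex $\J$. They have finite rank, whereas the trivial $\bbZ$-cover of $\mathrm{Vect}$ is the regular $\mathrm{Vect}_\bbZ$, so none of them is a trivial $\bbZ$-cover.

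So this step fails as written. You need to restrict to graded birepresentations, where the shift acts freely, and then construct the decomposition $\bfM\simeq\bigoplus_t\mathbf{N}(t)$ from that hypothesis.
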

\begin{proof}
The results in Proposition~\ref{prop:connfin} and Corollary~\ref{cor:finitepropsH2} imply that $\H$ satisfies the assumptions in  Theorems \ref{copHcellreduc} and \ref{thm:biequivalence}, proving the first two biequivalences. The proof of the 
last biequivalence is completely analogous to the proof of \cite[Proposition 7.5]{MMMTZ2}, the only difference being that 
we restrict to degree-preserving $2$-morphisms in this theorem. 
\end{proof}

\begin{remark}\label{rem:maindifference} Theorem \ref{thm:mainthmforS} is the analog 
of the combination of \cite[Theorem 2.20]{MMMTZ2} (strong $H$-reduction) and \cite[Theorem 7.1]{MMMTZ2}, but with one big technical restriction: it only holds for diagonal $H$-cells containing the longest element of a finite parabolic subgroup of $W$. In practice, unfortunately, most Coxeter groups have two-sided cells which do not contain such elements. As we have been indicating in the previous sections, we hope that this restriction can be lifted once Conjecture \ref{conj:EH} (the weak KEH-conjecture) has been proved.  
\end{remark}

Nevertheless, there are exceptions to the restriction described in Remark \ref{rem:maindifference}. 
In the next two sections, we consider two classes of infinite Coxeter groups (including an extended version of them) whose two-sided cells all contain the longest element of a finite parabolic subgroup. Moreover, for (extended) affine Weyl groups of type $A$ (the first class of examples), the asymptotic categories associated to the diagonal $H$-cells have been described explicitly in the literature. For universal Coxeter groups (the other class of examples), we conjecture that there is an explicit description as well. 

Before we continue, we prove that strong $H$-reduction holds in full generality for the asymptotic (bi)categories associated to two-sided cells, see Remark~\ref{rem:asympJ}. 

\begin{theorem}\label{thm:mainthmforJ}
Let $(W,S)$ be any Coxeter system of finite rank and $\J$ any two-sided cell in $W$. For any diagonal $H$-cell $\H$ in $\J$, 
there is a biequivalence 
\begin{equation}\label{eq:mainthmforJ}
\cA_\J\text{-}\mathrm{safmod}\simeq \cA_\H\text{-}\mathrm{safmod}.
\end{equation}
\end{theorem}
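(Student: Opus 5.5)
The plan is to apply the abstract strong $H$-reduction, Theorem~\ref{copHcellreduc}, to the almost fusion bicategory $\cC:=\cA_\J$ of Remark~\ref{rem:asympJ}. Its objects are the left cells $\Gamma$ in $\J$, and $\cA_\J(\Gamma_1,\Gamma_2)=\cA_{\Gamma_1\cap\Gamma_2^*}$. The identity $1$-morphism of $\Gamma$ is the Duflo involution $\rA_{d_\Gamma}$, which is indecomposable. Hom spaces are finite dimensional by \eqref{eq:Soehomformassymp}, and there are countably many objects and indecomposables since $W$ is countable. Duality $\rA_w\mapsto\rA_{w^{-1}}$, transported from the pivotal structure of Elias--Williamson, makes $\cA_\J$ almost fiab.

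\textbf{Step 1: cell structure.} The indecomposable $1$-morphisms of $\cA_\J$ are the $\rA_w$, $w\in\J$, and they multiply with structure constants $\gamma_{x,y,z^{-1}}$. Using Lusztig's properties P1--P15 (now known in finite rank by \cite{CH}), the relation $\rA_x$ is a summand of some $\rA_y\rA_z$ reproduces the left, right and two-sided KL preorders restricted to $\J$. In particular the identities $\rA_{d_\Gamma}$ lie in $\J$ itself, so there is no separate trivial cell.

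Concretely:
\begin{itemize}
\item $\J$ is the unique two-sided cell, so $\cA_\J$ is $\J$-simple, since the bicategory is semisimple with a single cell.
\item $\J$ is regular, because distinct left cells inside a two-sided cell are incomparable (Lusztig's P-properties).
\item The diagonal $H$-cells of $\cA_\J$ are exactly the $\H=\Gamma\cap\Gamma^*$, and $(\cA_\J)_\H=\cA_\H$.
\end{itemize}
Since $\cA_\J$ is semisimple, each of its simple birepresentations has apex $\J$. Hence $\cA_\J\swfmod=\cA_\J\swfmod_\J$ and $\cA_\H\swfmod=\cA_\H\swfmod_\H$.

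\textbf{Step 2: finiteness hypotheses.} I will verify hypothesis (1) of Theorem~\ref{algcoprod}.
\begin{itemize}
\item Adjoint connection-finiteness of $\J$ is Remark~\ref{rem:finAJ}. It follows from finiteness of $\{y\mid \gamma_{x,y,z}\neq 0\}$, which comes from cyclic symmetry $\gamma_{x,y,z}=\gamma_{z,x,y}$, together with the composition argument used in Proposition~\ref{prop:connfin}.
\item Cosparseness of each $\bfJ(\ti,\tj)$ holds because these categories are semisimple with one-dimensional endomorphism spaces of simples. Hence $\Hom(X,Y)\neq0$ only for the finitely many indecomposable summands $X$ of $Y$.
\item Frobenius-ness of $\ov{\bfJ(\ti,\tj)}$ holds because $\ov{\mathcal{D}}\simeq\mathcal{D}$ for a semisimple $\mathcal{D}$, where every object is projective and injective.
\end{itemize}
Theorem~\ref{copHcellreduc} then gives $\cA_\J\swfmod_\J\simeq(\cA_\J)_\H\swfmod_\H=\cA_\H\swfmod$, which is the claim.

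\textbf{Main obstacle.} I expect the hardest part to be Step 1, namely making rigorous that the cell structure of $\cA_\J$ as a bicategory matches the KL cells. This needs both directions of the comparison: $\gamma_{x,y,z}\neq0$ implies $x\sim_L y^{-1}$, $y\sim_L z^{-1}$ and so on, and conversely any two elements of a common left cell are connected via some nonzero $\gamma$. I would first try to derive the needed implications from P8 and P13, using that $\rA_{d_\Gamma}$ acts as the identity. A secondary point is checking that the multi-object bicategory is really needed and well defined, with the identity at $\Gamma$ being $\rA_{d_\Gamma}$ and the associators canonical. This is what avoids the infinite direct sum of Duflo involutions when $\J$ has infinitely many left cells.
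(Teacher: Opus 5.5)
Your proposal is correct and follows essentially the paper's proof. You apply Theorem~\ref{copHcellreduc} to $\cA_\J$, taking adjoint connection-finiteness from Remark~\ref{rem:finAJ} and cosparseness and the Frobenius property from semisimplicity; your Step~1 (regularity, $\J$-simplicity, unique apex, and $(\cA_\J)_\H=\cA_\H$) only makes explicit hypotheses that the paper leaves implicit.
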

\begin{proof}
Note that semisimplicity implies that $\cA_\J$ is Frobenius, cosparse and radical-f.g.. Remark~\ref{rem:finAJ} shows that the cell $\J$, as a two-sided cell of $\cA_\J$, is adjoint connection-finite.  This proves that the category $\cA_\J$ satisfies the conditions in Theorem~\ref{copHcellreduc}. 
\end{proof}


\section{(Extended) affine type $A$ Weyl groups}\label{secafftypeA}

Let $G$ be a connected reductive algebraic group $G$ over $\mathbb{C}$ with finite Weyl group $W_f$, root and coroot 
lattice $Q$ and $Q^\vee$, respectively, and weight and coweight lattice $P$ and $P^\vee$, respectively. 
The associated {\em affine Weyl group} $W$ and {\em extended affine Weyl group} $W^{\mathrm{ext}}$ are defined as  
\begin{equation}\label{eq:affWeyl}
W:=W_f\ltimes Q^\vee\quad\text{and}\quad W^{\mathrm{ext}}:=W_f\ltimes P^\vee,
\end{equation}
respectively. The Langlands dual group $G^\vee$ is another connected reductive algebraic group over $\mathbb{C}$ with the 
same finite Weyl group, but root and coroot lattice $Q^\vee$ and $Q$, respectively, 
and weight and coweight lattice $P^\vee$ and $P$, respectively.

For $G=\mathrm{GL}(n,\mathbb{C})$, we have $G^\vee\cong G$ and $W$ is isomorphic to the affine symmetric group $\Saff_n$, which is a Coxeter group of type $\widehat{A}_{n-1}$, and $W^{\mathrm{ext}}$ is isomorphic to the extended affine symmetric group $\Saffext_n$, whose definition we will recall below. 

The coweight lattice of $G=\mathrm{SL}(n,\mathbb{C})$ is isomorphic to the weight lattice of 
$G^\vee\cong \mathrm{PGL}(n,\mathbb{C})$, which is the adjoint group of $\mathrm{GL}(n,\mathbb{C})$. The latter fact implies that 
the weight lattice of $\mathrm{PGL}(n,\mathbb{C})$ is isomorphic to the root lattice of $\mathrm{GL}(n,\mathbb{C})$, 
hence $\Saff_n$ is also isomorphic to the extended affine Weyl group of $\mathrm{SL}(n,\mathbb{C})$. 

\begin{remark}\label{rem:weightcoweight} We use the coweight and the coroot lattice of $G$ in \eqref{eq:affWeyl}, 
which is a common but not a universal convention. Some authors use the weight and root lattice instead, which corresponds to 
$G^\vee$ in our convention. This explains, for example, why Xi obtains $\Saff_n$ as the extended affine Weyl group of 
$\mathrm{PGL}(n,\mathbb{C})$ in \cite[Section 18.3]{Xi}, instead of $\mathrm{SL}(n,\mathbb{C})$. 
\end{remark}

In \cite[Theorem 3]{Be} and \cite[Theorem 4]{BO}, the authors give a geometric description of the almost fusion category 
$\cA_\J$, for any two-sided cell $\J$ of $W^{\mathrm{ext}}$, proving and generalizing a conjecture due to Lusztig and 
using his bijection from \cite[Theorem 4.8]{Lu3} between two-sided cells $\J$ in $W^{\mathrm{ext}}$ and conjugacy classes of unipotent elements $\mathtt{u}_\J$ in $G^\vee$. Note that Lusztig does not mention $G^\vee$, but he uses the weight lattice 
of $G$ to define $W^{\mathrm{ext}}$ in \cite[Section 1.6]{Lu3}, which amounts to the same, see Remark~\ref{rem:weightcoweight}. Bezrukavnikov and Ostrik follow Lusztig's conventions, so they also use $G^\vee$ 
implicitly. For the so-called {\em canonical} diagonal $H$-cell $\H_{\mathtt{c}(\J)}$, Bezrukavnikov~\cite[Theorem 3]{Be} shows that $\cA_{\H_{\mathtt{c}(\J)}}\simeq \mathrm{Rep}(F_\J)$, the semisimple category of finite dimensional rational 
complex representations of $F_\J$, where $F_\J$ is the Levi factor of $Z_{G^\vee}(\mathtt{u}_\J)$. 
 
When $W$ is of (extended) affine type $A_{n-1}$, every two-sided cell contains the longest 
element of a finite parabolic subgroup of $W$, which is a key hypothesis in Theorem \ref{thm:mainthmforS}. 
The diagonal $H$-cells containing these elements are not the canonical $H$-cells in general, but fortunately that does not matter, thanks to Theorem~\ref{thm:mainthmforJ}.  As a matter of fact, in (extended) affine type $A_{n-1}$, the asymptotic categories for all diagonal $H$-cells in a given two-sided cell are even equivalent as pivotal categories. 

Contrary to $\Saff_n$, the group $\Saffext_n$ is not a Coxeter group, hence we will first recall the definition of Soergel bimodules for $\Saffext_n$ below and explain how the results from the previous sections extend to these bimodules. After that, we will briefly recall Bezrukavnikov and Ostrik's result for $\Saffext_n$, and then how to adapt it for $\Saff_n$. 

\subsection{The (extended) affine symmetric group}
\label{subsec:affsymS}
Let $n\in \mathbb{N}_{\geq 3}$. The {\em affine symmetric group} 
$\Saff_n$ is the infinite Coxeter group generated by the simple reflections $s_0,\ldots, s_{n-1}$ subject to the relations
\begin{equation}\label{eq:Saff}
s_i^2=e,\quad s_is_j=s_js_i\;\text{for}\;\vert i-j\vert>1,\quad s_i s_{i+1} s_i=
s_{i+1} s_i s_{i+1},
\end{equation}
for $i,j\in \{0,\ldots, n-1\}$, where one has to take the indices in the relations modulo $n$, e.g., $s_n=s_0$ by definition. The {\em extended affine symmetric group} 
$\Saffext_n$ is the semidirect product 
\begin{equation}\label{eq:affextS1}
\Saffext_n:=\langle \rho \rangle  \ltimes \Saff_n,
\end{equation}
where $\langle \rho\rangle$ is an infinite cyclic group generated by $\rho$, acting 
on $\Saff_n$ by 
\begin{equation}\label{eq:affextS2}
\rho s_i \rho^{-1}=s_{i+1},
\end{equation}
for $i=0,\ldots, n-1$, where $i$ has to be taken modulo $n$ again. The {\em 
symmetric group} $\mathfrak{S}_n$ is the finite subgroup of $\Saff_n$ generated by 
$s_1,\ldots, s_{n-1}$. Note that these groups can also be defined for $n=1,2$, by 
putting $\mathfrak{S}_1=\Saff_1=\{e\}$ and $\Saffext_1=\langle \rho\rangle$, and letting $\Saff_2$ be the group generated by $s_0$ and $s_1$, subject only to the quadratic relations $s_0^2=s_1^2=e$. Note that $\Saff_2$ is isomorphic to the infinite dihedral group, which is the Coxeter group of type $I_2(\infty)$.


\subsection{Soergel bimodules for $\Saffext_n$}
Let $R:=\mathbb{C}[y,x_1,\ldots, x_n]$ with grading determined by twice the polynomial degree. The group $\Saffext_n$ acts on $R$ by degree-preserving $\mathbb{C}[y]$-linear automorphisms such that the elements of $\mathfrak{S}_n$ permute the $x_i$, whereas $s_0$ and $\rho$ act by 
\begin{gather}\label{eq:action}
s_0(x_i)=
\begin{cases}
x_i& \text{if}\; i=2,\ldots, n-1;\\
x_n-y& \text{if}\; i=1;\\
x_1+y& \text{if}\; i=n,
\end{cases}
\qquad 
\rho(x_i)=
\begin{cases}
x_{i+1}& \text{if}\; i=1,\ldots, n-1;\\
x_1+y & \text{if}\; i=n.
\end{cases}
\end{gather}

Since $\Saff_n$ is a Coxeter group, the category $\Soeaff_n$ of Soergel bimodules for $\Saff_n$ can be defined as in Section~\ref{sec:Soergel}. As a monoidal category, it is generated by the $R$-bimodules $\rB_i:=\rB_{s_i}$, for $i=1,\ldots, n-1$. For every $w\in \Saff_n$, there is an indecomposable $R$-bimodule $\rB_w\in \Soeaff_n$, with a particular choice of grading, and these bimodules are all pairwise non-isomorphic and exhaust the indecomposable objects in $\Soeaff_n$ 
up to isomorphism and grading shift.  

To define the category $\Soeext_n$ of Soergel bimodules for $\Saffext_n$, we have to introduce the additional $R$-bimodules $\rB_{\rho^k}$, for $k\in \mathbb{Z}$. As a left $R$-module, we take $\rB_{\rho^k}:=R$. As a right $R$-module, $\rB_{\rho^k}$ is also isomorphic to $R$, but in a non-trivial way because the right $R$-action is defined by $f\cdot g:=f\rho^k(g)$, for $f,g\in R$. Note 
that $\rB_{\rho^k}$ is isomorphic to $R$ as a left and as a right $R$-module, but not as an $R$-bimodule, unless $k=0$. 

These $R$-bimodules are all 
indecomposable, of course, and it is easy to check that, for any $k,l\in \mathbb{Z}$, there is a canonical isomorphism of graded $R$-bimodules
\begin{equation}\label{eq:twistedproducts1}
\rB_{\rho^k}\rB_{\rho^m}\cong \rB_{\rho^{k+m}}
\end{equation}
and an isomorphism of left (resp. right) graded $R$-modules
\begin{equation}\label{eq:twistedhoms1}
\mathrm{HOM}_{(R\text{-gbim})^*}(\rB_{\rho^k},\rB_{\rho^m})\cong 
\begin{cases}
R, & \text{if}\; k=m;\\
\{0\}, & \text{else}.
\end{cases}
\end{equation}

\begin{definition}\label{def:Soeext} 
The category $\Soeext_n$ of {\em Soergel bimodules} for $\Saffext_n$ is the smallest idempotent complete full monoidal subcategory of $R$-gbim generated by $\Saff_n$ and the $R$-bimodules $\rB_{\rho^k}\{ t\}$, for $k,t\in \mathbb{Z}$.  
\end{definition}

The isomorphism in \eqref{eq:twistedproducts1} implies that there are degree-preserving isomorphisms of $R$-bimodules $\rB_\rho\rB_{\rho^{-1}}\cong \rB_{\rho^{-1}}\rB_\rho\cong R$. 
For any $i=0,\ldots,  n-1$, there is also a canonical degree-preserving isomorphism of $R$-bimodules 
\begin{equation}\label{eq:twistedproducts2}
\rB_{\rho}\rB_i \rB_{\rho^{-1}}\cong \rB_{i+1},
\end{equation}
which implies that any indecomposable object in $\Soeext_n$ is isomorphic to 
$\rB_{\rho^k}	\rB_w\{ t\}$, for some $w\in \Saff_n$ and $k,t\in \mathbb{Z}$. The isomorphism in \eqref{eq:twistedhoms1} 
implies that, for any $u,v\in \Saff_n$ and $k,m\in \mathbb{Z}$, there is an isomorphism of right graded $R$-modules
\begin{equation}\label{eq:twistedhoms2}
\mathrm{HOM}_{(\Soeext_n)^*}(\rB_{\rho^k}\rB_u,\rB_{\rho^m}\rB_v)\cong 
\begin{cases}
\mathrm{HOM}_{\Soeaff_n^*}(\rB_u,\rB_v), & \text{if}\; k=m;\\
0, & \text{else}.
\end{cases}
\end{equation}
Thanks to \eqref{eq:twistedhoms2}, the results from the previous sections also hold for $\Soeext_n$. In particular, there are 
asymptotic categories $\cA_\J$ and $\cA_\H$ for every two-sided cell $\J$ of $\Saffext_n$ and every diagonal $H$-cell 
$\H\subseteq \J$. 


\subsection{The asymptotic categories for $\Saffext_n$}\label{sec:asympbicextaffA}
The two-sided Kazhdan--Lusztig cells of $\Saffext_n$ correspond bijectively to the partitions of $n$, see~\cite[Section 2.2]{Xi}. We denote the set of partitions of $n$ 
by $\mathrm{P}(n)$ from now on. Let $\lambda=(\lambda_1,\ldots, \lambda_r)\in \mathrm{P}(n)$ and $\J_\lambda$ be the corresponding two-sided cell in 
$\Saffext_n$. Xi's conventions are consistent with Lusztig's bijection in~\cite[Theorem 4.8]{Lu3}, hence the conjugacy class of the unipotent element $\mathtt{u}_\lambda$ in $\mathrm{GL}(n,\mathbb{C})$ corresponding to $\J_\lambda$ has Jordan normal form determined by the dual partition $\mu=(\mu_1,\ldots, \mu_{r'})\in \mathrm{P}(n)$ and the number of left cells in $\J_\lambda$ is equal to 
\begin{equation}\label{eq:nlambda}
n_\lambda:=\dfrac{n!}{\mu_1!\cdots \mu_{r'}!}.
\end{equation}

Let $\mathfrak{S}_\lambda\cong \mathfrak{S}_{\lambda_1}\times \cdots \times \mathfrak{S}_{\lambda_r}$ be the finite parabolic subgroup of $\Saffext_n$ generated by the simple reflections 
\[
s_1,\ldots, s_{\lambda_1-1}, s_{\lambda_1+1},\ldots, s_{\lambda_1+\lambda_2-1},s_{\lambda_1+\lambda_2+1},\ldots, s_{\lambda_1+\cdots+\lambda_r-1}.
\]
The longest element $w_\lambda$ in $\mathfrak{S}_\lambda$, which is a Duflo involution, belongs to $\J_\lambda$ and will be denoted by $d_\lambda$. The left cell and the diagonal $H$-cell containing $d_\lambda$ are denoted by $\Gamma_\lambda$ 
and $\H_\lambda:=\Gamma_\lambda\cap \Gamma_\lambda^{-1}$, respectively. For lack of a better name, we call these the {\em parabolic left cell} and the {\em parabolic $H$-cell} in $\J_\lambda$, respectively.

For any $w\in \Saffext_n$, let $\cR(w)$ be the right descent set of $w$ and define 
$Y_0:=\left\{w\in \Saffext_n \mid \cR(w)\subseteq \{s_0\}\right\}$. By \cite[Theorem 1.2]{LN}, the intersection 
$\Gamma_{\mathtt{c}(\lambda)}:=Y_0\cap \J_\lambda$ is exactly one left cell of $\Saffext_n$, called the {\em canonical left cell} in $\J_\lambda$. 
We denote the corresponding Duflo involution by $d_{\mathrm{c}(\lambda)}$ 
and the diagonal $H$-cell containing it by $\H_{\mathtt{c}(\lambda)}:=\Gamma_{\mathtt{c}(\lambda)}\cap \Gamma_{\mathtt{c}(\lambda)}^{-1}$, which we call the {\em canonical $H$-cell} in $\J_\lambda$.

Given two Duflo involutions $d_1, d_2$ in $\J_\lambda$ such that $\rho^j d_1 \rho^{-j} = d_2$, for some integer $j$, it is clear that conjugation by $\rB_{\rho^j}$ defines an equivalence between $\cS_{\H_1}$ and $\cS_{\H_2}$, where $\H_1$ and $\H_2$ are the respective diagonal $H$-cells, which implies that $\cA_{\H_1}$ and $\cA_{\H_2}$ 
are also equivalent. When $d_1$ and $d_2$ are not conjugate to each other by a power of $\rho$, it is still true that $\cA_{\H_1}$ and $\cA_{\H_2}$ are equivalent (in this particular affine Weyl type), but the proof is much harder, see \cite[Section 8.2]{Xi} (and references therein). However, the bicategories $\cS_{\H_1}$ and $\cS_{\H_2}$ may be inequivalent, see Example~\ref{ex:Dufloinv} below.

Using the notation with multiplicities for partitions, write $\mu=(i_1^{m_1} i_2^{m_2} \ldots i_{k}^{m_k})$ with 
$\mu_1=i_1> i_2>\ldots >i_{k}=\mu_{r'}$. By the above, the almost fusion categories $\cA_{\H_\lambda}$ and $\cA_{\H_{\mathtt{c}(\lambda)}}$ are both equivalent to $\mathrm{Rep}(F_\lambda)$, where  
\begin{equation}\label{eq:Flambda}
F_\lambda\cong \mathrm{GL}(m_1,\mathbb{C})\times \cdots \times \mathrm{GL}(m_k,\mathbb{C}), 
\end{equation}
which is the Levi subgroup of the centraliser $Z_{ \mathrm{GL}(n,\mathbb{C})}(\mathtt{u}_\lambda)$, see \cite[Section 8.2]{Xi} (including references therein), \cite[Theorem 3]{Be} and \cite[Section 5.2 and Example 5.5(b)]{BO}. 

\begin{example}\label{ex:Dufloinv} Let $n=4$ and $\lambda=(2^2)$. Then $\mu=\lambda$, $\mathtt{a}(\J_\lambda)=2$ and $\J_\lambda$ has six left cells according to \eqref{eq:nlambda}, with respective Duflo involutions $d_1,\ldots, d_6$ given by 
\[
d_1=s_1s_3,\; d_2=s_0s_2,\; d_3=s_0 s_1 s_3 s_0,\;d_4=s_1 s_0 s_2 s_1,\; d_5=s_2 s_1 s_3 s_2,\; d_6=s_3 s_0 s_2 s_3.
\] 
Note that because $\rho d_i\rho^{-1}=d_{i+1}$, for $i=1,3,4,5$, there are two $\rho$-conjugacy classes, namely $\{d_1,d_2\}$ and $\{d_3,d_4,d_5,d_6\}$. One sees immediately that $d_\lambda=d_1$ and $d_{\mathtt{c}(\lambda)}=d_3$, which are also conjugate to each other, but not by a power of $\rho$. 

In this case, it is not hard to show directly that $\cA_{\H_\lambda}$ and $\cA_{\H_{\mathtt{c}(\lambda)}}$ are equivalent, because we have 
\[
\widetilde{\rC}_{0130} \cong  \widetilde{\rC}_{0}\widetilde{\rC}_{13}\widetilde{\rC}_{0}^*
\]
in $\cS_{\J_\lambda}$, so $\widetilde{\rC}_{0130}$ is obtained from $\widetilde{\rC}_{13}$ by framing with $\widetilde{\rC}_{0}$. By Lemma~\ref{framing}, 
this framing induces an equivalence between $\cB_{\H_\lambda}$ and $\cB_{\H_{\mathtt{c}(\lambda)}}$, 
which restricts to an equivalence between $\cA_{\H_\lambda}$ and $\cA_{\H_{\mathtt{c}(\lambda)}}$, thanks to Proposition~\ref{prop:bimodbicat}. This implies that all 
$\cA_{\H_i}$, for $i=1,\ldots, 6$, are equivalent to $\mathrm{Rep}( \mathrm{GL}(2,\mathbb{C}))$.

Now, let $\lambda=(4)$. Then $\mu=(1^4)$, $\mathtt{a}(\J_\lambda)=6$ (the maximal $\mathtt{a}$-value in $\Saffext_4$) and $\J_\lambda$ has $24$ left cells, each with a unique Duflo involution. Two of these Duflo involutions are 
\[
d_1:=s_1 s_0 s_ 1 s_ 2 s_ 0 s_ 1\quad\text{and}\quad d_2:= s_3 s_1 s_0 s_1 s_2 s_1 s_0 s_3.
\]   

Using SageMath~\cite{sagemath}, an open-source mathematics software system, we computed that 
\begin{eqnarray}\label{eq:decomp1}
\rB_{101201}^2&\cong &  \rB_{101201}^{\oplus  \left(\vpar^6+3\vpar^4+5\vpar^2 + 6 +5\vpar^{-2} + 3\vpar^{-4} + \vpar^{-6}\right) }\\ \label{eq:decomp2}
\rB_{31012103}^2&\cong & \rB_{31012103}^{\oplus  \left(\vpar^6+3\vpar^4+5\vpar^2 + 6 +5\vpar^{-2} + 3\vpar^{-4} + \vpar^{-6}\right) } \oplus \rB_{30121030121031}^{\oplus \left(\vpar^2+2+\vpar^{-2}\right) }
\end{eqnarray}
Let $\H_1$ and $\H_2$ be the diagonal $H$-cells containing $d_1$ and $d_2$, respectively. The decompositions in 
\eqref{eq:decomp1} and \eqref{eq:decomp2} imply that $\cS_{\H_1}$ and $\cS_{\H_2}$ cannot be equivalent as monoidal 
categories, because such an equivalence $\phi\colon \cS_{\H_1}\to \cS_{\H_2}$ would have to satisfy 
\[
\phi(R)\cong R\quad\text{and}\quad \phi(\rB_{d_1})\cong \rB_{d_2},
\]
as $R$ is the identity $1$-morphism and $\rB_{d_i}$ is the unique, up to isomorphism, indecomposable self-dual Soergel bimodule in $\H_i^\oplus\subset \cS_{\H_i}$ such that $\mathrm{Hom}_{\cS}(R,\rB_{d_i}(6))\ne 0$, for $i=1,2$. This would contradict the decompositions above if $\phi$ were to preserve the monoidal product up to natural isomorphism. Note that on the asymptotic 
level things are much simpler, because $\rA_{d_i}^2\cong \rA_{d_i}$ and
\[
\cA_{\H_i}\simeq \mathrm{Rep}(\mathrm{GL}(4,\mathbb{C})),
\]
for $i=1,2$.
\end{example} 





\subsection{Simple birepresentations of $\Soeext_n$} \label{secsimpleextA}

In this subsection, we will illustrate how Theorem~\ref{thm:mainthmforS} 
works in extended affine type $A_{n-1}$. Specifically, we will show how semisimple geometrical module categories over finite products 
of complex general linear groups, defined and studied in \cite{Ge}, give rise to certain simple almost finitary birepresentations of $\Soeext_n$, which we will also call {\em geometrical} by association. Although geometrical module categories have 
a concrete definition, they have not been classified in general. Moreover, not all semisimple module categories are geometrical, so this method does certainly not yield a complete classification of simple almost finitary $\Soeext_n$-birepresentations, but it does provide an interesting class of simple birepresentations of $\Soeext_n$.

We first recall Gelaki's~\cite{Ge} results in the case when $G$ is a connected reductive complex algebraic group. 
Given a (Zariski-)closed subgroup $H$ of $G$, the restriction functor 
\[
\mathrm{Res}^G_H\colon \mathrm{Rep}(G)\to \mathrm{Rep}(H)
\] 
is a monoidal functor, so it induces a functorial action $\circ\colon \mathrm{Rep}(G)\boxtimes_\bbC \mathrm{Rep}(H)\to  \mathrm{Rep}(H)$, which on objects is given by 
\[
V \circ W:=\mathrm{Res}^G_H(V)\otimes_\bbC W
\]
for $V\in \mathrm{Rep}(G)$ and $W\in  \mathrm{Rep}(H)$, and on morphisms is defined similarly. We denote the 
resulting $\mathrm{Rep}(G)$-birepresentation by $\mathbf{Rep}(H)$ and note that it is always transitive, but the underlying category is only semisimple if $H$ is reductive. If $H$ is not reductive, the negligible morphisms in $\mathrm{Rep}(H)$ 
form a non-zero $\mathrm{Rep}(G)$-stable ideal $I$, because they form a tensor ideal in $\mathrm{Rep}(H)$. 
This means that $\mathbf{Rep}(H)$ is not simple as a $\mathrm{Rep}(G)$-birepresentation, but it has a simple quotient 
with underlying category $\mathrm{Rep}(H)/I$, which is semisimple and equivalent to $\mathrm{Rep}(L)$, where $L$ is any Levi subgroup (maximal reductive subgroup) of $H$. This shows that $\mathbf{Rep}(H)$ is a simple 
$\mathrm{Rep}(G)$-birepresentation if and only if $H$ is reductive. 

Given a (normalized) algebraic group $2$-cocycle $\psi\colon H\times H \to \mathbb{G}_m$, where $\mathbb{G}_m\cong \bbC^\times$, the restriction functor $\mathrm{Res}^G_H$ also induces a functorial action on $\mathrm{Rep}^\psi(H)$, the category of projective rational finite dimensional complex $H$-representations with Schur multiplier $\psi$. As is well-known, the resulting $\mathrm{Rep}(G)$-birepresentation $\mathbf{Rep}^\psi(H)$ only depends on the cohomology class $[\psi]$ of $\psi$ in $H^2(H,\mathbb{G}_m)$. Recall also that any $2$-cocycle is cohomologous with a normalized one, satisfying $\psi(h,e)=\psi(e,h)=1$ for all $h\in H$, so we can always assume that $\psi$ is normalized. As before, the birepresentation $\mathbf{Rep}^\psi(H)$ is simple if and only if $H$ is reductive.

Given two closed subgroups $H_1$ and $H_2$ of $G$ and two Schur multipliers $\psi_1$ and $\psi_2$ on them, 
the $\mathrm{Rep}(G)$-birepresentations $\mathbf{Rep}^{\psi_1}(H_1)$ and 
$\mathbf{Rep}^{\psi_2}(H_2)$ are equivalent if and only if $(H_1,\psi_1)$ and $(H_2,\psi_2)$ are conjugate to each other, 
which means that there is an element $g\in G$ such that $H_2=gH_1g^{-1}$ and $[\psi_2]=[\psi_1^g]\in H^2(H_2,\mathbb{G}_m)$, 
where $\psi_1^g(ghg^{-1},gh'g^{-1}):=\psi_1(h,h')$ for all $h,h'\in H_1$. 

Gelaki calls $\mathrm{Rep}(G)$-birepresentations of the above form {\em geometrical}, cf. \cite[Theorem 4.5]{Ge}. 
Thus there is a bijection between equivalence classes of simple geometrical $\mathrm{Rep}(G)$-birepresentations and conjugacy classes of pairs $(H,[\psi])$, where $H$ is a reductive closed subgroup of $G$ and $[\psi]\in H^2(H,\mathbb{G}_m)$. Not all 
$\mathrm{Rep}(G)$-birepresentations are geometrical in general, e.g., Etingof and Ostrik showed in 
\cite[Section 3.2]{EO} that that are (non-symmetric) fiber functors $\mathrm{Rep}(\mathrm{SL}(2,\bbC))\to \mathrm{Vect}_\bbC$ that are not naturally isomorphic to the forgetful functor $F=\mathrm{Res}^{\mathrm{SL}(2,\bbC)}_{\{e\}}$ as monoidal functors. 
Every fiber functor yields a rank-one simple $\mathrm{SL}(2,\bbC)$-birepresentation, but only the one induced by $F$ is geometrical. 
Fiber functors $\mathrm{Rep}(\mathrm{SL}(n,\bbC))\to \mathrm{Vect}_\bbC$ have been classified for $n=2$ and $n=3$, 
see \cite[Section 3.2]{EO} (and references therein) and \cite[Proposition 3.1 and Definition 3.3]{Oh}, but not for $n>3$. 
If $G$ is abelian, fiber functors have been classified too, see \cite[Proposition 2.6.1]{EGNO}. 

Even for simple geometrical birepresentations of $\mathrm{Rep}(G)$, there is no general concrete classification, although 
it exists for some $G$. For example, if $G=\mathbb{G}_m^k$ (the $k$-dimensional complex torus), for some $k\in \mathbb{N}$, 
then any closed reductive subgroup $H$ of $G$ satisfies
\begin{equation}\label{eq:HforTorus1}
H\cong T\times A,
\end{equation}
for some complex torus $T:=\mathbb{G}_m^l$ of dimension $l$, with $0\leq l\leq k$, and some finite abelian group $A$, see \cite[Section 16.2]{Hu}. Moreover, the cohomology group 
$H^2(H,\mathbb{G}_m)$ is also known for all $H$. If $A\cong \prod_{i=1}^r \bbZ/d_i\bbZ$ in \eqref{eq:HforTorus1}, then 
\begin{equation}\label{eq:HforTorus2}
H^2(H, \mathbb{G}_m)\cong H^2(A, \mathbb{G}_m)\cong \bigoplus_{1\leq i< j\leq r} \bbZ/\mathrm{gcd}(d_i,d_j)
\bbZ.
\end{equation} 
The second isomorphism in \eqref{eq:HforTorus2} is a standard result in group cohomology of finite groups, see \cite[Proposition 2.1.1 and Theorem 2.2.10]{Ka}. To see why the 
first isomorphism is true, consider the Lyndon-Hochschild-Serre (LHS) spectral sequence 
\[
E^{p,q}_2=H^p(A,H^q(T,\mathbb{G}_m)) \Longrightarrow E^{p+q}_\infty = H^{p+q}(G,\mathbb{G}_m), 
\]
induced by short (split) exact sequence of algebraic groups $1\to T \to H\to A\to 1$. It is well-known that 
\[
H^q(T,\mathbb{G}_m)\cong 
\begin{cases}
\mathbb{G}_m, & \text{if}\; q=0;\\
\bbZ^l, & \text{if}\; q=1;\\
0, & \text{if}\; q>1,
\end{cases}
\]
see \cite[Proposition 15.8]{Mi} for the case when $n\geq 2$, hence $E^{p,q}_2=0$, for all $p\geq 0$ and $q\geq 2$, and 
\begin{eqnarray*}
H^2(A, H^0(T,\mathbb{G}_m))&\cong& H^2(A,\mathbb{G}_m),\\
H^1(A,H^1(T,\mathbb{G}_m)) &\cong &\mathrm{Hom}(A,\bbZ^l)=0.
\end{eqnarray*}
This implies that the LHS spectral sequence collapses on the $E_2$-page and the first isomorphism in \eqref{eq:HforTorus2} holds.

For $G=\mathrm{GL}(2,\bbC)$, there is also a complete classification of the conjugacy classes of all 
closed reductive subgroups $H$ of $G$, see \cite{NPT}. For any such $H$, let $H_0$ be the connected component containing the 
neutral element, then $H/H_0$ is a finite group and $H^2(H,\mathbb{G}_m)\cong H^2(H/H_0,\mathbb{G}_m)$, which is known 
explicitly for all the finite groups showing up in the classification in \cite{NPT} (we do not have a reference for the group 
cohomology of all these groups, but they can be computed by the usual techniques). A comprehensive review of many interesting classification results of the closed subgroups of $G$, for $G\in \{\mathrm{GL}(n,\bbC), \mathrm{SL}(n,\bbC), \mathrm{PSL}(n,\bbC)\}$, with $n\geq 2$, can be found in \cite{Ca}. 

To use these results in the case of $\Soeext_n$, let $\lambda\in \mathrm{P}(n)$ and recall that $F_\lambda$ is reductive, see \eqref{eq:Flambda}. By Theorem~\ref{thm:mainthmforS}, there is a bijection between the equivalence classes of simple almost 
finitary $\Soeext_n$-birepresentations $\mathbf{M}$ with apex $\J_\lambda$ and the equivalence classes of simple almost finitary birepresentations $\mathbf{N}$ of $\mathrm{Rep}(F_\lambda)$, up to overall shifts.  Following Gelaki's terminology, we say that $\mathbf{M}$ is {\em geometrical} if $\mathbf{N}$ is geometrical. Thus, there is a bijection between the equivalence classes of simple geometrical almost finitary $\Soeext_n$-birepresentations with apex $\J_\lambda$ and the conjugacy classes of pairs $(H,[\psi])$, where $H$ is a reductive closed subgroup of $F_\lambda$ and $[\psi]\in H^2(H,\mathbb{G}_m)$. 

By the remarks in the paragraph containing \eqref{eq:HforTorus1} and \eqref{eq:HforTorus2}, this bijection 
yields a complete classification of all simple geometrical almost finitary $\Soeext_n$-birepresentations with apex $\J_\lambda$, when $F_\lambda$ is isomorphic to $\mathbb{G}_m^k$, for some integer $k\geq 0$, or to $\mathrm{GL}(2,\bbC)$. The former case occurs when the partition $\mu=(\mu_1,\ldots, \mu_{r'})$ dual to $\lambda$ satisfies $\mu_1>\ldots>\mu_{r'}>0$, while the latter case occurs when $n=2k$ and $\mu=(k,k)$. When $\mu$ has at most two rows, it always satisfies one of these two conditions. 
As is well-known, the two-sided cells $\J_\lambda$ for the corresponding $\lambda$ (with at most two columns) are exactly the ones 
containing the fully commutative elements in $\Saffext_n$, which are the elements whose rexes can be transformed into 
one another by conjugation with powers of $\rho$ and the application of commutation relations only. The quotient of $\Soeext_n$ 
by the monoidal ideal generated by all two-sided cells $\J_\lambda$, for $\lambda$ with more than two rows, 
categorifies the extended affine type $A$ Temperley-Lieb algebra, which was shown to be an affine cellular algebra by Graham and Lehrer in \cite{GL}, who also classified its finite dimensional cell representations and their irreducible quotients. 
Thus we have obtained a complete classification of all simple geometrical almost finitary birepresentations, including the finitary ones of finite rank, of this categorified extended affine type $A$ Temperley-Lieb algebra. In Conjecture~\ref{conj:relswithEvandGL}, 
we propose a precise connection with Graham and Lehrer's work.

To illustrate how Theorem~\ref{thm:mainthmforS} works in practice, let us explain one particular example of simple geometrical birepresentation with trivial Schur multiplier (the asymptotic part of this story is well-known to experts, but we hope that a detailed account of it helps to make the paper accessible to a wider audience). Let $G:=\mathrm{GL}(m,\bbC)$, for some $m\in \bbN$, and consider the forgetful functor 
\[
F \colon \mathrm{Rep}(G)\to \mathrm{Vect}_\bbC
\] 
and the corresponding simple geometrical $\mathrm{Rep}(G)$-birepresentation $\mathbf{Vect}_\bbC$ (which is, of course, 
finitary because it has rank one). We claim that the algebra object $[\mathbb{C},\mathbb{C}]\in \mathrm{Rep}(G)^\sqcup$, obtained via the internal hom-construction, is isomorphic to the algebra $\mathcal{O}(G)$ of regular complex functions on $G$ (polynomials with complex coefficients in the matrix entries $x_{ij}$ and the inverse of the determinant) with pointwise multiplication. 

To simplify notation, we'll write $\otimes:=\otimes_\bbC$, $\mathrm{Vect}:=\mathrm{Vect}_\bbC$, $\mathrm{Hom}_\bbC:=\mathrm{Hom}_{\mathrm{Vect}_\bbC^\sqcup}$ and $\mathrm{Hom}_G:=\mathrm{Hom}_{\mathrm{Rep}(G)^\sqcup}$ 
in the proof of the claim. Recall that there is an essentially unique irreducible finite dimensional complex rational 
$G$-representation $L_\nu$, for every dominant integral $G$-weight $\nu\in \Lambda_m^+$, and that the (algebraic) Peter--Weyl Theorem says that 
\begin{equation}\label{eq:PW}
\mathcal{O}(G)\cong \bigoplus_{\nu\in  \Lambda_m^+} L_\nu\otimes L_\nu^*,
\end{equation}
where $L_\nu^*$ is the dual representation. In the first place, this implies that $\mathcal{O}(G)$ belongs to $\mathrm{Rep}(G)^\sqcup$, where the 
$G$-action on $\mathcal{O}(G)$, 
defined by $g\cdot f(x)=f(xg)$ for $g,x\in G$ and $f\in \mathcal{O}(G)$, corresponds to the 
$G$-action on the left tensor factor in $L_\nu\otimes L_\nu^*$ for every $\nu\in  \Lambda_m^+$. The other tensor factor serves merely as a multiplicity space for the left $G$-action. Secondly, since $\mathrm{Rep}(G)$ is semisimple, the isomorphism in \eqref{eq:PW} also implies that, for every $\nu\in  \Lambda_m^+$, there is an isomorphism
\[
\mathrm{Hom}_{\bbC}(F(L_\nu), \mathbb{C})\cong F(L_\nu^*)\cong 
\mathrm{Hom}_{G}(L_\nu,\mathcal{O}(G)),
\]
which proves that $[\bbC,\bbC]\cong \mathcal{O}(G)$ as $1$-morphisms in $\mathrm{Rep}(\mathcal{O}(G))^\sqcup$. 

To see that multiplication in $[\bbC,\bbC]$ corresponds to pointwise multiplication in 
$\mathcal{O}(G)$, first note that the evaluation morphism $\mathrm{ev}\in \mathrm{Hom}_\bbC(\mathcal{O}(G)\otimes \bbC,\bbC)$ is given by $\mathrm{ev}(f\otimes 1)=f(e)$, which corresponds exactly to the identity on $\mathcal{O}(G)$ under the isomorphism 
$\mathrm{Hom}_\bbC(\mathcal{O}(G)\otimes \bbC,\bbC)\cong 
\mathrm{Hom}_G(\mathcal{O}(G), \mathcal{O}(G))$ because $g\cdot f(e)=f(g)$, for $g\in G$ and $f\in \mathcal{O}(G)$. Multiplication in $[\bbC,\bbC]\cong \mathcal{O}(G)$ is defined by composition of 
\begin{gather*}
\mathcal{O}(G)\otimes \mathcal{O}(G)\otimes \bbC\xrightarrow{\mathrm{id}\otimes \mathrm{ev}} 
\mathcal{O}(G)\otimes \bbC\xrightarrow{\mathrm{ev}} \bbC,\\
f\otimes f' \otimes 1 \mapsto f\otimes f'(e)\mapsto f(e)f'(e). 
\end{gather*}
Since the multiplication in $\mathcal{O}(G)$, denoted temporarily by $\star$,  is required to be a morphism in 
$\mathrm{Rep}(G)^\sqcup$, we get 
\[
f\star f'(g)=g\cdot (f\star f')(e)=(g\cdot f)(e)(g\cdot f')(e)=f(g)f'(g),
\] 
for every $g\in G$. This is exactly pointwise multiplication of $f$ and $f'$ and, therefore, completes the proof of above claim.

In general, for $\lambda\in \mathrm{P}(n)$, let $F\colon \mathrm{Rep}(F_\lambda)\to \mathrm{Vect}_\bbC$ be the forgetful functor, where $F_\lambda$ is given in \eqref{eq:Flambda}. Then algebra object $[\mathbb{C},\mathbb{C}]$ is 
isomorphic to 
\[
\mathcal{O}(F_\lambda)\cong \mathcal{O}(\mathrm{GL}(m_1,\bbC))\otimes_\bbC \cdots \otimes_\bbC \mathcal{O}(\mathrm{GL}(m_k,\bbC))
\]
in $\mathrm{Rep}(F_\lambda)^\sqcup$, with tensor factorwise pointwise multiplication. 
In \cite[Lemma 5.2.4]{Xi}, Xi establishes a bijection between the elements of $\H_\lambda$ and 
the dominant integral weights of $F_\lambda$. By the Peter-Weyl Theorem and this bijection, the algebra object 
$\mathcal{O}(F_\lambda)$ corresponds to an algebra object $\rA_\lambda$ in $\cA_{\H_\lambda}^\sqcup$, and the lax 
monoidal functor $\Theta$ maps $\rA_\lambda$ to an algebra object $\rB_\lambda$ in $\cS_{\H_{\lambda}}^\sqcup$, where 
$\cS=\Soeext_n$. The corresponding rank-one $\cS_{\H_\lambda}$-birepresentation $\mathbf{proj}_{\cS_{\H_{\lambda}^{\sqcup}}}(\rB_\lambda)$ is simple with apex $\H_\lambda$ and, in turn, induces the simple $\Soeext_n$-birepresentation 
\begin{equation}\label{eq:forgetcellbirep}
\mathbf{C}_\lambda^\mathrm{f}:=\mathbf{proj}_{\cS_{\J_\lambda}^{\sqcup}}(\rB_\lambda)
\end{equation} 
with apex $\J_\lambda$, which we propose to call the {\em forgetful cell birepresentation} of $\Soeext_n$ associated to the 
left cell $\Gamma_\lambda$.

We claim that the rank of $\mathbf{C}_\lambda^\mathrm{f}$ is equal to the number $n_\lambda$ defined in \eqref{eq:nlambda}. To see this, note that this rank is equal to the rank of the decategorification of the 
corresponding asymptotic birepresentation, which is a representation of $A_{\J_\lambda}$, the asymptotic Hecke algebra 
(a.k.a. Lusztig's $J$-ring) associated to $\J_\lambda$. By \cite[Theorem 2.3.2(c)]{Xi}, the latter algebra is isomorphic to the matrix algebra $\mathrm{Mat}_{n_\lambda\times n_\lambda}(R_{F_\lambda})$, where $R_{F_\lambda}$ is the representation ring of $F_\lambda$. The forgetful functor $F$, therefore, induces a homomorphism of $\bbZ$-algebras
\[
f\colon \mathrm{Mat}_{n_\lambda\times n_\lambda}(R_{F_\lambda})\to \mathrm{Mat}_{n_\lambda\times n_\lambda}(\bbZ),
\]
which defines the $A_{\J_\lambda}$-action on $\bbZ^{n_\lambda}$. In particular, this implies that the rank of $\mathbf{C}_\lambda^\mathrm{f}$ is equal to $n_\lambda$, proving our claim. 

\begin{conjecture}\label{conj:relswithEvandGL} Let $\cS_n$ be the monoidal category of Soergel bimodules for the finite 
symmetric group $\Sfin_n$ and $\mathbf{C}_\lambda^\mathrm{fin}$ the $\cS_n$-cell birepresentation associated to the (finite) left cell in $\Sfin_n$ containing $w_\lambda$. By \cite[Section 5.1 and Theorem 5.4]{MMV1}, there is a so-called {\em evaluation functor} $\mathcal{E}v_{r,s}\colon \Soeext_n\to K^b(\cS_n)$, for every $r,s\in \bbZ$. This is 
a monoidal linear functor, hence every finitary $\cS_n$-birepresentation $\mathbf{M}$  
gives rise to a triangulated $\Soeext_n$ -birepresentation $\mathbf{M}^{\mathcal{E}v_{r,s}}$ via pull-back along $\mathcal{E}v_{r,s}$. In \cite[Proposition 6.2]{MMV1}, it was shown that 
$\mathbf{M}^{\mathcal{E}v_{r,s}}$ always has a {\em finitary cover}, which is a finitary $\Soeext_n$-birepresentation $\mathbf{L}$ together with a faithful morphism of $\Soeext_n$-birepresentations 
$\Pi\colon \mathbf{L}\to \mathbf{M}^{\mathcal{E}v_{r,s}}$ whose essential image generates $\mathbf{M}^{\mathcal{E}v_{r,s}}$ as a triangulated birepresentation. 

We conjecture that the forgetful cell birepresentation $\mathbf{C}_\lambda^\mathrm{f}$ is a finitary cover of the evaluation birepresentation $(\mathbf{C}_\lambda^\mathrm{fin})^{\mathcal{E}v_{n-2,2-n}}$. Moreover, we conjecture that this is essentially the unique finitary cover of minimal rank.

If $\lambda=(2^{k} 1^{n-2k})\in \mathrm{P}(n)$, for some $k\in \bbN_0$ such that $0\leq 2k\leq n$, 
then $\mu=(n-k,k)\in \mathrm{P}(n)$ and $\J_\lambda$ only contains fully commutative elements. 
We conjecture that $\mathbf{C}_\lambda^\mathrm{f}$ decategorifies to the Graham-Lehrer cell module $W_{t,z}(n)$ for 
$k=(n-t)/2$ and some parameter $z$, see \cite[Definition 2.6]{GL} and \cite[Remark 6.6]{MMV1}.
\end{conjecture}


\subsection{The asymptotic categories for $\Saff_n$}
As already remarked, the affine Weyl group $\Saff_n$ is isomorphic to the extended affine Weyl group of $\mathrm{SL}(n,\mathbb{C})$ and its two-sided cells are in bijective correspondence with the conjugacy classes of unipotent elements in $\mathrm{PGL}(n,\mathbb{C})$. The latter group is the adjoint group of $\mathrm{GL}(n,\mathbb{C})$ and the conjugacy classes of its unipotent elements are also indexed by the partitions of $n$, but the centralizers of these elements in $\mathrm{PGL}(n,\mathbb{C})$ are different. To be precise, for $\lambda\in \mathrm{P}(n)$ and its dual $\mu=(i_1^{m_1} i_2^{m_2} \ldots i_{k}^{m_k})\in \mathrm{P}(n)$ with 
$\mu_1=i_1> i_2>\ldots >i_{k}=\mu_{r'}$, we have
\[
F_\lambda\cong \left(\mathrm{GL}(m_1,\mathbb{C})\times \cdots \times \mathrm{GL}(m_k,\mathbb{C})\right)/\mathbb{C}^*,
\] 
where $\mathbb{C}^*$ is the subgroup of scalar matrices in $\mathrm{GL}(m_1,\mathbb{C})\times \cdots \times \mathrm{GL}(m_k,\mathbb{C})$ (with the same scalar 
for all $m_i$).

Again, it is true that $\cA_H\simeq \mathrm{Rep}(F_\lambda)$ for all diagonal $H$-cells of $J_\lambda$, see \cite[Section 8.3]{Xi}, but the structure of $\cA_{\J_\lambda}$ is more complicated now. However, it would take us too far away to recall it here and we refer the interested reader to \cite[Section 8.3]{Xi} and \cite[Section 5.5]{BO}.

\section{Universal Coxeter groups}
Recall that a Coxeter group $(W,S)$ is called {\em universal} if the element $st$, for any pair of distinct $s,t \in S$, has infinite order. Let $n:=\vert S\vert$ be the rank of $W$. When $n=2$, the universal Coxeter group is isomorphic to $\Saff_2$, but when $n >2$, the universal Coxeter group is not an affine Weyl group. The universal Coxeter groups have only two two-sided cells: the cell $J_e$, containing only the neutral element $e$, and the cell 
$J_{\ne e}$, containing all other elements of $W$. The left cells in $\J_{\ne e}$ are indexed by the elements of $S$, because two 
elements in $\J_{\ne e}$ are left equivalent if and only if their reduced expressions (which are unique) end with the same simple reflection, see \cite[Lemmas 3.3 and 6.1]{Dy}. Both $\J_e$ and $\J_{\ne e}$ contain, therefore, the longest element of a parabolic subgroup $W_I$, where in the former case we have $I=\emptyset$ and in the latter case we can take $I=\{s\}$, for any $s\in S$. Note that $W$ has no other finite parabolic subgroups. 

Using the complexification of the real realization of $W$ in \cite[Remark 3.13]{EL}, we have $\cA_{\J_e}\simeq \mathrm{Vect}_{\mathbb{C}}$, and we conjecture that $\cA_{\J_{\ne e}}$ is biequivalent to the $n$-colored Temperley--Lieb $2$-category over $\mathbb{C}$ with $\delta=2$, defined in \cite[Section 2]{EL}. For each $s\in S$, there is a diagonal $H$-cell $\H_s$ in $\J_{\ne e}$ and we conjecture that $\cA_{\H_s}$ is equivalent to the full monoidal subcategory generated by all $n$-colored Temperley--Lieb diagrams with both outermost regions colored by $s$. If our conjecture is true, all these asymptotic categories, for $s\in S$, are equivalent to each other as pivotal categories.  

The case for $n=2$ corresponds to the algebraic Satake correspondence in \cite{El}. 
As is explained in \cite[Example 2.9]{EL}, the $2$-colored Temperley-Lieb category is just a biategorical version of $\mathrm{Rep}(SL(2,\mathbb{C}))$. Let $L_n$, for $n\in \mathbb{N}_0$, denote the unique, up to isomorphism, irreducible complex $\mathrm{SL}(2,\mathbb{C})$-representation of dimension $n+1$, 
and let $\mathrm{Rep}^{\mathrm{even}}(SL(2,\mathbb{C}))$ and $\mathrm{Rep}^{\mathrm{odd}}(SL(2,\mathbb{C}))$ be the full subcategories of 
$\mathrm{Rep}(SL(2,\mathbb{C}))$ generated by the $L_n$ for $n$ even and $n$ odd, respectively. Note that $\mathrm{Rep}^{\mathrm{even}}(SL(2,\mathbb{C}))\cong \mathrm{Rep}(SO(3,\mathbb{C}))$, which is a monoidal category, whereas $\mathrm{Rep}^{\mathrm{odd}}(SL(2,\mathbb{C}))$ is not closed under the monoidal product but is naturally a left and right birepresentation of $\mathrm{Rep}^{\mathrm{even}}(SL(2,\mathbb{C}))$. The bicategory $\cA_{\J_{\ne e}}$ has two objects, 
which can be identified with the elements of $\mathbb{Z}/2\mathbb{Z}$, and  
\[
\mathrm{Hom}_{\cA_{\J_{\ne e}}}(\mathtt{i},\mathtt{j})\simeq 
\begin{cases}
\mathrm{Rep}^{\mathrm{even}}(SL(2,\mathbb{C})), & \mathrm{if}\; \mathtt{i}\equiv \mathtt{j}\bmod 2;\\
 \mathrm{Rep}^{\mathrm{odd}}(SL(2,\mathbb{C})), & \mathrm{else}.
\end{cases}
\]  
In particular, we see that for both diagonal $H$-cells in $\J_{\ne e}$, we have 
\[
\cA_{\H}\simeq \mathrm{Rep}(SO(3,\mathbb{C})).
\]
For more information, see \cite{El}.

\end{document}